\documentclass[11pt]{article}
\usepackage[T1]{fontenc}
\usepackage{lmodern}
\usepackage[a4paper,margin=27mm]{geometry}
\usepackage{amsmath,amssymb,amsthm,mathtools}
\usepackage{enumitem,needspace,microtype}
\usepackage{tikz}
\usetikzlibrary{arrows.meta}
\usepackage[hidelinks]{hyperref}
\hypersetup{pdftitle={Nonuniqueness for the planar Euler equation}}
\numberwithin{equation}{section}
\newtheorem{theorem}{Theorem}[section]
\newtheorem{proposition}[theorem]{Proposition}
\newtheorem{lemma}[theorem]{Lemma}
\newtheorem{corollary}[theorem]{Corollary}
\theoremstyle{remark}
\newtheorem*{remark}{Remark}

\usepackage{titlesec}
\renewcommand{\theparagraph}{\thesubsection.\arabic{paragraph}}
\titleformat{\paragraph}[runin]{\normalfont\normalsize\bfseries}{\theparagraph.}{0.5em}{}
\titlespacing*{\paragraph}{0pt}{3.25ex plus 1ex minus .2ex} {1.5ex plus .2ex}

\newcommand{\upindex}[1]{^{(#1)}}

\title{Nonuniqueness of Solutions to 2D Incompressible Euler Equations with Unbounded Vorticity}
\author{Hyungjun Choi}
\date{25 October 2026}
\begin{document}
\maketitle

\begin{abstract}
We construct two distinct solutions of the two-dimensional incompressible Euler equation with the same initial vorticity in $\bigcap_{1\le p<\infty}L^p(\mathbb R^2)$. The vorticities are compactly supported in space and have mean zero, so the corresponding velocities have finite energy. The nonuniqueness mechanism is a cascade of transitions in quadrupole dynamics across infinitely many scales.
\end{abstract}

\tableofcontents

\section{Introduction}
The incompressible Euler equation on $\mathbb R^2$ can be written in terms of velocity $u$ and vorticity $\omega=\partial_1u_2-\partial_2u_1$ as
\[ \left\{ \begin{aligned} & \partial_t\omega+u\cdot\nabla\omega=0, \\ & u=K*\omega,\quad K(x)=\frac{(-x_2,x_1)^t}{2\pi|x|^2} \end{aligned} \right. \]
We construct two distinct solutions of the two-dimensional incompressible Euler equation with the same initial vorticity. The initial vorticity is compactly supported, mean zero, and lies in every finite $L^p(\mathbb R^2)$.

\paragraph*{Existence, uniqueness, and the Yudovich conjecture.}
For smooth, sufficiently decaying initial vorticity, the planar Euler equation has a unique global smooth solution. The classical theory originates with Wolibner~\cite{Wolibner} and was developed further by Kato~\cite{Kato}. The vorticity is transported by an area-preserving flow, so its $L^p$ norms are conserved. Yudovich~\cite{Yudovich1963} extended global existence and uniqueness to bounded integrable vorticity, $\omega_0\in L^1\cap L^\infty$. The corresponding velocity is log-Lipschitz, which supplies the modulus of continuity needed for uniqueness. His later work~\cite{Yudovich1995} also permits unbounded vorticity under suitable restrictions on the growth of $\|\omega_0\|_{L^p}$ as $p\to\infty$. Thus membership in every finite $L^p$ must be distinguished from quantitative control of these norms. For a broader account of Euler dynamics and regularity, see~\cite{ElgindiDrivas}.

Weak existence reaches substantially rougher data. DiPerna and Majda~\cite{DiPernaMajda} developed compactness and concentration arguments yielding global weak solutions for integrable $L^p$ vorticity, $p>1$. Delort~\cite{Delort} reached distinguished-sign measure vorticity in $H^{-1}_{\mathrm{loc}}$ with locally finite energy. These existence results do not assert uniqueness. Uniqueness for a transport equation with a prescribed Sobolev velocity, as in DiPerna--Lions~\cite{DiPernaLions}, does not settle Euler uniqueness, because two Euler solutions transport their vorticities by different velocities.

The \emph{Yudovich nonuniqueness conjecture} has the following Lebesgue-space formulation in~\cite{Mengual,VishikBook}. For each $2<p<\infty$, there exist initial velocity and vorticity satisfying
\[ u_0\in L^2(\mathbb R^2),\qquad \omega_0=\operatorname{curl}u_0\in L^1(\mathbb R^2)\cap L^p(\mathbb R^2), \]
from which the unforced Euler equation admits distinct weak solutions with
\[ u\in C([0,T];L^2),\qquad \omega\in L^\infty([0,T];L^1\cap L^p). \]
The regularity is required throughout the evolution, not only initially. We discuss several developments relevant to this question before explaining the point-vortex ingredients of the present construction.

\paragraph*{Self-similar solutions and instability.}
The use of instability in similarity coordinates has been successful for the similar non-uniqueness problem in three-dimensional Navier--Stokes equations. Jia and \v Sver\'ak~\cite{JiaSverak2014,JiaSverak2015} constructed forward self-similar solutions and developed a route from an unstable self-similar profile to nonuniqueness. Guillod and \v Sver\'ak~\cite{GuillodSverak} supplied numerical evidence for the required bifurcation. More recently, Hou, Wang and Yang~\cite{HouWangYang} presented a computer-assisted proof of nonuniqueness of Leray--Hopf weak solutions, based on an unstable self-similar profile and localization. Ionescu, Jia and Palasek~\cite{IonescuJiaPalasek} numerically construct unstable self-similar profiles in the axisymmetric swirl-free class. An unstable trajectory in logarithmic time $\tau=\log t$ can yield a second solution with the same initial trace at $t=0$, provided the spatial behavior and nonlinear remainder are controlled. Vishik~\cite{Vishik} used instability and similarity variables to prove nonuniqueness for the forced two-dimensional Euler equation with $L^p$ vorticity. A detailed account is given in~\cite{VishikBook}. Albritton, Bru\`e and Colombo~\cite{ABC} implemented this mechanism for the \emph{forced} three-dimensional Navier--Stokes equations to obtain two Leray--Hopf weak solutions with the same force and initial data.

Self-similar solutions have been constructed for the \emph{unforced} two-dimensional Euler equation. Elling~\cite{Elling2013,Elling2016} constructed algebraic spiral solutions, subsequently developed by Shao, Wei and Zhang~\cite{ShaoWeiZhang}, and Choi~\cite{ChoiSpiral} removed the rotational symmetry restriction in a class of spiral profiles. Choi and Coiculescu~\cite{ChoiCoiculescu} constructed homogeneous steady solutions whose pseudo-velocity field has several stagnation points. Choi~\cite{ChoiLargeData} and Hou, Qin, Song and Zhang~\cite{HouQinSongZhang} constructed forward self-similar solutions from $-\alpha$-homogeneous initial vorticity for $1<\alpha<2$, without smallness or sign assumptions. Bressan and Shen~\cite{BressanShen} studied self-similar solutions in connection with nonuniqueness and numerically observed competing spiral configurations. However, transferring the Navier--Stokes strategy to the two-dimensional Euler equation presents difficulties~\cite{BinzCoiculescu,ChoiPreparation,CoiculescuIllposed}. In addition, having two distinct self-similar solutions from the same initial homogeneous data does not guarantee resolution of the problem with integrable vorticity.

\paragraph*{Multiscale constructions.}
While self-similarity provides a concrete route to nonuniqueness, it imposes a fixed continuous relation between spatial scale and time. Multiscale constructions organizing simpler dynamics at separated scales can offer greater flexibility. With infinitely many scales, an infinitesimal difference in the initial data can lead to a distinction between solutions at larger scales. Dyadic models and averaged equations provide examples of singularity formation~\cite{KatzPavlovic,TaoAveraged,PalasekModel}. See~\cite{CheskidovDaiFriedlander} for a survey. Filonov and Khodunov~\cite{FilonovKhodunov} obtained Leray--Hopf nonuniqueness for a forced dyadic model, and Palasek~\cite{Palasek} for an unforced model. For the three-dimensional Navier--Stokes equations, Coiculescu and Palasek~\cite{CoiculescuPalasek} constructed nonunique solutions from critical $\mathrm{BMO}^{-1}$ data on the torus, smooth for positive times. Cheskidov, Dai and Palasek~\cite{CheskidovDaiPalasek} constructed weak solutions that branch from a classical solution through transfer from arbitrarily high to lower frequencies.

Related methods yield Navier--Stokes norm growth~\cite{PalasekGrowth}, loss of Sobolev regularity for two-dimensional Euler~\cite{CordobaMartinezOzanski}, and blow-up of classical solutions of the three-dimensional Euler equations~\cite{CordobaMartinezZhengEuler}. These growth and blow-up results are distinct from nonuniqueness.

\paragraph*{Convex integration.}
The approach of De Lellis and Sz\'ekelyhidi~\cite{DeLellisSzekelyhidi} provides another mechanism for nonuniqueness of weak Euler solutions. Controlling vorticity in two dimensions is an additional constraint. Bru\`e and Colombo~\cite{BrueColombo} obtained nonuniqueness with vorticity in the weak-$L^1$ Lorentz space $L^{1,\infty}$, and Bru\`e, Colombo and Kumar~\cite{BrueColomboKumar} constructed periodic solutions with vorticity in $C([0,1];L^p)$ for some $p>1$, using concentrated traveling vortices in a convex-integration scheme. Mengual~\cite{Mengual} obtained nonunique bounded admissible velocity solutions from initial vorticity in $L^1\cap L^p$ for every finite $p>2$, but the competing solutions are not shown to retain $L^p$ vorticity at positive times.

\paragraph*{Point-vortex dynamics and their realization by patches.}
Point vortices provide explicit Hamiltonian motions. See, for instance, Aref~\cite{Aref} and the references therein. Grotto and Pappalettera~\cite{GP} provide a closely related motivation for using point-vortex dynamics to obtain nonuniqueness. They constructed a burst of one point vortex into three and obtained nonuniqueness for atomic vorticity in a symmetrized weak formulation. Dirac masses are not $L^p$ functions, so a separate mechanism is needed for integrable vorticity. Our building block is a quadrupole with dynamics connecting two distinct motions, a rotating rhombus and separating dipoles. The rhombus belongs to the relative equilibria classified by Hampton, Roberts and Santoprete~\cite{HRS}. Its connection with dipole scattering appeared in Eckhardt and Aref~\cite{EA} and a trajectory connecting the rhombus to separating dipoles appeared in Lydon, Nazarenko and Laurie~\cite{LNL}. Section~\ref{sec:four-point} derives this trajectory and the estimates needed below directly from the four-vortex equations.

We replace each point vortex by a small circular patch to obtain $L^p$ vorticity. Approximation of point vortices by concentrated Euler vorticity has been widely studied. Marchioro and Pulvirenti~\cite{MarchioroPulvirenti1983,MarchioroPulvirenti1993} established the connection between the Euler evolution of concentrated vorticity and point-vortex trajectories, including localization of the transported supports. Turkington~\cite{Turkington} studied a concentrated vortex in a bounded domain. Butt\`a and Marchioro~\cite{ButtaMarchioro} obtained longer concentration times in several configurations. Meyer~\cite{Meyer} proves long-time confinement of several nearly circular concentrated vortices while they remain separated. Such confinement controls core sizes, but should not be confused with an arbitrarily long, uniformly accurate comparison with an unstable point-vortex trajectory. D\'avila, del Pino, Musso and Wei~\cite{DDMW} used gluing to construct smooth concentrated vortices following prescribed noncolliding point-vortex motions with detailed core asymptotics.

Circular-patch stability and preservation of boundary regularity are also distinct issues. Wan and Pulvirenti~\cite{WanPulvirenti} proved nonlinear stability of circular patches, and Sideris and Vega~\cite{SiderisVega} gave a direct $L^1$ stability argument on the plane. Chemin~\cite{Chemin} and Bertozzi and Constantin~\cite{BertozziConstantin} established global persistence of regular patch boundaries. These theorems do not by themselves give uniform closeness to a circle in a boundary norm during a coupled multiscale motion. Multiplying an isolated Euler vorticity by a positive constant merely rescales time, so amplitude alone does not separate stable and unstable disks. For a patch subject to an external field, the relevant comparison is between its internal rotation and the external strain. Kida's exact elliptic-patch dynamics~\cite{Kida} illustrate this competition, while Constantin~\cite{Constantin} studied interactions between distant patches.

The patch estimates in Section~\ref{meta:section} are tailored to this last issue. They control boundary deformation and give centroid and moment equations, together with a relative error between two distinct dynamics. The relative estimate is needed because the signal passed from one scale to the next is much smaller than the available absolute concentration error. We then arrange quadrupoles at decreasing length scales with increasing rotation frequencies. For each finite number of scales, one solution stays near rotating rhombi while a second starts with separating dipoles only at the smallest scale. The separating dipoles activate successively larger scales. The initial discrepancy tends to zero as the number of scales grows, whereas the later discrepancy survives at the largest scale at time $1$. The two limits share a common initial datum and differ at time $1$. The following subsections state the theorem and explain the quantitative choices and the limiting argument.

\paragraph*{Main theorem.}
\begin{theorem} \label{thm:main}
There exist a compactly supported function $\omega_0$ and two vorticities $\omega^{(1)},\omega^{(2)}$ such that 
\begin{enumerate}[label=(\roman*)]
	\item $\displaystyle\omega_0\in\bigcap_{1\le p<\infty}L^p(\mathbb R^2)$ and $\displaystyle\int_{\mathbb R^2}\omega_0\,dx=0$.
	\item For every $1\le p<\infty$ and $j=1,2$, each pair $(\omega\upindex{j}, u\upindex{j})$ is a renormalized solution of the vorticity transport equation with initial data $\omega\upindex{j}(0) = \omega_0$ and also a weak solution of the momentum equation. Indeed,
\[ \omega^{(j)}\in C([0,\infty);L^p), \quad u^{(j)} = K\ast \omega\upindex{j} \in C([0,\infty);L^2),\quad \|\omega^{(j)}(t)\|_{L^p}=\|\omega_0\|_{L^p},\qquad t\ge0. \]
	\item The two solutions are distinct since these satisfy
\[ \|\omega^{(2)}(1)-\omega^{(1)}(1)\|_{L^1}>0. \]
\end{enumerate}
\end{theorem}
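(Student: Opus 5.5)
The plan is to build both solutions as limits of finite-scale constructions. Fix a sequence of length scales $\ell_1\gg\ell_2\gg\cdots$, patch radii $\varepsilon_k\ll\ell_k$, and circulations $\Gamma_k$. The rotation frequency of the $k$-th rhombus, $\Gamma_k/\ell_k^2$, should increase fast enough that scale $k$ completes its transition in a time $T_k$ with $\sum_{k>1}T_k$ smaller than the time needed at scale $1$. At scale $k$ we place a quadrupole of four circular patches of radius $\varepsilon_k$ and signs $(+,+,-,-)$ in the rhombus configuration of Section~\ref{sec:four-point}, each with vorticity amplitude $\Gamma_k/(\pi\varepsilon_k^2)$. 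The quadrupoles are nested, with the $k$-th sitting near the origin, inside a region free of the larger ones. The datum is $\omega_0=\sum_k\omega_{0,k}$. Each quadrupole has zero mean and $\|\omega_{0,k}\|_{L^p}^p\sim\Gamma_k^p\varepsilon_k^{2-2p}$. We choose the parameters so that this is summable for every finite $p$ while the amplitudes blow up, for instance $\Gamma_k/\varepsilon_k^2\sim k$ with $\varepsilon_k$ decreasing geometrically. This gives (i), with compact support and mean zero.

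For each $N$, we define two smooth-patch Yudovich solutions $\omega^{(1)}_N$ and $\omega^{(2)}_N$, which are unique because the data are bounded. The datum of $\omega^{(1)}_N$ is $\sum_{k\le N}\omega_{0,k}$ in the exact rotating rhombi. The datum of $\omega^{(2)}_N$ is the same, except that the $N$-th quadrupole is replaced by one placed on the trajectory of Section~\ref{sec:four-point} that leaves the rhombus as separating dipoles. The perturbation size is chosen so that the two data differ in $L^1$ by $\delta_N\to0$.

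The core step is an induction over scales using the patch estimates of Section~\ref{meta:section}. These give boundary deformation control, centroid and moment equations, and a relative error estimate between the two dynamics. Along $\omega^{(1)}_N$, every quadrupole stays close to its rotating rhombus up to time $1$. The external strain from the other scales is weak relative to internal rotation: the far-field of a zero-mean quadrupole decays like $|x|^{-3}$, and inner scales, though fast, are tiny.

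Along $\omega^{(2)}_N$, the separating dipoles of scale $k$ travel outward and reach scale $k-1$. Their induced field displaces the scale-$(k-1)$ centroids off the rhombus by a signal that is small but exceeds the absolute concentration error. That displacement then grows along the unstable direction of the rhombus, producing separating dipoles at scale $k-1$ within time $T_{k-1}$. The relative estimate is essential here: the signal between scales must be compared through the difference of the two dynamics, not through absolute errors. Iterating down to $k=1$, the scale-$1$ configurations of $\omega^{(1)}_N(1)$ and $\omega^{(2)}_N(1)$ differ by a fixed $c>0$ in $L^1$, uniformly in $N$. I expect this cascade step to be the main obstacle. It requires tuning how far the dipoles travel and the transfer time at each scale, and showing that the instability growth factor dominates both the accumulated concentration errors and the interference from the infinitely many inactive inner scales.

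Finally, we pass to the limit $N\to\infty$. Both families have uniformly bounded $L^1\cap L^p$ norms, and the velocities are uniformly bounded in $L^2$ thanks to mean zero and compact support, with support growth controlled by the dipole travel. Standard DiPerna--Majda-type compactness gives limits in $C([0,\infty);L^p)$ after a diagonal subsequence. Strong convergence comes from equicontinuity of the transport and tightness of each scale separately, and the limit is a weak solution of the momentum equation. Both limits start from $\omega_0$, since $\delta_N\to0$ and the $N$-th tail tends to zero. Renormalization and conservation of $\|\omega(t)\|_{L^p}$ follow from DiPerna--Lions, because $u=K*\omega$ lies in $W^{1,p}_{\mathrm{loc}}$ with $p>1$. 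This gives (ii). Strong $L^1$ convergence at $t=1$, combined with the uniform gap $c$, gives (iii).
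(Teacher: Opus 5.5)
Your outline has the right architecture: multiscale quadrupoles, rhombus versus separating dipoles, a change only at scale $N$, patch reduction with a relative estimate, and a compactness limit. However, two steps fail as stated.

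\textbf{The reference family.} You take exact rhombi and claim each quadrupole stays near its rhombus up to time $1$ because the external strain is weak relative to the rotation. This is false. The rhombus is a saddle with unstable rate $2\sqrt2\Omega_k$, and $\Omega_k\to\infty$. A strain of size $\mathsf S$ from the other scales oscillates in the rotating frame. Integrating the forcing in \eqref{eq:forced-growing} against $e^{-2\sqrt2\Omega_kt}$ shows it generically contributes a coefficient of order $\mathsf S/\Omega_k$ to the growing mode, and patch corrections add more. Both are then amplified by $e^{2\sqrt2\Omega_kt}$, so the fast quadrupoles of $\omega_N^{(1)}$ leave their rhombi long before time $1$, and $\omega_N^{(1)}$ itself cascades. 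Weakness relative to rotation controls patch shapes, not the unstable direction. The paper fixes all other data and selects $(q_1(0),\dots,q_N(0))\in[-\delta,\delta]^N$ by a Brouwer argument on the first exit through $q_n=\pm\delta$; the faces $p_n=\pm\delta$ point inward. This gives Proposition~\ref{prep:reference}. The second datum then copies these shooting-selected patches at every $n<N$, so the unstable differences start at zero whatever the selected $q_n(0)$ are. Without this step there is no fixed discrepancy at scale $1$.

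\textbf{The quantitative design of the cascade.} This is the part you defer, and your parameters break there.
\begin{itemize}
\item With $\Gamma_k/\varepsilon_k^2\sim k$ and $\varepsilon_k\ll\ell_k$ you get $\Omega_k=\Gamma_k/(2\pi\ell_k^2)\ll k$. Then $\sum_k\Omega_k^{-1}$ diverges, so even the fixed normalized passage times cannot fit sequentially before time $1$.
\item The amplitude must dominate $\Omega_k$ by a large power; the paper uses $A\Omega_k^4$, the threshold of Lemma~\ref{core:confinement}. Summability in every $L^p$ then forces super-exponentially small circulations. Consequently $\log(1/\kappa_n)=4\Omega_{n+1}^\alpha+O(n+\log B+1)$, far below the absolute normalized centroid error $O(\Omega_n^{-5})$.
\item This is why Theorem~\ref{meta:reduction}(ii) is essential. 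Your two claims are in tension: if the signal exceeded the concentration error, no relative estimate would be needed.
\item The delays $\log(1/\kappa_n)/\Omega_n\approx4\Omega_{n+1}^\alpha/\Omega_n$ must be summable, which is what $\ell_n=e^{-\Omega_n^\alpha}$ with $\alpha<1$ and $\Omega_n=\Lambda B^n$ achieves.
\item The dipoles must not reach the next scale; that would be an uncontrolled close encounter outside the separation hypotheses. In the paper they stay within $C\Omega_n\ell_n\ll|d_n-d_{n+1}|$ of their center, and the signal travels through the far-field strain $\propto\overline{M_2}/|z|^4$ of the linearly growing second moment.
\item That coefficient is $\kappa\cos(4\vartheta-2\theta_r-2\theta_\infty+\arctan(1/\sqrt2)-\pi)$, which has no sign a priori. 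You need the recursive phase choice \eqref{second:phase-choice} and control of the timing errors $h_n$ to keep it positive.
\end{itemize}

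\textbf{The limit.} Strong convergence should come from norm conservation, as in Lemma~\ref{limit:compactness}: renormalization of the limit gives $\|\omega(t)\|_{L^2}=\|\omega_0\|_{L^2}=\lim\|\omega_N(t)\|_{L^2}$, which upgrades weak convergence to strong. It does not come from ``tightness of each scale''.
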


\begin{remark}
Zero total circulation gives $u_0 = K*\omega_0\in L^2$. For each fixed $0<\alpha<1$, the datum can be chosen with $\|\omega_0\|_{L^p}\asymp p^{4/\alpha}$ for $p\ge2$, as shown in Section~\ref{sec:quartic-growth}.
\end{remark}

\subsection{The quadrupole motion}

Identify $\mathbb R^2$ with $\mathbb C$. Consider circulation $+\Gamma$ at $z_1,-z_1$ and $-\Gamma$ at $z_2,-z_2$, with $\Gamma>0$. Fix the length $\ell>0$ by
\[ |z_1|^2-|z_2|^2=\ell^2,\qquad \Omega=\frac{\Gamma}{2\pi\ell^2}. \]
The four positions
\[ \left\{\pm\ell\sqrt{\frac{\sqrt{2} +1}{2}}\,e^{i(\theta_0-\Omega t)},\quad \pm i\ell\sqrt{\frac{\sqrt{2} -1}{2}}\,e^{i(\theta_0-\Omega t)}\right\} \]
form an exact rotating rhombus. The two vertices on the longer diagonal have circulation $+\Gamma$, and those on the shorter diagonal have circulation $-\Gamma$. This relative equilibrium is part of the classification in Hampton, Roberts and Santoprete~\cite[Sections 4.2 and 7.4]{HRS}.

There is also a complete, collision-free solution with the following behavior. As $t\to-\infty$, its shape converges to this rhombus. As $t\to+\infty$, each positive vortex pairs with a negative vortex at distance tending to $\ell$. The two pair centers move in opposite directions, each with speed tending to $\Omega\ell$. Such a translating positive-negative pair is called a dipole. Lydon, Nazarenko and Laurie~\cite[Section III.1]{LNL} describe the limiting incoming motion in which two dipoles approach a rotating rhombus. The symmetry $z_j(t)\mapsto\overline{z_j(-t)}$ converts an incoming motion into a motion of separating dipoles with the same circulations. Eckhardt and Aref~\cite{EA} studied integrable four-vortex scattering earlier. Section~\ref{sec:four-point} constructs the separating motion using explicit first integrals. It also shows that a small positive unstable shape perturbation grows at rate $2\sqrt2\Omega$ near the rhombus, after the internal quadrupole rotation has been removed. The unstable and stable shape variables $q$ and $p$, defined in Section~\ref{sec:shape-variables}, determine the shape of a centrally symmetric quadrupole and describe its unstable and stable deformations from the rhombus.

The exactly rotating solution and the separating solution never coincide at a finite time. In particular, the point-vortex calculation by itself gives no nonuniqueness. Its role is to provide two different behaviors and a way for a smaller, faster quadrupole to influence a larger, slower one. If $\theta_\infty$ is the limiting direction from the negative to the positive member of a dipole, then the second complex moment of the configuration of separating dipoles obeys
\[ M_2(t):=2\Gamma\bigl(z_1(t)^2-z_2(t)^2\bigr)=-\frac{2i}{\pi}\Gamma^2e^{2i\theta_\infty}t+O(1). \]
For a rhombus, by contrast, $|M_2(t)|=2\sqrt2\Gamma\ell^2$. At a distant point $z$, the spatial derivative of the velocity generated by the quadrupole is proportional, to leading order, to $\overline{M_2}/|z|^4$. The linearly growing moment of a configuration of separating dipoles can deform another rhombus if the two configurations are oriented so that instability amplifies the effect. Section~\ref{sec:four-point} computes the contribution of this influence to the unstable shape variable.

\subsection{Scales, construction of two solutions, and cascade} \label{intro:scales}

Fix $0<\alpha<1$. For each positive integer $n$, choose
\begin{equation} \label{eq:scales}
d_n=2^{-n},\qquad \Omega_n=\Lambda B^n,\qquad \ell_n=e^{-\Omega_n^\alpha},\qquad \Gamma_n=2\pi\Omega_n\ell_n^2.
\end{equation}
Here $d_n$ is an initial location on the real axis, $\ell_n$ is the length scale of the quadrupole, $\Gamma_n$ is the absolute circulation of each of its four vortices, and $\Omega_n$ is its rhombus rotation frequency. As $n$ increases, the quadrupoles become smaller, lie closer to the origin, and rotate faster. The constant $B>2$ is chosen sufficiently large depending on $0<\alpha<1$, and then $\Lambda$ is chosen sufficiently large.

Replace each point vortex at scale $n$ by a circular patch of circulation $\pm\Gamma_n$ and vorticity value $\pm A\Omega_n^4$, where $A$ is a fixed sufficiently large constant. The circulation determines the radius $a_n$ through
\[ \pi a_n^2 A\Omega_n^4=\Gamma_n,\qquad a_n^2=\frac{2\ell_n^2}{A\Omega_n^3},\qquad \frac{a_n}{\ell_n}=\sqrt{\frac2A}\,\Omega_n^{-3/2}. \]
The individual patches are consequently much smaller than the distances between the four vortex centers. The construction maintains this separation even when the two dipoles move apart. There is also much greater separation between different scales, since
\[ |d_n-d_{n+1}|=2^{-(n+1)},\qquad \sup_{n\ge1}\frac{\ell_n(1+\Omega_n)}{|d_n-d_{n+1}|}\longrightarrow0\quad\text{as }\Lambda\to\infty\text{ with }B\text{ fixed}. \]
The factor $1+\Omega_n$ includes the distance traveled by the dipoles during a bounded physical time. Their configurations stay small relative to the distances between scales.

For each $N$, the first solution is denoted by $\omega_N^{(1)}$. It contains four patches at each scale $1\le n\le N$. The center of a quadrupole is the arithmetic mean of its four patch centroids. The initial centroids are chosen so that the shape about this moving center stays close to a rotating rhombus throughout $0\le t\le2$. Simply placing disks at the point-vortex rhombus would not cancel the fields from the other patches. For any prescribed initial phases, the topological argument in Section~\ref{sec:first-solution} finds an initial unstable shape variable at each scale, simultaneously, so that all remain small through time $2$.

The second solution is denoted by $\omega_N^{(2)}$. At time zero its patches at scales $1,\ldots,N-1$ are identical to those of $\omega_N^{(1)}$. Only the centers of the four disks at scale $N$ are changed. They are placed at a fixed configuration sufficiently far along the separating point-vortex trajectory, with the same radii, vorticity values, and circulations. The intended motion is
\[ \begin{array}{c|c|c} &t=0&0<t\le1\\ \hline \omega_N^{(1)}&\text{all scales near rhombi}&\text{all scales remain near rhombi}\\ \omega_N^{(2)}&\text{only scale }N\text{ already separating}&N\text{ influences }N-1,\;\ldots,\;2\text{ influences }1. \end{array} \]
These descriptions concern the shape of the quadrupole at each scale.

Here is why the second motion reaches scale $1$ in a time bounded independently of $N$. Once scale $n+1$ separates, the time derivative of its complex second moment has magnitude of order $\Gamma_{n+1}^2$. Its resulting strain grows linearly in elapsed time and oscillates in the rotating coordinates of scale $n$. Formula~\eqref{eq:positive-coefficient} integrates this forcing against $e^{-2\sqrt2\Omega_nt}$ and gives a dimensionless coefficient of the exponentially growing response, with positive maximum comparable to
\[ \frac{\Gamma_{n+1}^2}{\Omega_n^2|d_n-d_{n+1}|^4}. \]
The two factors of $\Omega_n^{-1}$ come from elapsed time in the linearly growing forcing and from integration over the growth time scale. The weighted integral is a sinusoidal function of the orientations with nonzero amplitude. The initial orientations are chosen recursively so that its predicted leading effect is maximized. Exponential growth at rate $2\sqrt2\Omega_n$ then reaches a fixed positive size after a time delay of order
\begin{equation} \label{eq:time-delay}
\frac{1}{2\sqrt2\Omega_n}\left[\log\!\left(\frac{\Omega_n^2|d_n-d_{n+1}|^4}{\Gamma_{n+1}^2}\right)+O(1)\right]=\frac{4\Omega_{n+1}^\alpha+O(n+\log B+1)}{2\sqrt2\Omega_n}.
\end{equation}
The leading delays are summable since
\begin{equation} \label{eq:delay-sum}
\sum_{n\ge1}\frac{\Omega_{n+1}^\alpha}{\Omega_n}=\frac{\Lambda^{\alpha-1}B^{2\alpha-1}}{1-B^{\alpha-1}}.
\end{equation}
Thus all departures occur before time $1/2$ after $\Lambda$ is chosen large enough. Section~\ref{array:section} establishes these estimates for the fully coupled patch solutions, retaining the responses of slower scales and the effects of every other scale. The scalar calculation supplies the leading coefficient, while the coupled estimates control its sign and accumulated errors.

Theorem~\ref{meta:reduction} compares the point-vortex approximation errors in the centroid equations of the two distinct patch solutions at the same scale. Their difference is bounded by a time integral of centroid and external-field differences from time zero. This integral retains the earlier motion until the predicted transition time. Having this relative bound is essential because an absolute error in the centroid equation that is only a negative power of $\Omega_n$ would exceed the exponentially small effect from scale $n+1$ described above.

\subsection{Identical limiting initial data and different later solutions}

Only the four disks at scale $N$ differ initially. Their amplitudes are $A\Omega_N^4$ and their total absolute circulation is $4\Gamma_N$, so, for every finite $p\ge1$,
\begin{equation} \label{eq:initial-difference}
\begin{aligned}
\|\omega_N^{(2)}(0)-\omega_N^{(1)}(0)\|_{L^p}^p&\le 2^{p+2}\Gamma_N(A\Omega_N^4)^{p-1}\\
&\le C_{p,A}\Omega_N^{4p-3}e^{-2\Omega_N^\alpha}\longrightarrow0.
\end{aligned}
\end{equation}
To obtain a common initial limit, we also need compactness of either initial solution family. At each fixed scale, the first solution family's disk centers lie in a compact set, while its radii and amplitudes are fixed. A diagonal subsequence makes all these centers converge. Area preservation gives
\begin{equation} \label{eq:lp-sum}
\|\omega_N^{(k)}(t)\|_{L^p}^p=8\pi A^{p-1}\sum_{n=1}^N\Omega_n^{4p-3}e^{-2\Omega_n^\alpha}\le C_{p,A,\alpha,B,\Lambda},\qquad k=1,2.
\end{equation}
The summable tails, together with continuity of translated disks in $L^p$, give $\omega_N^{(1)}(0)\to\omega_0$ strongly in every finite $L^p$. Equation~\eqref{eq:initial-difference} gives the same limit for the second family. The phases may depend on $N$, and successive finite constructions need not be compatible.

The center estimate in Section~\ref{array:observation-section} gives displacement $o(\ell_1)$ from $d_1$ for the outer quadrupole of each solution, uniformly in $N$ as $\Lambda\to\infty$. At time $1$, the first solution's outer patches therefore remain within $C\ell_1$ of $d_1$, whereas the second solution's outer patches lie at distances comparable to $\Omega_1\ell_1$. A fixed annulus centered at $d_1$ with radius comparable to $\Omega_1 \ell_1$ contains all four outer patches of the second solution and no other patches of either solution. These radial bounds are uniform in the $N$-dependent phases. Thus one can choose a fixed smooth function $0\le\chi\le1$, supported in this annulus, such that
\begin{equation} \label{eq:finite-distinction}
\int\chi|\omega_N^{(1)}(1)|\,dx=0,\qquad \int\chi|\omega_N^{(2)}(1)|\,dx=4\Gamma_1.
\end{equation}
Absolute vorticity detects these four patches despite their zero signed circulation. All parameters, including $\Gamma_1>0$, remain fixed as $N\to\infty$.

Passing to a subsequence as $N\to\infty$ gives two solutions $\omega^{(1)},\omega^{(2)}$ with the same initial data $\omega^{(1)}(0)=\omega^{(2)}(0)=\omega_0$, but
\[ \|\omega^{(2)}(1)-\omega^{(1)}(1)\|_{L^1}\ge4\Gamma_1>0. \]

\subsection{Patch amplitude and vorticity norms} \label{sec:quartic-growth}

The vorticity magnitude $A\Omega_n^4$ is chosen to keep each patch close to a disk for a fixed physical time, uniformly over all scales. Its internal rotation has angular speed $A\Omega_n^4/2$, whereas the strain produced by the other three patches in its quadrupole has size $C\Omega_n$. Merely making the first speed larger than the second would not control the accumulated nonlinear deformation.

To explain the choice of exponent, consider one patch initially equal to a disk, with vorticity magnitude $M$, whose normalized external coefficients and their time derivatives are bounded by $L$ and $L^2$, respectively, as in Lemma~\ref{core:normal-form}. After removing the internal rotation and the nonresonant quadratic deformation terms, Lemma~\ref{core:normal-form} bounds the transformed boundary deformation $h(t)$ in physical time by
\[ h(t)\le C\frac{L}{M}+C\int_0^t\left(Mh(s)^3+\frac{L^2}{M}\right)\,ds. \]
The bounded linear evolution is oscillatory. The term $Mh^3$ is the remaining nonlinear deformation, and $L^2/M$ is the residual external forcing. Balancing these terms gives a deformation size $(L/M)^{2/3}$. The time over which the residual forcing accumulates to that size is
\[ \frac{(L/M)^{2/3}}{L^2/M}=\frac{M^{1/3}}{L^{4/3}}. \]
Lemma~\ref{core:confinement} makes this computation precise, including control of the boundary labels. The deformation stays at most $C(L/M)^{2/3}$ for $0\le t\le cM^{1/3}/L^{4/3}$. With $L\asymp\Omega_n$ and a hypothetical amplitude $M=A\Omega_n^\gamma$ with $\gamma>1$, the guaranteed time is therefore
\[ cA^{1/3}\Omega_n^{(\gamma-4)/3}. \]
For $\gamma=4$, a sufficiently large fixed $A$ gives the required interval $[0,2]$ and deformation $O(\Omega_n^{-2})$. For $\gamma<4$, the guaranteed interval shrinks to zero as $n\to\infty$, for every fixed $A$. Thus the fourth power is the threshold of the confinement estimate used here. A lower power would require a stronger estimate for the residual forcing or its effect on deformation than is available in this construction.

For the length scales \eqref{eq:scales}, the circulation identity gives $a_n^2=2\ell_n^2/(A\Omega_n^3)$. Since each quadrupole consists of four disjoint patches, its contribution to the $p$th power of the norm is its total absolute circulation times the $(p-1)$st power of its amplitude. Consequently,
\[ \|\omega_0\|_{L^p}^p=8\pi A^{p-1}\sum_{n\ge1}\Omega_n^{4p-3}e^{-2\Omega_n^\alpha}. \]
Estimating the series and taking $p$th roots gives
\[ cp^{4/\alpha}\le\|\omega_0\|_{L^p}\le Cp^{4/\alpha},\qquad p\ge 2.\]
These constants may depend on $\alpha,A,B,\Lambda$, but not on $p$. Thus these scales yield every norm-growth exponent $4/\alpha>4$. This range does not establish sharpness of Yudovich's uniqueness condition $\int_2^\infty dp/(p\|\omega_0\|_{L^p})=\infty$ in \cite{Yudovich1995}.

The proof proceeds as follows. Section~\ref{sec:four-point} derives the four-vortex motions and their interaction and outlines the cascade mechanism. Section~\ref{meta:section} controls patch deformation and derives the centroid and moment equations, including the relative error estimate. Section~\ref{sec:first-solution} constructs the first solution by choosing its initial unstable shape variables. Section~\ref{sec:second-solution} specifies the second initial configuration and explains how successive departures give a fixed later distinction. Section~\ref{transfer:material-section} proves the transition estimates for a single quadrupole with prescribed external strain. Section~\ref{array:section} verifies the hypotheses of these estimates by induction in the coupled system of quadrupoles. Section~\ref{sec:limit} passes both sequences to limits with the same initial datum and a positive difference at time $1$.

\subsection*{History of this work and use of AI/LLM}
The author has worked on nonuniqueness for 2D Euler for many years, including a series of works on self-similar solutions~\cite{ChoiSpiral,ChoiCoiculescu,ChoiLargeData}. Trial and error in pursuing this problem led the author to consider discrete multiscale construction for the reasons discussed in introduction. Guided by this strategy, the author found the main quadrupole dynamics used in this work and identified a cascade mechanism by which one quadrupole activates the next, as described in Sections 2, 5, and 6.

ChatGPT 5.6, 6.0, and 6.1 were used under the author's guidance to realize the cascade strategy through a sequence of quadrupole patches. In particular, they helped complete the technical details of replacing point vortices by patches, mainly in Section~\ref{meta:section}. ChatGPT 6.0 and Claude Opus 5.5 were also used for reference searches and language editing. The main ideas of the paper were generated by the author, who takes full responsibility for the correctness of the proof.

\subsection*{Recent works on the Yudovich conjecture}
While completing this work, the author learned of an independent preprint by Coiculescu~\cite{CoiculescuRecent} and the Hexagon preprint~\cite{HexagonRecent} prepared by Pappalettera and Sorella.

\subsection*{Acknowledgments}
The author thanks Professor Alexandru Ionescu for his encouragement in pursuing this problem.

\section{Quadrupole dynamics and cascade mechanism} \label{sec:four-point}

We first construct a rotating rhombus quadrupole and an orbit that approaches it in backward time and becomes separating dipoles in forward time. We then calculate the strain produced at another rotating quadrupole. This is the local interaction used in each step of the cascade. The passage from these finite-dimensional point-vortex dynamics to patches is justified in Section~\ref{meta:section} and the subsequent construction.

\subsection{Equations and the rotating rhombus}
The point-vortex system is a Hamiltonian system of ordinary differential equations which describes 2D Euler dynamics with vorticity data being a finite sum of point masses. For point vortices with positions $x_j\in\mathbb R^2$ and circulations $\Gamma_j$, put
\[ H=-\frac1{2\pi}\sum_{i<j}\Gamma_i\Gamma_j\log|x_i-x_j|. \]
Then, the point-vortex system is
\[ \Gamma_j\dot x_j=-(\nabla_{x_j}H)^\perp=\Gamma_j\sum_{i\ne j}\Gamma_iK(x_j-x_i), \]
and hence $H$ is conserved in time.
\[ \frac{dH}{dt}=\sum_j \nabla_{x_j}H\cdot\dot x_j=-\sum_j\frac{\nabla_{x_j}H\cdot(\nabla_{x_j}H)^\perp}{\Gamma_j}=0. \]
Use complex coordinates $z=x_1+ix_2$, so that $K(z)=i/(2\pi\overline z)$. In our configuration, fix $\Gamma>0$ and put circulation $+\Gamma$ at $z_1,-z_1$ and $-\Gamma$ at $z_2,-z_2$. In this case,
\[ H =-\frac{\Gamma^2}{2\pi}\left[\log(2|z_1|)+\log(2|z_2|)-2\log|z_1-z_2|-2\log|z_1+z_2|\right] =\frac{\Gamma^2}{2\pi}\log\frac{|z_1^2-z_2^2|^2}{4|z_1||z_2|}. \]
Translating all four points by a constant $d\in\mathbb C$ gives the same internal motion about $d$. Also, central symmetry is preserved by the equations. Summing the three velocities at each independent point gives
\[ \dot z_1=\frac{i\Gamma}{2\pi}\left(\frac{1}{2\overline z_1}-\frac{1}{\overline z_1-\overline z_2}-\frac{1}{\overline z_1+\overline z_2}\right),\qquad \dot z_2=\frac{i\Gamma}{2\pi}\left(\frac{1}{\overline z_2-\overline z_1}+\frac{1}{\overline z_2+\overline z_1}-\frac{1}{2\overline z_2}\right). \]
Equivalently,
\begin{equation} \label{eq:point-ode}
\dot z_1=-\frac{i\Gamma(3\overline z_1^2+\overline z_2^2)}{4\pi\overline z_1(\overline z_1^2-\overline z_2^2)},\qquad \dot z_2=-\frac{i\Gamma(\overline z_1^2+3\overline z_2^2)}{4\pi\overline z_2(\overline z_1^2-\overline z_2^2)}.
\end{equation}
The angular impulse $2\Gamma (|z_1|^2-|z_2|^2)$ is also conserved, since
\[ \frac{d}{dt}(|z_1|^2-|z_2|^2)=2\operatorname{Re}(\overline z_1\dot z_1-\overline z_2\dot z_2)=2\operatorname{Re}\left(-\frac{i\Gamma}{2\pi}\right)=0. \]
We restrict attention to configurations with $|z_1|>|z_2|$ and fix
\[ |z_1|^2-|z_2|^2=\ell^2>0,\qquad \Omega=\frac{\Gamma}{2\pi\ell^2}. \]
Dividing positions by $\ell$ and measuring time in the variable $\Omega t$ changes the coefficient in \eqref{eq:point-ode} to $\Gamma/(4\pi\ell^2\Omega)=1/2$. The resulting dimensionless equations do not depend on $\Gamma$ or $\ell$.

\begin{proposition} \label{prop:rhombus}
A uniformly rotating solution of \eqref{eq:point-ode} is, for $\theta_0 \in \mathbb R$,
\begin{equation} \label{eq:rhombus}
z_1(t)=\ell\sqrt{\frac{\sqrt2 + 1}{2}}\,e^{i(\theta_0-\Omega t)},\qquad z_2(t)=i\ell\sqrt{\frac{\sqrt2-1}{2}}\,e^{i(\theta_0-\Omega t)}.
\end{equation}
\end{proposition}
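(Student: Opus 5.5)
We prove this by direct substitution into \eqref{eq:point-ode}. The ansatz is $z_1=r_1e^{i\phi}$ and $z_2=ir_2e^{i\phi}$ with $\phi=\theta_0-\Omega t$ and $r_1,r_2>0$ constant. It gives $\dot z_j=-i\Omega z_j$. Rotation invariance of \eqref{eq:point-ode} lets us factor out $e^{i\phi}$ from both sides; the right-hand sides are homogeneous of degree $-1$ and rotate like $z_j$. So it suffices to check the two algebraic identities at $\phi=0$.

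At $\phi=0$ we have $\overline z_1=r_1$, $\overline z_2=-ir_2$, $\overline z_2^2=-r_2^2$ and $\overline z_1^2-\overline z_2^2=r_1^2+r_2^2$.

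For the first equation, the left side is $-i\Omega r_1$ and the right side is
\[
-\frac{i\Gamma(3r_1^2-r_2^2)}{4\pi r_1(r_1^2+r_2^2)}.
\]
Equality means $4\pi\Omega r_1^2(r_1^2+r_2^2)=\Gamma(3r_1^2-r_2^2)$.

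For the second equation, the left side is $-i\Omega(ir_2)=\Omega r_2$. The right side is
\[
-\frac{i\Gamma(r_1^2-3r_2^2)}{4\pi(-ir_2)(r_1^2+r_2^2)}=\frac{\Gamma(r_1^2-3r_2^2)}{4\pi r_2(r_1^2+r_2^2)}.
\]
Equality means $4\pi\Omega r_2^2(r_1^2+r_2^2)=\Gamma(r_1^2-3r_2^2)$.

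Now insert $r_1^2=\ell^2(\sqrt2+1)/2$ and $r_2^2=\ell^2(\sqrt2-1)/2$, so that $r_1^2+r_2^2=\sqrt2\,\ell^2$ and $r_1^2-r_2^2=\ell^2$. The latter confirms consistency with the fixed angular impulse. With $\Gamma=2\pi\Omega\ell^2$, both identities reduce, after dividing by $2\pi\Omega\ell^4$, to:
\begin{itemize}
\item first: $\sqrt2(\sqrt2+1)=\bigl(3(\sqrt2+1)-(\sqrt2-1)\bigr)/2=\sqrt2+2$, which holds;
\item second: $\sqrt2(\sqrt2-1)=\bigl((\sqrt2+1)-3(\sqrt2-1)\bigr)/2=2-\sqrt2$, which holds.
\end{itemize}

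There is no real obstacle here. The only care needed is the bookkeeping of the factor $i$ in $z_2$ and its conjugate, so that the second equation's sign comes out consistently. One could alternatively derive the radii by subtracting the two identities, which forces $r_1^2-r_2^2=\ell^2$ and fixes $\Omega$, and then solving the resulting quadratic for $r_1^2/r_2^2$. For the proposition, verification suffices.
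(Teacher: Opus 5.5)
Your proposal is correct and matches the paper's proof, which also substitutes the rhombus into \eqref{eq:point-ode} to obtain $\dot z_j=-i\Omega z_j$. The paper records the verification through $|z_1|^2-|z_2|^2=\ell^2$ and $(z_2/z_1)^2=-(3-2\sqrt2)$, while you reduce to $\phi=0$ by rotation equivariance and check both identities explicitly.
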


\begin{proof}
The displayed positions satisfy $|z_1|^2-|z_2|^2=\ell^2$ and $(z_2/z_1)^2=-(3-2\sqrt2)$. Substituting them into \eqref{eq:point-ode} gives $\dot z_j=-i\Omega z_j$ for $j=1,2$, which is their stated time dependence.
\end{proof}

\subsection{The shape variables} \label{sec:shape-variables}

To distinguish deformation from rigid rotation, define
\[ Re^{i\phi} = \bigg(\frac{z_2}{z_1}\bigg)^2, \qquad \theta=\arg z_1. \]
Here $0<R<1$ and angles are chosen continuously along the trajectory. The pair $(R,\phi)$ is unchanged by a rigid rotation, while $\theta$ records that rotation. The factor $2$ in $\phi$ makes the shape, with $\phi$ understood modulo $2\pi$, unchanged when the two vortices of either sign are relabeled. Since the angular impulse is fixed, the original positions are recovered by
\[ z_1=\frac{\ell e^{i\theta}}{\sqrt{1-R}},\qquad z_2=\frac{\ell\sqrt R\,e^{i(\theta+\phi/2)}}{\sqrt{1-R}}. \]
Thus, after fixing $\ell$ and removing the common rotation, two real variables remain.

For completeness, the logarithmic derivatives from \eqref{eq:point-ode} are
\[ \frac{\dot z_1}{z_1}=-\frac{i\Omega(1-R)}2\frac{3+Re^{-i\phi}}{1-Re^{-i\phi}},\qquad \frac{\dot z_2}{z_2}=-\frac{i\Omega(1-R)}{2R}\frac{1+3Re^{-i\phi}}{1-Re^{-i\phi}}. \]
Taking twice the real part of their difference, twice its imaginary part, and the imaginary part of the first expression gives, respectively,
\begin{equation} \label{eq:shape-ode}
\begin{aligned}
\dot R&=-\frac{4\Omega R(1-R)^2\sin\phi}{1+R^2-2R\cos\phi},\\
\dot\phi&=-\frac{\Omega(1-R^2)(1-4R+R^2+2R\cos\phi)}{R(1+R^2-2R\cos\phi)},\\
\dot\theta&=-\frac{\Omega(1-R)(3-2R\cos\phi-R^2)}{2(1+R^2-2R\cos\phi)}.
\end{aligned}
\end{equation}
In the first operation one uses $\dot R/R=2\operatorname{Re}(\dot z_2/z_2-\dot z_1/z_1)$. The rhombus corresponds to the fixed shape
\[ R_*=3-2\sqrt2,\qquad \phi=\pi. \]
The third equation then gives $\dot\theta=-\Omega$, in agreement with \eqref{eq:rhombus}. In a neighborhood of this fixed shape, introduce the dimensionless deviations
\[ x=\frac{R-R_*}{R_*},\qquad y=\frac{\phi-\pi}{\sqrt2}. \]
Differentiating the first two equations in \eqref{eq:shape-ode} at $(R_*,\pi)$ gives
\[ \left.\partial_\phi\dot R\right|_{(R_*,\pi)}=2\Omega R_*,\qquad \left.\partial_R\dot\phi\right|_{(R_*,\pi)}=\frac{4\Omega}{R_*},\qquad \left.\partial_R\dot R\right|_{(R_*,\pi)}=\left.\partial_\phi\dot\phi\right|_{(R_*,\pi)}=0. \]
Consequently,
\begin{equation} \label{eq:shape-linearization}
\binom{\dot x}{\dot y}=2\sqrt2\Omega\begin{pmatrix}0&1\\1&0\end{pmatrix}\binom{x}{y}+O\!\left(\Omega(x^2+y^2)\right).
\end{equation}
The linearized flow expands on $x=y$ and contracts on $x=-y$. Define the unstable shape variable $q$ and the stable shape variable $p$ by
\[ q=\frac{x+y}{2}, \quad p = \frac{x-y}{2}. \]
Along $p=0$, increasing the unstable shape variable $q$ from zero increases both $R$ and $\phi$ from their rhombus values. The next subsection constructs the exact nonlinear trajectory with this behavior.

The phase equation has a linear correction that will be useful when comparing orientations between these quadrupoles. Expansion of the third equation in \eqref{eq:shape-ode} gives
\[ \dot\theta=-\Omega+\Omega(\sqrt2-1)x+O\!\left(\Omega(x^2+y^2)\right),\qquad \dot\phi=4\Omega x+O\!\left(\Omega(x^2+y^2)\right). \]
Therefore the adjusted phase
\begin{equation} \label{eq:adjusted-phase}
\Theta=\theta-\frac{\sqrt2-1}{4}(\phi-\pi)
\end{equation}
satisfies $\dot\Theta=-\Omega+O(\Omega(x^2+y^2))$. This subtraction removes the linear change of orientation caused by the shape deformation.

\Needspace{15\baselineskip}
\subsection{The complete separating trajectory}

\begin{proposition} \label{prop:separating}
There is a solution of \eqref{eq:point-ode}, defined for all $t\in\mathbb R$, whose shape tends to $(R_*,\pi)$ with $q>0$ and $p/q\to0$ as $t\to-\infty$, and for which $R\to1$, $\phi\to2\pi$ as $t\to+\infty$. Every distance between two vortices is bounded below by a positive constant times $\ell$. After relabeling the negative pair, there are constants $\theta_\infty\in\mathbb R$ and $Z_\infty\in\mathbb C$ such that
\begin{equation} \label{eq:dipole-asymptotics}
\begin{aligned}
z_1(t)-z_2(t)&=\ell e^{i\theta_\infty}+O(t^{-2}),\\
\frac{z_1(t)+z_2(t)}2&=-i\Omega\ell e^{i\theta_\infty}t+Z_\infty+O(t^{-1})\qquad(t\to+\infty).
\end{aligned}
\end{equation}
After fixing $\theta_\infty$, the constant $Z_\infty$ is determined modulo time shift by the conservation of the impulse. The two dipoles have limiting internal distance $\ell$ and limiting velocities $\pm i\Omega\ell e^{i\theta_\infty}$.
\end{proposition}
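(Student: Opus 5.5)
The proof will be a phase-plane analysis of the shape system \eqref{eq:shape-ode}, using conservation of $H$ at fixed angular impulse. Substituting $z_1=\ell e^{i\theta}/\sqrt{1-R}$ and $z_2=\ell\sqrt R\,e^{i(\theta+\phi/2)}/\sqrt{1-R}$ into the expression for $H$ gives $H=\frac{\Gamma^2}{2\pi}\log\bigl(\frac{\ell^2}{4}F(R,\phi)\bigr)$, where
\[ F(R,\phi)=\frac{1+R^2-2R\cos\phi}{(1-R)\sqrt R}. \]
So $F$ is a first integral of the planar system for $(R,\phi)$. At the rhombus, $\sqrt{R_*}=\sqrt2-1$, $1+R_*=2\sqrt2(\sqrt2-1)$ and $1-R_*=2(\sqrt2-1)$, so $F(R_*,\pi)=4$. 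By \eqref{eq:shape-linearization} the rhombus is a hyperbolic saddle of the shape flow. Hence the level set $\{F=4\}$ contains its stable and unstable separatrices, and the branch we want is the branch of the unstable manifold leaving along $q>0$, $p=0$.

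\textbf{Step 1: the branch as a graph over $R$.} On the strip $\pi<\phi<2\pi$ we have $\sin\phi<0$, so $\dot R>0$ by \eqref{eq:shape-ode}, and $R$ is a good parameter. On $\{F=4\}$ one can solve
\[ \cos\phi=\frac{1+R^2-4(1-R)\sqrt R}{2R}. \]
I will check that the right side lies in $(-1,1)$ for $R_*<R<1$, tends to $-1$ as $R\downarrow R_*$ (a double root, consistent with the saddle), and tends to $1$ as $R\uparrow1$. Choosing $\phi\in(\pi,2\pi)$ defines a continuous curve from $(R_*,\pi)$ to $(1,2\pi)$. Along $p=0$ both $R$ and $\phi$ increase as $q$ increases from zero, so this curve is the $q>0$ branch of the unstable manifold. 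Along it $\dot R>0$ and no equilibrium is met, since $\dot R\neq0$ away from the endpoints. The solution therefore tends to the rhombus with $p/q\to0$ as $t\to-\infty$, by the unstable-manifold theorem, and it is monotone in $R$.

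\textbf{Step 2: completeness and the limit $R\to1$.} Put $\varepsilon=1-R$ and $\psi=2\pi-\phi$. The level set gives $\varepsilon^2+R\psi^2\approx4\varepsilon\sqrt R$, so $\psi\approx2\sqrt\varepsilon$. Then \eqref{eq:shape-ode} gives $\dot\varepsilon\approx-2\Omega\varepsilon^{3/2}$, hence $\varepsilon\sim(\Omega t)^{-2}$. In particular $R=1$ is not reached in finite time, and the solution exists for all $t\in\mathbb R$. Likewise $\dot\theta=O(\Omega\varepsilon)=O(t^{-2})$, which is integrable, so $\theta\to\theta_\infty'$ for some limit $\theta_\infty'$.

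Next I will express the vortex separations in terms of $(R,\phi)$:
\[ |z_1\mp z_2|^2=\frac{\ell^2\,|1\mp\sqrt R\,e^{i\phi/2}|^2}{1-R}. \]
For $z_1+z_2$ near $R=1$ this gives $|z_1+z_2|^2\approx\ell^2\bigl((1-\sqrt R)^2+R\psi^2/4\bigr)/\varepsilon\to\ell^2$. The other separations grow like $\varepsilon^{-1/2}\sim t$, and $|z_j|$ also grows. On compact portions of the curve all separations are bounded below by continuity, and at the rhombus end they are positive. Together these give the uniform lower bound $c\ell$. Relabeling $z_2\mapsto-z_2$ turns the close pair into $z_1-z_2$.

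\textbf{Step 3: dipole asymptotics, the main obstacle.} I will write the ODE for the pair difference $w=z_1-z_2$ and the pair center $c=(z_1+z_2)/2$, after relabeling. The velocity of $c$ is the self-induced dipole velocity $i\Gamma/(2\pi\overline w)$ plus the field of the opposite dipole at distance $\sim|c|\sim t$. That field is $O(\Gamma\ell/|c|^2)=O(t^{-2})$, so $\dot c=-i\Omega\ell^2/\overline w+O(t^{-2})$. The difference obeys $\dot w=O(\Gamma\ell^2/|c|^3)$ from the strain, so $\dot w=O(t^{-3})$ and $w=\ell e^{i\theta_\infty}+O(t^{-2})$ for some limit. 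Here $|w|\to\ell$ comes from Step 2, and the argument $\theta_\infty$ comes from the convergence of $\theta$ and $\phi$.

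Substituting back gives $\dot c=-i\Omega\ell e^{i\theta_\infty}+O(t^{-2})$. Integrating yields the asymptotic formula \eqref{eq:dipole-asymptotics} with some constant $Z_\infty$. Finally, the angular impulse identity $|z_1|^2-|z_2|^2=\operatorname{Re}(\overline w\,c)\cdot2=\ell^2$ fixes the component of $Z_\infty$ along $e^{i\theta_\infty}$. The orthogonal component is absorbed by a time shift, because $c$ moves in the direction $ie^{i\theta_\infty}$.

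The delicate point is making the error rates sharp. The rate $O(t^{-2})$ for $w$ requires feeding the preliminary bound $|c|\gtrsim t$, which follows from Step 2, back into the $O(|c|^{-3})$ strain estimate. If that bootstrap is awkward, I would fall back on deriving these rates from the explicit expansions $\varepsilon\sim(\Omega t)^{-2}$ and $\psi\approx2\sqrt\varepsilon$.
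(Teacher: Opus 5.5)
Your proposal is correct and follows essentially the paper's route: the first integral on the level set $F=4$, the branch $\pi<\phi<2\pi$ written as a graph over $R$, the rate $\dot\varepsilon\approx-2\Omega\varepsilon^{3/2}$ near $R=1$, the estimates $\dot w=O(t^{-3})$ and $\dot c=-i\Omega\ell^2/\overline w+O(t^{-2})$ obtained from $|c|\asymp t$, and the impulse identity $2\operatorname{Re}(\overline w\,c)=\ell^2$ for $Z_\infty$. The paper uses $s=\sqrt R$ and an explicit quadrature where you use the unstable-manifold theorem and monotonicity of $R$, and it gives closed-form separations where you argue by continuity; these are minor variants, and the only slip is that the self-induced dipole velocity in your prose should read $-i\Gamma/(2\pi\overline w)$, as in your displayed formula.
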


\begin{proof}
The polar coordinates $R,\phi$ and the fixed impulse $\ell^2 = |z_1|^2 - |z_2|^2$ give
\[ |z_1^2-z_2^2|^2=|z_1|^4(1+R^2-2R\cos\phi),\qquad |z_1||z_2|=|z_1|^2\sqrt R,\qquad |z_1|^2=\frac{\ell^2}{1-R}. \]
Substitution yields
\[ H=\frac{\Gamma^2}{2\pi}\log\left[\frac{\ell^2}{4}\frac{1+R^2-2R\cos\phi}{\sqrt R(1-R)}\right],\qquad \frac{1+R^2-2R\cos\phi}{\sqrt R(1-R)}=\frac4{\ell^2}e^{2\pi H/\Gamma^2}. \]
Since both $H$ and $\ell$ are constant, the last expression is conserved and its value at $(R_*,\pi)$ is $4$. Every trajectory approaching this uniformly rotating rhombus lies on this energy level. We select the branch with $R_*<R<1$ and $\pi<\phi<2\pi$. The scalar equation below establishes its time direction and its two limits.

Put $s=\sqrt R$. The invariant and the choice $\sin\phi<0$ give
\[ \cos\phi=\frac{s^4+4s^3-4s+1}{2s^2},\qquad \sin\phi=-\frac{s^2+2s-1}{2s^2}\sqrt{(1-s^2)(s^2+4s-1)}. \]
These formulas specify a unique continuous function $\phi(s)\in(\pi,2\pi)$ for $\sqrt2-1<s<1$. Substituting the sine into the first equation of \eqref{eq:shape-ode} yields
\begin{equation} \label{eq:scalar-orbit}
\dot s=\frac{\Omega}{4}\frac{(1-s^2)^{3/2}}{s^2}(s^2+2s-1)\sqrt{s^2+4s-1}.
\end{equation}
The solution is specified by the explicit quadrature
\[ t=\frac4\Omega\int_{s(0)}^{s(t)}\frac{\sigma^2\,d\sigma}{(1-\sigma^2)^{3/2}(\sigma^2+2\sigma-1)\sqrt{\sigma^2+4\sigma-1}}. \]
The integrand is positive. It has a simple pole at $\sqrt2-1$ and is comparable to $(1-\sigma)^{-3/2}$ near $1$. Thus the integral tends to $-\infty$ and $+\infty$ at the respective endpoints. It gives a unique increasing $s(t)$ modulo time translation, and the function $\phi(t)$ is given by the preceding trigonometric formulas. The derivative of the invariant with respect to $\phi$ is nonzero on $\pi<\phi<2\pi$, so differentiating its constant value together with the equation for $R=s^2$ recovers the second equation in \eqref{eq:shape-ode}. Integrating the third equation gives $\theta(t)$, and reconstruction of $z_1,z_2$ gives the asserted trajectory.

Differentiating the right side of \eqref{eq:scalar-orbit} at $s=\sqrt2-1$ gives $2\sqrt2\Omega$. Hence $s-(\sqrt2-1)$ is a positive constant times $e^{2\sqrt2\Omega t}(1+o(1))$ as $t\to-\infty$. The sine formula gives $y/x\to1$. The phase equation shows that $\dot\theta+\Omega$ is integrable on $(-\infty,0]$, so $\theta(t)+\Omega t$ has a limit there. This proves convergence to a rotating rhombus with an appropriate phase.

To verify all distance claims, the two squared opposite-sign distances have sum and product
\[ |z_1-z_2|^2+|z_1+z_2|^2=\frac{2\ell^2(1+s^2)}{1-s^2},\qquad |z_1-z_2|^2|z_1+z_2|^2=\frac{4\ell^4s}{1-s^2}. \]
Solving this quadratic gives
\begin{equation} \label{eq:pair-distances}
\min\{|z_1-z_2|^2,|z_1+z_2|^2\}=\frac{2\ell^2}{1+s},\qquad \max\{|z_1-z_2|^2,|z_1+z_2|^2\}=\frac{2\ell^2s}{1-s}.
\end{equation}
The same-sign distances are $2\ell/\sqrt{1-s^2}$ and $2\ell s/\sqrt{1-s^2}$. Since $\sqrt2-1<s<1$, none of these four expressions can approach zero. The smaller opposite-sign distance tends to $\ell$, and the larger one diverges.

The large-time rate also follows directly from \eqref{eq:scalar-orbit}. Expanding at $R=1$ gives
\[ \frac{d}{dt}(1-R)=-2\Omega(1-R)^{3/2}\bigl(1+O(1-R)\bigr),\qquad \frac{d}{dt}(1-R)^{-1/2}=\Omega\bigl(1+O(1-R)\bigr). \]
First this proves $1-R\asymp t^{-2}$. The error in the second identity is consequently integrable, so for a constant $t_*$,
\[ (1-R)^{-1/2}=\Omega(t-t_*)+O(t^{-1}),\qquad 1-R=\frac{1}{\Omega^2(t-t_*)^2}+O(t^{-4}). \]
Here $O$ bounds the absolute error by a constant times the displayed quantity, and $\asymp$ bounds the ratio above and below by positive constants.

In the labeling used to construct the orbit, $z_2/z_1\to-1$. Relabel the two negative vortices so that $z_1-z_2$ is the smaller separation. Write $Z=(z_1+z_2)/2$ for the center of this dipole. The other dipole has center $-Z$. Formula \eqref{eq:pair-distances} and the preceding rate give $|Z|\asymp t$ and $|z_1-z_2|\asymp\ell$. A positive-negative pair with bounded internal distance, viewed at distance comparable to $t$, has velocity $O(t^{-2})$ and velocity gradient $O(t^{-3})$. These bounds follow by subtracting its two kernels and applying the mean value theorem to $1/\overline z$ and its derivative. The self-induced velocities of the two members of one dipole are equal to $-i\Gamma/(2\pi\overline{z_1-z_2})$. It follows that
\[ \frac{d}{dt}(z_1-z_2)=O(t^{-3}),\qquad \dot Z=-\frac{i\Gamma}{2\pi\overline{z_1-z_2}}+O(t^{-2}). \]
Integrating the first estimate and using the limiting distance proves $z_1-z_2=\ell e^{i\theta_\infty}+O(t^{-2})$. Substitution in the second proves $\dot Z=-i\Omega\ell e^{i\theta_\infty}+O(t^{-2})$. One more integration gives \eqref{eq:dipole-asymptotics}.

It remains to identify $Z_\infty$. The exact impulse identity and the two asymptotic expansions just proved give
\[ \ell^2=|z_1|^2-|z_2|^2=2\operatorname{Re}\bigl(Z\overline{z_1-z_2}\bigr)=2\ell\operatorname{Re}(e^{-i\theta_\infty}Z_\infty)+O(t^{-1}). \]
Indeed, the term of order $t$ in this real part vanishes because $-i\Omega\ell^2t$ is purely imaginary, and multiplying the $O(t^{-2})$ separation error by $Z=O(t)$ gives $O(t^{-1})$. Letting $t\to+\infty$ proves $\operatorname{Re}(e^{-i\theta_\infty}Z_\infty)=\ell/2$.

A time translation changes only the component of $e^{-i\theta_\infty}Z_\infty$ perpendicular to the limiting dipole separation. The later argument uses the coefficient of $t$ in $M_2$ and a bounded constant term, so no further normalization of $Z_\infty$ is needed.
\end{proof}

\begin{remark}
Time translation and rotation preserve the equations. Thus, if $z_j(t)$ is the trajectory just constructed, then $e^{i(\varphi-\Omega T)}z_j(t-T)$ is also a solution for any $T,\varphi\in\mathbb R$.
\end{remark}

\subsection{How separating dipoles force a rotating quadrupole}

\paragraph{The far field of a quadrupole.}
The second complex moment of the four points is
\[ M_2=\Gamma z_1^2+\Gamma(-z_1)^2-\Gamma z_2^2-\Gamma(-z_2)^2=2\Gamma(z_1^2-z_2^2). \]
For the uniformly rotating rhombus solution \eqref{eq:rhombus}, this gives 
\[ M_2=2\sqrt2\Gamma\ell^2e^{2i(\theta_0-\Omega t)}. \]
Along the separating trajectory, the identity $M_2=4\Gamma Z(z_1-z_2)$ and the estimates just proved give a constant $M_{2,\infty}\in\mathbb C$ such that
\begin{equation} \label{eq:moment-growth}
M_2(t)=-\frac{2i}{\pi}\Gamma^2e^{2i\theta_\infty}t+M_{2,\infty}+O(t^{-1}),\qquad \dot M_2(t)=-\frac{2i}{\pi}\Gamma^2e^{2i\theta_\infty}+O(t^{-2}).
\end{equation}
For $|z|>\max\{|z_1|,|z_2|\}$, the velocity from the quadrupole is
\[ \begin{aligned} u(z)&=\frac{i\Gamma}{2\pi}\left(\frac1{\overline z-\overline z_1}+\frac1{\overline z+\overline z_1}-\frac1{\overline z-\overline z_2}-\frac1{\overline z+\overline z_2}\right)\\ &=\frac{i\overline{M_2}}{2\pi\overline z^3}\frac1{(1-\overline z_1^2/\overline z^2)(1-\overline z_2^2/\overline z^2)}. \end{aligned} \]
The zero total circulation and the central symmetry have canceled the first two orders. If $|z| \gg |z_1|, |z_2|$,
\begin{equation} \label{eq:distant-strain}
\partial_{\overline z}u(z)=-\frac{3i\overline{M_2}}{2\pi\overline z^4}+O\!\left(\frac{|M_2|\max(|z_1|^2,|z_2|^2)}{|z|^6}\right).
\end{equation}
Here $\partial_{\overline z}=(\partial_{x_1}+i\partial_{x_2})/2$. Since this velocity depends analytically on $\overline z$, its linear relative velocity near a point $z$ is $S\overline\zeta$, where $S=\partial_{\overline z}u(z)$ and $\zeta$ is the displacement from $z$. For a quadrupole centered at $d_s$ and a second one centered at $d_r$, the leading strain at the latter is obtained by substituting $z=d_r-d_s$ in \eqref{eq:distant-strain}.

\paragraph{Deformation of the rhombus by external strain.}
Compute how a prescribed external strain acts on the rhombus. Add the velocity $S(t)\overline z$ to each point equation. At the rhombus, $z_1$ has argument $\theta$ and $z_2$ has argument $\theta+\pi/2$. The external contributions to their logarithmic velocities are therefore $Se^{-2i\theta}$ and $-Se^{-2i\theta}$. The definitions of $R,\phi$ give
\[ \left.\dot R\right|_{\mathrm{ext}}=-4R_*\operatorname{Re}(Se^{-2i\theta}),\qquad \left.\dot\phi\right|_{\mathrm{ext}}=-4\operatorname{Im}(Se^{-2i\theta}). \]
The linearized equation for the unstable shape variable $q$ is
\begin{equation} \label{eq:forced-growing}
\dot q=2\sqrt2\Omega q-2\operatorname{Re}(Se^{-2i\theta})-\sqrt2\operatorname{Im}(Se^{-2i\theta}).
\end{equation}

\paragraph{Cascade of quadrupole separation.}
Now distinguish the source and receiving configurations by subscripts $s$ and $r$. The separating source has circulation magnitude $\Gamma_s$, length $\ell_s$, and frequency $\Omega_s=\Gamma_s/(2\pi\ell_s^2)$. The receiving rhombus has frequency $\Omega_r$ and phase $\theta_r-\Omega_rt$. We first make an ideal linear calculation with fixed centers $d_s,d_r$. Put $\vartheta=\arg(d_r-d_s)$ and retain the term linear in time from \eqref{eq:moment-growth}. Its leading strain at the rhombus is
\[ S(t)=\frac{3\Gamma_s^2}{\pi^2|d_r-d_s|^4}\,t\,e^{i(4\vartheta-2\theta_\infty)}. \]
Let $f(t)$ denote the last two terms in \eqref{eq:forced-growing}, with this prescribed $S(t)$ and $\theta=\theta_r-\Omega_rt$. The solution of this linear equation is
\[ q(t)=e^{2\sqrt2\Omega_rt}\left(q(0)+\int_0^t e^{-2\sqrt2\Omega_rs}f(s)\,ds\right). \]
The weighted forcing integral contributes to the coefficient of the growing exponential. Using $\int_0^\infty te^{-at}\,dt=a^{-2}$ for $\operatorname{Re}a>0$ gives
\begin{equation} \label{eq:positive-coefficient}
\int_0^\infty e^{-2\sqrt2\Omega_rt}f(t)\,dt=\frac{\sqrt3\,\Gamma_s^2}{2\sqrt2\pi^2\Omega_r^2|d_r-d_s|^4}\cos\!\left(4\vartheta-2\theta_r-2\theta_\infty+\arctan\frac1{\sqrt2}-\pi\right).
\end{equation}
Its positive maximum is attained at
\[ 4\vartheta-2\theta_r-2\theta_\infty=\pi-\arctan \frac{1}{\sqrt2}\pmod{2\pi}. \]
At this orientation, an error of size $\delta$ in the displayed phase combination changes the cosine only by $O(\delta^2)$. Denote the positive maximum by $\kappa=\sqrt3\,\Gamma_s^2/(2\sqrt2\pi^2\Omega_r^2|d_r-d_s|^4)$. The same integration, now over $[0,t]$, gives
\[ q(t)=(q(0)+\kappa)e^{2\sqrt2\Omega_rt} -\kappa\bigl[(1+2\sqrt2\Omega_rt)\cos(2\Omega_rt)+2\Omega_rt\sin(2\Omega_rt)\bigr]. \]
In particular, $|q(t)-(q(0)+\kappa)e^{2\sqrt2\Omega_rt}|\le\kappa(1+2\sqrt3\Omega_rt)$ for $t\ge0$. Thus an initially unperturbed rhombus, $q(0)=0$, has a positive growing response. Fix a small threshold $q_*>0$. When $\kappa/q_* \ll 1$, its first crossing time satisfies
\[ T=\frac1{2\sqrt2\Omega_r}\log\frac{q_*}{\kappa}+O(\Omega_r^{-1}). \]

\paragraph{Controlling the errors and completing the separation.}
The preceding calculation suggests two steps. First the forcing increases the unstable shape variable to a fixed small threshold $q_*$. Then the four points follow the separating trajectory to two well-separated dipoles. We explain the estimates needed for each step.

For the first step, retain the ideal forcing $f$ above and write the actual equation as $\dot q=2\sqrt2\Omega_rq+f+h$. Thus $h$ contains the nonlinear terms and the errors in the prescribed forcing. The finite-time formula for $q$ gives
\[ \left|e^{-2\sqrt2\Omega_rT}q(T)-q(0)-\kappa\right|\le\int_0^T e^{-2\sqrt2\Omega_rt}|h(t)|\,dt+\kappa(1+2\sqrt3\Omega_rT)e^{-2\sqrt2\Omega_rT}. \]
The last term bounds the tail of the ideal forcing integral. Near the predicted crossing time, it is $o(\kappa)$ when $\kappa \ll q_*$. If $q(0)+\kappa\asymp\kappa>0$ and the weighted integral of $|h|$ is sufficiently small compared with $\kappa$, then $q(T)\asymp\kappa e^{2\sqrt2\Omega_rT}$.

The largest part of $h$ comes from replacing the source's second moment by its linear growth. Start the source at a fixed point sufficiently far along the separating trajectory. Formula~\eqref{eq:moment-growth} gives $|M_2(0)|\le C_{\mathrm{sec}}\Gamma_s\ell_s^2$ and $|\dot M_2+(2i/\pi)\Gamma_s^2e^{2i\theta_\infty}|\le\varepsilon\Gamma_s^2$, and hence
\[ \left|M_2(t)+\frac{2i}{\pi}\Gamma_s^2e^{2i\theta_\infty}t\right|\le C_{\mathrm{sec}}\Gamma_s\ell_s^2+\varepsilon\Gamma_s^2t. \]
By \eqref{eq:distant-strain}, their contributions to the weighted forcing integral are bounded by
\[ \frac{C_{\mathrm{sec}}\Gamma_s\ell_s^2}{\Omega_r|d_r-d_s|^4}+\frac{C\varepsilon\Gamma_s^2}{\Omega_r^2|d_r-d_s|^4}\le\left(C_{\mathrm{sec}}\frac{\Omega_r}{\Omega_s}+C\varepsilon\right)\kappa. \]
Here we used $\Gamma_s=2\pi\Omega_s\ell_s^2$. The constant $C_{\mathrm{sec}}$ depends on the chosen source configuration, whereas $C$ is universal. Fixing the initial source configuration and then choosing $\Omega_s/\Omega_r$ sufficiently large makes this error small relative to $\kappa$.

For the second step, fix two points $\sqrt{2} - 1 < s_0<s_1 < 1$ of the scalar orbit~\eqref{eq:scalar-orbit}, where $q(s_0)=q_*$ and the two dipoles are well separated at $s=s_1$. On this fixed interval, $c\Omega_r\le\dot s \le C\Omega_r$, so the travel time is at most $(s_1-s_0)/(c\Omega_r)$. The same $O(\Omega_r^{-1})$ transition bound holds for sufficiently small perturbations of all four positions and their equations, by continuous dependence along this collision-free segment. Adding this time to the logarithmic growth time gives the total delay
\[ \frac1{2\sqrt2\Omega_r}\left[\log\left(\frac{\Omega_r^2|d_r-d_s|^4}{\Gamma_s^2}\right)+O(1)\right]. \]
Set $r=n$ and $s=n+1$ and recall the scales \eqref{eq:scales}. Since $\Omega_{n+1}=B\Omega_n$, $|d_n-d_{n+1}|=2^{-n-1}$, and $\Gamma_{n+1}=2\pi\Omega_{n+1}e^{-2\Omega_{n+1}^\alpha}$, the logarithm equals $4\Omega_{n+1}^\alpha+O(n+\log B+1)$. This gives \eqref{eq:time-delay}.

Sections~\ref{transfer:material-section} and~\ref{array:section} establish these estimates for the coupled patch solutions. They apply the weighted growth argument to the unstable shape difference between the two solutions, control its initial value and the remaining errors, and then show that the second configuration follows the fixed separating trajectory.

\section{Small patch approximation of point vortices} \label{meta:section}

We replace point vortices by small patches. First, the fast rotation of each patch limits its deformation by the external velocity. Second, the patch centroids and moments satisfy the point-vortex equations up to controlled errors. Third, we compare the boundary deformations, centroids, and moments of two patch solutions. The resulting centroid and moment equations are collected in Theorem~\ref{meta:reduction}. Their geometric hypotheses are verified in the later bootstrap arguments in Sections~4--7.

\subsection{Contour equation and label--deformation coordinates $(\eta,z)$ near a disk}

A uniform patch of circulation $\Gamma$ and area $\pi a^2$ has vorticity $\Gamma/(\pi a^2)$. For vorticity $\sigma\Omega_{\mathrm{patch}}$, $\sigma=\pm1$, let $w(t)$ be its area centroid and let $X(t,\theta)$ follow its boundary particles. Remove translation and the disk's internal rotation by
\[ F(t,\theta)=e^{-i\sigma\Omega_{\mathrm{patch}}t/2}\frac{X(t,\theta)-w(t)}a,\qquad \theta\in\mathbb T=\mathbb R/(2\pi\mathbb Z). \]
The normalized domain has area $\pi$ and centroid zero. An initial disk with standard labels has $F(0,\theta)=e^{i\theta}$. Unless specified otherwise, we use the contour norm
\[ \|f\|_{C^{1,1/2}}=\|f\|_{L^\infty}+\|f'\|_{L^\infty}+\sup_{\theta\ne\beta}\frac{|f'(\theta)-f'(\beta)|}{\operatorname{dist}_{\mathbb T}(\theta,\beta)^{1/2}},\qquad \operatorname{dist}_{\mathbb T}(\theta,\beta)=\min_{k\in\mathbb Z}|\theta-\beta+2\pi k|. \]
We use $K(z)=i/(2\pi\bar z)$. The external velocity of a patch is the sum of the velocities induced by all other patches, including those in the same quadrupole. We estimate its Taylor coefficients with weights $2^k$, corresponding to the disk $B(w,2a)$ after scaling lengths by $a$. Constants may depend on the fixed contour neighborhood, but not on the number of patches or their vorticity magnitudes.

Write $D_F$ for the bounded interior of $F$. By a normalized contour we mean a positively oriented $C^{1,1/2}$ embedding near $F_0(\theta)=e^{i\theta}$ satisfying
\[ |D_F|=\pi,\qquad \int_{D_F}w\,dA(w)=0,\qquad |F(\theta)-F(\beta)|\ge c\operatorname{dist}_{\mathbb T}(\theta,\beta). \]
The last inequality prevents distinct boundary labels from approaching the same point too closely. Among these contours,
\[ E(\eta)(\theta)=e^{i(\theta+\eta(\theta))},\qquad \eta\text{ real},\quad \|\eta\|_{C^{1,1/2}}\text{ small}, \]
all describe the unit circle with different labels. Since $1+\eta'>0$, the labels preserve orientation. We call these contours parameterized unit circles. The coordinates below describe all nearby normalized contours, while $E(\eta)$ describes only the circles.

\begin{lemma}[Contour equation] \label{core:analytic-contour}
Let $X=C^{1,1/2}(\mathbb T;\mathbb C)$, regarded as a real Banach space, and let $F_0(\theta)=e^{i\theta}$. In a neighborhood of $F_0$ consisting of positively oriented embeddings satisfying the separation bound above, the unit-vorticity boundary velocity
\[ V(F)(\theta)=-\frac{1}{2\pi}\int_0^{2\pi}\log|F(\theta)-F(\beta)|F'(\beta)\,d\beta \]
is a real-analytic map from $X$ to $X$. On a fixed smaller neighborhood its derivatives satisfy $\|D^mV\|\le C m!r^{-m}$ for some $r>0$. Area and the two real components of the area centroid are analytic contour functionals. The normalized contours near $F_0$ admit local analytic coordinates with an analytic inverse.
\end{lemma}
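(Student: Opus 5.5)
The plan is to split the logarithmic kernel into a singular part depending only on the labels and a remainder that is analytic in $F$. Write
\[ \log|F(\theta)-F(\beta)|=\log|e^{i\theta}-e^{i\beta}|+\operatorname{Re}\log G_F(\theta,\beta),\qquad G_F(\theta,\beta)=\frac{F(\theta)-F(\beta)}{e^{i\theta}-e^{i\beta}}. \]
By the separation bound and the $C^{1,1/2}$ control, $G_F$ is bounded away from zero near $F_0$. Moreover $G_F$ depends linearly on $F$, and can be written as a divided difference
\[ G_F(\theta,\beta)=\int_0^1F'(\beta+\tau(\theta-\beta))\,d\tau\cdot\frac{\theta-\beta}{e^{i\theta}-e^{i\beta}}, \]
with the usual periodic adjustment. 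For $F$ near $F_0$, the function $G_F$ lies in a small ball around $G_{F_0}\equiv1$ in a suitable Banach algebra $Y$ of functions of $(\theta,\beta)$. For $Y$ I would take functions that are $C^{1/2}$ in $\theta$ uniformly in $\beta$, together with the needed control of the $\theta$-derivative of the divided difference.

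Since $\log$ is analytic near $1$ on any Banach algebra, the map $F\mapsto\log G_F$ is analytic into $Y$ through its power series. This series gives the Cauchy bounds $\|D^m\|\le Cm!r^{-m}$ directly, with $r$ comparable to the radius of the ball where $|G_F-1|<1/2$.

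The singular part is then handled by a linear operator. The term
\[ -\frac1{2\pi}\int\log|e^{i\theta}-e^{i\beta}|F'(\beta)\,d\beta \]
is a fixed convolution with $\log|2\sin((\theta-\beta)/2)|$. This is a Fourier multiplier of order $-1$ acting on $F'$, so it is bounded $X\to X$. The remainder
\[ -\frac1{2\pi}\int\operatorname{Re}\log G_F(\theta,\beta)\,F'(\beta)\,d\beta \]
is the composition of the analytic map $F\mapsto\log G_F$ with a bounded bilinear map $Y\times C^{1/2}\to X$. Products and compositions of analytic maps with geometric derivative bounds keep such bounds, after shrinking $r$.

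\textbf{Main obstacle.} The hard step is showing that the remainder integral is $C^{1,1/2}$ in $\theta$. Differentiating in $\theta$ produces
\[ \operatorname{Re}\frac{F'(\theta)}{F(\theta)-F(\beta)}-\operatorname{Re}\frac{ie^{i\theta}}{e^{i\theta}-e^{i\beta}}, \]
which has a Cauchy-type singularity. I would treat it by writing
\[ \frac{F'(\theta)}{F(\theta)-F(\beta)}=\frac{F'(\theta)-F'(\beta)}{F(\theta)-F(\beta)}+\frac{F'(\beta)}{F(\theta)-F(\beta)}. \]
The second piece, integrated against $F'(\beta)\,d\beta$, becomes a Cauchy integral along the curve $F$. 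Its $C^{1/2}$ boundedness and analytic dependence on $F$ follow from the classical Privalov/Plemelj theorem for $C^{1,1/2}$ curves. I would prove the analytic dependence by expanding $1/(F(\theta)-F(\beta))$ around the circle kernel as a geometric series in $G_F-1$, with each term a commutator-type operator bounded by $C\|F-F_0\|^k$. The first piece is weakly singular, of order $|\theta-\beta|^{-1/2}$, and is handled directly.

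The remaining statements are straightforward. Area and centroid are the polynomial functionals
\[ \frac1{2}\operatorname{Im}\int\overline FF'\,d\theta \qquad\text{and}\qquad \frac1{3i}\int|F|^2F'\,d\theta\ \text{(suitably normalized)}, \]
which are continuous multilinear in $(F,\overline F,F')$ and hence analytic. For the coordinates on normalized contours, I would show that the derivative of these three real functionals at $F_0$ is surjective onto $\mathbb R^3$. This is tested on variations $\delta F=\epsilon e^{i\theta}$ (dilation) and $\delta F=c$ (translation). The analytic implicit function theorem then makes the normalized contours an analytic submanifold of codimension three. Its local chart is given by projection onto a closed complement of the span of those three directions, with an analytic inverse.
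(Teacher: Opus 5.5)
Your outline is viable, but it reaches analyticity by a different mechanism than the paper.

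\textbf{The paper's route.} The paper treats $F$ and $\overline F$ as independent functions $F_\pm$. Then $\log|F(\theta)-F(\beta)|$ is the circle logarithm plus half the sum of two \emph{holomorphic} logarithms, so $V$ is holomorphic along every complex line. The only estimate needed is then a uniform $C^{1,1/2}$ bound on a complex neighborhood. That bound comes from
\[ \partial_\theta V=-\frac{1}{4\pi}\left(F_+'(\theta)\operatorname{pv}\!\int\frac{F_+'(\beta)}{F_+(\theta)-F_+(\beta)}\,d\beta+F_-'(\theta)\operatorname{pv}\!\int\frac{F_+'(\beta)}{F_-(\theta)-F_-(\beta)}\,d\beta\right). \]
The first principal value is a constant. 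The second is a Cauchy integral along $F_-$ with density $F_+'/F_-'\in C^{1/2}$, bounded by a near/far splitting that is uniform in the curve. Cauchy's formula on complex lines and polarization then give $\|D^mV\|\le Cm!r^{-m}$, with no series to sum.

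\textbf{Your route, compared.} You peel off the circle part as a Hilbert-transform multiplier and expand $1/(F(\theta)-F(\beta))$ as a geometric series in $G_F-1$. This is more explicit, but it obliges you to show that the $k$-th commutator-type term has $C^{1/2}$ operator norm at most $C(C\epsilon)^k$, with $\epsilon=\|F-F_0\|_X$. This does follow from the same near/far argument, since its kernel constants are $O(k(C\epsilon)^k)$. It is, however, bookkeeping that the paper's ``locally bounded and G\^ateaux-holomorphic'' principle avoids. The area/centroid chart is the same implicit-function argument as in the paper. Your dilation and translation test directions are an equivalent choice to the paper's radial modes $1,\cos\theta,\sin\theta$.

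\textbf{Two points need correcting.}

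First, as you describe it, $Y$ is defined by pointwise control of the kernel and its $\theta$-derivative. Such a $Y$ cannot by itself supply a bounded bilinear map $Y\times C^{1/2}\to X$. The $C^{1/2}$ modulus of the $\theta$-derivative sits exactly at the exponent where absolute kernel bounds lose a logarithm. So $Y$ only gives analyticity in a weaker norm such as $C^0$, and all of the $C^{1,1/2}$ information must come from your Cauchy-integral step.

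Second, the piece $\frac{F'(\theta)-F'(\beta)}{F(\theta)-F(\beta)}$ is not ``handled directly''. It is $O(|\theta-\beta|^{-1/2})$, but its increment in $\theta$ contains $\frac{F'(\theta+h)-F'(\theta)}{F(\theta+h)-F(\beta)}$. The absolute integral of that term over $|\theta-\beta|>2|h|$ is of order $|h|^{1/2}\log(1/|h|)$, so you need the boundedness of truncated Cauchy integrals there as well. The simplest fix is the paper's: do not split. Instead factor $F'(\theta)$ (respectively $\overline{F'(\theta)}$) out of the integral. Then one term is $F'(\theta)$ times a constant, and the other is a Cauchy integral along $\overline F$ with density $F'/\overline{F'}\in C^{1/2}$.
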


\begin{proof}
Regard $F$ and its conjugate as independent complex functions $F_+,F_-$ near $e^{i\theta},e^{-i\theta}$. On a sufficiently small $C^1$ neighborhood,
\[ \sup_{\theta\ne\beta}\left|\frac{F_\pm(\theta)-F_\pm(\beta)}{e^{\pm i\theta}-e^{\pm i\beta}}-1\right|<\frac12. \]
Hence
\[ \log\frac{F_\pm(\theta)-F_\pm(\beta)}{e^{\pm i\theta}-e^{\pm i\beta}}=\sum_{m\ge1}\frac{(-1)^{m+1}}m\left(\frac{F_\pm(\theta)-F_\pm(\beta)}{e^{\pm i\theta}-e^{\pm i\beta}}-1\right)^m. \]
The uniformly convergent series define the complexified logarithm and make the velocity holomorphic on each complex line. The original logarithm is the circle logarithm plus half their sum. Since the circle logarithm is integrable, $V$ is locally bounded in $L^\infty$. Differentiating once gives
\[ \partial_\theta V=-\frac{1}{4\pi}\left(F_+'(\theta)\operatorname{pv}\!\int\frac{F_+'(\beta)}{F_+(\theta)-F_+(\beta)}\,d\beta+F_-'(\theta)\operatorname{pv}\!\int\frac{F_+'(\beta)}{F_-(\theta)-F_-(\beta)}\,d\beta\right). \]
The first integral is constant. In the second, write $F_+'=(F_+'/F_-')F_-'$. Since $F_-'$ is bounded away from zero, the quotient is uniformly bounded in $C^{1/2}$.

For a nearby curve $F$, the Cauchy kernel $F'(\beta)/(F(\theta)-F(\beta))$ is bounded by $C|\theta-\beta|^{-1}$. Replacing $\theta$ by $\theta+h$ changes it by at most $C|h||\theta-\beta|^{-2}$ when $|\theta-\beta|>2|h|$. Its principal-value integral on constants is constant. Its truncated integrals are uniformly bounded because subtracting $-(\beta-\theta)^{-1}$ leaves a local remainder $O(|\theta-\beta|^{-1/2})$. For a density $f\in C^{1/2}$, subtract $f(\theta)$. The near and far parts of the H\"older difference are bounded, respectively, by
\[ C\|f\|_{C^{1/2}}|h|^{1/2},\qquad C\|f\|_{C^{1/2}}|h|\int_{|h|}^1s^{-3/2}\,ds\le C\|f\|_{C^{1/2}}|h|^{1/2}. \]
The difference of the subtracted constants is multiplied by a bounded truncated integral. Thus the Cauchy integral is uniformly bounded on $C^{1/2}$. Both complexified curves satisfy these estimates, so $V$ is locally bounded in $C^{1,1/2}$.

Apply the one-variable Cauchy formula along each complex line to the velocity and its first derivative. The uniform H\"older bound gives an $X$ bound $Cr^{-m}$ for the Taylor coefficient of order $m$. The resulting series converges in $X$ and agrees with the velocity in $C^0$. Polarization, with a smaller $r$, gives $\|D^mV\|\le Cm!r^{-m}$.

Green's formula expresses area and first moments as polynomial contour functionals,
\[ \int_{D_F}z^k\,dA=\frac1{2i}\int_0^{2\pi}F^k\bar F F'\,d\theta. \]
The area is nonzero near $F_0$, so dividing the first moment by area proves analyticity of the centroid. Write $F=e^{i\theta}(1+h+ik)$, with $h,k$ real. At the circle, the differentials of area and the first complex moment are $\int h\,d\theta$ and $\int e^{i\theta}h\,d\theta$. Their restriction to $h=1,\cos\theta,\sin\theta$ is invertible. The analytic implicit-function theorem solves for these three radial coefficients in terms of the others. The coordinate map $\chi$ retains the remaining radial modes and all tangential modes. Its inverse inserts the three solved coefficients. Both maps have bounded derivatives on a smaller neighborhood. Since the removed radial directions are invariant under angular translation, uniqueness gives $\chi\bigl(e^{-i\varphi}F(\cdot+\varphi)\bigr)=\chi(F)(\cdot+\varphi)$.
\end{proof}

The bounded first derivative gives local existence and uniqueness for the contour equation with continuous time-dependent analytic external coefficients. For finitely many separated patches, each interaction is a nonsingular boundary integral, analytic in the two contours and their centers. The coupled field is locally Lipschitz, so solutions depend continuously on their initial contours and continue while they remain in the chart and stay separated. Reynolds' formula and the divergence-free boundary velocity give the weak transport equation for each patch indicator. Patch areas and vorticity values are preserved.

The periodic Hilbert transform is defined by $He^{im\theta}=-i\operatorname{sgn}(m)e^{im\theta}$. On the modes $|m|\ge1$, it satisfies $H^2=-I$.

\begin{lemma} \label{core:disk-spectrum}
For a unit-vorticity patch, let $\tau$ denote time in the spatial frame rotating at angular velocity $1/2$. The unforced equation for normalized contours is $F_\tau=G(F)$, where $G(F)=V(F)-iF/2$. Every $E(\eta)$ is stationary. There are local analytic coordinates $(\eta,z)$, with $\eta,z$ real in $C^{1,1/2}$ and $\widehat z(0)=\widehat z(1)=\widehat z(-1)=0$, such that $z=0$ corresponds exactly to $F=E(\eta)$. Thus $\eta$ records the labels on a circle and $z$ measures deformation from it. In these coordinates,
\[ \eta_\tau=O(\|z\|^2),\qquad z_\tau=-\tfrac12Hz+Q(\eta)[z,z]+O(\|z\|^3). \]
The linearized equations at any $(\eta,0)$ are $\eta_\tau=0$ and $z_\tau=-Hz/2$. After allowing complex-valued perturbations, the deformation part is the sum of the eigenspaces with eigenvalues $i/2$ and $-i/2$. The projections onto these two spaces and the stationary label space are uniformly bounded and depend analytically on $\eta$. The remainders and the bilinear map $Q$ have uniformly bounded analytic coefficients on a slightly larger fixed neighborhood.
\end{lemma}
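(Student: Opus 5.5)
We first verify the stationary structure and then build the coordinates by an implicit-function argument in the chart of Lemma~\ref{core:analytic-contour}. Stationarity of $E(\eta)$ is direct: for the unit circle with any orientation-preserving labels, $V(F)$ is the boundary velocity of the unit-vorticity disk. In the nonrotating frame this is $iF/2$, rigid rotation at angular speed $1/2$, because the kernel integral is reparametrization invariant up to the factor $F'(\beta)\,d\beta$. Hence $G(E(\eta))=0$. Next we linearize at $F_0$, writing $F=e^{i\theta}(1+h+ik)$ with $h,k$ real. The classical computation for the Rankine vortex gives $DG(F_0)$ in Fourier modes. Tangential perturbations $ik$ lie in the kernel; they are exactly the tangent directions $i\eta' e^{i\theta}$-type of the family $E(\eta)$, namely $T_{E}=\{ie^{i\theta}\eta\}$. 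The radial modes $h=\cos m\theta,\sin m\theta$ with $|m|\ge2$ are coupled with tangential components and rotate at frequency $\pm1/2$ relative to the co-rotating frame. Modes $0,\pm1$ are removed by the normalization.

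To build the coordinates, use the graph parametrization $F=E(\eta)\cdot(1+\mathcal N(\eta)z)$. Here $\mathcal N(\eta)z$ is the analytic normal perturbation of the circle: radial displacement by $z$ composed with the label map, corrected by the three radial coefficients solved in Lemma~\ref{core:analytic-contour} to restore area and centroid. The map $(\eta,z)\mapsto F$ has invertible differential at $(0,0)$, since it sends $(\eta,z)$ to $ie^{i\theta}\eta+e^{i\theta}z$ with $\widehat z(0)=\widehat z(\pm1)=0$. The analytic inverse function theorem therefore gives local analytic coordinates, and $z=0$ is exactly the circle family by construction. The $\theta$-translation equivariance noted at the end of Lemma~\ref{core:analytic-contour} makes the construction uniform in $\eta$ near any fixed $\eta$.

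Pulling back $F_\tau=G(F)$ gives analytic vector fields $\eta_\tau=a(\eta,z)$ and $z_\tau=b(\eta,z)$. Both vanish at $z=0$ because the circles are stationary. The main computation is the $z$-linear part at $(\eta,0)$. At $\eta=0$, it should be verified from the Rankine spectrum, with the mode-$m$ radial perturbation having co-rotating frequency $(|m|-1)/2$ relative to the fluid frame shifted by $1/2$. I expect this step to be the main obstacle.

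The choice of the normal coordinate and the frame must be checked to yield exactly $-\tfrac12 Hz$, meaning frequency $\pm1/2$ on every mode. Otherwise the coordinates must be adjusted. One adjustment is to define $z$ through the Lagrangian label deformation instead of the radial graph: displacement of a particle relative to the circle moves with the fluid, so in the frame rotating at $1/2$ it oscillates at the uniform rate $1/2$. Any $z$-linear contribution to $\eta_\tau$ can then be removed by a linear change $\eta\mapsto\eta+Lz$, with $L$ found by solving against $-\tfrac12 H$. This is invertible since $H^2=-I$ on the nonzero modes, so the $\eta$ equation becomes $O(\|z\|^2)$.

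For general $\eta$, relabeling conjugates the linearization, so the spectrum remains $\{0,\pm i/2\}$. The spectral projections $P_\pm=\tfrac12(I\mp 2iH\cdot\tfrac12\cdots)$, that is, $(I\pm iH)/2$ on deformations, together with the label projection, depend analytically on $\eta$ through the analytic coordinate change. The uniform bounds on $Q$ and on the remainders follow from Cauchy estimates $\|D^mV\|\le Cm!r^{-m}$ on the slightly larger neighborhood, via Taylor expansion of $a$ and $b$ in $z$.
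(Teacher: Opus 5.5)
Your stationarity argument is fine, and the step you flag as the main obstacle is actually routine. With $F=e^{i\theta}(1+h+ik)$, the paper computes $DG(F_0)[e^{i\theta}(h+ik)]=e^{i\theta}\bigl(-\tfrac12Hh-\tfrac i2h\bigr)$. The labels in $F$ are already material, so your radial graph gives $h_\tau=-\tfrac12Hh$ and $k_\tau=-\tfrac12h$. Your shift with $L=H$, that is $\eta=k+Hh$, is exactly the stationary combination, so no switch of deformation variable is needed.

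The genuine gap is the dependence on $\eta$. The lemma asserts that the linearization is $\eta_\tau=0$, $z_\tau=-\tfrac12Hz$ at \emph{every} $(\eta,0)$. Only this makes the remainders $O(\|z\|^2)$ and $Q(\eta)[z,z]+O(\|z\|^3)$, rather than containing terms of size $\|\eta\|\,\|z\|$. Since $\eta$ ranges over a fixed ball of radius $\delta$ and does not shrink with $z$, such terms are genuinely linear in $z$. They would spoil the later use of the constant operator $\operatorname{diag}(0,-H/2)$ in Lemma~\ref{core:normal-form}, and of $e^{-\tau H/2}$ as an isometry of $\|\cdot\|_*$ in Lemma~\ref{core:confinement}.

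Your chart does not deliver this exactness. If $z$ is the radial displacement at label $\theta$, relabeling by $\psi=\mathrm{id}+\eta$ shows that the $z$-linear part at $(\eta,0)$ is only conjugate to $-\tfrac12H$, essentially $-\tfrac12C_\psi HC_\psi^{-1}$ with $C_\psi f=f\circ\psi$; it is not equal to it. If instead you compose $z$ with the label map so that the conjugation becomes exact, the chart $(\eta,z)\mapsto F$ loses a derivative in $\eta$ when $z$ is only $C^{1,1/2}$. It is then neither analytic nor even differentiable, so the inverse function theorem is unavailable. In both versions a constant shift $\eta\mapsto\eta+Lz$ removes the linear term of the label equation only at $\eta=0$, and translation equivariance covers only constant $\eta$. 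Knowing that the spectrum is still $\{0,\pm i/2\}$ does not by itself put the linear part into the form $-\tfrac12H$.

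The missing idea is to build the coordinates from the spectral data at each circle. Your remark that the projections depend analytically on $\eta$ ``through the analytic coordinate change'' has the logic reversed. The paper proceeds as follows.
\begin{enumerate}
\item Put $e(\eta)=\chi(E(\eta))$ in the constraint chart, and let $A(\eta)$ be the differential of the field there.
\item Conjugation by a fixed relabeling is a bounded operator, and it gives $A(\eta)\bigl(A(\eta)^2+\tfrac14I\bigr)=0$. Hence the spectral projections are $P_0(\eta)=I+4A(\eta)^2$ and $P_\pm(\eta)=-2A(\eta)^2\mp iA(\eta)$. Being polynomials in the analytic family $A(\eta)$, they are analytic and uniformly bounded.
\item With $\Pi_\pm=\tfrac12(I\pm iH)$, the operator $T(\eta)z=\sum_\pm P_\pm(\eta)(\Pi_\pm z,-H\Pi_\pm z)$ intertwines: $A(\eta)T(\eta)=-\tfrac12T(\eta)H$.
\item In the analytic chart $\mathcal C(\eta,z)=\chi^{-1}(e(\eta)+T(\eta)z)$ one has $G(\eta,0)=0$. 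Since $A(\eta)De(\eta)=0$, also $DG(\eta,0)=\operatorname{diag}(0,-H/2)$ for all $\eta$.
\end{enumerate}
Taylor expansion in $z$ at fixed $\eta$ then gives the stated equations, with $Q(\eta)$ the quadratic coefficient of the $z$ component. The uniform bounds follow from the Cauchy estimates, as you intended.
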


\begin{proof}
The disk velocity is $iz/2$ on its boundary, so $G(E(\eta))=0$. At $F_0$, write a perturbation as $e^{i\theta}(h+ik)$, with $h,k$ real. For the Fourier coefficients $\widehat h(m)=(2\pi)^{-1}\int_0^{2\pi}h(\theta)e^{-im\theta}\,d\theta$, the linearized area and centroid constraints are $\widehat h(0)=\widehat h(1)=\widehat h(-1)=0$. For $|z|<1$, the domain variation under a radial displacement $h$ is
\[ \frac{i}{2\pi}\int_0^{2\pi}\frac{h(\beta)}{\bar z-e^{-i\beta}}\,d\beta=-i\sum_{m\ge1}\widehat h(-m)\bar z^{m-1}. \]
Take the interior trace of this formula and combine it with the interior evaluation-point derivative $ie^{i\theta}(h+ik)/2$, which cancels the rotating-frame term in $DG$. Equivalently, differentiating the contour formula and its circle Fourier series gives
\[ DG(F_0)[e^{i\theta}(h+ik)]=-ie^{i\theta}\sum_{m\le-2}\widehat h(m)e^{im\theta}=e^{i\theta}\left(-\tfrac12Hh-\tfrac i2h\right). \]
The resulting linear equations are
\[ h_\tau=-\tfrac12Hh,\qquad k_\tau=-\tfrac12h. \]
Thus $\eta=k+Hh$ is stationary, and the positive and negative modes of $h$ have eigenvalues $i/2$ and $-i/2$, respectively. The Hilbert transform is bounded by the cancellation estimate in Lemma~\ref{core:analytic-contour} and commutes with differentiation. Hence $(h,k)\mapsto(h,k+Hh)$ is a bounded change of coordinates. Moreover,
\[ e^{-\tau H/2}=\cos(\tau/2)I-\sin(\tau/2)H,\qquad \|z\|_*:=\sup_{r\in\mathbb R}\|e^{-rH/2}z\|_{C^{1,1/2}},\qquad \|e^{-\tau H/2}z\|_*=\|z\|_*. \]
The norm $\|\cdot\|_*$ is equivalent to the $C^{1,1/2}$ norm.

In the original $(h,k)$ coordinates, the projections onto these three spaces are
\[ P_0(h,k)=(0,k+Hh),\qquad P_\pm(h,k)=(\Pi_\pm h,-H\Pi_\pm h),\qquad \Pi_\pm=\frac12(I\pm iH). \]
These formulas allow complex-valued $(h,k)$ satisfying the linearized constraints. For real inputs, $P_0$ is real and $P_-$ is the conjugate of $P_+$. The oscillating deformation has both components $(h,-Hh)$ and evolves by $-H/2$.

Fix $\psi(\theta)=\theta+\eta(\theta)$, so that $F_0\circ\psi=E(\eta)$. A change of variable gives $G(F\circ\psi)=G(F)\circ\psi$. Composition with $\psi$ is bounded on $C^{1,1/2}$, since
\[ [(f\circ\psi)']_{1/2}\le [f']_{1/2}\|\psi'\|_{L^\infty}^{3/2}+\|f'\|_{L^\infty}[\psi']_{1/2}. \]
The same estimate holds for $\psi^{-1}$ in the fixed small neighborhood. Thus a fixed change of labels conjugates $DG(F_0)$ to the differential at any nearby parameterized disk.

Use the constraint chart $\chi$ from Lemma~\ref{core:analytic-contour}. Write $e(\eta)=\chi(E(\eta))$ and let $A(\eta)$ be the differential of the field in this chart at $e(\eta)$. Fixed relabeling conjugates it to the circle differential, so $A(\eta)(A(\eta)^2+I/4)=0$. Consequently the projections onto its three eigenspaces are
\[ P_0(\eta)=I+4A(\eta)^2,\qquad P_\pm(\eta)=-2A(\eta)^2\mp iA(\eta). \]
They sum to $I$, have pairwise zero products, and satisfy $AP_0=0$ and $AP_\pm=\pm(i/2)P_\pm$. They are analytic and uniformly bounded. Define
\[ T(\eta)z=\sum_{\pm}P_\pm(\eta)(\Pi_\pm z,-H\Pi_\pm z),\qquad A(\eta)T(\eta)=-\tfrac12T(\eta)H. \]
Since $T(\eta)$ and $P_\pm(\eta)$ differ by $O(\|\eta\|)$ from their values at zero, $T(\eta)$ maps the $z$ variables invertibly onto the range of $P_+(\eta)+P_-(\eta)$, with uniformly bounded inverse. The map $\mathcal C(\eta,z)=\chi^{-1}(e(\eta)+T(\eta)z)$ has invertible differential at $(0,0)$ because its $\eta$ derivative gives the stationary label changes and its $z$ derivative gives the two oscillating parts. Thus $\mathcal C$ gives local analytic coordinates for normalized contours. The field in this chart, again denoted by $G$, satisfies
\[ G(\eta,0)=0,\qquad DG(\eta,0)=\begin{pmatrix}0&0\\0&-H/2\end{pmatrix}. \]
\[ G(\eta,z)=\bigl(O(\|z\|^2),-\tfrac12Hz+Q(\eta)[z,z]+O(\|z\|^3)\bigr). \]
Coordinate changes in this chart preserve area and centroid after applying $\mathcal C$.

Define angular translation by $S_\varphi f(\theta)=f(\theta+\varphi)$, acting on each scalar component, and the circle-fixing action by $\mathcal T_\varphi F(\theta)=e^{-i\varphi}F(\theta+\varphi)$. The constraint chart satisfies $\chi(\mathcal T_\varphi F)=S_\varphi\chi(F)$, and $E(S_\varphi\eta)=\mathcal T_\varphi E(\eta)$. Equivariance of the original field and the polynomial projection formulas therefore give
\[ \begin{aligned} A(S_\varphi\eta)&=S_\varphi A(\eta)S_\varphi^{-1},&P_j(S_\varphi\eta)&=S_\varphi P_j(\eta)S_\varphi^{-1},\\ T(S_\varphi\eta)S_\varphi&=S_\varphi T(\eta). \end{aligned} \]
Consequently,
\[ \mathcal C(S_\varphi\eta,S_\varphi z)=\mathcal T_\varphi\mathcal C(\eta,z). \]
\end{proof}

Fix $\delta<1/8$ sufficiently small that the preceding coordinates are defined whenever
\begin{equation} \label{meta:boundary-chart}
\|\eta\|_{C^{1,1/2}}<\delta,\qquad \|F-e^{i(\cdot+\eta)}\|_{C^{1,1/2}}<\delta.
\end{equation}
The first inequality gives $1-\delta<1+\eta'<1+\delta$, so the labels preserve orientation. The two inequalities give a uniform chord-arc bound and place the patch in $B(w,2a)$. We retain these material labels when the quadrupole changes configuration.

\subsection{Normal form with external field}

We change coordinates to remove the unforced quadratic term in the $z$ equation and the value and first $z$ derivative of the external forcing at $z=0$. The quadratic term in the $\eta$ equation remains.

\begin{lemma} \label{core:normal-form}
Let $L>0$ and let a positive-vorticity patch of area $\pi a^2$ have centroid $d(t)$. Assume its external velocity $u_{\mathrm{ext}}(t,\cdot)$ is curl-free and divergence-free on $B(d(t),2a)$, and write
\[ \frac{u_{\mathrm{ext}}(t,d(t)+a\zeta)-u_{\mathrm{ext}}(t,d(t))}{a}=\sum_{k\ge1}a_k(t)\bar\zeta^k. \]
Assume the coefficients satisfy
\[ \sum_{k\ge1}2^k|a_k(t)|\le L,\qquad \sum_{k\ge1}2^k|\dot a_k(t)|\le L^2. \]
Let $\Omega_{\mathrm{patch}}>0$ be the patch vorticity and put $\varepsilon=L/\Omega_{\mathrm{patch}}$. For sufficiently small $\varepsilon$, in time $\tau=\Omega_{\mathrm{patch}} t$ and the spatial frame rotating at speed $\Omega_{\mathrm{patch}}/2$ there are analytic coordinate changes in which
\begin{equation} \label{core:normal-form-equations}
\begin{aligned}
\eta_\tau&=R_\eta(\eta,z,\tau),&z_\tau&=-\tfrac12Hz+R_0(\eta,z,\tau),\\
R_\eta&=O(\|z\|^2+\varepsilon^2),&
R_0&=O(\|z\|^3+\varepsilon^2).
\end{aligned}
\end{equation}
The changes differ by $O(\varepsilon)$ from their zero-forcing versions, including their first two coordinate derivatives. The zero-forcing versions include the unforced quadratic change. The remainders are analytic in $(\eta,z)$ on a fixed neighborhood and satisfy
\[ \|D_{\eta}R_\eta\|\le C(\|z\|^2+\varepsilon^2),\qquad \|D_zR_\eta\|\le C(\|z\|+\varepsilon^2), \]
\[ \|D_{\eta}R_0\|\le C(\|z\|^3+\varepsilon^2),\qquad \|D_zR_0\|\le C(\|z\|^2+\varepsilon^2). \]
The changes and their inverses have uniformly bounded first two derivatives in the contour coordinates. Their time dependence uses $a_k$, and their time derivatives use $a_k$ and $\dot a_k$. We again write $(\eta,z)$ for the final coordinates. With zero external field, $z=0$ still describes $E(\eta)$. The forcing-dependent change moves the coordinates of such a circle by $O(\varepsilon)$.
\end{lemma}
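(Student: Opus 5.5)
We first write the forced contour equation in the rotating normalized frame and then remove the forcing by near-identity changes of variables. For a patch with centroid $d(t)$, the normalized contour $F$ satisfies, in time $\tau=\Omega_{\mathrm{patch}}t$,
\[ F_\tau=G(F)+\Omega_{\mathrm{patch}}^{-1}e^{-i\tau/2}\Bigl(\sum_{k\ge1}a_k\,\overline{e^{i\tau/2}F}^{\,k}-\dot{\text{(centroid correction)}}\Bigr). \]
Here the constant term $u_{\mathrm{ext}}(d)$ cancels against $\dot d$, up to the centroid-recentering term. That term is itself $O(\varepsilon)$ times the $k\ge2$ contributions, and it is analytic in $F$ by Lemma~\ref{core:analytic-contour}. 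The weighted bound $\sum 2^k|a_k|\le L$ makes the forcing an analytic map on the chart $F\in B(0,2)$ of size $O(\varepsilon)$. Its $\tau$-derivative is $O(\varepsilon\cdot\varepsilon+\varepsilon)$: the rotation phase contributes $O(\varepsilon)$ and $\dot a_k/\Omega_{\mathrm{patch}}^2$ contributes $O(\varepsilon^2)$. Passing to the coordinates $(\eta,z)$ of Lemma~\ref{core:disk-spectrum}, we obtain
\[ \eta_\tau=O(\|z\|^2)+\varepsilon f_\eta(\eta,z,\tau),\qquad z_\tau=-\tfrac12Hz+Q(\eta)[z,z]+O(\|z\|^3)+\varepsilon f_z(\eta,z,\tau). \]
The forcing terms are quasi-periodic in $\tau$ through factors $e^{\pm ik\tau/2}$ multiplying slowly varying coefficients.

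Next we remove the unforced quadratic term. We seek $z=w+\mathcal B(\eta)[w,w]$ solving the homological equation $-\tfrac12H\mathcal B[w,w]+\mathcal B[Hw,w]/2+\mathcal B[w,Hw]/2=Q[w,w]$. In the complexified eigenbases $P_\pm(\eta)$, the operator $-H/2$ acts on $P_\pm$ by $\pm i/2$, so the bilinear resonance conditions read $\pm i/2=(\epsilon_1+\epsilon_2)i/2$ with $\epsilon_j=\pm1$. These sums lie in $\{0,\pm i\}$ and never equal $\pm i/2$. There are therefore no resonances, and $\mathcal B$ is bounded and analytic in $\eta$ by the uniform bounds on the projections. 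The $\eta$-dependence of $\mathcal B$ produces an extra term $D_\eta\mathcal B\,\eta_\tau=O(\|z\|^2(\|z\|^2+\varepsilon))$. This term is $O(\|w\|^3+\varepsilon^2)$ after Young's inequality: the cross term $\varepsilon\|w\|^2$ is bounded by $\|w\|^3+\varepsilon^3$.

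Then we remove the $O(\varepsilon)$ forcing at order zero and first order in $z$. Write $\varepsilon f_z(\eta,0,\tau)+\varepsilon D_zf_z(\eta,0,\tau)z$, together with the corresponding $\eta$ terms $\varepsilon f_\eta(\eta,0,\tau)+\varepsilon D_zf_\eta z$. We seek a time-dependent change $z=w+\varepsilon g(\eta,\tau)+\varepsilon M(\eta,\tau)w$ and $\eta=\xi+\varepsilon h(\eta,\tau)+\varepsilon N(\eta,\tau)w$. This leads to the equations
\[ \partial_\tau g+\tfrac12Hg=f_z(\eta,0,\tau),\qquad \partial_\tau h=f_\eta(\eta,0,\tau). \]
Each is solved by writing the forcing as $\sum_k c_k(t)e^{i\omega_k\tau}$ and dividing by $i\omega_k\mp i/2$ or by $i\omega_k$. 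The essential point is that the frequencies coming from $\bar\zeta^k$ in the rotating frame are $\omega_k=-(k+1)/2$. For $k\ge1$ these never equal $0$ or $\pm1/2$, so every divisor is bounded below by $1/2$. The geometric weights $2^k$ absorb the growth of the $k$-th mode when acting on contours in $B(0,2)$. Differentiating the slowly varying coefficients $c_k(t)$ in $\tau$ produces errors $\dot a_k/\Omega_{\mathrm{patch}}^2=O(\varepsilon^2)$, which is why the hypothesis on $\dot a_k$ is needed. The linear-in-$w$ correction $M$ solves an analogous commutator equation, $\partial_\tau M+\tfrac12[H,M]=D_zf_z$, with divisors $\omega_k+(\pm1\pm1)/2$. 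These are again nonzero, since $(k+1)/2\notin\{0,\pm1\}$ for $k\ge 2$. For $k=1$ the value $\omega_1=-1$ would resonate with $\pm1$; this term must be checked separately.

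The main obstacle is exactly this: the $k=1$ strain term acting linearly, together with the stationary $\eta$-direction. I would first try to show that the resonant component vanishes. The idea is that a linear strain applied to the circle stays in the $m=\pm2$ deformation modes, and the resonant part of the linear operator then produces a stationary label change of size $\varepsilon\|w\|$. That change is placed in $R_\eta$, where $\|D_zR_\eta\|\le C(\|z\|+\varepsilon^2)$ demands that it be removable, so the $\eta$-component should also be removed by the change $N$. If genuine resonance remains, I would average it into $R_\eta$, which tolerates $O(\varepsilon^2)$ only after one more averaging step. After these changes, all remaining terms are $O(\|w\|^3+\varepsilon^2)$ in the $z$ equation and $O(\|w\|^2+\varepsilon^2)$ in the $\eta$ equation, with derivative bounds that follow by Cauchy estimates on a shrunken analytic neighborhood. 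The changes are $O(\varepsilon)$-perturbations of the unforced quadratic change, and with zero forcing they leave the circles $z=0$ invariant.
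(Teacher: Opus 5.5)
Your overall scheme matches the paper's. You build a time-dependent near-identity change term by term from the modes $a_ke^{-i(k+1)\tau/2}$ and $\overline{a_k}e^{i(k+1)\tau/2}$, and it removes the value and first $z$-derivative of the forcing at $z=0$. You add the nonresonant quadratic change with denominators $\lambda_a+\lambda_b-\lambda_j$. Doing the quadratic change first rather than last is immaterial. You also correctly locate the one vanishing divisor: $k=1$ in the first-order equation, where $i\nu+\lambda_l-\lambda_j=0$.

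\textbf{The gap.} The proposal stops exactly where the lemma becomes nontrivial, and neither remedy you suggest works. The resonant block has nothing to do with the label direction. For the term $a_1e^{-i\tau}$ it is $P_-(\cdot)P_+$, from the $+i/2$ deformation space to the $-i/2$ deformation space. Every block touching the label space has divisor $|i\nu|\ge1$ or $|i\nu\pm i/2|\ge1/2$, so nothing resonant can be ``placed in $R_\eta$''. If $P_-BP_+$ were nonzero, the $z$-equation would retain a term $\varepsilon e^{-i\tau}\mathcal J z$ resonant with $-\tfrac12H$. No averaging step removes a resonant term. It would violate $\|D_zR_0\|\le C(\|z\|^2+\varepsilon^2)$. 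An $O(\varepsilon\|z\|)$ linear term acting over $\tau\sim\varepsilon^{-4/3}$ also cannot be controlled in the bootstrap of Lemma~\ref{core:confinement}.

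\textbf{The missing idea.} What you need is an angular selection rule from equivariance.
\begin{itemize}
\item Under the circle-fixing rotation $\mathcal T_\varphi F=e^{-i\varphi}F(\cdot+\varphi)$, the unforced field $G$ and the chart $\mathcal C$ are equivariant.
\item The $k$th forcing field, centroid subtraction included, transforms with the character $e^{-i(k+1)\varphi}$.
\item Hence at the standard circle $S_\varphi BS_\varphi^{-1}=e^{-i(k+1)\varphi}B$, where $B=DX(\eta,0)+D^2G(\eta,0)[\cdot,Y_0]$. So $B$ maps angular mode $m$ only to mode $m-(k+1)$.
\item For $k=1$ this is a shift by $-2$. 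It sends the $+i/2$ space (modes $m\ge2$) into modes $m-2\ge0$, never into the $-i/2$ space (modes $\le-2$).
\item So the resonant numerator vanishes, and the $\overline{a_1}e^{i\tau}$ term is handled symmetrically.
\end{itemize}
Your remark that a strain excites $m=\pm2$ modes is only the zeroth-order shadow of this.

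Two further steps are still required. First, transport the vanishing from $e^{i\theta}$ to every parameterized circle $E(\eta)$. There the projections no longer commute with $S_\varphi$, and the paper conjugates by the relabeling map $\Psi$ and uses uniqueness of $Y_0$. Second, pass from trigonometric polynomials to $C^{1,1/2}$ by approximation.

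\textbf{Smaller omissions.} Your first-order equations must include the cross term $D^2G(\eta,0)[\cdot,Y_0]$. In your ordering this is $2q_\eta[w,g]$, where $q_\eta[z,z]$ is the quadratic part of the $\eta$-equation. It is an $O(\varepsilon\|w\|)$ term, and $N$ must remove it to obtain $\|D_zR_\eta\|\le C(\|z\|+\varepsilon^2)$. It is nonresonant, so this is bookkeeping only.

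Finally, the $k$-sums close only on a neighborhood with $|F|\le3/2$, not on $B(0,2)$. Derivatives of $F^k$ cost polynomial factors in $k$, and $\sum_k2^k|a_k|\le L$ absorbs $k^p(3/2)^k$ but not $k^p2^k$.
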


\begin{proof}
\emph{The external field in contour coordinates.}

By antisymmetry of the Biot--Savart kernel, the self-induced velocity has zero patch average. The centroid therefore follows the patch average of the external velocity. The centered $k$th external field is
\[ \overline F^k-\frac1\pi\int_{D_F}\overline w^k\,dA(w). \]
For independent complex variables $\mathbf F=(F_+,F_-)$, define
\[ \begin{aligned} m_{k,-}(\mathbf F)&=\frac1{2\pi i(k+1)}\int_0^{2\pi}F_-^{k+1}F_+'\,d\theta,\\ m_{k,+}(\mathbf F)&=-\frac1{2\pi i(k+1)}\int_0^{2\pi}F_+^{k+1}F_-'\,d\theta. \end{aligned} \]
When $F_+=F$ and $F_-=\overline F$, these are the normalized area averages of $\overline w^k$ and $w^k$. Define the complex vector fields and the forcing in the rotating frame by
\[ X_{k,+}(\mathbf F)=\bigl(F_-^k-m_{k,-}(\mathbf F),0\bigr),\qquad X_{k,-}(\mathbf F)=\bigl(0,F_+^k-m_{k,+}(\mathbf F)\bigr), \]
\[ \begin{aligned} f_a(\tau,\mathbf F)=\frac1{\Omega_{\mathrm{patch}}}\sum_{k\ge1}\bigl(&a_k(t)e^{-i(k+1)\tau/2}X_{k,+}(\mathbf F)\\ &+\overline{a_k(t)}e^{i(k+1)\tau/2}X_{k,-}(\mathbf F)\bigr),\qquad t=\frac{\tau}{\Omega_{\mathrm{patch}}}. \end{aligned} \]
Each field preserves the complexified area and centroid constraints. The area derivative integrates a derivative of $F_-^{k+1}$ or $F_+^{k+1}$, and the potentially nonzero first-moment derivative is $\pi m_{k,\pm}-\pi m_{k,\pm}=0$. When $F_-=\overline F_+$, the components of $f_a$ are conjugate, so they represent the physical velocity. Using $\mathcal C$ also for its complexification, set
\[ X_{k,\pm}^{\mathcal C}(\eta,z)=\bigl(D\mathcal C(\eta,z)\bigr)^{-1}X_{k,\pm}(\mathcal C(\eta,z)). \]
Consider one term $\xi e^{i\nu\tau}X$, where $\xi$ is constant, $X=X_{k,\pm}^{\mathcal C}$, and $\nu=\mp(k+1)/2$. Put $A=\operatorname{diag}(0,-H/2)$ and let $P_0,P_+,P_-$ be its projections. Under the substitution $x=y+\xi e^{i\nu\tau}Y(y)$, the coefficient of $\xi e^{i\nu\tau}$ in the transformed forcing is
\[ X+DG\,Y-DY\,G-i\nu Y. \]
Put $Y_0(\eta)=Y(\eta,0)$ and $Y_1(\eta)=DY(\eta,0)$, and define
\[ B(\eta)=DX(\eta,0)+D^2G(\eta,0)[\,\cdot\,,Y_0(\eta)]. \]
The residual and its first derivative vanish at $z=0$ if
\[ (i\nu-A)Y_0=X(\eta,0),\qquad i\nu Y_1+Y_1A-AY_1=B(\eta). \]
Projecting onto the eigenspaces gives
\[ Y_0=\sum_j\frac{P_jX(\eta,0)}{i\nu-\lambda_j},\qquad P_jY_1P_l=\frac{P_jB(\eta)P_l}{i\nu+\lambda_l-\lambda_j},\qquad \lambda_0=0,\quad\lambda_\pm=\pm i/2. \]
The denominators in $Y_0$ have modulus at least $1/2$. We use the formula for $P_jY_1P_l$ when its denominator is nonzero and verify below that its numerator vanishes otherwise. Since $A$ is constant, differentiating the equation for $Y_0$ gives $(i\nu-A)D_{\eta}Y_0=D_{\eta}X(\eta,0)$. Since $D_{\eta}DG(\eta,0)=0$, this agrees with the equation for $Y_1$ on inputs of the form $(h,0)$. We set $P_jY_1P_l=0$ whenever its denominator vanishes.

\noindent\emph{Vanishing numerators when the denominator is zero.}

Only a forcing term between the two deformation eigenspaces can have zero denominator. We rule it out by computing its angular modes. The circle-fixing action extends to the independent variables by
\[ \mathcal T_\varphi(F_+,F_-)=(e^{-i\varphi}S_\varphi F_+,e^{i\varphi}S_\varphi F_-). \]
Substitution in the contour moments gives
\[ \mathcal T_\varphi X_{k,+}(\mathcal T_\varphi^{-1}\mathbf F)=e^{-i(k+1)\varphi}X_{k,+}(\mathbf F). \]
The field $X_{k,-}$ has the conjugate factor, and the centroid subtraction has the same factor as the corresponding power. Also, $\mathcal T_\varphi G(\mathcal T_\varphi^{-1}\mathbf F)=G(\mathbf F)$. By equivariance of $\mathcal C$, these identities hold in contour coordinates. At the standard circle the projections commute with $S_\varphi$, so
\[ S_\varphi Y_0=e^{-i(k+1)\varphi}Y_0,\qquad S_\varphi B S_\varphi^{-1}=e^{-i(k+1)\varphi}B. \]
The second identity includes $D^2G[\cdot,Y_0]$ by equivariance of $D^2G$ and the first identity.

Let $B_{nm}$ map the coefficients of angular mode $m$ to those of mode $n$. Then
\[ e^{i(n-m)\varphi}B_{nm}=e^{-i(k+1)\varphi}B_{nm}\quad\hbox{for every }\varphi,\qquad B_{nm}=0\ \hbox{unless }n=m-(k+1). \]
For $X_{k,+}$, the denominator can vanish only when $k=1$, $\lambda_l=i/2$, and $\lambda_j=-i/2$. The corresponding numerator would require $m\ge2$ and $n=m-2\le-2$, which is impossible. For $X_{k,-}$ the inequalities are reversed, with the same conclusion. Every nonzero denominator has modulus at least $1/2$.

The calculation first applies to trigonometric polynomials. Fix $0<\beta<1/2$. The Cauchy integrals, chart derivatives, and projections are also bounded on $C^{1,\beta}$. Fourier smoothing converges in that norm and commutes with $\Pi_\pm$, so the numerator vanishes on every $C^{1,1/2}$ input.

Fix $\psi(\theta)=\theta+\eta(\theta)$ and $\Psi(x)=\mathcal C^{-1}(\mathcal C(x)\circ\psi)$, so $\Psi(0)=(\eta,0)$. Relabeling preserves the contour moments, giving $G\circ\Psi=D\Psi\,G$ and $X\circ\Psi=D\Psi\,X$. Choose an affine $Y$ realizing $Y_0(0),Y_1(0)$ and put $\widetilde Y\circ\Psi=D\Psi\,Y$. Since $D^2\Psi[Y,G]=D^2\Psi[G,Y]$,
\[ (X+DG\,\widetilde Y-D\widetilde Y\,G-i\nu\widetilde Y)\circ\Psi=D\Psi(X+DG\,Y-DY\,G-i\nu Y). \]
Thus the residual and its first derivative vanish at $(\eta,0)$. Uniqueness in the value equation gives $\widetilde Y(\eta,0)=Y_0(\eta)$. The derivative equation therefore gives $P_jB(\eta)P_l=0$ whenever $i\nu+\lambda_l-\lambda_j=0$.

The displayed formulas define analytic $Y_0(\eta),Y_1(\eta)$ at $z=0$. Since $Y_1(\eta)(h,0)=D_\eta Y_0(\eta)h$, the extension $Y(\eta,z)=Y_0(\eta)+Y_1(\eta)(0,z)$ has the prescribed value and first derivative at $z=0$.

\noindent\emph{Summation and remainder estimates.}

Shrink the contour neighborhood so that $|F_\pm|\le3/2$, and denote the solutions just constructed by $Y_{k,\pm}$. Conjugation in the complexified chart allows the choice $Y_{k,-}(x)=\overline{Y_{k,+}(\overline x)}$. For each fixed derivative order, the derivatives of the powers and contour moments are bounded by a polynomial in $k$ times $(3/2)^k$. The projection, denominator, and chart bounds therefore give fixed $p,C$ such that
\[ \|Y_{k,+}\|_{C^3}+\|Y_{k,-}\|_{C^3}\le C(1+k)^p(3/2)^k. \]
Here $C^m$ denotes the supremum of the function and its first $m$ Fr\'echet derivatives on a fixed smaller chart neighborhood. Summing over $k\ge1$ and $\sigma\in\{+,-\}$ gives
\[ \sum_{k,\sigma}\frac{|a_k|}{\Omega_{\mathrm{patch}}}\|Y_{k,\sigma}\|_{C^3}\le C\varepsilon,\qquad \sum_{k,\sigma}\frac{|\dot a_k|}{\Omega_{\mathrm{patch}}^2}\|Y_{k,\sigma}\|_{C^2}\le C\varepsilon^2. \]
Here we used $\sup_{k\ge1}(1+k)^p(3/4)^k<\infty$. Set
\[ \mathcal S(\tau,x)=\frac1{\Omega_{\mathrm{patch}}}\sum_{k\ge1}\bigl(a_k(t)e^{-i(k+1)\tau/2}Y_{k,+}(x)+\overline{a_k(t)}e^{i(k+1)\tau/2}Y_{k,-}(x)\bigr),\qquad t=\frac{\tau}{\Omega_{\mathrm{patch}}}. \]
Then $\mathcal S$ is real on the real chart. Differentiating an exponential adds a factor bounded by $k+1$, which is absorbed by increasing $p$. Hence $\|\mathcal S\|_{C^3}+\|\partial_\tau\mathcal S\|_{C^2}\le C\varepsilon$. The part containing $\dot a_k$ is $O(\varepsilon^2)$ in $C^2$, while $\mathcal S$ uses only $a_k$. For small $\varepsilon$, $\|D\mathcal S\|\le1/2$, so $x=y+\mathcal S(\tau,y)$ has an inverse on a smaller fixed neighborhood with uniformly bounded first two derivatives. Products of two corrections, including their required derivatives, are $O(\varepsilon^2)$. The first-order residual and its first $z$ derivative vanish at $z=0$, so the residual is bounded by $C\varepsilon\|z\|^2$.

For the unforced quadratic term in the $z$ equation, define
\[ H_2(\eta)[v,w]=\sum_{j,a,b\in\{+,-\}}\frac{\Pi_jQ(\eta)[\Pi_av,\Pi_bw]}{\lambda_a+\lambda_b-\lambda_j}. \]
Every denominator has modulus at least $1/2$. Under $z=w+H_2(\eta)[w,w]$, the quadratic terms in the $w$ equation cancel because
\[ \Pi_jQ(\eta)[\Pi_aw,\Pi_bw]-(\lambda_a+\lambda_b-\lambda_j)\Pi_jH_2(\eta)[\Pi_aw,\Pi_bw]=0. \]
On a sufficiently small fixed ball, the derivative of this change differs from the identity by less than $1/2$, so its inverse has the same derivative bounds. The term $D_\eta H_2[\eta_\tau][w,w]$ is fourth order in the unforced equation, since $\eta_\tau=O(\|w\|^2)$. Its forcing-dependent part is bounded by $C(\varepsilon\|w\|^2+\varepsilon^2)$. The quadratic term in the $\eta$ equation remains, and no order-$\varepsilon$ constant or linear term is created.

Thus the remainder in the $\eta$ equation is the sum of terms of orders $\|z\|^2$, $\varepsilon\|z\|^2$, and $\varepsilon^2$, with analytic coefficients bounded on a fixed larger complex neighborhood. The remainder in the $z$ equation has the same form, with unforced order $\|z\|^3$. If $\mathcal R(\eta,0)=D_z\mathcal R(\eta,0)=0$, then
\[ \mathcal R(\eta,z)=\int_0^1(1-s)D_z^2\mathcal R(\eta,sz)[z,z]\,ds. \]
This identity, its third-order analogue, and the uniform analytic derivative bounds give the asserted estimates, including the $C\varepsilon^2$ bound for the $\eta$ derivative of the forcing residual. The mixed terms are absorbed by
\[ \varepsilon\|z\|^2\le C(\|z\|^3+\varepsilon^3),\qquad \varepsilon\|z\|\le C(\|z\|^2+\varepsilon^2),\qquad \varepsilon\le1. \]
This proves \eqref{core:normal-form-equations} and its derivative bounds.
\end{proof}

We next verify the coefficient bounds of Lemma~\ref{core:normal-form} from the distances and circulations of the other patches.

\begin{lemma}[External velocity estimates] \label{core:material-derivative}
Consider a finite collection of transported bounded uniform patches with separated $C^{1,1/2}$ boundaries. Let a distinguished one-sign patch of circulation $\Gamma$ and centroid $d(t)$ be contained in $B(d(t),2a)$. Suppose the remaining vorticity $\omega_S$ is at distance at least $\rho\ge8a$ from $d(t)$, and set
\[ u_S=K*\omega_S,\qquad L_1(x)=\int\frac{|\omega_S(y)|}{|x-y|^2}\,dA(y),\qquad L_0=L_1(d)+|\Gamma|/\rho^2, \]
where $x$ is outside the source supports. For the coupled Euler evolution,
\[ |D^ku_S(d)|\le C^{k+1}k!L_1(d)\rho^{-(k-1)},\qquad \left|\frac{d}{dt}D^ku_S(t,d(t))\right|\le C^{k+1}(k+1)!L_0^2\rho^{-(k-1)},\qquad k\ge1. \]
Hence, for $a/\rho$ sufficiently small, the normalized external coefficients $a_k$ of Lemma~\ref{core:normal-form}, with $u_{\mathrm{ext}}=u_S$, satisfy
\[ \sum_{k\ge1}2^k|a_k|\le C L_0,\qquad \sum_{k\ge1}2^k|\dot a_k|\le C L_0^2. \]
The constants are independent of the internal vorticity magnitudes and the number of patches. On a time interval where $L_0$ is uniformly bounded, these are the coefficient bounds of Lemma~\ref{core:normal-form} with $L=C\sup_t L_0(t)$.
\end{lemma}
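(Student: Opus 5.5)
The plan is to treat the two families of bounds separately: the spatial derivatives come from direct differentiation of the Biot--Savart kernel, and the time derivatives come from Euler transport of the source vorticity together with the motion of the centroid $d(t)$. The coefficient bounds then follow by reading off the Taylor coefficients.

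For the spatial bounds, write $u_S(x)=\int K(x-y)\omega_S(y)\,dA(y)$ with $K(z)=i/(2\pi\bar z)$. Since $u_S$ is antiholomorphic in $x$ outside the source supports, $D^ku_S$ reduces to $\partial_{\bar x}^k$ of $\frac{i}{2\pi}\int(\bar x-\bar y)^{-1}\omega_S$. This equals $\frac{i}{2\pi}(-1)^kk!\int(\bar x-\bar y)^{-k-1}\omega_S$. With $|x-y|\ge\rho$ on the support,
\[ |x-y|^{-k-1}\le\rho^{-(k-1)}|x-y|^{-2}, \]
which gives $|D^ku_S(d)|\le C^{k+1}k!L_1(d)\rho^{-(k-1)}$. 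The normalized coefficients are $a_k=a^{k-1}\partial_{\bar x}^ku_S(d)/k!$. Hence
\[ \sum_k2^k|a_k|\le C\sum_k(2Ca/\rho)^{k-1}L_1(d)\le CL_0 \]
once $a/\rho$ is small.

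For the time derivative, compute
\[ \frac{d}{dt}\partial^k_{\bar x}u_S(t,d(t))=\frac{i(-1)^kk!}{2\pi}\Bigl(\int\partial_t\omega_S(y)(\bar d-\bar y)^{-k-1}\,dA+\int\omega_S(y)\,\partial_t(\bar d-\bar y)^{-k-1}\,dA\Bigr). \]
Using the weak transport equation $\partial_t\omega_S=-\operatorname{div}(u\,\omega_S)$, the two terms combine into
\[ (k+1)\,k!\int\omega_S(y)\,\frac{\overline{u(y)-\dot d}}{(\bar d-\bar y)^{k+2}}\,dA(y). \]
Here $u$ is the full velocity at $y$, and $\dot d$ is the patch average of $u_S$ over the distinguished patch, by antisymmetry as in the proof of Lemma~\ref{core:normal-form}. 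This is legitimate since each patch indicator satisfies the weak transport equation, and the kernel is smooth on the source support.

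The key step is to show that the velocity difference $u(y)-\dot d$ is controlled by $L_0$ times $|d-y|$, so that the extra factor $|d-y|^{-1}$ restores the weight $|d-y|^{-2}$ and leaves $L_0^2\rho^{-(k-1)}$. This is the main obstacle, because $u(y)$ includes the large self-induced velocity of the patch containing $y$, and the bound must be independent of vorticity magnitudes.

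I would handle it by splitting $\omega_S$ into its individual patches $P$. For each patch, the self-induced velocity contributes
\[ \int_P\omega\,u_{\mathrm{self}}\,g\,dA, \]
with $g$ the smooth kernel. By antisymmetry of the Biot--Savart kernel this equals a double integral of $K(y-y')(g(y)-g(y'))$, so it is bounded by $\int\int|\omega(y)||\omega(y')||y-y'|^{-1}|\nabla g|\cdot|y-y'|$. That is, the kernel singularity is cancelled. The result is at most $|\Gamma_P|^2\sup|\nabla g|$ with $|\nabla g|\lesssim|d-y|^{-k-3}$, which I would absorb into the $L_0^2$ term using $|\Gamma_P|\lesssim L_1\operatorname{dist}^2$.

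The remaining velocity at $y$ comes from the distinguished patch and the other sources. The distinguished patch contributes $|\Gamma|/|d-y|$, which is where the $|\Gamma|/\rho^2$ in $L_0$ is used. The other sources are split into those near $y$ and those far from it, and bounded by $L_1$-type sums times $|d-y|$. Finally $|\dot d|\le CL_1(d)\rho$. Together these give
\[ \Bigl|\frac{d}{dt}D^ku_S\Bigr|\le C^{k+1}(k+1)!L_0^2\rho^{-(k-1)}. \]
Here I expect to need $L_1(y)\lesssim L_0$ on the support, or an honest reorganization of the double sum into $L_0^2$. This is the delicate point, and I would first try bounding the cross-interaction double integral by $\int\int|\omega_S(y)||\omega_S(y')|\,|d-y|^{-k-2}|y-y'|^{-1}$ and splitting according to whether $|y-y'|$ is smaller or larger than $|d-y|/2$.

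Summing in $k$ with $\dot a_k=a^{k-1}\frac{d}{dt}\partial^k u_S/k!$ gives $\sum_k2^k|\dot a_k|\le CL_0^2$ for $a/\rho$ small. Taking the supremum over the time interval then yields the hypotheses of Lemma~\ref{core:normal-form} with $L=C\sup_tL_0(t)$.
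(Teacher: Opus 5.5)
Your spatial bounds, the transport identity with the factor $\overline{u(y)-\dot d}$, the distinguished-patch term, and the coefficient summation are correct and agree with the paper. For the distinguished-patch term, its velocity on the sources is at most $C|\Gamma|/|d-y|$, and this is where $|\Gamma|/\rho^2$ enters $L_0$.

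\textbf{The gap.} It is in the main source--source term, exactly the step you flag, and the bounds you propose there are false.
\begin{enumerate}
\item The bound $|\dot d|\le CL_1(d)\rho$ fails. A single source of circulation $\Gamma'$ at distance $R\gg\rho$ gives $|\dot d|\approx|\Gamma'|/(2\pi R)$, while $L_1(d)\rho\approx|\Gamma'|\rho/R^2$.
\item For the same reason, the other sources' velocity at $y$ is not pointwise $O(L_1|d-y|)$. Distant sources produce an almost uniform drift. Only the difference $u_S(y)-u_S(d)$ is small, and only away from the near field.
\item Your fallback bound $\iint|\omega_S(y)||\omega_S(y')||d-y|^{-k-2}|y-y'|^{-1}$ discards both available cancellations. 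Take two small source patches, one at distance $\rho$ and one at distance $R\gg\rho$, with equal contributions to $L_1(d)$. Then the bound exceeds the target $L_1(d)^2\rho^{-(k-1)}$ by a factor of order $R/\rho$.
\item The same fallback fails in the near field. Take two distinct source patches of radius $a'$ lying close together at distance about $\rho$; only separation is assumed. The near-field part is then of order $\Gamma_P\Gamma_{P'}\rho^{-k-2}/a'$, which is not bounded independently of the vorticity magnitudes.
\item The per-patch bound $|\Gamma_P|^2\sup_P|\nabla g|$ with $|\Gamma_P|\lesssim L_1\,\mathrm{dist}^2$ silently assumes each source patch is small relative to its distance from $d$. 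The lemma does not assume this.
\item The option ``$L_1(y)\lesssim L_0$ on the support'' is unavailable, since $L_1(y)$ is infinite at points of the support.
\end{enumerate}

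\textbf{The missing idea.} Keep the subtraction and the antisymmetry together in one symmetrized double integral over all of $\omega_S\otimes\omega_S$, not patch by patch.

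First, replace $\dot d$ by $u_S(d)$. The centroid equation, the one-sign assumption, and support in $B(d,2a)$ give $|\dot d-u_S(d)|\le CaL_1(d)$. This costs $C^{k+1}(k+1)!(a/\rho)\rho^{-(k-1)}L_1(d)^2$.

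Next, write $u_S(d)-u_S(y)=\int[K(d-z)-K(y-z)]\omega_S(z)\,dA(z)$ and symmetrize in $(y,z)$. This gives the kernel
\[ B_k(d,y,z)=\tfrac12\bigl(D^{k+1}K(d-y)[K(d-z)-K(y-z)]+D^{k+1}K(d-z)[K(d-y)-K(z-y)]\bigr). \]
Bound it by cases:
\begin{enumerate}
\item For $|y-d|\le|z-d|/2$, use $|K(d-z)-K(y-z)|\le C|y-d|/|z-d|^2$. The reverse case is symmetric.
\item For comparable distances with $|y-z|\gtrsim|y-d|$, bound each term directly.
\item For $|y-z|$ small, the singular part is $-\tfrac12[D^{k+1}K(d-y)-D^{k+1}K(d-z)]K(y-z)$. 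It is bounded because the bracket is $O(|y-z|\,|y-d|^{-k-3})$.
\end{enumerate}
Together these give
\[ |B_k|\le C^{k+1}(k+1)!\bigl(|d-y|^{-k-1}|d-z|^{-2}+|d-y|^{-2}|d-z|^{-k-1}\bigr). \]
This integrates to $C^{k+1}(k+1)!\rho^{-(k-1)}L_1(d)^2$, uniformly in vorticity magnitudes, patch geometry, and the number of patches. Justify the symmetrization by excising $|y-z|<\epsilon$ and passing to the limit with this integrable bound. Combined with your distinguished-patch term, this closes the $L_0^2$ estimate.
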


\begin{proof}
Differentiating $K$ outside the supports gives the spatial derivative bound. If the sources are transported by $u_S$ alone, transport and integration by parts give
\[ (\partial_t+u_S(x)\cdot\nabla_x)D^ku_S(x)=\int D^{k+1}K(x-y)[u_S(x)-u_S(y)]\omega_S(y)\,dA(y). \]
Substituting $u_S=K*\omega_S$ and interchanging $y,z$ gives the double-integral kernel
\[ \begin{aligned} B_k(x,y,z)=\tfrac12\bigl(&D^{k+1}K(x-y)[K(x-z)-K(y-z)]\\ &+D^{k+1}K(x-z)[K(x-y)-K(z-y)]\bigr). \end{aligned} \]
Here the last derivative index is contracted with the velocity. Translate $x$ to zero and put $r=|y|$, $s=|z|$. For $r\le s/2$, $|K(-z)-K(y-z)|\le Cr/s^2$, and the two terms are bounded by
\[ C^{k+1}(k+1)!r^{-k-1}s^{-2},\qquad C^{k+1}(k+1)!s^{-k-2}(r^{-1}+s^{-1})\le C^{k+1}(k+1)!r^{-2}s^{-k-1}. \]
The case $s\le r/2$ is symmetric. When $r$ and $s$ are comparable and $|y-z|\ge cr$, each term is at most $C^{k+1}(k+1)!r^{-k-3}$. In the remaining case the singular part is
\[ \begin{gathered} -\tfrac12[D^{k+1}K(-y)-D^{k+1}K(-z)]K(y-z),\\ |D^{k+1}K(-y)-D^{k+1}K(-z)|\le C^{k+1}(k+1)!|y-z|r^{-k-3}. \end{gathered} \]
The factor $|y-z|$ cancels the singularity of $K(y-z)$, and the nonsingular terms have the same bound. We used $|D^mK(x)|\le C^{m+1}m!|x|^{-m-1}$, absorbing the additional factor $k+2$ into $C^{k+1}$. Thus
\[ |B_k(x,y,z)|\le C^{k+1}(k+1)!\left(|x-y|^{-k-1}|x-z|^{-2}+|x-y|^{-2}|x-z|^{-k-1}\right). \]
At $x=d$, integration gives $C^{k+1}(k+1)!\rho^{-(k-1)}L_1(d)^2$. The identities follow for bounded patch vorticity by first excluding $|y-z|<\epsilon$ and then using this integrable bound to pass to the limit.

The distinguished patch induces velocity at most $C|\Gamma|/|d-y|$ on the source supports. Its additional transport contribution is bounded by
\[ C^{k+1}(k+1)!|\Gamma|\int\frac{|\omega_S(y)|}{|d-y|^{k+3}}\,dA(y)\le C^{k+1}(k+1)!\rho^{-(k-1)}\frac{|\Gamma|}{\rho^2}L_1(d). \]
The centroid equation and the one-sign assumption give
\[ \dot d=\Gamma^{-1}\int\omega_{\rm patch}(z)u_S(z)\,dA(z),\qquad |\dot d-u_S(d)|\le CaL_1(d). \]
Multiplication by the spatial bound for $D^{k+1}u_S$ shows that changing to the path $d(t)$ adds at most
\[ C^{k+1}(k+1)!\rho^{-(k-1)}\frac a\rho L_1(d)^2. \]
Since $a/\rho\le1/8$, the sum is bounded by $C^{k+1}(k+1)!\rho^{-(k-1)}L_0^2$.

The external velocity is anti-holomorphic, so $a_k=a^{k-1}\partial_{\bar z}^ku_S(d)/k!$. The derivative estimates yield geometric series with ratio $2Ca/\rho$, and the time-derivative series has an additional factor $k+1$. Both sums have the asserted bounds when $2Ca/\rho<1/2$.
\end{proof}

\subsection{Uniform boundary control}

Decrease the fixed $\delta$ in \eqref{meta:boundary-chart}, if necessary, so that the coordinate changes of Lemma~\ref{core:normal-form} and their inverses are defined throughout that neighborhood. We keep this choice of $\delta$ below.

\begin{lemma} \label{core:confinement}
Let a vortex patch have uniform vorticity magnitude $\Omega_{\mathrm{patch}}$, area $\pi a^2$, and centroid $d(t)$. Assume that an external velocity field satisfies the bounds of Lemma~\ref{core:normal-form}, with constant $L$. Suppose the initial normalized contour satisfies, for some real $\eta_0$,
\[ \|\eta_0\|_{C^{1,1/2}}\le\delta/2,\qquad \|F(0)-E(\eta_0)\|_{C^{1,1/2}}\le C_0\varepsilon,\qquad \varepsilon=\frac L{\Omega_{\mathrm{patch}}}. \]
There are $c,C,\varepsilon_0>0$, depending on the fixed coordinate maps, $C_0$, and $\delta$, such that for $\varepsilon\le\varepsilon_0$, the normalized contour stays within $C\varepsilon^{2/3}$ of a parameterized unit circle for
\[ 0\le t\le c\frac{\Omega_{\mathrm{patch}}^{1/3}}{L^{4/3}}. \]
The original label coordinate satisfies $\|\eta(t)-\eta_0\|_{C^{1,1/2}}\le\delta/4$. The patch is therefore contained in $B(d(t),a(1+C\varepsilon^{2/3}))$. The condition $\Omega_{\mathrm{patch}}\ge C(T^3L^4+L)$ suffices for a fixed time $T$, provided the external hypotheses hold on $[0,T]$.
\end{lemma}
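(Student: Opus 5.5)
We work in the normal-form coordinates of Lemma~\ref{core:normal-form}, with rotating time $\tau=\Omega_{\mathrm{patch}}t$. The physical interval $0\le t\le c\Omega_{\mathrm{patch}}^{1/3}L^{-4/3}$ then becomes $0\le\tau\le c\varepsilon^{-4/3}$. First transfer the initial data. Since $F(0)$ is within $C_0\varepsilon$ of $E(\eta_0)$, and the forcing-dependent coordinate change differs from the zero-forcing one by $O(\varepsilon)$ with bounded derivatives, the transformed initial deformation satisfies $\|z(0)\|\le C\varepsilon$. The unforced quadratic change maps $z=0$ to $z=0$, so it adds no constant. The transformed label stays within $C\varepsilon$ of $\eta_0$, hence within $\delta/2+C\varepsilon$ of zero.

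The core step is an energy-type bound for $z$ using the isometric norm $\|\cdot\|_*$ of Lemma~\ref{core:disk-spectrum}. Write Duhamel's formula
\[ z(\tau)=e^{-\tau H/2}z(0)+\int_0^\tau e^{-(\tau-s)H/2}R_0(\eta,z,s)\,ds. \]
Since $e^{-rH/2}$ is an isometry for $\|\cdot\|_*$, we get
\[ h(\tau):=\|z(\tau)\|_*\le C\varepsilon+C\int_0^\tau\bigl(h^3+\varepsilon^2\bigr)\,ds. \]
This is the physical-time inequality of the introduction, rewritten in $\tau$. Run a bootstrap with hypothesis $h\le 2K\varepsilon^{2/3}$ on $[0,\tau_1]$, where $\tau_1\le c\varepsilon^{-4/3}$. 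Under this hypothesis the integral is at most $C\tau_1(8K^3\varepsilon^2+\varepsilon^2)\le Cc(K^3+1)\varepsilon^{2/3}$. Choosing $K$ larger than the constant from $C\varepsilon\le\varepsilon^{2/3}$, and then $c$ small depending on $K$, improves the bound to $h\le K\varepsilon^{2/3}$. Continuity then closes the bootstrap, provided the solution stays in the chart.

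Staying in the chart is the real obstacle. It is controlled through the label equation $\eta_\tau=R_\eta=O(\|z\|^2+\varepsilon^2)$. A naive integration gives $\int_0^{\tau_1}(K^2\varepsilon^{4/3}+\varepsilon^2)\,ds\le cK^2+c\varepsilon^{2/3}$, which is small but not $o(1)$. We therefore choose $c$ small enough, depending on $K$ and $\delta$, that this drift is at most $\delta/8$. Combined with the $O(\varepsilon)$ initial offsets and the $O(\varepsilon)$ difference between the final and original label coordinates, this gives $\|\eta(t)-\eta_0\|_{C^{1,1/2}}\le\delta/4$ for $\varepsilon\le\varepsilon_0$. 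The derivative bounds in Lemma~\ref{core:normal-form} must hold uniformly for $\eta$ in the $\delta$-ball, and they do, since the remainders are analytic on a fixed larger neighborhood. I would double-check that the order of choosing $K$, then $c$, then $\varepsilon_0$ is consistent: $K$ depends only on $C_0$ and the chart constants, so it is.

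Finally, convert back to the original contour coordinates. The inverse changes have bounded derivatives, and at zero forcing $z=0$ still describes $E(\eta)$. Hence the physical normalized contour lies within $C(\|z\|+\varepsilon)\le C\varepsilon^{2/3}$ of a parameterized unit circle $E(\tilde\eta)$. Because the $C^{1,1/2}$ norm controls $L^\infty$, and the contour is scaled by $a$ and centered at $d(t)$, the patch lies in $B(d(t),a(1+C\varepsilon^{2/3}))$. For the last claim, $T\le c\Omega_{\mathrm{patch}}^{1/3}L^{-4/3}$ is equivalent to $\Omega_{\mathrm{patch}}\ge c^{-3}T^3L^4$. In addition, $\varepsilon\le\varepsilon_0$ requires $\Omega_{\mathrm{patch}}\ge L/\varepsilon_0$. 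Together these give the condition $\Omega_{\mathrm{patch}}\ge C(T^3L^4+L)$.
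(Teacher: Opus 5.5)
Your proposal is correct and follows the paper's proof essentially step by step. It uses the same transfer of initial data to $\|z(0)\|\le C\varepsilon$, the same Duhamel bound in the isometric norm $\|\cdot\|_*$ with a bootstrap at scale $\varepsilon^{2/3}$ on $\tau\le c\varepsilon^{-4/3}$, the same order of constants (bootstrap constant, then $c$, then $\varepsilon_0$), and the same label-drift bound $C(K^2+1)c\le\delta/8$ to stay strictly inside \eqref{meta:boundary-chart}; the only omission is the preliminary reflection reducing negative vorticity to the positive case of Lemma~\ref{core:normal-form}, which is immediate.
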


\begin{proof}
Reflection reduces to positive vorticity and preserves the norms and external hypotheses. Apply Lemma~\ref{core:normal-form}. The map $\mathcal C$ satisfies $\mathcal C(\eta,0)=E(\eta)$ and has bounded derivatives and inverse derivatives. The unforced correction is $O(\|z\|^2)$ and the forcing correction is $O(\varepsilon)$. Hence the coordinates after Lemma~\ref{core:normal-form} satisfy
\[ \|z(0)\|\le C\varepsilon,\qquad \|\eta(0)-\eta_0\|\le C\varepsilon. \]
Put $\rho=\varepsilon^{2/3}$ and use the equivalent norm $\|\cdot\|_*$ of Lemma~\ref{core:disk-spectrum}. By variation of constants,
\[ \|z(\tau)\|_*\le C\varepsilon+C\int_0^\tau(\|z(\sigma)\|_*^3+\varepsilon^2)\,d\sigma. \]
Under the bootstrap assumptions $\|z\|_*\le C_*\rho$ and $0\le\tau\le c\rho^{-2}$, choose $C_*$ large, then $c$ small, and finally $\varepsilon_0$ small. Since $\rho^3=\varepsilon^2$,
\[ \|z(\tau)\|_*\le C\varepsilon+C(C_*^3+1)c\rho\le\frac{C_*\rho}{2}. \]
Also,
\[ \|\eta(\tau)-\eta(0)\|\le C\int_0^{c\rho^{-2}}(C_*^2\rho^2+\varepsilon^2)\,d\sigma\le C(C_*^2+1)c. \]
Choose $c$ so that $C(C_*^2+1)c\le\delta/8$. Returning to the original coordinates changes $(\eta,z)$ by $O(\rho^2+\varepsilon)=O(\varepsilon)$. Decrease $\varepsilon_0$ so that the initial and coordinate-change errors together are at most $\delta/8$ and $C\rho<\delta/2$. In the original coordinates of Lemma~\ref{core:disk-spectrum}, this gives
\[ \|\eta(t)-\eta_0\|_{C^{1,1/2}}\le\delta/4,\qquad \|F(t)-E(\eta(t))\|_{C^{1,1/2}}\le C\rho<\delta/2. \]
Since $\|\eta_0\|_{C^{1,1/2}}\le\delta/2$, both inequalities are strictly inside \eqref{meta:boundary-chart}, so local existence continues the solution. Since $\tau=\Omega_{\mathrm{patch}}t$, the time interval is the one stated. The radial displacement is bounded by the contour distance, which gives the support estimate.
\end{proof}

For $\Omega_{\mathrm{patch}}=A\Omega^4$ and $L=C_0\Omega$, Lemma~\ref{core:confinement} gives time $cA^{1/3}C_0^{-4/3}$ and deformation $C(C_0/A)^{2/3}\Omega^{-2}$. Thus choosing $A$ large and then $\Omega$ large gives the following consequence.

\begin{corollary} \label{meta:confinement}
For an initial disk with standard labels $F(0,\theta)=e^{i\theta}$, $\Omega_{\mathrm{patch}}=A\Omega^4$, and $L=C_0\Omega$, choose $A$ large and then $\Omega$ large. On every initial time interval $[0,T]\subset[0,2]$ where the normalized external coefficients satisfy the bounds of Lemma~\ref{core:normal-form}, the original contour coordinates satisfy
\[ \|\eta(t)\|_{C^{1,1/2}}\le\delta/2,\qquad \|F(t)-e^{i(\cdot+\eta(t))}\|_{C^{1,1/2}}\le C_A\Omega^{-2}<\delta/2. \]
Thus the contour cannot first exit through \eqref{meta:boundary-chart}.
\end{corollary}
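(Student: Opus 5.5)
The plan is to apply Lemma~\ref{core:confinement} directly, with $\Omega_{\mathrm{patch}}=A\Omega^4$, $L=C_0\Omega$ and $\eta_0=0$. For an initial disk with standard labels we have $F(0)=E(0)$, so the hypotheses $\|\eta_0\|\le\delta/2$ and $\|F(0)-E(\eta_0)\|\le C_0\varepsilon$ hold trivially. The smallness parameter is
\[ \varepsilon=\frac{L}{\Omega_{\mathrm{patch}}}=\frac{C_0}{A\Omega^3}, \]
which is at most $\varepsilon_0$ once $\Omega$ is large for fixed $A$. The guaranteed time is
\[ c\,\frac{\Omega_{\mathrm{patch}}^{1/3}}{L^{4/3}}=c\,\frac{A^{1/3}\Omega^{4/3}}{C_0^{4/3}\Omega^{4/3}}=cA^{1/3}C_0^{-4/3}. \]
This is independent of $\Omega$, so choosing $A$ large (depending on $C_0$ and the constants of the lemma) makes it exceed $2$. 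The deformation bound is
\[ C\varepsilon^{2/3}=C\,(C_0/A)^{2/3}\,\Omega^{-2}=:C_A\Omega^{-2}. \]
We then take $\Omega$ large so that $C_A\Omega^{-2}<\delta/2$. The label estimate $\|\eta(t)-\eta_0\|\le\delta/4$ from the lemma, together with $\eta_0=0$, gives $\|\eta(t)\|\le\delta/4\le\delta/2$.

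The remaining issue is that the lemma presupposes its external hypotheses on the whole interval, whereas here they are only assumed on some initial interval $[0,T]\subset[0,2]$ on which the solution exists in the chart. I would run a continuity argument. Let $T'\le T$ be the supremum of times up to which the contour stays in the chart \eqref{meta:boundary-chart}. On $[0,T']$ the lemma applies, since its proof only integrates the normal-form equations while the coordinates are defined. This yields the strict bounds $\|\eta\|\le\delta/2<\delta$ and $\|F-E(\eta)\|<\delta/2<\delta$. By local existence and continuity of the contour in $C^{1,1/2}$, the solution therefore continues inside the chart past $T'$ unless $T'=T$. Hence the contour cannot first exit through the boundary of \eqref{meta:boundary-chart}, and the bounds hold on all of $[0,T]$.

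The only delicate point is making sure the constants $c,C,\varepsilon_0$ of Lemma~\ref{core:confinement} do not depend on $A$ or $\Omega$; they depend only on the coordinate maps, $C_0$ and $\delta$. This holds as stated, so the order of choices ($A$ first, then $\Omega$) is consistent.
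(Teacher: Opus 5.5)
Your proposal is correct and follows the paper's argument. The paper likewise applies Lemma~\ref{core:confinement} with $\Omega_{\mathrm{patch}}=A\Omega^4$ and $L=C_0\Omega$, reads off the time $cA^{1/3}C_0^{-4/3}$ and the deformation $C(C_0/A)^{2/3}\Omega^{-2}$, and chooses $A$ and then $\Omega$. The no-first-exit conclusion comes from the bounds lying strictly inside \eqref{meta:boundary-chart}, exactly as in the continuation step at the end of that lemma's proof. (The constant $C_0$ in the lemma's initial-data hypothesis is unrelated to the one in $L=C_0\Omega$, but since $F(0)=E(0)$ that hypothesis holds trivially.)
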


For later use, the hypotheses for each patch are, in the physical spatial axes,
\begin{equation} \label{meta:individual-input}
\begin{aligned}
\frac{u_{\mathrm{ext}}(t,w(t)+a\zeta)-u_{\mathrm{ext}}(t,w(t))}{a}&=\sum_{k\ge1}a_k(t)\bar\zeta^k,\\
\sum_{k\ge1}2^k|a_k(t)|&\le C_0\Omega,\qquad \sum_{k\ge1}2^k|\dot a_k(t)|\le C_0^2\Omega^2.
\end{aligned}
\end{equation}
Here $w(t)$ is the patch centroid, $\dot a_k$ differentiates along $w(t)$, and $C_0$ is fixed. The weight $2^k$ controls the series on $|\zeta|\le2$, corresponding to $B(w(t),2a)$, where the external field is curl-free and divergence-free. These coefficients exclude the internal rotation $A\Omega^4/2$. The same $A$ works for a larger set of quadrupole configurations provided \eqref{meta:individual-input} holds with the same $C_0$.

\subsection{Relative estimates in common coordinates}

We compare two distinct patch solutions under different external velocity field in the coordinates of Lemma~\ref{core:normal-form} constructed from the first one's external field. Consider two patches with the same signed circulation $\Gamma$, area $\pi a^2$, and vorticity magnitude $\Omega_{\mathrm{patch}}$. Center each normalized contour at its centroid, use the same initial boundary-label convention, and rotate both frames at the same signed internal rate. Suppose both patches satisfy the hypotheses of Lemma~\ref{core:confinement}, with the same bound $L$ for their normalized external coefficients. Denote these coefficients by $a_k^{(j)}$, $j=1,2$, as defined in Lemma~\ref{core:normal-form} at the respective centroids with the common radius $a$.

\begin{lemma} \label{core:relative}
Put $\rho=(L/\Omega_{\mathrm{patch}})^{2/3}$ and apply the coordinate change of Lemma~\ref{core:normal-form}, constructed from the first patch's normalized external coefficients, to both contours. In these coordinates $(\eta^{(j)},z^{(j)})$, $j=1,2$, write $\triangle$ for the second value minus the first at the same physical time. Define
\[ \triangle z(t)=z^{(2)}(t)-z^{(1)}(t),\qquad W_{\mathrm{bdry}}=\|\triangle z\|_*+\rho\|\triangle\eta\|, \]
where $\|\cdot\|_*$ is the norm of Lemma~\ref{core:disk-spectrum}. Then, on the common time interval from Lemma~\ref{core:confinement}, $0\le t\le c\Omega_{\mathrm{patch}}^{1/3}/L^{4/3}$,
\begin{equation} \label{core:relative-estimate}
W_{\mathrm{bdry}}(t)\le e^{C\Omega_{\mathrm{patch}}\rho^2t}W_{\mathrm{bdry}}(0)+C\int_0^t e^{C\Omega_{\mathrm{patch}}\rho^2(t-r)}\sum_{k\ge1}2^k|\triangle a_k(r)|\,dr.
\end{equation}
Identical initial normalized embeddings give $W_{\mathrm{bdry}}(0)=0$. The estimate requires no time derivative of $\triangle a_k$.
\end{lemma}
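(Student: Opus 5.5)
The plan is to write the equations of both patches in the single coordinate system built from the first patch's coefficients $a_k^{(1)}$, subtract them, and close a Gronwall estimate for the weighted difference $W_{\mathrm{bdry}}$ using the derivative bounds of Lemma~\ref{core:normal-form}.

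\emph{Step 1: the two equations in common coordinates.} In rotating time $\tau=\Omega_{\mathrm{patch}}t$, the first patch obeys \eqref{core:normal-form-equations} exactly. The second patch's raw contour equation differs from the first only through the forcing term. That term is $f_{a^{(2)}}=f_{a^{(1)}}+f_{\triangle a}$, because $f_a$ is linear in the coefficients. Apply the same change of variables $x=y+\mathcal S^{(1)}(\tau,y)$, followed by the unforced quadratic change, to the second contour. Its equation becomes
\[ \eta^{(2)}_\tau=R_\eta(\eta^{(2)},z^{(2)},\tau)+E_\eta,\qquad z^{(2)}_\tau=-\tfrac12Hz^{(2)}+R_0(\eta^{(2)},z^{(2)},\tau)+E_z. \]
Here $(E_\eta,E_z)$ is the pushforward of $f_{\triangle a}$ by the bounded inverse derivative of the coordinate change. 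By the $X_{k,\pm}$ bounds with weights $(3/2)^k\le 2^k$, this gives
\[ \|E_\eta\|+\|E_z\|\le C\Omega_{\mathrm{patch}}^{-1}\sum_k2^k|\triangle a_k|. \]
The key point is that the change of variables is built from $a^{(1)}$ alone. Consequently no $\dot a_k^{(2)}$ and no time derivative of $\triangle a_k$ appears: the $\partial_\tau\mathcal S$ term is common to both patches.

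\emph{Step 2: subtract and estimate.} Both patches remain in the chart with $\|z^{(j)}\|\le C\rho$ on the interval, by Lemma~\ref{core:confinement}. The coordinate shifts of size $O(\varepsilon)$ are harmless since $\varepsilon\le\rho$. The mean value theorem and the derivative bounds give
\[ \|\triangle R_0\|\le C(\rho^3+\varepsilon^2)\|\triangle\eta\|+C(\rho^2+\varepsilon^2)\|\triangle z\|\le C\rho^2\bigl(\rho\|\triangle\eta\|+\|\triangle z\|\bigr), \]
\[ \|\triangle R_\eta\|\le C\rho^2\|\triangle\eta\|+C\rho\|\triangle z\|. \]
Multiplying the second bound by $\rho$ gives $\rho\|\triangle R_\eta\|\le C\rho^2 W_{\mathrm{bdry}}$. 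The weight $\rho$ on $\triangle\eta$ is chosen exactly so that this works. Since $e^{-\tau H/2}$ is an isometry for $\|\cdot\|_*$, Duhamel's formula applied to $\triangle z$, together with direct integration of $\triangle\eta$, yields
\[ W_{\mathrm{bdry}}(\tau)\le W_{\mathrm{bdry}}(0)+\int_0^\tau\Bigl(C\rho^2W_{\mathrm{bdry}}+C\Omega_{\mathrm{patch}}^{-1}\sum_k2^k|\triangle a_k|\Bigr)d\sigma. \]
The factor $\rho\le1$ on the $E_\eta$ contribution is simply dropped. Gronwall's inequality in $\tau$, followed by the substitution $\tau=\Omega_{\mathrm{patch}}t$, gives \eqref{core:relative-estimate}. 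In physical time the rate is $C\Omega_{\mathrm{patch}}\rho^2$ and $d\sigma=\Omega_{\mathrm{patch}}\,dr$ cancels the $\Omega_{\mathrm{patch}}^{-1}$. For identical initial embeddings, both patches are mapped by the same coordinate change at $\tau=0$, so $W_{\mathrm{bdry}}(0)=0$.

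\emph{Main obstacle.} The delicate step is Step 1, for two reasons.

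First, one must justify that the second patch's trajectory lies in the domain of the first patch's normal-form chart. The chart is defined on a fixed neighborhood independent of the forcing, and Lemma~\ref{core:confinement} applies to both patches with the same $L$, so this should hold.

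Second, one must check that the forcing difference enters with no linear-in-$\varepsilon$ feedback on the scale $\rho^2$. In particular, the derivative of the remainder, $\|D_zR_\eta\|\le C(\|z\|+\varepsilon^2)$, must be used with $\|z\|\le C\rho$ rather than an $O(1)$ bound. Using $O(1)$ here would ruin the $\rho$-weighting.

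A secondary point is the centering. Both contours are centered at their own centroids, but $f_a$ already subtracts the moments $m_{k,\pm}$ and therefore preserves the centroid constraint. Hence no extra term from the centroid difference appears.
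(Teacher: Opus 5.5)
Your proposal is correct and follows essentially the same route as the paper. You use common coordinates built from the first patch's coefficients, so that the difference of the transformed fields is just the pushforward of $f_2-f_1$ while the shared $\partial_\tau\Phi$ cancels. You then use the derivative bounds with $\|z\|\le C\rho$ at both states and the weight $\rho$ on $\triangle\eta$ to get the rate $C\rho^2$, the $\|\cdot\|_*$-isometry of $e^{-\tau H/2}$, and Gronwall. The differences are cosmetic: you integrate before applying Gronwall, where the paper works with the Dini derivatives $D^+$; your mean-value step tacitly uses a convex coordinate ball; and the negative-vorticity case follows by the reflection used in Lemma~\ref{core:confinement}.
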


\begin{proof}
Let $\tau=\Omega_{\mathrm{patch}}t$ and let $\Phi(\tau,F)$ be the coordinate change of Lemma~\ref{core:normal-form} constructed from the first list of normalized external coefficients. Denote by $f_j(\tau,F)$ the centered external contour field determined by the normalized external coefficients $a_k^{(j)}$. At the same contour $F$, the difference of the transformed fields is
\[ D_F\Phi(\tau,F)\bigl(f_2(\tau,F)-f_1(\tau,F)\bigr),\qquad \left\|D_F\Phi(f_2-f_1)\right\|\le\frac C{\Omega_{\mathrm{patch}}}\sum_{k\ge1}2^k|\triangle a_k|. \]
Indeed, the external contour field, including subtraction of its patch average, is linear in the coefficients. Both equations contain the same $\partial_\tau\Phi(\tau,F)$. Its change with $F$ is included in the state derivatives of the first transformed field, so no derivative of $\triangle a_k$ occurs.

Choose a smaller convex coordinate ball. Confinement and the $O(L/\Omega_{\mathrm{patch}})$ forcing-dependent change give $\|z\|\le C\rho$ at both states and on the segment joining them. The first transformed field is
\[ \eta_\tau=R_\eta,\qquad z_\tau=-\tfrac12Hz+R_0. \]
By Lemma~\ref{core:normal-form}, $L/\Omega_{\mathrm{patch}}=\rho^{3/2}$ gives
\[ \|D_\eta R_\eta\|\le C\rho^2,\qquad \|D_zR_\eta\|\le C\rho,\qquad \|D_\eta R_0\|\le C\rho^3,\qquad \|D_zR_0\|\le C\rho^2. \]
Integrating these derivatives along the segment gives
\[ D^+\|\triangle\eta\|\le C\rho^2\|\triangle\eta\|+C\rho\|\triangle z\|_*+\frac C{\Omega_{\mathrm{patch}}}\sum_{k\ge1}2^k|\triangle a_k|, \]
\[ D^+\|\triangle z\|_*\le C\rho^3\|\triangle\eta\|+C\rho^2\|\triangle z\|_*+\frac C{\Omega_{\mathrm{patch}}}\sum_{k\ge1}2^k|\triangle a_k|. \]
Here $D^+h(\tau)=\limsup_{s\downarrow0}(h(\tau+s)-h(\tau))/s$. For the second inequality we used variation of constants and $\|e^{-\tau H/2}z\|_*=\|z\|_*$ from Lemma~\ref{core:disk-spectrum}. Multiplying the first inequality by $\rho$ and adding yields
\[ D^+W_{\mathrm{bdry}}\le C\rho^2W_{\mathrm{bdry}}+\frac C{\Omega_{\mathrm{patch}}}\sum_{k\ge1}2^k|\triangle a_k|. \]
Gronwall's inequality and $\tau=\Omega_{\mathrm{patch}}t$ prove \eqref{core:relative-estimate}, with prefactor one on the initial value because $e^{-\tau H/2}$ preserves $\|\cdot\|_*$. The differential inequality applies from any later time in the interval. Since both contours use the same map, identical initial embeddings give $W_{\mathrm{bdry}}(0)=0$. Reflection as in Lemma~\ref{core:confinement} gives the assertion for negative vorticity.
\end{proof}

\begin{lemma} \label{core:relative-moments}
For the patches of Lemma~\ref{core:relative}, let $d^{(j)}$ denote their centroids and set
\[ M_k^{(j)}=\int(z-d^{(j)})^k\omega^{(j)}(z)\,dA(z),\qquad I^{(j)}=\int|z-d^{(j)}|^2\omega^{(j)}(z)\,dA(z). \]
Then, for $k\ge2$,
\[ |\triangle M_k|\le C_k|\Gamma|a^kW_{\mathrm{bdry}},\qquad |\triangle I|\le C|\Gamma|a^2\rho W_{\mathrm{bdry}}. \]
Each patch satisfies $|I^{(j)}-\Gamma a^2/2|\le C|\Gamma|a^2\rho^2$. The constants may be chosen so that $C_k\le C(k+1)^2(3/2)^k$, where $C$ depends only on the fixed contour neighborhood and coordinate maps, not on $k$, $a$, or $\Gamma$. For the same harmonic field $U$ on a fixed disk larger than both centered patches,
\[ \left|\frac1\Gamma\int U(z-d^{(2)})\omega^{(2)}(z)\,dA(z)-\frac1\Gamma\int U(z-d^{(1)})\omega^{(1)}(z)\,dA(z)\right|\le Ca^2\sup|D^2U|W_{\mathrm{bdry}}. \]
\end{lemma}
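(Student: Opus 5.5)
Every quantity in the statement is a contour functional of the normalized contour $F^{(j)}$. I will therefore write each one as an analytic function of the common coordinates $(\eta,z)$ of Lemma~\ref{core:relative} and estimate its differential. Scaling $w=d^{(j)}+aF$ and using Green's formula from Lemma~\ref{core:analytic-contour} gives
\[ M_k^{(j)}=\sigma\Omega_{\mathrm{patch}}a^{k+2}e^{ik\sigma\Omega_{\mathrm{patch}}t/2}\frac1{2i}\int_0^{2\pi}F^k\overline F F'\,d\theta. \]
Since $\sigma\Omega_{\mathrm{patch}}\pi a^2=\Gamma$, this is $\Gamma a^k$ times a unimodular phase times the functional $\mu_k(F)=\frac1{2\pi i}\int F^k\overline FF'$. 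The phase is common to both patches, because the frames rotate at the same rate and time is the same. The analogous formula holds for $I^{(j)}$ with $\iota(F)=\frac1{2\pi i}\int |F|^2\overline FF'$, and no phase appears.

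The key structural observation is that $\mu_k$ and $\iota$ are invariant under relabeling on circles. For $k\ge2$, $\mu_k(E(\eta))=\frac1\pi\int_{\mathbb D}w^k\,dA=0$ for every $\eta$, and $\iota(E(\eta))=1/2$. Through the chart $\mathcal C$ composed with the inverse of the normal-form change, which is $O(\varepsilon)$-close to the unforced one, I view $\mu_k$ as an analytic function $\tilde\mu_k(\tau,\eta,z)$. In the unforced coordinates it vanishes identically on $z=0$. The forcing-dependent change only moves $z=0$ by $O(\varepsilon)$, and $\varepsilon\le\rho$. Hence $D_\eta\tilde\mu_k=O(\|z\|+\varepsilon)=O(\rho)$ on the convex ball, while $D_z\tilde\mu_k=O(1)$. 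Integrating along the segment between the two states, on which $\|z\|\le C\rho$ as in Lemma~\ref{core:relative}, gives
\[ |\triangle\mu_k|\le C_k\bigl(\|\triangle z\|_*+\rho\|\triangle\eta\|\bigr)=C_kW_{\mathrm{bdry}}. \]
The constant $C_k$ comes from derivatives of $F^k$ on $|F|\le3/2$, namely a factor $k(3/2)^{k-1}$ from $D(F^k)$ together with chart bounds. This gives the claimed $C(k+1)^2(3/2)^k$.

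For $I$ the functional is also rotation invariant in the physical sense, so it is a second-order quantity. The Taylor expansion of the area-normalized $\iota$ about a circle has no linear term in the deformation, because area and centroid are fixed and the first variation of $\int|w|^2$ is $\int h$, which is $O(\|z\|^2)$ by the area constraint. Thus $\iota-1/2=O(\|z\|^2+\varepsilon\|z\|+\varepsilon^2)=O(\rho^2)$, which gives the single-patch bound. Its differential is $O(\rho)$ in both $z$ and $\eta$ directions, and this yields $|\triangle I|\le C|\Gamma|a^2\rho W_{\mathrm{bdry}}$ since $\rho\|\triangle\eta\|\le W_{\mathrm{bdry}}$. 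I expect the main obstacle to be checking carefully that the forcing-dependent normal-form change does not spoil the vanishing of first-order $z$-derivatives of $\iota$ and of $\eta$-derivatives of $\mu_k$ beyond $O(\varepsilon)$. To handle this, I would use the bound from Lemma~\ref{core:normal-form} that the changes differ by $O(\varepsilon)$ in their first two derivatives, and compose.

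For the harmonic-field integral, I expand $U(d^{(j)}+a\zeta)$ about the centroid. The constant term gives the same value $U(0)$ for both patches. The linear term integrates to zero by the centroid normalization. The remainder is $\int_0^1(1-s)D^2U(sa\zeta)[a\zeta,a\zeta]\,ds$ against the normalized patch. This is a contour functional of $F$ with size $a^2\sup|D^2U|$, and it is analytic with bounded differential. By the same segment argument its difference is at most $Ca^2\sup|D^2U|\,(\|\triangle z\|+\|\triangle\eta\|)$. Here $\eta$-relabeling does not change the domain $D_F$, and the domain integral depends on the contour only through the region. So the $\eta$ differential is $O(\rho)$, exactly as for $\mu_k$, and the bound is $Ca^2\sup|D^2U|W_{\mathrm{bdry}}$.
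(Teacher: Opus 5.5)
Your treatment of $\triangle M_k$ and of the harmonic-field average matches the paper's argument, and those two bounds are fine. The ingredients are Green's formula, vanishing or constancy on the whole family $E(\eta)$, the bounds $D_z=O(1)$ and $D_\eta=O(\|z\|+\varepsilon)=O(\rho)$, and integration along the segment. One small correction: turning vanishing on $z=0$ into $D_\eta=O(\|z\|)$ uses a bound on the mixed second derivative. That second derivative of $F^k$ is where $(k+1)^2(3/2)^k$ comes from, not $k(3/2)^{k-1}$.

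\textbf{The gap is in the estimate for $\triangle I$.} You claim that the differential of $\iota$ is $O(\rho)$ in both the $z$ and the $\eta$ directions. You then conclude $|\triangle I|\le C|\Gamma|a^2\rho W_{\mathrm{bdry}}$ ``since $\rho\|\triangle\eta\|\le W_{\mathrm{bdry}}$''. That inference does not follow. Integrating your derivative bounds along the segment gives $C|\Gamma|a^2\bigl(\rho\|\triangle z\|_*+\rho\|\triangle\eta\|\bigr)$. The second term is only bounded by $C|\Gamma|a^2W_{\mathrm{bdry}}$, because the weight $\rho$ on $\|\triangle\eta\|$ in $W_{\mathrm{bdry}}$ has already used up the one factor of $\rho$ you have. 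You therefore lose exactly the extra $\rho$ that distinguishes this bound from the moment bound, and that factor is what is used later in \eqref{transfer:action-correction}.

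\textbf{What you need is $\|D_\eta\iota\|\le C\rho^2$, and it does hold.}
\begin{itemize}
\item In the unforced coordinates, $\iota-\tfrac12$ vanishes to second order in $z$ at every point $(\eta,0)$. It equals $\tfrac12$ on every $E(\eta)$. Its first $z$-variation is proportional to $\int h$, the area variation, which vanishes on the constrained tangent space.
\item Differentiating $\iota-\tfrac12=\int_0^1(1-s)D_z^2\iota(\eta,sz)[z,z]\,ds$ in $\eta$ therefore gives $D_\eta\iota=O(\|z\|^2)$.
\item After the forcing-dependent change, which is $O(\varepsilon)$ in $C^2$, the only new contribution to the $\eta$ derivative is $D_z\iota\cdot O(\varepsilon)=O(\rho\varepsilon)\le C\rho^2$. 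The shift of $z$ by $O(\varepsilon)$ keeps the first term $O(\rho^2)$.
\item Alternatively, apply a Cauchy estimate in $\eta$ to your own uniform bound $\iota-\tfrac12=O(\rho^2)$ on a fixed $\eta$-ball.
\end{itemize}
With $D_z\iota=O(\rho)$ and $D_\eta\iota=O(\rho^2)$, the segment argument gives $C(\rho\|\triangle z\|_*+\rho^2\|\triangle\eta\|)=C\rho W_{\mathrm{bdry}}$. This is the paper's argument.

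A minor point: your normalization $\iota(F)=\frac1{2\pi i}\int|F|^2\overline FF'\,d\theta$ equals $1$ on the unit circle. The prefactor should be $(4\pi i)^{-1}$ to give $\tfrac12$.
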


\begin{proof}
For fixed $k\ge2$, Green's formula gives the normalized moment functional
\[ \mathcal M(F)=\frac1\pi\int_{D_F}w^k\,dA(w)=\frac1{2\pi i}\int_0^{2\pi}F^k\overline F\,\partial_\theta F\,d\theta. \]
The common rotation of the two frames multiplies both moments by the same factor of modulus one, so it does not affect their difference in absolute value. On the fixed contour neighborhood, $\|F\|_{L^\infty}\le3/2$ and $\|F\|_{C^{1,1/2}}\le C$. Differentiating the displayed formula once or twice gives
\[ \|D\mathcal M(F)\|+\|D^2\mathcal M(F)\|\le C(k+1)^2(3/2)^k. \]
Indeed, each derivative of $F^k$ contributes at most a factor $k$, while $\overline F$ and $\partial_\theta F$ depend linearly on $F$. The coordinate maps of Lemma~\ref{core:normal-form} have bounded first two derivatives, so the same bound holds in contour coordinates. This functional vanishes on every $E(\eta)$. In the unforced coordinates it therefore satisfies
\[ \mathcal M(\eta,z)=\int_0^1D_z\mathcal M(\eta,rz)z\,dr,\qquad \|D_z\mathcal M\|\le C_k,\qquad \|D_\eta\mathcal M\|\le C_k\|z\|. \]
The forcing-dependent change is $O(L/\Omega_{\mathrm{patch}})=O(\rho^{3/2})$ in $C^2$. Thus the $z$ and $\eta$ derivatives of the normalized $k$th moment are bounded by $C_k$ and $C_k\rho$ on $\|z\|\le C\rho$. Integrating along the segment between the states gives
\[ \frac{|\triangle M_k|}{|\Gamma|a^k}\le C_k\bigl(\|\triangle z\|_*+\rho\|\triangle\eta\|\bigr)=C_kW_{\mathrm{bdry}}. \]

For the averaging error, subtract $U(0)+DU(0)x$. The linear term has zero patch average by the centroid constraint, and the remainder has zero disk average by the mean-value property. Taylor's formula and Cauchy estimates give $z$ and $\eta$ derivatives bounded by $Ca^2\sup|D^2U|$ and $Ca^2\rho\sup|D^2U|$. Integrating along the segment gives the asserted bound. Here $U$ is fixed, and its difference is estimated separately in the centroid equation.

For the radial moment, closeness in $C^1$ gives $\partial_\theta\arg F=\operatorname{Im}(F'/F)>0$. The boundary is therefore a radial graph $a(1+h(\theta))e^{i\theta}$ in centered physical coordinates, with $\|h\|_{L^\infty}\le C\rho$. Area conservation and polar integration give
\[ \int_0^{2\pi}(2h+h^2)\,d\theta=0,\qquad I=\frac{\Gamma a^2}{4\pi}\int_0^{2\pi}(1+h)^4\,d\theta. \]
Consequently,
\[ I-\frac{\Gamma a^2}{2}=\frac{\Gamma a^2}{\pi}\int_0^{2\pi}\left(h+\frac{h^2}{2}\right)^2d\theta. \]
The normalized impulse $I/(\Gamma a^2)=(4\pi i)^{-1}\int F\overline F^2F'\,d\theta$ is an analytic contour functional. In the unforced coordinates, $\|h\|_{L^\infty}\le C\|z\|$, so $I/(\Gamma a^2)-1/2=O(\|z\|^2)$. Its $z$ and $\eta$ derivatives are therefore $O(\|z\|)$ and $O(\|z\|^2)$. The forcing-dependent change is $O(\rho^{3/2})$ in $C^2$ and adds at most $C\rho\rho^{3/2}\le C\rho^2$ to the $\eta$ derivative. The $z$ derivative remains bounded by $C\rho$. Thus
\[ \frac{|\triangle I|}{|\Gamma|a^2}\le C\bigl(\rho\|\triangle z\|_*+\rho^2\|\triangle\eta\|\bigr)=C\rho W_{\mathrm{bdry}}. \]
\end{proof}

\subsection{The centroid and moment theorem}

Consider four patches with circulations $\Gamma_1=\Gamma_2=\Gamma$ and $\Gamma_3=\Gamma_4=-\Gamma$, where
\[ \Gamma=2\pi\Omega\ell^2,\qquad 0<\ell\le1,\qquad a^2=\frac{2\ell^2}{A\Omega^3}. \]
Write $\omega_i$ for the vorticity of patch $i$, $\omega_{\mathrm{quad}}=\sum_{i=1}^4\omega_i$, and $\omega$ for the total vorticity. Let $\zeta_i$ be their centroids, $d=\frac14\sum_i\zeta_i$, and $U=K*(\omega-\omega_{\mathrm{quad}})$ the velocity from all other quadrupoles. Write the positive centroids as $d+\delta\pm z_1$ and the negative centroids as $d-\delta\pm z_2$. Recall $D=|z_1|^2-|z_2|^2$ and define the signed moments
\[ M_1=\int z\omega_{\mathrm{quad}}(z)\,dA(z)=4\Gamma\delta,\qquad I=\int|z-d|^2\omega_{\mathrm{quad}}(z)\,dA(z). \]
\[ M_k=\int(z-d)^k\omega_{\mathrm{quad}}(z)\,dA(z),\qquad k\ge2. \]
The integrals include all four patches with their signs. Since $\sum_{i=1}^4\Gamma_i=0$, $M_1$ is independent of the origin. Put $\rho=C_A\Omega^{-2}$, with $C_A$ large enough for Corollary~\ref{meta:confinement}.

For the comparison of two motions, fix $c,R>0$ and suppose that, for $j=1,2$ and throughout the comparison interval,
\[ \max_{1\le i\le4}\frac{|\zeta_i^{(j)}-d^{(j)}|}{\ell}\le\frac R2,\qquad \min_{i\ne m}\frac{|\zeta_i^{(j)}-\zeta_m^{(j)}|}{\ell}\ge c. \]
For $j=1,2$, assume $U^{(j)}$ is anti-holomorphic on $B(d^{(j)},2R\ell)$ and satisfies $|DU^{(j)}|\le C\mathsf S$ and $|D^2U^{(j)}|\le C\mathsf T$ on $B(d^{(j)},R\ell)$, with $\mathsf S,\mathsf T\le1$. Define
\begin{equation} \label{meta:comparison-quantities}
\begin{gathered}
|Z|=\ell^{-1}\sum_{i=1}^4|\triangle(\zeta_i-d)|,\qquad \|\triangle A\|_R=\sum_{k\ge1}R^k|\triangle A_k|,\\
\ell^{-1}[U^{(j)}(t,d^{(j)}+\ell z)-U^{(j)}(t,d^{(j)})]=\sum_{k\ge1}A_k^{(j)}(t)\bar z^k.
\end{gathered}
\end{equation}
The expansions use the same physical axes and their respective centers. Constants in the relative estimates may depend on $c$ and $R$, but not on the scale or the number of quadrupoles.

\begin{theorem} \label{meta:reduction}
Consider four patches in an Euler evolution of a finite collection of disjoint uniform patches with $C^{1,1/2}$ boundaries on an initial time interval $[0,T]\subset[0,2]$. Suppose these four patches start from disjoint disks with the parameters above and the standard labels $F_i(0,\theta)=e^{i\theta}$. Assume their centroids have mutual distances at least $c\ell$ and their contours satisfy \eqref{meta:boundary-chart}. For each patch $i$, assume \eqref{meta:individual-input} with $w=\zeta_i$ and
\[ u_{\mathrm{ext}}=K*(\omega-\omega_i)=U+\sum_{m\ne i}K*\omega_m. \]
Thus \eqref{meta:individual-input} controls the field induced by the other three patches together with $U$. Suppose $U$ is anti-holomorphic on a neighborhood of the convex hull of $\bigcup_i\overline{B(\zeta_i,2a)}$, where $\sup|DU|\le C\mathsf{S}$ and $\sup|D^2U|\le C\mathsf{T}$ with $\mathsf{S},\mathsf{T}\le1$. Choose $A$ and the lower threshold for $\Omega$ as in Corollary~\ref{meta:confinement}. The constants below may depend on the fixed contour charts, separation and analytic margins, $C_0$, and $A$, but not on the scale or the number of other patches. For the relative assertions, assume also the normalized size, separation, and analyticity bounds stated above. Then the following conclusions hold.

\begin{enumerate}
\item[(i)] For each patch $i$, the label coordinate $\eta_i$ and the normalized contour $F_i$ satisfy
\[ \|\eta_i(t)\|_{C^{1,1/2}}\le\delta/2,\qquad \|F_i(t)-e^{i(\cdot+\eta_i(t))}\|_{C^{1,1/2}}\le\rho<\delta/2,\qquad 0\le t\le T. \]
These bounds improve \eqref{meta:boundary-chart}. The exact centroid equations are
\begin{equation} \label{transfer:centroid-equation}
\dot\zeta_i=\sum_{j\ne i}\Gamma_jK(\zeta_i-\zeta_j)+U(\zeta_i)+E_i,\qquad |E_i|\le C\Omega\ell\frac{a^2}{\ell^2}\rho+Ca^2\rho\mathsf{T}\le C\ell\Omega^{-4}.
\end{equation}
Each patch has zero first centered moment and centered harmonic moments bounded by $C_k\Gamma a^k\rho$ for $k\ge2$.

\item[(ii)] Consider two such evolutions with the same radii and signed circulations and identical initial normalized disks. On an initial time interval in $[0,2]$ satisfying the comparison bounds above, use $s=\Omega t$ and put $F(s)=\Omega^{-1}\|\triangle A(s/\Omega)\|_R$. Define
\begin{equation} \label{transfer:core-memory}
W(s)=C\int_0^s e^{\sigma_c(s-r)}\bigl(|Z(r)|+F(r)\bigr)\,dr,\qquad \sigma_c=C\left(\Omega^{-1}+\frac{a^2}{\ell^2}\right).
\end{equation}
The fixed constant $C$ can be chosen so that, with $|\triangle E|=\sum_i|\triangle E_i|$,
\begin{equation} \label{meta:relative-error}
\frac{|\triangle E|}{\Omega\ell}\le C\frac{a^2}{\ell^2}\bigl(\rho|Z|+W+\rho F\bigr).
\end{equation}
For $k\ge2$, the centered complex moments satisfy $|\triangle M_k|\le C_k\Gamma\ell^k(|Z|+(a^2/\ell^2)W)$. For every later $s_0$,
\begin{equation} \label{meta:history-restart}
W(s)=e^{\sigma_c(s-s_0)}W(s_0)+C\int_{s_0}^s e^{\sigma_c(s-r)}\bigl(|Z(r)|+F(r)\bigr)\,dr.
\end{equation}
In particular $W(0)=0$, and no derivative of $\triangle A$ is required.

\item[(iii)] The first moment and angular impulse satisfy
\begin{equation} \label{transfer:neutral-identities}
\begin{aligned}
\dot M_1&=\int U(z)\omega_{\mathrm{quad}}(z)\,dA(z),\\
\dot I&=2\operatorname{Re}\int\overline{z-d}[U(z)-U(d)]\omega_{\mathrm{quad}}(z)\,dA(z)+2\operatorname{Re}\bigl((U(d)-\dot d)\overline{M_1}\bigr).
\end{aligned}
\end{equation}
Expanding about each patch centroid gives
\[ I-2\Gamma D=\sum_{i=1}^4\int|z-\zeta_i|^2\omega_i(z)\,dA(z). \]
Initially each disk contributes $\Gamma_i a^2/2$, so this sum is zero because $\sum_i\Gamma_i=0$. It satisfies
\begin{equation} \label{transfer:action-correction}
|I-2\Gamma D|\le C\Gamma a^2\rho^2,\qquad |\triangle(I-2\Gamma D)|\le C\Gamma a^2\rho W,
\end{equation}
where the second estimate assumes the comparison hypotheses of (ii). Its time derivative satisfies
\begin{equation} \label{meta:radial-derivative}
\left|\frac d{dt}(I-2\Gamma D)\right|\le C\Gamma a^2\Omega\rho.
\end{equation}
The second complex moment satisfies
\begin{equation} \label{meta:second-moment}
\begin{aligned}
\dot M_2={}&\frac{i}{2\pi}\iint\frac{z-w}{\bar z-\bar w}\,\omega_{\mathrm{quad}}(z)\omega_{\mathrm{quad}}(w)\,dA(z)dA(w)\\
&+2\int(z-d)[U(z)-\dot d]\omega_{\mathrm{quad}}(z)\,dA(z).
\end{aligned}
\end{equation}
Each same-patch integral contributes at most $C\Gamma^2\rho$. Replacing all different-patch integrals by their centroid expressions has total error at most $C\Gamma^2a^2/\ell^2$.

\item[(iv)] The contour bounds cannot cause a first exit while the geometric and external hypotheses retain their margins. To verify \eqref{meta:individual-input} for patch $i$, it suffices that the supports of $\omega-\omega_i$ have distance at least $c\ell$ from $\zeta_i$ and
\[ \int\frac{|(\omega-\omega_i)(y)|}{|\zeta_i-y|^2}\,dA(y)\le C\Omega. \]
For $a/\ell$ sufficiently small, the coefficient and material-derivative bounds then hold with fixed $C_0$, since $|\Gamma|/(c\ell)^2=O(\Omega)$. Local existence and continuous dependence hold for every finite collection before a geometric or contour exit.

If the quadrupole is contained in $B(d,r/4)$, the support of $\omega-\omega_{\mathrm{quad}}$ lies outside $B(d,r)$, and $\int |(\omega-\omega_{\mathrm{quad}})(y)|/|d-y|^2\,dA(y)+\Gamma/r^2\le C\mathsf{S}$, then
\begin{equation} \label{meta:group-strain}
\left|\frac d{dt}DU(t,d(t))\right|\le C\mathsf{S}^2+C\mathsf{T}|\dot d(t)-U(t,d(t))|.
\end{equation}
The separation radius $r$ in this assertion does not enter the choice of $A$.
\end{enumerate}
\end{theorem}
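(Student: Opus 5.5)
The proof assembles Lemmas~\ref{core:confinement}--\ref{core:relative-moments} patch by patch and then sums over the four patches.

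\emph{Part (i).} Corollary~\ref{meta:confinement} applies to each patch, because \eqref{meta:individual-input} holds with $L=C_0\Omega$ and $\Omega_{\mathrm{patch}}=A\Omega^4$. This gives the improved chart bounds with $\rho=C_A\Omega^{-2}$. For the centroid equation I will use the identity from the proof of Lemma~\ref{core:normal-form}: $\dot\zeta_i$ equals the patch average of $u_{\mathrm{ext}}$. Split $u_{\mathrm{ext}}=U+\sum_{m\ne i}K*\omega_m$. For $U$, subtracting $U(\zeta_i)+DU(\zeta_i)(z-\zeta_i)$ leaves a term bounded by $Ca^2\rho\,\mathsf T$. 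This follows from the averaging argument of Lemma~\ref{core:relative-moments}, since the disk average vanishes and the deviation from a disk is $O(\rho)$. For $m\ne i$, I will expand $K(\zeta_i-\zeta_m+\cdot)$ in double multipole form. The monopole gives $\Gamma_mK(\zeta_i-\zeta_m)$ and the first moments vanish. The harmonic moments of order $k\ge2$ are $O(C_k\Gamma a^k\rho)$, and the $|z|^2$-type terms drop out by harmonicity (mean value for disks). So the remaining error is $C\Gamma a^2\rho/\ell^3\sim\Omega\ell(a^2/\ell^2)\rho$. With $a^2/\ell^2\sim\Omega^{-3}$ and $\rho\sim\Omega^{-2}$, this is at most $C\ell\Omega^{-4}$. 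The moment bounds are exactly those of Lemma~\ref{core:relative-moments} for a single contour.

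\emph{Part (ii).} This is the main obstacle, because Lemma~\ref{core:relative} is stated for the patch's own normalized coefficients $a_k$ at radius $a$, whereas $W$ is built from the quadrupole-scale quantities $|Z|$ and $F$. I will first convert $\triangle a_k$ for patch $i$ into those quantities. The $U$ part is rescaled from radius $\ell$ to radius $a$, which gives $a^{k-1}\ell^{-k+1}$-type factors applied to $\triangle A_k$ and the shift $\triangle\zeta_i$. The intra-quadrupole part contributes through $\triangle(\zeta_i-\zeta_m)$, which is $\le C\Omega|Z|$ after normalization, and through the differences of the other patches' moments. By Lemma~\ref{core:relative-moments} the latter are $\le C(a/\ell)^2W_{\mathrm{bdry},m}$. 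This yields
\[ \sum_k2^k|\triangle a_k^{(i)}|\le C\Omega\bigl(|Z|+F+\tfrac{a^2}{\ell^2}\textstyle\sum_mW_{\mathrm{bdry},m}\bigr). \]
Inserting this into \eqref{core:relative-estimate} for all four patches, with $s=\Omega t$ and $\Omega_{\mathrm{patch}}\rho^2/\Omega\lesssim\Omega^{-1}$, gives a closed Gronwall system for $\sum_mW_{\mathrm{bdry},m}$ with rate $\sigma_c$. The solution of this system is dominated by $W$. The identity \eqref{meta:history-restart} is then just the semigroup property of the explicit formula. Finally, $\triangle E_i$ is obtained by redoing the expansion of part (i) for the difference. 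The moment differences give the factor $(a^2/\ell^2)W$. The $U$-averaging difference, by the last estimate of Lemma~\ref{core:relative-moments}, gives $\rho F$ through $\triangle D^2U$. The centroid shift in the averaging gives $\rho|Z|$. This yields \eqref{meta:relative-error}, and the bound on $\triangle M_k$ follows by expanding about $d$.

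\emph{Parts (iii) and (iv).} Here I will differentiate the moments using the weak transport equation, $\frac{d}{dt}\int\phi\,\omega=\int u\cdot\nabla\phi\,\omega$. For $M_1$, the self-interaction cancels by antisymmetry of $K$. For $I$ and $M_2$, the self-terms are handled by symmetrizing the double integral. Antisymmetry kills the part of $\dot I$ coming from the self-field, which leaves \eqref{transfer:neutral-identities}. For $M_2$ it leaves the displayed kernel in \eqref{meta:second-moment}. The same-patch integrals are bounded using the $O(\rho)$ deviation from a disk, since the integral vanishes for a disk. Replacing the different-patch integrals by centroid expressions costs $O(\Gamma^2a^2/\ell^2)$ by Taylor expansion. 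The decomposition of $I-2\Gamma D$ follows from the parallel-axis theorem, using the centroids $d\pm\delta\pm z_{1,2}$. Its bounds follow from $|I^{(j)}-\Gamma a^2/2|\le C\Gamma a^2\rho^2$ and the $\triangle I$ estimate of Lemma~\ref{core:relative-moments}. The derivative bound \eqref{meta:radial-derivative} comes from the normal-form equation, since $\eta$ moves at rate $O(\rho^2+\varepsilon^2)$ and $z$ is $O(\rho)$ in time $\tau$. Part (iv) is Lemma~\ref{core:material-derivative} applied with $\rho\gtrsim c\ell$. The group estimate \eqref{meta:group-strain} is the same transport commutator computation applied to $DU$ at the moving point $d$.
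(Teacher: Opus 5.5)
Your arguments for (i), (ii), the identities in (iii), and (iv) follow the paper's proof closely. You use Corollary~\ref{meta:confinement} for the contours and the patch average of $u_{\mathrm{ext}}$ with multipole expansions for \eqref{transfer:centroid-equation}. You convert to $\sum_k2^k|\triangle a_k|\le C\Omega\bigl(|Z|+F+(a^2/\ell^2)W_{\mathrm{bdry}}\bigr)$ and apply Gronwall to get $W_{\mathrm{bdry}}\le W$. You symmetrize the double integrals for $M_1,I,M_2$, and you use Lemma~\ref{core:material-derivative} for (iv).

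The genuine gap is \eqref{meta:radial-derivative}. Your reasoning is that $\eta$ moves at rate $O(\rho^2+\varepsilon^2)$ and $z=O(\rho)$ in the normal form, and this does not produce the bound.

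\begin{itemize}
\item Write $\mathcal I_i=\Gamma_i^{-1}a^{-2}\int|z-\zeta_i|^2\omega_i\,dA$. Its $z$-derivative is $O(\rho)$.
\item $z_\tau$ contains the oscillating linear term $-\tfrac12Hz$, which is itself $O(\rho)$.
\item The chain rule therefore gives only $d\mathcal I_i/d\tau=O(\rho^2)$. In physical time this is $O(\Gamma a^2\Omega_{\mathrm{patch}}\rho^2)=O(\Gamma a^2)$, because $\Omega_{\mathrm{patch}}\rho^2=AC_A^2$.
\item That is a factor of order $\Omega$ worse than the claimed $C\Gamma a^2\Omega\rho$.
\end{itemize}

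The $O(\rho^2)$ term does cancel, but only because the patch's self-induced flow conserves its angular impulse about its centroid. Your sketch never invokes this fact.

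The direct way to use it is the exact transport computation you already apply to $I$ and $M_2$. Differentiate $\int|z-\zeta_i|^2\omega_i\,dA$:
\begin{itemize}
\item The self-field term vanishes after symmetrization, since $\operatorname{Re}\bigl(\overline{z-w}K(z-w)\bigr)=0$.
\item The constant external velocity term and the $\dot\zeta_i$ term vanish by the zero first centered moment.
\item What remains is $2\operatorname{Re}\sum_{k\ge1}a_ka^{1-k}\overline{M_{k+1,\mathrm{patch}}}$.
\end{itemize}
The absolute bounds $|M_{k+1,\mathrm{patch}}|\le C\Gamma a^{k+1}(1+C\rho)^{k+1}\rho$ from (i) then give $C\Gamma a^2\rho\sum_k2^k|a_k|\le C\Gamma a^2\Omega\rho$.

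Equivalently, in your normal-form language: only the $O(\varepsilon)$ external contour field changes $\mathcal I_i$, and it acts on a functional with derivative $O(\rho)$. This gives $d\mathcal I_i/d\tau=O(\varepsilon\rho)$, and $\Omega_{\mathrm{patch}}\varepsilon\rho=L\rho=C_0\Omega\rho$.

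A smaller point in (ii) concerns the term $\rho F$ in \eqref{meta:relative-error}.
\begin{itemize}
\item It comes from the absolute averaging estimate of (i), which uses the $O(\rho)$ deviation from a disk, applied to the field difference with $\sup|D^2\triangle U|\le C_R\|\triangle A\|_R/\ell$.
\item The last estimate of Lemma~\ref{core:relative-moments} compares two patches in one fixed field. It produces a $W$ term, with $\sup|D^2U|\le C_R\mathsf S/\ell$.
\end{itemize}
Both terms appear in your list, so the bound survives, but you have attributed each term to the wrong estimate.
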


\begin{proof}
(i) Corollary~\ref{meta:confinement} gives the boundary bounds. Each patch differs from its centered disk on a set of area at most $Ca^2\rho$. By the centroid constraint and the vanishing disk moments,
\[ M_{1,\mathrm{patch}}=0,\qquad |M_{k,\mathrm{patch}}|\le C\Gamma a^k(1+C\rho)^k\rho,\qquad k\ge2. \]
For $|z-\zeta_i|\ge c\ell/2$,
\[ \begin{aligned} u_{\mathrm{patch}}(z)&=\frac{i\Gamma_i}{2\pi(\bar z-\bar\zeta_i)}+\frac{i}{2\pi}\sum_{k\ge2}\frac{\overline{M_{k,\mathrm{patch}}}}{(\bar z-\bar\zeta_i)^{k+1}},\\ \left|u_{\mathrm{patch}}(z)-\Gamma_iK(z-\zeta_i)\right|&\le\frac{C\Gamma a^2\rho}{|z-\zeta_i|^3}. \end{aligned} \]
The self-field has zero patch average by antisymmetry of $K$. For the external field, subtract its constant and linear Taylor terms. The centroid constraint and the mean-value property give
\[ \left|\frac1{\Gamma_i}\int u_{\mathrm{ext}}(z)\omega_i(z)\,dA(z)-u_{\mathrm{ext}}(\zeta_i)\right|\le Ca^2\rho\sup|D^2u_{\mathrm{ext}}|\le Ca^2\rho\left(\frac\Gamma{\ell^3}+\mathsf T\right). \]
Combining the two errors proves \eqref{transfer:centroid-equation}.

(ii) The derivative bounds for the external series give
\[ \sum_{k\ge1}k^2R^{2k-2}|A_k|^2\le C\mathsf S^2,\qquad \sum_{k\ge2}k^2(k-1)^2R^{2k-4}|A_k|^2\le C\ell^2\mathsf T^2. \]
By Cauchy--Schwarz,
\[ \sum_{k\ge1}R^k|A_k|\le C_R\mathsf S,\qquad \sum_{k\ge2}R^k|A_k|\le C_R\ell\mathsf T. \]
The contribution of $U$ to the individual coefficient $a_m$ is
\[ \left(\frac a\ell\right)^{m-1}\sum_{k\ge m}\binom{k}{m}A_k\left(\frac{\overline{\zeta_i-d}}\ell\right)^{k-m}. \]
Since $|\zeta_i-d|/\ell\le R/2$ and $2a/\ell\le R/4$, summing with weight $2^m$, also after one normalized centroid derivative, gives a bound $C_R\sum R^k|A_k|$. Its difference is therefore bounded by $C_R(\mathsf S|Z|+\|\triangle A\|_R)$.

Let $W_{\mathrm{bdry}}$ be the sum of the four quantities $\|\triangle z\|_*+\rho\|\triangle\eta\|$, one for each patch in the common contour coordinates of Lemma~\ref{core:relative}. The weight in that lemma is $(L/\Omega_{\mathrm{patch}})^{2/3}=(C_0/A)^{2/3}\Omega^{-2}$. Replacing it by $\rho=C_A\Omega^{-2}$ changes the distance and its differential inequality only by fixed factors. For the other three patches, the point fields contribute $C\Gamma\ell^{-2}|Z|$. Their moment differences start at order two and are bounded by Lemma~\ref{core:relative-moments}. Summing the convergent moment series gives
\[ \begin{aligned} \sum_{k\ge1}2^k|\triangle a_k|&\le C\left(\frac\Gamma{\ell^2}+\mathsf S\right)|Z|+C\frac{\Gamma a^2}{\ell^4}W_{\mathrm{bdry}}+C\|\triangle A\|_R\\ &\le C\Omega\left(|Z|+\frac{a^2}{\ell^2}W_{\mathrm{bdry}}+F\right). \end{aligned} \]
The differential inequality in Lemma~\ref{core:relative}, $\Omega_{\mathrm{patch}}\rho^2=AC_A^2$, and $s=\Omega t$ imply
\[ D_s^+W_{\mathrm{bdry}}\le C\left(\Omega^{-1}+\frac{a^2}{\ell^2}\right)W_{\mathrm{bdry}}+C(|Z|+F),\qquad W_{\mathrm{bdry}}(0)=0. \]
The initial value is zero because both normalized disks and their labels agree. Choosing $C$ in \eqref{transfer:core-memory} sufficiently large, Gronwall gives
\[ W_{\mathrm{bdry}}(s)\le C\int_0^s e^{\sigma_c(s-r)}(|Z(r)|+F(r))\,dr=W(s). \]

The centroid error is
\[ E_i= \sum_{m\ne i}\left[\frac1{\Gamma_i}\int(K*\omega_m)(z)\omega_i(z)\,dA(z)-\Gamma_mK(\zeta_i-\zeta_m)\right] + \frac1{\Gamma_i}\int[U(z)-U(\zeta_i)]\omega_i(z)\,dA(z). \]
Both terms vanish for disks, and their moment expansions start at order two. By (i) and Lemma~\ref{core:relative-moments}, each moment is bounded by $C_k\Gamma a^k\rho$ and its difference by $C_k\Gamma a^kW_{\mathrm{bdry}}$. Thus changes of the kernel coefficients retain $\rho$, while changes of the moments give $W_{\mathrm{bdry}}$. For the external term, Cauchy's estimates give
\[ \sup|D^2U|\le C_R\mathsf S/\ell,\qquad \sup|D^3U|\le C_R\mathsf S/\ell^2. \]
The centered external-field difference has Hessian bounded by $C_R\|\triangle A\|_R/\ell$. Combining the internal and external bounds gives
\[ \begin{aligned} |\triangle E|\le{}&C\frac{\Gamma a^2}{\ell^3}(\rho|Z|+W_{\mathrm{bdry}})\\ &+C_R\frac{a^2}{\ell}\left[\mathsf S(\rho|Z|+W_{\mathrm{bdry}})+\rho\|\triangle A\|_R\right]. \end{aligned} \]
For nearby centroid configurations, apply the derivative bounds along their connecting segment. Otherwise $|Z|$ is bounded below, and the absolute estimates give the same bound. Since $\Gamma=2\pi\Omega\ell^2$ and $\mathsf S\le\Omega$,
\[ \frac{|\triangle E|}{\Omega\ell}\le C\frac{a^2}{\ell^2}(\rho|Z|+W_{\mathrm{bdry}}+\rho F)\le C\frac{a^2}{\ell^2}(\rho|Z|+W+\rho F). \]
This proves \eqref{meta:relative-error}.

Expanding $(z-d)^k$ about each patch centroid and using the same moment bounds gives
\[ |\triangle M_k|\le C_k\Gamma\ell^k|Z|+C_k\Gamma a^2\ell^{k-2}(\rho|Z|+W)\le C_k\Gamma\ell^k\left(|Z|+\frac{a^2}{\ell^2}W\right). \]
Splitting the integral defining $W$ at $s_0$ proves \eqref{meta:history-restart}.

(iii) Differentiate $M_1$ and $I$. The internal terms cancel by
\[ K(z-w)+K(w-z)=0,\qquad \operatorname{Re}\bigl(\overline{z-w}K(z-w)\bigr)=0. \]
Differentiating the moving center gives the last term in \eqref{transfer:neutral-identities}. The centroid contribution to $I$ is $2\Gamma D$, and the four signed disk impulses cancel. Lemma~\ref{core:relative-moments} and $W_{\mathrm{bdry}}\le W$ therefore give \eqref{transfer:action-correction}.

For each patch, the centroid term and self-field term again vanish. The centered external series gives
\[ \frac d{dt}\int|z-\zeta_i|^2\omega_i(z)\,dA(z)=2\operatorname{Re}\sum_{k\ge1}a_k a^{1-k}\overline{M_{k+1,\mathrm{patch}}}. \]
Since $(1+C\rho)^k<2^k$, the moment bound in (i) yields
\[ \left|\frac d{dt}\int|z-\zeta_i|^2\omega_i(z)\,dA(z)\right|\le C\Gamma a^2\rho\sum_{k\ge1}2^k|a_k|\le C\Gamma a^2\Omega\rho. \]
Summing proves \eqref{meta:radial-derivative}.

For $M_2$, symmetrization gives
\[ \begin{aligned} &2\iint(z-d)K(z-w)\omega_{\mathrm{quad}}(z)\omega_{\mathrm{quad}}(w)\,dA(z)dA(w)\\ &\qquad=\frac{i}{2\pi}\iint\frac{z-w}{\bar z-\bar w}\omega_{\mathrm{quad}}(z)\omega_{\mathrm{quad}}(w)\,dA(z)dA(w), \end{aligned} \]
which proves \eqref{meta:second-moment}. The quotient has modulus one off the diagonal and zero integral over two copies of a disk, by rotation. Replacing one patch by its disk changes the vorticity measure by at most $C\Gamma\rho$ in total variation, so each same-patch integral is $O(\Gamma^2\rho)$. For distinct patches, the quotient has second derivatives bounded by $C\ell^{-2}$. Taylor expansion and the zero first centered moments give error $C\Gamma^2a^2/\ell^2$.

(iv) Lemma~\ref{core:material-derivative} gives \eqref{meta:individual-input} under the stated geometric criterion, with $C_0$ independent of the number and vorticity magnitudes of the other patches. For \eqref{meta:group-strain}, its symmetrization bounds the external sources' contribution to $(\partial_t+U(d)\cdot\nabla)DU(d)$ by $C\mathsf S^2$. The four patches induce velocity at most $C\Gamma/|d-y|$ on the external supports, and hence contribute at most
\[ C\Gamma\int\frac{|(\omega-\omega_{\mathrm{quad}})(y)|}{|d-y|^4}\,dA(y)\le C\frac\Gamma{r^2}\int\frac{|(\omega-\omega_{\mathrm{quad}})(y)|}{|d-y|^2}\,dA(y)\le C\mathsf S^2. \]
Replacing $U(d)$ by $\dot d$ adds $D^2U(d)[\dot d-U(d)]$, bounded by $C\mathsf T|\dot d-U(d)|$. This proves \eqref{meta:group-strain}.

Local existence and continuous dependence follow from Lemma~\ref{core:analytic-contour}. Part (i) strictly improves the contour bounds, so continuation fails only if a geometric or external bound loses its margin.
\end{proof}

We record how the estimate is used when the centroid coordinates or comparison region change. Near a rotating rhombus, $(p,q)$, $D/\ell^2$, $\theta$, and $\delta/\ell$ are local coordinates for the centered centroids, so their difference norm is equivalent to $|Z|$. Replacing $D/\ell^2$ by $I/(2\Gamma\ell^2)$ introduces an error at most $C(a^2/\ell^2)\rho W$ by \eqref{transfer:action-correction}. Gronwall then increases $\sigma_c$ by at most $C(a^2/\ell^2)\rho$. Fix these constants in the rhombus neighborhood before fixing how far to follow the separating point-vortex trajectory.

On a subsequent fixed region of bounded, separated centroid configurations, continue $W$ by $W_s=\sigma_cW+C(|Z|+F)$, using the incoming value of $W$ and the constants required in that region. Formula~\eqref{meta:history-restart} holds on each interval, and Lemma~\ref{core:relative} preserves $W_{\mathrm{bdry}}\le W$. Across successive intervals the exponential factor is $\exp(\int_r^s\sigma_c(v)\,dv)$, with the input constant chosen on each interval. A fixed normalized duration gives a fixed amplification, without changing the constants on earlier intervals.

Once the normalized diameter grows like $1+\Omega t$, use the absolute conclusions (i), (iii), and (iv), while their separation and external-field assumptions hold. They apply on the original interval starting from disks at time zero, including all later subintervals. No relative comparison with a rhombus is required there.

The following consequence bounds the accumulated patch error when the centroid and external-field differences grow at the unstable rate $2\sqrt2$.

\begin{corollary} \label{meta:memory}
Use the notation and fixed constants of Theorem~\ref{meta:reduction}(ii), with $\Omega$ sufficiently large that $\sigma_c<\sqrt2$. Let $0<\kappa<1$. On a normalized interval $[s_0,s_1]$, suppose
\[ W(s_0)\le C\kappa,\qquad |Z(s)|+F(s)\le C\kappa e^{2\sqrt2(s-s_0)}. \]
Then $W(s)\le C\kappa e^{2\sqrt2(s-s_0)}$ and
\[ \frac1\kappa\int_{s_0}^{s_1}e^{-2\sqrt2(s-s_0)}\frac{|\triangle E(s)|}{\Omega\ell}\,ds\le C\frac{a^2}{\ell^2}(s_1-s_0). \]
For an interval of length $O(\log(1/\kappa))$, this is $O(\Omega^{-3}\log(1/\kappa))$. The normalized angular-impulse correction satisfies
\[ \frac1\kappa\int_{s_0}^{s_1}e^{-2\sqrt2(s-s_0)}\frac{|\triangle(I-2\Gamma D)(s)|}{2\Gamma\ell^2}\,ds\le C\frac{a^2}{\ell^2}\rho(s_1-s_0). \]
\end{corollary}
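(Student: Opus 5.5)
The plan is to use the restart formula \eqref{meta:history-restart} with base point $s_0$ and then insert the resulting bound on $W$ into \eqref{meta:relative-error}. All constants are those fixed in Theorem~\ref{meta:reduction}(ii). The letter $C$ may grow from line to line, but only by factors depending on $\sigma_c<\sqrt2$ and on the fixed constants.

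\emph{Step 1: bound on $W$.} By \eqref{meta:history-restart} and the hypotheses,
\[ W(s)\le e^{\sigma_c(s-s_0)}C\kappa+C^2\kappa\int_{s_0}^s e^{\sigma_c(s-r)}e^{2\sqrt2(r-s_0)}\,dr. \]
The integral equals
\[ e^{\sigma_c(s-s_0)}\,\frac{e^{(2\sqrt2-\sigma_c)(s-s_0)}-1}{2\sqrt2-\sigma_c}\le\frac{e^{2\sqrt2(s-s_0)}}{2\sqrt2-\sqrt2}. \]
Here we use $2\sqrt2-\sigma_c>\sqrt2$. Since $\sigma_c<2\sqrt2$, we also have $e^{\sigma_c(s-s_0)}\le e^{2\sqrt2(s-s_0)}$. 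Together these give $W(s)\le C\kappa e^{2\sqrt2(s-s_0)}$. This is the only place where the gap between $\sigma_c$ and the unstable rate enters. Because the constant does not depend on the length of the interval, no logarithmic loss appears at this stage.

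\emph{Step 2: weighted centroid error.} By \eqref{meta:relative-error},
\[ \frac{|\triangle E(s)|}{\Omega\ell}\le C\frac{a^2}{\ell^2}\bigl(\rho|Z|+W+\rho F\bigr)\le C\frac{a^2}{\ell^2}\kappa e^{2\sqrt2(s-s_0)}. \]
This uses $\rho\le1$ and the bounds on $|Z|+F$ and $W$. Multiplying by $e^{-2\sqrt2(s-s_0)}/\kappa$ makes the integrand bounded by $Ca^2/\ell^2$. Integrating over $[s_0,s_1]$ gives the factor $s_1-s_0$. Since $a^2/\ell^2=2/(A\Omega^3)$, an interval of length $O(\log(1/\kappa))$ gives the stated bound $O(\Omega^{-3}\log(1/\kappa))$.

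\emph{Step 3: angular-impulse correction.} By the second estimate in \eqref{transfer:action-correction},
\[ |\triangle(I-2\Gamma D)|\le C\Gamma a^2\rho W. \]
Dividing by $2\Gamma\ell^2$ and using Step 1 bounds the weighted integrand by $C(a^2/\ell^2)\rho$, and integration again gives the factor $s_1-s_0$.

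\emph{Main obstacle.} The only delicate point is Step 1. There one must make sure that the memory term $W$, which carries the whole history of $|Z|+F$, does not grow faster than $e^{2\sqrt2(s-s_0)}$. One must also confirm that the comparison hypotheses of Theorem~\ref{meta:reduction}(ii) hold on all of $[s_0,s_1]$, since the corollary implicitly works inside that comparison interval. Both facts follow from the restart identity and from the strict inequality $\sigma_c<\sqrt2$, which was secured by taking $\Omega$ large.
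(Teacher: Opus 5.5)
Your proposal matches the paper's proof. You substitute the hypotheses into the restart formula \eqref{meta:history-restart}, evaluate the exponential convolution using $2\sqrt2-\sigma_c>\sqrt2$, and then insert the resulting bound on $W$ into \eqref{meta:relative-error} and \eqref{transfer:action-correction}. One small correction to your closing remark: the comparison hypotheses of Theorem~\ref{meta:reduction}(ii) on $[s_0,s_1]$ are a standing assumption of the corollary; they do not follow from the restart identity.
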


\begin{proof}
Substitute the assumed bound into \eqref{meta:history-restart} and use
\[ \int_{s_0}^s e^{\sigma_c(s-r)}\kappa e^{2\sqrt2(r-s_0)}\,dr=\frac{\kappa\bigl(e^{2\sqrt2(s-s_0)}-e^{\sigma_c(s-s_0)}\bigr)}{2\sqrt2-\sigma_c}\le C\kappa e^{2\sqrt2(s-s_0)}. \]
The initial term $e^{\sigma_c(s-s_0)}W(s_0)$ has the same bound. Equations~\eqref{meta:relative-error} and \eqref{transfer:action-correction} give the two weighted integrals. Finally, $a^2/\ell^2=2/(A\Omega^3)$ gives the stated accumulated error.
\end{proof}

\section{First solution: rotating quadrupoles at all scales} \label{sec:first-solution}

We choose the initial unstable shape variables in the exact centroid equations of Theorem~\ref{meta:reduction} so that every quadrupole stays close to a rotating rhombus about its moving center for $0\le t\le2$. The estimates on $[0,1]$ will be sharper because the unstable variables remain bounded through time 2.

Use the scales \eqref{eq:scales} and the signed circular patches of Section~\ref{intro:scales}, with $B>2$. First fix $\delta>0$ sufficiently small for the contour and shape coordinates and the estimates below. Next choose $A$ large enough for Theorem~\ref{meta:reduction} on $[0,2]$, and finally choose $\Lambda$ large. Constants may depend on $A,\alpha,B$, but are independent of $n,N$ and the prescribed phases. We distinguish the constant $C_0$ in the quadratic point-vortex remainder, which is independent of $A$, from constants $C_A$ that may depend on $A$.

For quadrupole $n$, use the centroid coordinates $d_n,\delta_n,z_{1,n},z_{2,n},D_n$ of Theorem~\ref{meta:reduction} and the shape variables $(p_n,q_n)$ of Section~\ref{sec:shape-variables}. Let $\theta_n=\arg z_{1,n}$ be continuous, with $\theta_n(0)=\varphi_n$. The displacement $\delta_n$ is complex-valued, whereas $\delta>0$ is the fixed small radius chosen above.

Write the unstable shape equation as $\dot q_n=2\sqrt2\Omega_nq_n+f_n$, where $f_n$ includes the nonlinear terms and the forcing from the other patches. Under the bounds below, whenever $|q_n|$ reaches a fixed threshold $0<\delta\ll1$, the estimate $|f_n|\lesssim\Omega_n\delta^2+C$ gives $|f_n|<2\sqrt2\Omega_n\delta$ for sufficiently small $\delta$ and large $\Lambda$. We obtain the initial vector by varying $(q_1(0),\ldots,q_N(0))$ and applying Brouwer's theorem to the first exit through $q_n=\pm\delta$. This gives an initial choice of $(q_1(0), \cdots, q_N(0))$ that satisfies $|q_n(t)|\le\delta$ through time 2 without a contraction argument. The selected solution need not have $q_n(2)=0$, but its terminal contribution is exponentially small on $[0,1]$.

Prescribe $d_n(0)=d_n$, $\delta_n(0)=0$, $D_n(0)=\ell_n^2$, $p_n(0)=0$, and $\theta_n(0)=\varphi_n$, and vary $q_n(0)$ in $[-\delta,\delta]$. The corresponding initial positions relative to $d_n$ are
\[ z_{1,n}(0)=\frac{\ell_ne^{i\varphi_n}}{\sqrt{1-R_*(1+q_n(0))}},\qquad z_{2,n}(0)=\frac{\ell_n\sqrt{R_*(1+q_n(0))}}{\sqrt{1-R_*(1+q_n(0))}}e^{i(\varphi_n+\pi/2+q_n(0)/\sqrt2)}. \]
Set
\[ \begin{aligned} \omega_N^{(1)}(0,x)=A\sum_{n=1}^N\Omega_n^4\Bigl(&\mathbf1_{B(d_n+z_{1,n}(0),a_n)}(x)+\mathbf1_{B(d_n-z_{1,n}(0),a_n)}(x)\\ &-\mathbf1_{B(d_n+z_{2,n}(0),a_n)}(x)-\mathbf1_{B(d_n-z_{2,n}(0),a_n)}(x)\Bigr). \end{aligned} \]
The four centroid separations at scale $n$ are comparable to $\ell_n$. Since $a_n/\ell_n\to0$ and $\ell_n/|d_n-d_j|\to0$ uniformly for $j\ne n$, these disks are disjoint for sufficiently large $\Lambda$, uniformly over the initial cube.

Give each initial disk the normalized boundary labels $F(0,\theta)=e^{i\theta}$ and transport these labels with its Euler flow, as in Theorem~\ref{meta:reduction}.

Let $I_n$ be the angular impulse in Theorem~\ref{meta:reduction}(iii). The difference $I_n-2\Gamma_nD_n$ is the patch impulse. It vanishes for the initial equal disks, so $I_n(0)=2\Gamma_n\ell_n^2$.

The external velocity of quadrupole $n$ is the velocity induced by all other quadrupoles. Near this quadrupole it is divergence-free and curl-free, so its gradient is symmetric and trace-free. Set
\[ \mathsf{S}=\sum_{j\ge1}\frac{\Gamma_j}{d_j^2} \le 8\Gamma_1,\qquad \mathsf{T}=\sum_{j\ge1}\frac{\Gamma_j}{d_j^3} \le 16\Gamma_1. \]
Under the separation bounds below, the gradients and Hessians of the external velocities are bounded by $C\mathsf{S}$ and $C\mathsf{T}$, respectively. Both sums tend to zero as $\Lambda\to\infty$ and give bounds independent of $N$.

\begin{proposition} \label{prep:reference}
Fix $B>2$, a sufficiently small $\delta>0$, and a sufficiently large $A>0$. There exists $\Lambda_0$, independent of $N$, such that for every $\Lambda\ge\Lambda_0$, every $N\ge1$, and every $(\varphi_1,\ldots,\varphi_N) \in (\mathbb{R}/2\pi\mathbb{Z})^N$, some choice $(q_1(0),\ldots,q_N(0))\in[-\delta,\delta]^N$ gives an exact Euler patch solution $\omega_N^{(1)}$ on $[0,2]$, with $u_N^{(1)}=K*\omega_N^{(1)}$ and the initial disks prescribed above. In particular, $p_n(0)=0$, $\delta_n(0)=0$, $D_n(0)=\ell_n^2$, and $\theta_n(0)=\varphi_n$. The following constants are independent of $N,n$ and the phases.

\begin{enumerate}
\item[(i)] For every $1\le n\le N$ and $0\le t\le2$,
\[ |q_n(t)|,|p_n(t)|\le\delta,\qquad \frac12\le\frac{D_n(t)}{\ell_n^2}\le2,\qquad |\delta_n(t)|\le C\mathsf{T}\ell_n^2t e^{C\mathsf{S}t}. \]
Each normalized patch boundary has $C^{1,1/2}$ distance at most $C\Omega_n^{-2}$ from a parameterized circle. For $i\ne j$,
\[ \frac12|d_i-d_j|\le |d_i(t)-d_j(t)|\le2|d_i-d_j|,\qquad 0\le t\le2. \]
Thus all patches remain disjoint, and every quadrupole stays near a rhombus about its moving center.

\item[(ii)] For every $1\le n\le N$ and $0<T\le1$,
\[ \sup_{0\le t\le T}\left(|q_n(t)|+|p_n(t)|+\left|\frac{I_n(t)}{2\Gamma_n\ell_n^2}-1\right|+\left|\frac{D_n(t)}{\ell_n^2}-1\right|\right)\le C\left(\frac{\mathsf{S}+T\mathsf{S}^2}{\Omega_n}+\Omega_n^{-5}\right). \]
The phase satisfies
\begin{equation} \label{prep:phase}
\sup_{0\le t\le T}|\theta_n(t)-\varphi_n+\Omega_nt|\le CT\bigl(\mathsf{S}+T\mathsf{S}^2+\Omega_n^{-4}\bigr),\qquad 0<T\le1.
\end{equation}
In particular, $|q_n(0)|\le C(\mathsf{S}\Omega_n^{-1}+\Omega_n^{-5})$, and the phase error on $[0,1]$ tends to zero uniformly over $n\le N$ as $\Lambda\to\infty$.

\item[(iii)] The outer quadrupole satisfies
\[ \sup_{0\le t\le2}\frac{|d_1(t)-d_1|}{\ell_1}\le C\left(\frac{\sum_{j\ge2}\Gamma_j}{\ell_1}+\mathsf{T}\Omega_1\ell_1+\mathsf{T}\ell_1+\Omega_1^{-4}\right)\longrightarrow0\qquad\text{as }\Lambda\to\infty. \]
\end{enumerate}
Parts (ii) and (iii) hold for every initial choice whose solution satisfies part (i) through time 2.
\end{proposition}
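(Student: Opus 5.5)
The construction runs as a bootstrap combined with a Brouwer shooting argument in the unstable variables $(q_1(0),\ldots,q_N(0))\in[-\delta,\delta]^N$.

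\emph{Bootstrap set.} For a fixed initial vector, let $T^*\le2$ be the supremal time on which the following hold with doubled constants:
\begin{itemize}
\item the bounds of part (i);
\item the contour bounds \eqref{meta:boundary-chart};
\item the separation $\frac12|d_i-d_j|\le|d_i(t)-d_j(t)|\le2|d_i-d_j|$.
\end{itemize}
On $[0,T^*]$, I verify the hypotheses of Theorem~\ref{meta:reduction} at every scale.

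Since $\ell_n(1+\Omega_n)\ll|d_n-d_{n\pm1}|$, the integral criterion in Theorem~\ref{meta:reduction}(iv) holds with $C\Omega_n$. The intra-quadrupole part is $O(\Gamma_n/\ell_n^2)=O(\Omega_n)$, and the other scales contribute $O(\mathsf S)$. This gives \eqref{meta:individual-input} with a fixed $C_0$. Hence part (i) of the theorem improves the contour bounds to $C\Omega_n^{-2}$ and yields the centroid equations \eqref{transfer:centroid-equation} with errors $C\ell_n\Omega_n^{-4}$.

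Writing the external field $U_n$ as $U_n(d_n)+DU_n(d_n)(z-d_n)+O(\mathsf T|z-d_n|^2)$, the shape variables satisfy three equations.
\begin{itemize}
\item The shape equations are \eqref{eq:shape-ode}, perturbed by $O(\mathsf S)$ from the strain and by $O(\Omega_n^{-4})$ from the patch errors.
\item The displacement obeys $\dot\delta_n=O(\mathsf S|\delta_n|+\mathsf T\ell_n^2)$. This follows from the $M_1$ identity in \eqref{transfer:neutral-identities}, since the linear strain acting on a centrally symmetric configuration has zero net $M_1$ contribution up to $\delta_n$ terms. Gronwall then gives the stated bound on $\delta_n$.
\item For $D_n$, I use $I_n$ via \eqref{transfer:neutral-identities}, \eqref{transfer:action-correction} and \eqref{meta:radial-derivative}. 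The strain contributes only $O(\mathsf S)\cdot$(deviation from the circularly averaged moment), so $D_n/\ell_n^2$ stays within $C\delta^2+C\mathsf S$ of $1$.
\end{itemize}
In the linearized variables \eqref{eq:shape-linearization}, $p_n$ contracts. Therefore $|p_n|\le\delta/2$ follows from $|q_n|\le\delta$, with forcing $C\Omega_n\delta^2+C$. The centers move by $\int|U_n(d_n)|\le C\sum\Gamma_j/|d_j-d_n|$, which is small relative to the inter-scale separations. So every bootstrap bound except $|q_n|\le\delta$ is strictly improved.

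\emph{Shooting.} The only bound not improved is $|q_n|\le\delta$, which I handle by the exit-map argument. On $q_n=\pm\delta$ we have $|f_n|<2\sqrt2\Omega_n\delta$, so $\pm\dot q_n>0$ there: exits are transversal and strict, and the exit time depends continuously on the data. This uses continuous dependence from Theorem~\ref{meta:reduction}(iv) and the continuous dependence of the initial disks on $q(0)$. Suppose every initial vector led to an exit before time $2$. Mapping each initial point to $\operatorname{sgn}(q_n)\delta e_n$ at the first exit, for the exiting index $n$, gives a continuous retraction of the cube onto its boundary that is the identity on the boundary. A point of the boundary exits at time $0$, so the map is the identity there. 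Brouwer's theorem forbids such a retraction, so some choice survives to $t=2$, proving (i). Simultaneous exits are handled by the standard Wa\.zewski formulation: the set of initial data that exits through a given face depends continuously, and the map is well defined in the limit.

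\emph{Parts (ii), (iii).} For part (ii) I integrate the unstable equation backward from time $2$:
\[
q_n(t)=e^{-2\sqrt2\Omega_n(2-t)}q_n(2)-\int_t^2e^{-2\sqrt2\Omega_n(s-t)}f_n\,ds .
\]
For $t\le1$ the first term is exponentially small. In $f_n$, the linear part is $O(\mathsf S+t\mathsf S^2)$, where the time derivative of the strain is bounded via \eqref{meta:group-strain}; the nonlinear part is $O(\Omega_n q^2)$; and the patch errors are $O(\Omega_n^{-4})$. The integral therefore gives $C(\mathsf S+T\mathsf S^2)/\Omega_n+C\Omega_n^{-5}$, after absorbing $\Omega_n|q|^2$ by smallness. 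The bound for $p_n$ follows by forward integration from $p_n(0)=0$, and the bounds for $I_n$ and $D_n$ follow from the impulse identities. The phase estimate \eqref{prep:phase} follows from the adjusted phase equation \eqref{eq:adjusted-phase}, whose error is $O(\Omega_n(x^2+y^2))$ plus the forcing. Part (iii) integrates $\dot d_1$: the far field of the inner quadrupoles at $d_1$ is $O(\sum_{j\ge2}\Gamma_j)$ after cancellation with bounded $\delta_j$, and the $\mathsf T\Omega_1\ell_1$ term comes from $\delta_1$ and the strain terms.

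\emph{Main obstacle.} I expect the hardest step to be the uniform control of $\delta_n$ and $D_n$ at order $\mathsf S$ over the whole time interval. These directions are neutral rather than contracting, so any $O(1)$ forcing would accumulate over time. Making the continuity of the exit map rigorous in the patch (not point-vortex) setting is a secondary difficulty.
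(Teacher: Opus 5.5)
Your overall architecture matches the paper's: a bootstrap, Brouwer shooting in the $q_n(0)$, forward integration for $p_n$, and backward integration for $q_n$. The main gap is in part (ii). The impulse direction is neutral, and \eqref{transfer:neutral-identities} only gives $\dot I_n/I_n=O(\mathsf S)$. That yields $|D_n/\ell_n^2-1|\le C\mathsf S T$, not the required $C((\mathsf S+T\mathsf S^2)/\Omega_n+\Omega_n^{-5})$.

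This is not cosmetic. The phase obeys $\dot\theta_n=-\Omega_n\ell_n^2/D_n+e_{\theta,n}$, so the crude bound controls the phase drift only by $C\Omega_n\mathsf S T^2$. That is unbounded in $n\le N$, because $\mathsf S$ is fixed while $\Omega_n\to\infty$. Your appeal to \eqref{eq:adjusted-phase} ignores this $D_n$-dependence of the frequency, so \eqref{prep:phase} does not follow.

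The missing idea is averaging. Since $M_{2,n}/I_n=\sqrt2e^{2i\theta_n}+O(|q_n|+|p_n|+\Omega_n^{-5})$, the leading term of $\dot I_n/I_n$ is $2\sqrt2\operatorname{Re}(S_ne^{-2i\theta_n})$, which rotates at frequency $2\Omega_n$. Write $e^{-2i\theta_n}$ as a time derivative via the phase equation and integrate by parts. Using $|\dot S_n|\le C\mathsf S^2+C\mathsf T r_n$ from \eqref{meta:group-strain} and $|\dot D_n/D_n|\le C(\mathsf S+\Omega_n^{-4})$, this gains the factor $\Omega_n^{-1}$, and it is where $T\mathsf S^2/\Omega_n$ comes from. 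You invoked \eqref{meta:group-strain} in the $q_n$ equation instead. There the weight $e^{-2\sqrt2\Omega_n(s-t)}$ already gives $\mathsf S/\Omega_n$, and no averaging is needed.

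Three further steps fail as written.

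\emph{Separation.} The absolute displacement $\int_0^2|U_{\mathrm{ext},n}(d_n)|\,dt$ contains the field of quadrupole $1$ near the origin, whose size does not decay in $n$. Meanwhile $|d_n-d_{n+1}|=2^{-n-1}\to0$, so absolute displacements are not small relative to the inter-scale distances uniformly in $N$. You must compare velocities instead. The paper uses a cutoff field $V$ with $V(d_n(t))=U_{\mathrm{ext},n}(d_n(t))$ and $\|\nabla V\|_{L^\infty}\le C\mathsf S$. This gives $|\dot d_i-\dot d_j|\le C\mathsf S|d_i-d_j|+r_i+r_j$ with $\sup_n r_n/d_n\to0$, and Gronwall finishes.

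\emph{Shooting map.} Your map $\mathbf q\mapsto\operatorname{sgn}(q_n)\delta e_n$ takes only $2N$ values, so it is neither continuous nor the identity on $\partial[-\delta,\delta]^N$. Use instead $r(\mathbf q)=(q_1(\tau(\mathbf q)),\ldots,q_N(\tau(\mathbf q)))$. It is continuous once $\tau$ is (by transversality and continuous dependence), equals the identity on the boundary, and handles simultaneous exits without any Wa\.zewski sets.

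\emph{Absorption.} You cannot absorb $\Omega_n|q_n|^2$ in your backward formula with a plain supremum. The integral runs up to $t=2$, where $|q_n|$ may equal $\delta$. You need a weighted supremum with weight $\mathsf S/\Omega_n+\Omega_n^{-5}+\delta e^{-c\Omega_n(2-t)}$, as in \eqref{prep:interior-tube}.
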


Theorem~\ref{meta:reduction}(iv) gives local existence and continuous dependence for each initial choice. On an initial time interval $[0,T]\subset[0,2]$, impose the following \emph{bootstrap assumptions} for every $1\le n\le N$ and $i\ne j$,
\[ |q_n(t)|,|p_n(t)|\le\delta,\qquad \tfrac12\le D_n(t)/\ell_n^2\le2,\qquad |\delta_n(t)|\le\delta\ell_n,\qquad |d_i(t)-d_j(t)|\ge\tfrac12|d_i-d_j|, \]
and require every contour boundary to satisfy \eqref{meta:boundary-chart}. All these conditions hold for the initial disks. The next two lemmas improve \eqref{meta:boundary-chart} and the bounds on $\delta_n,D_n$ and $|d_i(t)-d_j(t)|$, and derive the shape equations.

\begin{lemma} \label{prep:patch-bounds}
On every initial time interval in $[0,2]$ satisfying the bootstrap assumptions, the geometric hypotheses of Theorem~\ref{meta:reduction} hold. Each centroid satisfies \eqref{transfer:centroid-equation} with $|E_i|\le C\ell_n\Omega_n^{-4}$. Every normalized patch boundary stays within $C\Omega_n^{-2}$ of a parameterized circle. The constants are uniform in $n,N$, the phases, and the initial vector in $[-\delta,\delta]^N$.
\end{lemma}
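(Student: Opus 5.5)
The plan is to verify, at each scale $n$, the hypotheses of Theorem~\ref{meta:reduction} from the bootstrap assumptions. Then parts (i) and (iv) of that theorem supply the centroid error and the contour bound. The contour part is not circular: Theorem~\ref{meta:reduction}(i) and Corollary~\ref{meta:confinement} only require \eqref{meta:boundary-chart} and \eqref{meta:individual-input} on the interval. They return the strictly improved bound $\|F_i-e^{i(\cdot+\eta_i)}\|\le C_A\Omega_n^{-2}$. So the real work is geometric: separation, size, and the external-field integrals.

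\emph{Step 1: geometry of one quadrupole.} From $|q_n|,|p_n|\le\delta$ and $\tfrac12\le D_n/\ell_n^2\le2$, the shape reconstruction formulas of Section~\ref{sec:shape-variables} bound the centered centroids $z_{1,n},z_{2,n}$. For $\delta$ small they stay in a compact neighborhood of a rhombus of size comparable to $\ell_n$. Hence $|\zeta_i-d_n|\le R\ell_n/2$ and the mutual centroid distances are at least $c\ell_n$, with $c,R$ fixed. Adding $|\delta_n|\le\delta\ell_n$ only perturbs these bounds by $O(\delta\ell_n)$. The contour chart \eqref{meta:boundary-chart} places each patch in $B(\zeta_i,2a_n)$. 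Since $a_n/\ell_n=\sqrt{2/A}\,\Omega_n^{-3/2}\to0$, the patches of one quadrupole are disjoint and at distance $\ge c\ell_n/2$ from each other.

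\emph{Step 2: external integrals.} Split $\omega-\omega_i$ into the other three patches of quadrupole $n$ and the patches at scales $j\ne n$. The first part gives
\[ \int\frac{|\omega_m|}{|\zeta_i-y|^2}\,dA\le \frac{C\Gamma_n}{\ell_n^2}=C\Omega_n. \]
For $j\ne n$, the bootstrap $|d_n(t)-d_j(t)|\ge\tfrac12|d_n-d_j|\ge 2^{-\max(n,j)-2}$ is used. Together with $\ell_j\ll|d_n-d_j|$ (uniformly as $\Lambda\to\infty$), this places the quadrupole-$j$ supports at distance $\ge c|d_n-d_j|$. The corresponding contribution is at most $C\sum_j\Gamma_j 4^{\max(n,j)}$. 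I expect this sum to be the main point to check. For $j<n$ it is $\le C4^n\Gamma_1\ll\Omega_n$. For $j>n$ it is $\le C\sum_j 4^j\Omega_je^{-2\Omega_j^\alpha}$, which is tiny. Both hold uniformly in $n$ once $\Lambda$ is large, because $\Omega_n=\Lambda B^n$ grows faster than $4^n$ is needed only mildly; actually $4^n\Gamma_1\le C\Omega_n$ needs $B\ge4$ or a crude bound. I would first try the better estimate $\Gamma_j/|d_n-d_j|^2\le\Gamma_j4^{j+2}$ for $j<n$, since $|d_n-d_j|\ge 2^{-j-1}$. This gives $\sum_j 4^j\Gamma_j\le C\Gamma_1$ with no $B$ restriction.

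Then Theorem~\ref{meta:reduction}(iv) yields \eqref{meta:individual-input} with a fixed $C_0$ independent of $A$, $n$, $N$. The same estimates give that $U$ is anti-holomorphic on $B(d_n,2R\ell_n)$, with $|DU|\le C\mathsf S$ and $|D^2U|\le C\mathsf T$. This uses a Cauchy estimate on a disk of radius comparable to $|d_n-d_j|$, together with $\ell_n\ll|d_n-d_j|$ and the bound $\sum_j\Gamma_j/d_j^k$ with $d_j$ replaced by interscale distances. Where needed, I would redefine $\mathsf S,\mathsf T$ up to fixed factors using $|d_n-d_j|\ge 2^{-\max(n,j)-1}$.

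\emph{Step 3: conclusion.} With $A$ fixed large as in Corollary~\ref{meta:confinement} and $\Omega_n\ge\Lambda$ large, Theorem~\ref{meta:reduction}(i) applies on the interval. It gives \eqref{transfer:centroid-equation} with $|E_i|\le C\Omega_n\ell_na_n^2\ell_n^{-2}\rho+Ca_n^2\rho\mathsf T\le C\ell_n\Omega_n^{-4}$, using $a_n^2/\ell_n^2=2/(A\Omega_n^3)$, $\rho=C_A\Omega_n^{-2}$ and $\mathsf T\le1$. It also gives the boundary distance $C_A\Omega_n^{-2}$. All constants depend only on $A,\alpha,B,\delta$ and the fixed charts, never on $n$, $N$, the phases, or $q(0)\in[-\delta,\delta]^N$, since the initial configurations lie in one compact family.
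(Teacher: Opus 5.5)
Your proposal is correct and follows essentially the same route as the paper's proof. In both, the bootstrap bounds give the intra- and inter-quadrupole separations, the weighted-circulation integral is bounded by $C(\Omega_n+\mathsf S)\le C\Omega_n$ so that Theorem~\ref{meta:reduction}(iv) supplies \eqref{meta:individual-input}, kernel bounds give $|DU|\le C\mathsf S$ and $|D^2U|\le C\mathsf T$, and Theorem~\ref{meta:reduction}(i) then yields $|E_i|\le C\ell_n\Omega_n^{-4}$ and the $C\Omega_n^{-2}$ contour bound. One correction: the inequality to use everywhere, as the paper does, is $|d_n-d_j|\ge 2^{-\min(n,j)-1}\ge d_j/2$, not the $2^{-\max(n,j)-1}$ in your final remark on redefining $\mathsf S,\mathsf T$ (that version would make the $j<n$ Hessian sum grow like $8^n$). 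With $d_j/2$, both bounds hold with fixed multiples of $\mathsf S$ and $\mathsf T$, uniformly in $n$ and without any restriction on $B$.
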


\begin{proof}
Up to the first possible exit from \eqref{meta:boundary-chart}, each patch lies in $B(\zeta_i,2a_n)$. For sufficiently small $\delta$, the bootstrap assumptions give centroid separations at least $c\ell_n$ within quadrupole $n$ and at least $c|d_n-d_j|$ between quadrupoles $n$ and $j$. Since $a_n/\ell_n=\sqrt{2/A}\,\Omega_n^{-3/2}$ is uniformly small, the supports remain separated.

For a centroid $w$ at scale $n$, let $\omega_{\mathrm{other}}$ denote all vorticity except its own patch. Since $|d_n-d_j|\ge d_j/2$ for $j\ne n$,
\[ \int\frac{|\omega_{\mathrm{other}}(y)|}{|w-y|^2}\,dA(y)\le C\frac{\Gamma_n}{\ell_n^2}+C\sum_{j\ne n}\frac{\Gamma_j}{|d_n-d_j|^2}\le C(\Omega_n+\mathsf{S})\le C\Omega_n. \]
By Theorem~\ref{meta:reduction}(iv) and Lemma~\ref{core:material-derivative}, its normalized external coefficients satisfy \eqref{meta:individual-input},
\[ \sum_{k\ge1}2^k|a_k(t)|\le C\Omega_n,\qquad \sum_{k\ge1}2^k|\dot a_k(t)|\le C\Omega_n^2. \]
The derivative follows $w(t)$ in the fixed physical axes.

The external field $U$ of the quadrupole is analytic near the convex hull of the four disks $B(\zeta_i,2a_n)$, which is separated from the other supports. The spatial kernel bounds give, throughout this neighborhood,
\[ \sup|DU|\le C\sum_{j\ne n}\frac{\Gamma_j}{|d_n-d_j|^2}\le C\mathsf{S},\qquad \sup|D^2U|\le C\sum_{j\ne n}\frac{\Gamma_j}{|d_n-d_j|^3}\le C\mathsf{T}. \]
Thus Theorem~\ref{meta:reduction} applies on the stopped interval. Its boundary bounds are
\[ \|\eta(t)\|_{C^{1,1/2}}\le\delta/2,\qquad \|F(t)-e^{i(\cdot+\eta(t))}\|_{C^{1,1/2}}\le C\Omega_n^{-2}<\delta/2. \]
Moreover, \eqref{transfer:centroid-equation} gives
\[ |E_i|\le C\Omega_n\ell_n\frac{a_n^2}{\ell_n^2}\Omega_n^{-2}+Ca_n^2\Omega_n^{-2}\mathsf{T}\le C\ell_n\Omega_n^{-4}, \]
using $a_n^2/\ell_n^2=2/(A\Omega_n^3)$ and $\mathsf{T}\le1$. The strict contour bounds and Theorem~\ref{meta:reduction}(iv) exclude a first contour exit.
\end{proof}

\begin{lemma} \label{prep:moment-bounds}
On every initial time interval in $[0,2]$ satisfying the bootstrap assumptions,
\[ |\delta_n(t)|\le C\mathsf{T}\ell_n^2t e^{C\mathsf{S}t},\qquad \left|\frac{D_n(t)}{\ell_n^2}-1\right|\le C(\mathsf{S}+\Omega_n^{-7}),\qquad \left|\frac{\dot D_n(t)}{D_n(t)}\right|\le C(\mathsf{S}+\Omega_n^{-4}), \]
\[ \frac34|d_i-d_j|\le |d_i(t)-d_j(t)|\le\frac32|d_i-d_j|\qquad(i\ne j), \]
after increasing $\Lambda$ if necessary. The shape equations are
\[ \dot q_n=2\sqrt2\Omega_n\frac{\ell_n^2}{D_n}q_n+f_n,\qquad \dot p_n=-2\sqrt2\Omega_n\frac{\ell_n^2}{D_n}p_n+\widetilde f_n, \]
\[ |f_n|+|\widetilde f_n|\le C_0\Omega_n(|q_n|+|p_n|)^2+C_A\bigl(\mathsf{S}+\Omega_n^{-4}+\Omega_n\mathsf{T}^2\ell_n^2\bigr). \]
The remainders $f_n,\widetilde f_n$ include the nonlinear shape terms and the forcing by the other patches, and depend on the entire finite solution.
\end{lemma}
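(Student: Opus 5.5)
The plan has three parts: the first-moment and center equations, the angular impulse, and then the shape equations obtained by writing the centroid ODEs in the coordinates $(R,\phi,\theta)$ of Section~\ref{sec:shape-variables}. Throughout, Lemma~\ref{prep:patch-bounds} supplies the hypotheses of Theorem~\ref{meta:reduction} on the stopped interval. This gives the centroid equations \eqref{transfer:centroid-equation} with $|E_i|\le C\ell_n\Omega_n^{-4}$, together with $|DU|\le C\mathsf S$ and $|D^2U|\le C\mathsf T$ near quadrupole $n$.

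\emph{First moment and centers.} By \eqref{transfer:neutral-identities}, $\dot M_1=\int U\omega_{\mathrm{quad}}\,dA$. Expand $U$ about $d_n$. The constant term cancels because the total circulation is zero. The linear term is $\int DU(d_n)[z-d_n]\omega_{\mathrm{quad}}$, which equals $DU(d_n)$ applied to $M_1$ and so is bounded by $C\mathsf S\Gamma_n|\delta_n|$. The quadratic remainder is at most $C\mathsf T\Gamma_n\ell_n^2$. Since $M_1=4\Gamma_n\delta_n$, this gives $|\dot\delta_n|\le C\mathsf S|\delta_n|+C\mathsf T\ell_n^2$. With $\delta_n(0)=0$, Gronwall yields the stated bound on $\delta_n$, and this improves the bootstrap bound $\delta\ell_n$ once $\Lambda$ is large. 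For the centers, average the four centroid equations. The mutual point-vortex terms cancel pairwise within the quadrupole, with an $O(|\delta_n|\Gamma_n/\ell_n^2)$ correction from the centrally symmetric structure. Hence $\dot d_n=U(d_n)+O(\mathsf S\ell_n+\Omega_n\mathsf T\ell_n^2+\ell_n\Omega_n^{-4})$. The far field of each quadrupole decays like $\Gamma_j\ell_j^2/|x-d_j|^3$, since the monopole vanishes and the dipole moment is $O(\Gamma_j|\delta_j|)$. So $|U(d_n)|$ and the relative velocities $|\dot d_i-\dot d_j|$ are bounded by $\sum_j\Gamma_j\ell_j^2/|d_i-d_j|^3$ plus small terms. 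Integrated over $t\le2$, this is $o(|d_i-d_j|)$, because $\ell_j(1+\Omega_j)/|d_j-d_{j+1}|\to0$. This improves the separation bounds to the factors $3/4$ and $3/2$.

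\emph{Angular impulse.} In the $\dot I$ identity of \eqref{transfer:neutral-identities}, the first term is $2\operatorname{Re}\int\overline{z-d}\,DU(d)(z-d)\omega_{\mathrm{quad}}$ plus an $O(\mathsf T\Gamma_n\ell_n^3)$ remainder. Because $DU$ is anti-holomorphic, the leading part equals $2\operatorname{Re}(\partial_{\bar z}U\,\overline{M_2})$, of size $C\mathsf S\Gamma_n\ell_n^2$. The second term is $O(\Gamma_n|\delta_n|\,|U(d)-\dot d|)$ and is negligible. Combining this with \eqref{transfer:action-correction} and \eqref{meta:radial-derivative}, which give $|I-2\Gamma D|\le C\Gamma a^2\rho^2$ with $a_n^2\rho^2/\ell_n^2\sim\Omega_n^{-7}$, produces the bound on $\dot D_n/D_n$. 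Integrating from $D_n(0)=\ell_n^2$ gives the bound on $D_n/\ell_n^2-1$; the additive $\Omega_n^{-7}$ comes from the patch impulse, and the time derivative of that correction is $O(\Omega_n^{-4})$ after normalization.

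\emph{Shape equations.} Write the centroid ODEs relative to $d_n$, keeping $\pm z_{1,n}$ and $\pm z_{2,n}$ with the offset $\delta_n$. The intra-quadrupole terms give exactly \eqref{eq:point-ode}, but with $\ell^2$ replaced by the current $D_n$. This is why the frequency appears as $\Omega_n\ell_n^2/D_n$ in the linearization \eqref{eq:shape-linearization}, and it contributes the quadratic remainder $C_0\Omega_n(|q|+|p|)^2$ with $C_0$ independent of $A$. The perturbations enter in three ways. The offset $\delta_n$ breaks central symmetry and contributes $O(\Omega_n|\delta_n|/\ell_n)\le C\Omega_n\mathsf T\ell_n$. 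The external field, Taylor expanded, contributes $DU\,\bar z$, which is $O(\mathsf S)$ in the logarithmic derivatives as in \eqref{eq:forced-growing}, together with a Hessian term. The errors contribute $E_i/\ell_n=O(\Omega_n^{-4})$. Finally, $\dot D_n$ enters $\dot x$ through the normalization $R=|z_2/z_1|^2$; by the previous step this is controlled by $C(\mathsf S+\Omega_n^{-4})$. Summing these contributions gives the bound on $f_n,\widetilde f_n$.

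The main obstacle I expect is reconciling the $\Omega_n\mathsf T^2\ell_n^2$ term, which should arise from squaring the $\delta_n$ contribution rather than from a linear term. It also requires care to check that no forcing term of size $\Omega_n|\delta_n|/\ell_n$ survives linearly; I would try to show that the first-order effect of $\delta_n$ cancels in $(R,\phi)$ by central symmetry, since $\delta_n$ is a common translation of the sign classes.
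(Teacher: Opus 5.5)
Your estimates for $\delta_n$, for $D_n$ and $\dot D_n$ via the angular impulse and patch impulse, and for the shape remainders follow the paper's route. The separation estimate, however, has a genuine gap.

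You bound $|\dot d_i-\dot d_j|$ by absolute far-field velocities $\sum_k\Gamma_k\ell_k^2/|d_i-d_k|^3$ and then assert that the time integral is $o(|d_i-d_j|)$. This cannot hold uniformly in $N$. For $i=N-1$, $j=N$ one has $|d_i-d_j|=2^{-N}$. Yet your bound contains the term from the source $k=1$, of order $\Gamma_1\ell_1^2/d_1^3$, which is a fixed positive number once $\Lambda$ is fixed. This is not merely a lossy bound: the field of quadrupole $1$ really does displace every center near the origin by a fixed amount, far larger than $d_N$. The variant $|d_i(t)-d_i|=o(d_i)$ for each $i$ fails for the same reason.

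What rescues the lemma is that distant quadrupoles move nearby centers by almost the same amount. You therefore need a relative, Lipschitz-type estimate
\[ |\dot d_i-\dot d_j|\le C\mathsf S|d_i(t)-d_j(t)|+r_i+r_j, \]
where $r_n$ is the center residual. A mean-value argument along the segment from $d_j$ to $d_i$ is not directly available, because that segment crosses intermediate quadrupoles; for example $d_2$ lies between $d_1$ and $d_3$.

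The paper handles this with a cutoff construction. Multiply each quadrupole's velocity by a cutoff that vanishes on $B(d_n(t),\epsilon_0d_n)$ and equals one outside $B(d_n(t),2\epsilon_0d_n)$. The sum $V$ of these fields satisfies $V(d_n(t))=U_{\mathrm{ext},n}(d_n(t))$ for every $n$ and $\|\nabla V\|_{L^\infty}\le C\mathsf S$. Gronwall's inequality for the distance then closes the argument, using $r_i+r_j\le3|d_i-d_j|\sup_nr_n/d_n$ (which follows from $d_i+d_j\le3|d_i-d_j|$) and $\sup_nr_n/d_n\to0$. This yields the factors $3/4$ and $3/2$ uniformly in $N$. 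Incidentally, your $O(\mathsf S\ell_n)$ term in $\dot d_n-U(d_n)$ is unnecessary: the linear Taylor terms cancel exactly because $d_n$ is the arithmetic mean of the four centroids.

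The obstacle you flag at the end is not a real one. One option is the paper's argument. Rotation by $\pi$ about $d_n$, followed by swapping the two members of each same-sign pair, maps $(\delta_n,z_{1,n},z_{2,n})$ to $(-\delta_n,z_{1,n},z_{2,n})$ and preserves the internal point-vortex equations. Hence the internal shape equations are even in $\delta_n$, and the correction is $O(\Omega_n|\delta_n/\ell_n|^2)\le C\Omega_n\mathsf T^2\ell_n^2$. Alternatively, your linear bound already suffices. Since $\mathsf T\le1$, we have $C\Omega_n\mathsf T\ell_n\le C\Omega_ne^{-\Omega_n^\alpha}\le C\Omega_n^{-4}$ for large $\Lambda$, so this term is absorbed into the $C_A\Omega_n^{-4}$ term of the stated bound.
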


\begin{proof}
Write $U_{{\rm ext},n}$ for the exact velocity of the other quadrupoles and expand
\[ U_{{\rm ext},n}(d_n(t)+\zeta)=U_{{\rm ext},n}(d_n(t))+\sum_{k\ge1}a_{k,n}\overline\zeta^{\,k}. \]
The series converges on a disk containing quadrupole $n$. Its coefficients satisfy $A_{k,n}=\ell_n^{k-1}a_{k,n}$ in the notation of Theorem~\ref{meta:reduction}. In particular, the strain coefficient is $S_n=a_{1,n}$. The coefficients for an individual patch are normalized by $a_n$ instead.

The internal velocity preserves the signed first moment $M_{1,n}=4\Gamma_n\delta_n$. Since the total circulation is zero, \eqref{transfer:neutral-identities} gives
\begin{equation} \label{prep:first-moment}
\dot M_{1,n}=\sum_{k\ge1}a_{k,n}\overline{M_{k,n}}.
\end{equation}
The supports lie within $C\ell_n$ of $d_n(t)$, so $|M_{k,n}|\le C_k\Gamma_n\ell_n^k$ for $k\ge2$. The sum over $k\ge2$ is the integral of the external Taylor remainder after its affine part is subtracted, and is bounded by $C\Gamma_n\mathsf{T}\ell_n^2$. Dividing by $4\Gamma_n$ gives
\[ \dot\delta_n=a_{1,n}\overline{\delta_n}+\frac1{4\Gamma_n}\sum_{k\ge2}a_{k,n}\overline{M_{k,n}},\qquad \left|\dot\delta_n-a_{1,n}\overline{\delta_n}\right|\le C\mathsf{T}\ell_n^2,\qquad |a_{1,n}|\le C\mathsf{S}. \]
Since $\delta_n(0)=0$, Gronwall's inequality yields
\[ |\delta_n(t)|\le C\mathsf{T}\ell_n^2\int_0^t e^{C\mathsf{S}(t-s)}\,ds\le C\mathsf{T}\ell_n^2t e^{C\mathsf{S}t}. \]
This strictly improves $|\delta_n|\le\delta\ell_n$ for large $\Lambda$.

The internal point velocity of the arithmetic center is odd in $\delta_n$ and vanishes at $\delta_n=0$ by central symmetry. Differentiating the rational point-vortex velocities on the fixed noncollision neighborhood bounds it by $C\Omega_n|\delta_n|$. The four external linear Taylor terms cancel because the centroids average to $d_n(t)$. Together with Lemma~\ref{prep:patch-bounds}, this gives
\begin{equation} \label{prep:center-residual}
|\dot d_n(t)-U_{{\rm ext},n}(d_n(t))|\le r_n,\qquad r_n=C\bigl(\mathsf{T}\Omega_n\ell_n^2+\mathsf{T}\ell_n^2+\ell_n\Omega_n^{-4}\bigr).
\end{equation}
The three terms bound the internal point velocity, the external Taylor remainder, and the centroid error, respectively.

To compare the center trajectories, multiply each quadrupole's velocity by a smooth cutoff which is zero on $B(d_n(t),\epsilon_0d_n)$ and one outside $B(d_n(t),2\epsilon_0d_n)$. Choose the fixed $\epsilon_0>0$ so that the larger disks are disjoint under the bound $|d_i(t)-d_j(t)|\ge\frac12|d_i-d_j|$. The smaller disks contain their quadrupoles for large $\Lambda$. The sum $V$ of these fields satisfies
\[ V(d_n(t))=U_{{\rm ext},n}(d_n(t)),\qquad \|\nabla V\|_{L^\infty}\le C\mathsf{S}. \]
Indeed, on the cutoff annulus the velocity is bounded by $C\Gamma_n/d_n$, while the cutoff derivative is bounded by $C/d_n$. The derivative of the velocity is bounded by $C\Gamma_n/d_n^2$ wherever its cutoff is nonzero. Summing gives the stated bound. Hence
\[ |\dot d_i(t)-\dot d_j(t)|\le C\mathsf{S}|d_i(t)-d_j(t)|+r_i+r_j. \]
Since $d_n=2^{-n}$ gives $d_i+d_j\le3|d_i-d_j|$, we have $r_i+r_j\le3|d_i-d_j|\sup_n(r_n/d_n)$. The upper and lower differential inequalities for the distance imply
\[ e^{-C\mathsf{S}t}-3t\sup_n(r_n/d_n)\le\frac{|d_i(t)-d_j(t)|}{|d_i-d_j|}\le e^{C\mathsf{S}t}\bigl(1+3t\sup_n(r_n/d_n)\bigr). \]
Since $d_n^{-1}=2^n\le\Omega_n$ for $\Lambda\ge1$ and $B>2$,
\[ \frac{r_n}{d_n}\le C\bigl(\mathsf{T}\Omega_n^2\ell_n^2+\mathsf{T}\Omega_n\ell_n^2+\ell_n\Omega_n^{-3}\bigr),\qquad \mathsf{S}+\sup_n\frac{r_n}{d_n}\longrightarrow0. \]
Thus the distance ratio tends uniformly to one for $0\le t\le2$, proving the stated separation bounds.

The centroid contribution to the signed angular impulse is $2\Gamma_nD_n$, since the terms in $|\delta_n|^2$ cancel. The equal reference-disk radial moments also cancel. Theorem~\ref{meta:reduction}(iii) therefore gives
\begin{equation} \label{prep:angular-moment}
|I_n-2\Gamma_nD_n|\le C\Gamma_n\ell_n^2\Omega_n^{-7},
\end{equation}
using $C\Gamma_na_n^2\Omega_n^{-4}\le C\Gamma_n\ell_n^2\Omega_n^{-7}$. Expanding \eqref{transfer:neutral-identities} yields
\begin{equation} \label{prep:angular-derivative}
\dot I_n=2\operatorname{Re}\Big((U_{{\rm ext},n}(d_n(t))-\dot d_n(t))\overline{M_{1,n}}+\sum_{k\ge1}a_{k,n}\overline{M_{k+1,n}}\Big).
\end{equation}
The internal contribution vanishes by antisymmetry of the kernel, and differentiation of the center contributes $-2\operatorname{Re}(\dot d_n\overline{M_{1,n}})$. The moment bounds and \eqref{prep:center-residual} imply
\[ |\dot I_n|\le C\Gamma_n\ell_n^2\bigl(\mathsf{S}+\mathsf{T}\ell_n+\Omega_n\mathsf{T}^2\ell_n^2+\mathsf{T}\ell_n\Omega_n^{-4}\bigr). \]
Since $I_n(0)=2\Gamma_n\ell_n^2$, integration and \eqref{prep:angular-moment} give
\[ \left|\frac{D_n(t)}{\ell_n^2}-1\right|\le C\bigl(\mathsf{S}+\mathsf{T}\ell_n+\Omega_n\mathsf{T}^2\ell_n^2+\mathsf{T}\ell_n\Omega_n^{-4}+\Omega_n^{-7}\bigr),\qquad 0\le t\le2. \]
For its derivative, use the patch impulse derivative bound in Theorem~\ref{meta:reduction}(iii) to obtain
\[ |\dot I_n-2\Gamma_n\dot D_n|\le C\Gamma_na_n^2\Omega_n\Omega_n^{-2},\qquad \left|\frac{\dot D_n}{D_n}\right|\le C(\mathsf{S}+\Omega_n^{-4})+C\mathsf{T}\ell_n+C\Omega_n\mathsf{T}^2\ell_n^2. \]
Here internal patch rotation cancels, leaving the individual-patch external strain, bounded by $C\Omega_n$. Since $\ell_n=e^{-\Omega_n^\alpha}$ and $\mathsf{T}\le1$, the additional terms in the bounds for $D_n$ and $\dot D_n$ are at most $C\Omega_n^{-7}$ and $C\Omega_n^{-4}$, respectively. This proves both stated bounds and strictly improves the bootstrap bound on $D_n$.

Finally, \eqref{eq:shape-linearization} at scale $D_n$ gives
\[ \binom{\dot x_n}{\dot y_n}=2\sqrt2\Omega_n\frac{\ell_n^2}{D_n}\begin{pmatrix}0&1\\1&0\end{pmatrix}\binom{x_n}{y_n}+\text{remainder}. \]
Taking the sum and difference yields the equations for $q_n,p_n$, with
\[ |f_n|+|\widetilde f_n|\le C_0\Omega_n(|q_n|+|p_n|)^2+C_A\left[\mathsf{S}+\Omega_n^{-4}+\Omega_n\left|\frac{\delta_n}{\ell_n}\right|^2\right]. \]
The first term is the Taylor remainder of the point equations. Its constant $C_0$ depends only on the fixed centroid neighborhood and is independent of $A$. The second is the external strain, since common translation leaves the shape unchanged. The third follows by dividing the centroid error $C\ell_n\Omega_n^{-4}$ by $\ell_n$. For the last term, reflection through the center followed by interchange of the two members of each equal-sign pair sends $\delta_n$ to $-\delta_n$ and leaves $z_{1,n},z_{2,n}$ unchanged. Thus the internal shape equations are even in $\delta_n$, and their correction is quadratic. Substituting the bound on $\delta_n$ proves the result.
\end{proof}

\begin{lemma} \label{prep:shooting}
For every finite $N$ and prescribed phase vector $(\varphi_1, \cdots, \varphi_N) \in (\mathbb{R}/2\pi\mathbb{Z})^N$, some $(q_1(0),\ldots,q_N(0))\in[-\delta,\delta]^N$ gives a solution satisfying all bootstrap assumptions through time 2. Every such solution satisfies
\begin{equation} \label{prep:interior-tube}
|q_n(t)|+|p_n(t)|\le C\left(\frac{\mathsf{S}}{\Omega_n}+\Omega_n^{-5}\right)+C\delta e^{-c\Omega_n(2-t)},\qquad 0\le t\le2,
\end{equation}
where $c,C>0$ are independent of $n,N$. In particular,
\[ \sup_{0\le t\le1}(|q_n(t)|+|p_n(t)|)\le C(\mathsf{S}/\Omega_n+\Omega_n^{-5}). \]
\end{lemma}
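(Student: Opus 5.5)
We prove the existence assertion by a Wa\.zewski--Brouwer shooting argument on the cube $Q=[-\delta,\delta]^N$ of initial unstable variables. For $q(0)\in Q$, let $T^*(q(0))\in(0,2]$ be the first time at which some bootstrap assumption fails, with $T^*=2$ if none fails. Lemmas~\ref{prep:patch-bounds} and~\ref{prep:moment-bounds} strictly improve every bootstrap assumption except $|q_n|,|p_n|\le\delta$. So before time $2$ an exit can only occur through $|q_n|=\delta$ or $|p_n|=\delta$.

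The $p_n$ exit is excluded first. The coefficient $2\sqrt2\Omega_n\ell_n^2/D_n\ge\sqrt2\Omega_n$ holds by the bound on $D_n$, and $p_n(0)=0$. At a point where $|p_n|=\delta$, Lemma~\ref{prep:moment-bounds} gives
\[ \tfrac{d}{dt}|p_n|\le-\sqrt2\Omega_n\delta+4C_0\Omega_n\delta^2+C_A(\mathsf S+\Omega_n^{-4}+\Omega_n\mathsf T^2\ell_n^2). \]
This is negative once $\delta$ is small relative to $C_0$ (independent of $A$) and then $\Lambda$ is large. The same computation at $|q_n|=\delta$ gives $\frac{d}{dt}\operatorname{sgn}(q_n)q_n>0$, so exits through the faces $q_n=\pm\delta$ are strict and transversal.

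Next, suppose no initial vector survives to time $2$. Define $\Phi:Q\to\partial Q$ by $\Phi(q(0))=q(T^*)$, the exit point. Transversality and the continuous dependence in Theorem~\ref{meta:reduction}(iv) make $T^*$, and hence $\Phi$, continuous. This uses the standard argument: a strict exit is stable under perturbation, and a nearby solution cannot exit much earlier because the other bootstrap bounds hold with margin. On $\partial Q$ itself, $T^*=0$ and $\Phi$ is the identity. So $\Phi$ is a retraction of the cube onto its boundary, contradicting Brouwer's theorem. Hence some $q(0)$ gives $T^*=2$ with all bootstrap bounds holding through time $2$.

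\textbf{Main obstacle.} I expect the continuity of $\Phi$ to be the main point. One must check that the margins in Lemmas~\ref{prep:patch-bounds} and~\ref{prep:moment-bounds} are uniform, so that the only possible tangential exit is ruled out by the sign computation above.

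For \eqref{prep:interior-tube}, take any solution satisfying the bootstrap assumptions through time $2$, and set $\epsilon_n=C_A(\mathsf S+\Omega_n^{-4}+\Omega_n\mathsf T^2\ell_n^2)$. Since $\ell_n=e^{-\Omega_n^\alpha}$, we have $\epsilon_n\le C(\mathsf S+\Omega_n^{-4})$. Let $\lambda_n(t)=2\sqrt2\Omega_n\ell_n^2/D_n\ge\sqrt2\Omega_n$. Integrating the $q_n$ equation backward from $t=2$ gives
\[ q_n(t)=e^{-\int_t^2\lambda_n}q_n(2)-\int_t^2e^{-\int_t^s\lambda_n}f_n(s)\,ds. \]
Integrating the $p_n$ equation forward from $p_n(0)=0$ gives the analogous formula. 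Bootstrap the quantity $m_n=\sup_t\bigl(|q_n|+|p_n|-\delta e^{-c\Omega_n(2-t)}\bigr)_+$. Using $|f_n|\le C_0\Omega_n(|q_n|+|p_n|)^2+\epsilon_n$, we get
\[ |q_n|+|p_n|\le\delta e^{-\sqrt2\Omega_n(2-t)}+C\Omega_n^{-1}\epsilon_n+C_0\delta\sup(|q_n|+|p_n|). \]
The quadratic term is absorbed because its weighted convolution is at most $C_0\delta$ times the supremum of the same profile; take $c<\sqrt2$. This yields \eqref{prep:interior-tube}. On $[0,1]$ we have $\delta e^{-c\Omega_n}\le\Omega_n^{-5}$ for large $\Lambda$, which gives the final bound.
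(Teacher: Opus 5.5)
Your proposal is correct and follows the paper's proof: the same face-sign computation (inward on $p_n=\pm\delta$, strictly outward on $q_n=\pm\delta$), the same first-contact retraction of the cube onto its boundary contradicting Brouwer, and the same backward-from-$t=2$/forward-from-$t=0$ variation of constants with absorption of the quadratic term. The one step to tighten is the absorption. Your display uses the unweighted supremum of $|q_n|+|p_n|$, which can be of size $\delta$ near $t=2$, so it leaves an $O(C_0\delta^2)$ error; $m_n$ with coefficient exactly one in front of $\delta e^{-c\Omega_n(2-t)}$ has the same defect. You should instead take the supremum of $(|q_n|+|p_n|)\big/\bigl(\mathsf S/\Omega_n+\Omega_n^{-5}+\delta e^{-c\Omega_n(2-t)}\bigr)$ with $c<\sqrt2$, as your last sentence suggests and as the paper does, or use a coefficient larger than one in $m_n$.
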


\begin{proof}
On the bootstrap neighborhood,
\[ \sqrt2\Omega_n\le2\sqrt2\Omega_n\frac{\ell_n^2}{D_n(t)}\le4\sqrt2\Omega_n. \]
By Lemma~\ref{prep:moment-bounds} and $\Omega_n\mathsf{T}^2\ell_n^2\le\Omega_n^{-4}$,
\[ |f_n|+|\widetilde f_n|\le C_0\Omega_n(|q_n|+|p_n|)^2+C_A(\mathsf{S}+\Omega_n^{-4}). \]
Choose $\delta$ so that $4C_0\delta\le\sqrt2/4$ and so that the kernel estimate below can be absorbed. This choice depends only on $C_0$ and universal kernel bounds. Next fix $A$ as above, and then choose $\Lambda$ so that
\[ C_A\sup_{n\ge1}(\mathsf{S}/\Omega_n+\Omega_n^{-5})\le\tfrac{\sqrt2}{4}\delta. \]
The remainders are at most half the linear term on each shape face. Hence
\[ \begin{aligned} q_n=\delta&\ \Longrightarrow\ \dot q_n\ge\tfrac{\sqrt2}{2}\Omega_n\delta,&q_n=-\delta&\ \Longrightarrow\ \dot q_n\le-\tfrac{\sqrt2}{2}\Omega_n\delta,\\ p_n=\delta&\ \Longrightarrow\ \dot p_n\le-\tfrac{\sqrt2}{2}\Omega_n\delta,&p_n=-\delta&\ \Longrightarrow\ \dot p_n\ge\tfrac{\sqrt2}{2}\Omega_n\delta. \end{aligned} \]
Both faces $p_n=\pm\delta$ point inward. Lemma~\ref{prep:patch-bounds} strictly improves \eqref{meta:boundary-chart}, and Lemma~\ref{prep:moment-bounds} strictly improves the bounds on $\delta_n,D_n$ and $|d_i(t)-d_j(t)|$. Thus only $q_n=\pm\delta$ can be a first exit. The contours are regular and separated, and these faces lie inside the coordinate neighborhood, so Theorem~\ref{meta:reduction}(iv) gives continuation slightly beyond them.

Let $\mathbf{q}=(\mathbf{q}_1,\ldots,\mathbf{q}_N)\in[-\delta,\delta]^N$ prescribe $q_n(0)=\mathbf{q}_n$, with the other initial data fixed. Suppose no initial vector satisfies all bootstrap bounds through time 2. The preceding continuation and strict improvements imply that every trajectory first reaches $|q_n|=\delta$ for some $n$ before time 2. For interior $\mathbf{q}$, let $\tau(\mathbf{q})<2$ be this first contact time. On the boundary of the cube set $\tau(\mathbf{q})=0$.

For fixed $N$, the self-induced contour field is locally Lipschitz in $C^{1,1/2}$ by Lemma~\ref{core:analytic-contour}. Interactions between distinct patches have smooth kernels because their supports remain separated. The coupled contour equation is therefore a locally Lipschitz ordinary differential equation, and the initial disk centers depend smoothly on $\mathbf q$. Thus, near each fixed initial vector $\mathbf q$, solutions exist through $\tau(\mathbf q)$ and for a short time beyond it, and depend continuously on the initial vector in the contour norm, uniformly on this common time interval. Only continuity for each fixed $N$ is needed here. This local continuation time and modulus of continuity need not be uniform in $N$.

We claim that $\tau$ is continuous. On any nonempty interval $[0,\tau(\mathbf{q})-\varepsilon]$, all $|q_n|$ have a positive margin below $\delta$. Continuous dependence therefore prevents nearby trajectories from exiting on this interval. At $\tau(\mathbf{q})$, some coordinate $q_n$ has a strictly outward derivative, so nearby trajectories also cross before time $\tau(\mathbf{q})+\varepsilon$. These give the lower and upper bounds for continuity. If several faces are reached simultaneously, the lower bound uses all coordinates and the upper bound uses any outward coordinate. At an initial boundary point, the same upper bound gives $\tau\to0$.

Consequently,
\[ r(\mathbf{q})=\bigl(q_1(\tau(\mathbf{q});\mathbf{q}),\ldots,q_N(\tau(\mathbf{q});\mathbf{q})\bigr) \]
is a continuous map from the cube to its boundary, equal to the identity on the boundary. By Brouwer's theorem, the cube self-map $\mathbf{q}\mapsto-r(\mathbf{q})$ has a fixed point. This point is on the boundary, where $r(\mathbf{q})=\mathbf{q}$, so it must satisfy $\mathbf{q}=-\mathbf{q}=0$, a contradiction. Thus some initial vector satisfies the bootstrap bounds through time 2, with first contact at time 2 allowed. The selected initial vector need not be unique or depend continuously on the phases.

Along any such solution, set
\[ \lambda_n(t)=2\sqrt2\Omega_n\frac{\ell_n^2}{D_n(t)}. \]
Variation of constants, forward from $p_n(0)=0$ and backward from $q_n(2)$, gives
\[ p_n(t)=\int_0^t\exp\left(-\int_r^t\lambda_n(s)\,ds\right)\widetilde f_n(r)\,dr, \]
\[ q_n(t)=\exp\left(-\int_t^2\lambda_n(s)\,ds\right)q_n(2)-\int_t^2\exp\left(-\int_t^r\lambda_n(s)\,ds\right)f_n(r)\,dr. \]
In particular,
\[ q_n(0)=-\int_0^2\exp\left(-\int_0^r\lambda_n(s)\,ds\right)f_n(r)\,dr+\exp\left(-\int_0^2\lambda_n(s)\,ds\right)q_n(2). \]
The last term is at most $\delta e^{-2\sqrt2\Omega_n}$. Both $f_n$ and $D_n$ here belong to the actual coupled solution selected above.

The kernels decay at rate at least $c\Omega_n$, with $c=\sqrt2$, and have integrals at most $C/\Omega_n$. Since $(|q_n|+|p_n|)^2\le2\delta(|q_n|+|p_n|)$,
\[ \begin{aligned} |q_n(t)|+|p_n(t)|\le{}&\delta e^{-c\Omega_n(2-t)}+C_A(\mathsf{S}/\Omega_n+\Omega_n^{-5})\\ &+2C_0\delta\Omega_n\int_0^2e^{-c\Omega_n|t-r|}(|q_n(r)|+|p_n(r)|)\,dr. \end{aligned} \]
Fix $0<c'<c$. Direct integration gives
\[ \Omega_n\int_0^2e^{-c\Omega_n|t-r|}\left(\frac{\mathsf{S}}{\Omega_n}+\Omega_n^{-5}+\delta e^{-c'\Omega_n(2-r)}\right)dr\le\frac{C}{c-c'}\left(\frac{\mathsf{S}}{\Omega_n}+\Omega_n^{-5}+\delta e^{-c'\Omega_n(2-t)}\right). \]
Divide the preceding inequality for $|q_n|+|p_n|$ by the positive expression in the last parentheses and take the supremum in $t$. The initial choice of $\delta$ makes the coefficient of this supremum on the right at most $1/2$. Absorption proves \eqref{prep:interior-tube}, with $c'$ renamed $c$. On $[0,1]$, the terminal term is at most $C\delta e^{-c\Omega_n}\le C\Omega_n^{-5}$.
\end{proof}

\Needspace{6\baselineskip}
\begin{lemma} \label{prep:phase-bounds}
Every initial choice whose solution satisfies the bootstrap assumptions on $[0,2]$ obeys, for $0<T\le1$,
\[ \sup_{0\le t\le T}\left|\log\frac{I_n(t)}{2\Gamma_n\ell_n^2}\right|\le C\left(\frac{\mathsf{S}+T\mathsf{S}^2}{\Omega_n}+T\Omega_n^{-5}\right). \]
Its shape variables, angular impulse, and phase satisfy part (ii) of Proposition~\ref{prep:reference}.
\end{lemma}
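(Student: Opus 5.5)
The plan is to start from the exact impulse identity \eqref{prep:angular-derivative} and sharpen the crude bound of Lemma~\ref{prep:moment-bounds}. That crude bound carried a term $\mathsf{S}$ with no factor $\Omega_n^{-1}$. On $[0,1]$ this term comes from the strain contribution $2\operatorname{Re}(a_{1,n}\overline{M_{2,n}})$. The other contributions to $\dot I_n$ are already small:
\begin{itemize}[label={}]
\item the $k\ge2$ terms are bounded by $C\Gamma_n\ell_n^2\mathsf{T}\ell_n$;
\item the center term is bounded by $r_n|M_{1,n}|\le C\Gamma_n\ell_n^2\,\Omega_n\mathsf{T}^2\ell_n^2$, using \eqref{prep:center-residual} and the bound on $\delta_n$.
\end{itemize}
Since $\ell_n=e^{-\Omega_n^\alpha}$, both contributions are absorbed into $C\Gamma_n\ell_n^2\Omega_n^{-5}$ after integration over $[0,T]$. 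The substance of the proof is therefore the strain term.

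Along the solution, Lemma~\ref{prep:shooting} gives $|q_n|+|p_n|\le C(\mathsf{S}/\Omega_n+\Omega_n^{-5})$ on $[0,1]$. Hence
\[ M_{2,n}=2\sqrt2\Gamma_n\ell_n^2e^{2i\theta_n}(1+O(|q_n|+|p_n|+\mathsf{S}))+O(\Gamma_n a_n^2), \]
and the rhombus value of $M_{2,n}$ rotates with $\dot\theta_n=-\Omega_n+O(\Omega_n(|q_n|+|p_n|)+\mathsf{S}+\Omega_n^{-4})$. I would write the leading strain term as a time derivative,
\[ 2\operatorname{Re}\bigl(a_{1,n}\,2\sqrt2\Gamma_n\ell_n^2e^{-2i\theta_n}\bigr)=\frac{d}{dt}\operatorname{Re}\Bigl(\frac{a_{1,n}\,2\sqrt2\Gamma_n\ell_n^2e^{-2i\theta_n}}{i\,\dot{(\,2\theta_n)}\,}\cdots\Bigr)+\text{errors}, \]
that is, integrate by parts in time using $e^{-2i\theta_n}=\frac{d}{dt}e^{-2i\theta_n}/(-2i\dot\theta_n)$. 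The boundary terms are $O(\Gamma_n\ell_n^2\mathsf{S}/\Omega_n)$. The integrand left after integration by parts contains $\dot a_{1,n}$, which \eqref{meta:group-strain} bounds by $C\mathsf{S}^2+C\mathsf{T}r_n$, together with $\ddot\theta_n$ and the shape corrections. Integrated over $[0,T]$, these give $C\Gamma_n\ell_n^2(T\mathsf{S}^2/\Omega_n+T\Omega_n^{-5})$.

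The non-rhombus part of $M_{2,n}$ times $a_{1,n}$ contributes $\mathsf{S}\,\Gamma_n\ell_n^2(|q_n|+|p_n|)$, and this is already of order $T\mathsf{S}^2/\Omega_n+T\mathsf{S}\Omega_n^{-5}$. Dividing by $I_n\asymp\Gamma_n\ell_n^2$ and integrating from $I_n(0)=2\Gamma_n\ell_n^2$ yields the asserted bound on $\log(I_n/2\Gamma_n\ell_n^2)$.

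For part (ii) of Proposition~\ref{prep:reference}, combine this with \eqref{prep:angular-moment} to get the same bound for $|D_n/\ell_n^2-1|$. The bounds on $q_n,p_n$ then follow from Lemma~\ref{prep:shooting}. For the phase \eqref{prep:phase}, use the $\dot\theta$ equation of \eqref{eq:shape-ode} rescaled by $\ell_n^2/D_n$:
\[ \dot\theta_n=-\Omega_n\ell_n^2/D_n+O\bigl(\Omega_n(|q_n|+|p_n|)+\mathsf{S}+\Omega_n^{-4}\bigr). \]
The external strain and the centroid error $E_i/\ell_n$ contribute $\mathsf{S}$ and $\Omega_n^{-4}$. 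Inserting the improved bounds for $D_n$ and $q_n,p_n$, each multiplied by $\Omega_n$, gives errors $\mathsf{S}+T\mathsf{S}^2+\Omega_n^{-4}$; integrating over $[0,T]$ gives \eqref{prep:phase}. The bound on $q_n(0)$ is the case $t=0$ of the tube estimate.

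The main obstacle is the integration by parts. One must check that no resonance prevents it: $a_{1,n}$ varies slowly, at rate $\mathsf{S}^2$, while $e^{-2i\theta_n}$ oscillates at frequency $2\Omega_n$. This uses \eqref{meta:group-strain} at the moving center. One must also check that the phase nonuniformity does not reintroduce an unweighted factor $\mathsf{S}$.
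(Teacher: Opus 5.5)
Your strategy is the paper's: isolate the strain term $2\operatorname{Re}(a_{1,n}\overline{M_{2,n}})$ in \eqref{prep:angular-derivative}, then integrate it by parts against the rotation at rate $2\Omega_n$, using \eqref{meta:group-strain} for $\dot S_n$. However, two steps fail as written.

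\textbf{The expansion of $M_{2,n}$.} You expand $M_{2,n}$ about $2\sqrt2\Gamma_n\ell_n^2e^{2i\theta_n}$ with relative error $O(|q_n|+|p_n|+\mathsf S)$, and then account only for the $|q_n|+|p_n|$ part.
\begin{itemize}
\item The $O(\mathsf S)$ part is $D_n/\ell_n^2-1$, which before this lemma is known only to be $O(\mathsf S+\Omega_n^{-7})$ (Lemma~\ref{prep:moment-bounds}).
\item Multiplied by $|a_{1,n}|\le C\mathsf S$, it contributes $O(\mathsf S^2)$ to $\dot I_n/I_n$, hence $CT\mathsf S^2$ to $\log(I_n/2\Gamma_n\ell_n^2)$, with no factor $\Omega_n^{-1}$.
\item This is not within the claimed bound uniformly in $n$. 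After multiplication by $\Omega_n$ in the phase equation it would give a phase error of order $T^2\Omega_n\mathsf S^2$, which destroys \eqref{prep:phase}.
\end{itemize}
The paper avoids this by normalizing with the impulse itself. The centroid part of $M_{2,n}$ is $2\Gamma_n(z_{1,n}^2-z_{2,n}^2)$, which equals $D_n$ times a function of the shape and $\theta_n$. So \eqref{prep:angular-moment} gives $M_{2,n}/I_n=\sqrt2e^{2i\theta_n}+O(|q_n|+|p_n|+\Omega_n^{-5})$, and hence $\dot I_n/I_n=2\sqrt2\operatorname{Re}(S_ne^{-2i\theta_n})+\ldots$. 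In this linear equation the deviation of $D_n$ from $\ell_n^2$ is carried by $\log I_n$ rather than appearing as a forcing error. A Gronwall step on your version would achieve the same.

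\textbf{The integration by parts.} Dividing by $\dot\theta_n$ produces $\ddot\theta_n$, and hence a time derivative of the phase error. That error contains the patch centroid errors $E_i$, and nothing in Theorem~\ref{meta:reduction} bounds $\dot E_i$. Using $-\Omega_n$ in place of $\dot\theta_n$ does not work either:
\begin{itemize}
\item a priori $\dot\theta_n+\Omega_n$ is only $O(\Omega_n\mathsf S)$, and your display with $O(\mathsf S)$ presupposes the improved $D_n$ bound;
\item the leftover would again cost an unweighted $T\mathsf S^2$, exactly the danger you flag at the end.
\end{itemize}
The paper writes $\frac{d}{dt}e^{-2i\theta_n}=(2i\Omega_n\ell_n^2/D_n-2ie_{\theta,n})e^{-2i\theta_n}$. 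It divides only by the explicit factor $2i\Omega_n\ell_n^2/D_n$ and keeps $e_{\theta,n}$ as an undifferentiated remainder; by the tube bound, $e_{\theta,n}=O(\mathsf S+\Omega_n^{-4})$. Only $\dot S_n$ and $\dot D_n/D_n=O(\mathsf S+\Omega_n^{-4})$ are then needed. The boundary term, the term with $\dot S_n$ and $\dot D_n$, and the $e_{\theta,n}$ term are bounded by $C\mathsf S/\Omega_n$, $CT(\mathsf S^2+\Omega_n^{-4})/\Omega_n$, and $CT\mathsf S(\mathsf S+\Omega_n^{-4})/\Omega_n$ respectively.

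With these two repairs, the rest of your argument goes through as you describe, including part (ii) of Proposition~\ref{prep:reference} and the phase estimate.
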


\begin{proof}
By \eqref{prep:angular-moment} and the bootstrap lower bound on $D_n$,
\[ I_n\ge2\Gamma_nD_n-C\Gamma_n\ell_n^2\Omega_n^{-7}\ge\tfrac12\Gamma_n\ell_n^2>0. \]
Recall $S_n=a_{1,n}$. At the point rhombus, $z_{2,n}^2=-R_*z_{1,n}^2$, so
\[ \frac{M_{2,n}}{I_n}=\frac{z_{1,n}^2-z_{2,n}^2}{|z_{1,n}|^2-|z_{2,n}|^2}=\frac{1+R_*}{1-R_*}e^{2i\theta_n}=\sqrt2e^{2i\theta_n}. \]
The centroid terms in $M_{2,n}$ are $2\Gamma_n(z_{1,n}^2-z_{2,n}^2)$, since the terms in $\delta_n^2$ cancel. By Theorem~\ref{meta:reduction}(i), each centered second patch moment is bounded by $C\Gamma_na_n^2\Omega_n^{-2}\le C\Gamma_n\ell_n^2\Omega_n^{-5}$. Together with \eqref{prep:angular-moment}, Taylor expansion in $p_n,q_n$ gives
\[ \frac{M_{2,n}}{I_n}=\sqrt2e^{2i\theta_n}+O(|q_n|+|p_n|+\Omega_n^{-5}). \]
Thus \eqref{prep:angular-derivative} gives
\[ \left|\frac{\dot I_n}{I_n}-2\sqrt2\operatorname{Re}(S_ne^{-2i\theta_n})\right|\le C\bigl[\mathsf{S}(|q_n|+|p_n|+\Omega_n^{-5})+\mathsf{T}\ell_n+\Omega_n\mathsf{T}^2\ell_n^2+\Omega_n^{-4}\mathsf{T}\ell_n\bigr]. \]

For large $\Lambda$, $B(d_n(t),d_n/64)$ contains quadrupole $n$, and $B(d_n(t),d_n/16)$ excludes every other support. The weighted external absolute circulation is at most $C\mathsf{S}$, and $\Gamma_n/(d_n/16)^2\le C\mathsf{S}$. By \eqref{meta:group-strain} and \eqref{prep:center-residual},
\[ |\dot S_n|\le C\mathsf{S}^2+C\mathsf{T}r_n. \]
The point phase equation in Section~\ref{sec:four-point}, expanded at the rhombus with scale $D_n$, and the centroid error give
\[ \dot\theta_n=-\Omega_n\frac{\ell_n^2}{D_n}+e_{\theta,n},\qquad |e_{\theta,n}|\le C\bigl(\Omega_n(|q_n|+|p_n|)+\mathsf{S}+\Omega_n^{-4}+\Omega_n\mathsf{T}^2\ell_n^2\bigr). \]
Fix $0<T\le1$. Lemmas~\ref{prep:shooting} and~\ref{prep:moment-bounds} imply
\[ \begin{aligned} \sup_{0\le t\le T}(|q_n(t)|+|p_n(t)|)&\le C(\mathsf{S}/\Omega_n+\Omega_n^{-5}),\\ \sup_{0\le t\le T}\left|\dot D_n(t)/D_n(t)\right|+\sup_{0\le t\le T}|e_{\theta,n}(t)|&\le C(\mathsf{S}+\Omega_n^{-4}),\\ \sup_{0\le t\le T}|\dot S_n(t)|&\le C(\mathsf{S}^2+\Omega_n^{-4}),\qquad \sup_{0\le t\le T}|S_n(t)|\le C\mathsf{S}. \end{aligned} \]
Here we used $\mathsf{T}\le1$ and $\ell_n=e^{-\Omega_n^\alpha}$ to absorb the terms containing $\ell_n$.

It remains to integrate the oscillatory term in $\dot I_n/I_n$. The phase equation gives
\[ \dot{\bigl(e^{-2i\theta_n}\bigr)}=\left(\frac{2i\Omega_n\ell_n^2}{D_n}-2ie_{\theta,n}\right)e^{-2i\theta_n}. \]
Solving for $e^{-2i\theta_n}$ and integrating by parts, we obtain
\[ \begin{aligned} \int_0^t S_n(s)e^{-2i\theta_n(s)}\,ds={}&\left[\frac{D_n(s)S_n(s)e^{-2i\theta_n(s)}}{2i\Omega_n\ell_n^2}\right]_{s=0}^{s=t}\\ &-\int_0^t\frac{D_n(s)\dot S_n(s)+S_n(s)\dot D_n(s)}{2i\Omega_n\ell_n^2}e^{-2i\theta_n(s)}\,ds\\ &+\int_0^t\frac{D_n(s)S_n(s)e_{\theta,n}(s)}{\Omega_n\ell_n^2}e^{-2i\theta_n(s)}\,ds. \end{aligned} \]
Since $D_n/\ell_n^2\asymp1$, the preceding bounds yield
\[ \begin{aligned} \sup_{0\le t\le T}\left|\int_0^t S_n(s)e^{-2i\theta_n(s)}\,ds\right|&\le\frac{C}{\Omega_n}\left[\mathsf{S}+T(\mathsf{S}^2+\Omega_n^{-4})+T\mathsf{S}(\mathsf{S}+\Omega_n^{-4})\right]\\ &\le C\left[\frac{\mathsf{S}+T\mathsf{S}^2}{\Omega_n}+T\Omega_n^{-5}\right]. \end{aligned} \]
The remaining terms in $\dot I_n/I_n$ are bounded by $C(\mathsf{S}^2/\Omega_n+\Omega_n^{-5})$. Since $I_n(0)=2\Gamma_n\ell_n^2$, integration gives
\[ \sup_{0\le t\le T}\left|\log\frac{I_n(t)}{2\Gamma_n\ell_n^2}\right|\le C\left[\frac{\mathsf{S}+T\mathsf{S}^2}{\Omega_n}+T\Omega_n^{-5}\right]. \]
The right side is uniformly small. By \eqref{prep:angular-moment} and Lemma~\ref{prep:shooting},
\[ \begin{aligned} \sup_{0\le t\le T}(|q_n(t)|+|p_n(t)|)&\le C\left[\frac{\mathsf{S}+T\mathsf{S}^2}{\Omega_n}+\Omega_n^{-5}\right],\\ \sup_{0\le t\le T}\left[\left|\frac{I_n(t)}{2\Gamma_n\ell_n^2}-1\right|+\left|\frac{D_n(t)}{\ell_n^2}-1\right|+\left|\frac{\ell_n^2}{D_n(t)}-1\right|\right]&\le C\left[\frac{\mathsf{S}+T\mathsf{S}^2}{\Omega_n}+\Omega_n^{-5}\right]. \end{aligned} \]
Finally, integrating the phase equation gives
\[ \sup_{0\le t\le T}|\theta_n(t)-\theta_n(0)+\Omega_nt|\le CT\bigl(\mathsf{S}+T\mathsf{S}^2+\Omega_n^{-4}\bigr). \]
These are the estimates in part (ii).
\end{proof}

\begin{proof}[Proof of Proposition~\ref{prep:reference}]
Apply Lemma~\ref{prep:shooting} to the prescribed phases. Lemma~\ref{prep:patch-bounds} and Theorem~\ref{meta:reduction}(iv) give the exact patch solution on $[0,2]$ and its boundary estimates. Lemma~\ref{prep:moment-bounds} gives the bounds on $\delta_n,D_n$ and $|d_i(t)-d_j(t)|$. Together with Lemmas~\ref{prep:shooting} and~\ref{prep:phase-bounds}, these prove (i) and (ii).

For (iii), the other quadrupoles stay at a fixed positive distance from quadrupole $1$, uniformly in $N$. Their velocity at $d_1(t)$ is therefore bounded by $C\sum_{j\ge2}\Gamma_j$. Integrating \eqref{prep:center-residual}, we get
\[ \sup_{0\le t\le2}\frac{|d_1(t)-d_1|}{\ell_1}\le C\left(\frac{\sum_{j\ge2}\Gamma_j}{\ell_1}+\mathsf{T}\Omega_1\ell_1+\mathsf{T}\ell_1+\Omega_1^{-4}\right). \]
The last three terms tend to zero. For the first, $\Omega_j=\Omega_1B^{j-1}$ and $\ell_j=e^{-\Omega_j^\alpha}$ imply
\[ \frac{\sum_{j\ge2}\Gamma_j}{\ell_1}\le C\Omega_2\exp\bigl(-(2B^\alpha-1)\Omega_1^\alpha\bigr)\longrightarrow0. \]
All estimates apply to every initial choice that remains within the bootstrap assumptions through time 2.
\end{proof}

\section{Second solution: cascade of separating dipoles} \label{sec:second-solution}

We change only the four initial disk centers at scale $N$ in the reference solution of Section~\ref{sec:first-solution}. They are placed in a fixed configuration of separating dipoles, with the same radii and signed circulations. At every scale $n<N$, the initial patches of $\omega_N^{(1)}$ and $\omega_N^{(2)}$ agree. Both solutions then evolve under their own Biot--Savart velocities. No patch is moved or replaced at a later time.

The objective is to make scales $N-1,\ldots,1$ undergo the transition to separating dipoles successively, with scale $1$ separating before time $1/2$. The field of the modified $N$-th scale acts on every scale from time zero. Thus each scale has already moved when the preceding scale separates, and the proof must retain that earlier motion. We use $\tau_n$ for the time when scale $n$ reaches a fixed dipole separation, with $\tau_N=0$. The intended motion at scale $n<N$ is
\[ \begin{array}{c|c|c} [0,\tau_{n+1}]&[\tau_{n+1},\tau_n]&[\tau_n,1]\\ \text{near the rotating rhombus}&\text{transition and separation}&\text{separating dipoles}. \end{array} \]
The difference from the reference remains small before $\tau_{n+1}$ and grows exponentially during the first part of $[\tau_{n+1},\tau_n]$. The second configuration then follows the fixed separating trajectory. After $\tau_n$, the dipole centers separate at speed comparable to $\Omega_n\ell_n$. The second moment of scale $n+1$ supplies the strain that starts this growth. A large frequency ratio $\Omega_{n+1}/\Omega_n=B$ keeps the effects of the earlier motion and the responses of slower scales smaller than the interaction between adjacent scales.

This section explains the proof strategy and the approximate computations used in the later sections. Section~\ref{transfer:material-section} proves the estimates for one quadrupole, and Section~\ref{array:section} verifies them simultaneously for the interacting system. Figure~\ref{fig:second-dependencies} summarizes the induction.
\subsection{Initial configurations and phases} \label{second:configuration}

We choose the initial phases so that separation at scale $n+1$ produces a positive growing coefficient at scale $n$.

Take the separating trajectory of Proposition~\ref{prop:separating} with $\ell=\Omega=1$, write its time as $s$, and normalize its time and rotation by
\[ \lim_{s\to-\infty}e^{-2\sqrt2s}q(s)=1,\qquad \lim_{s\to-\infty}(\theta(s)+s)=0. \]
Choose $K$ sufficiently large for a fixed small tolerance $\varepsilon_*>0$, following the parameter order in Section~\ref{array:choices}. It is independent of the scale and of $N$. Let $\theta_\infty$ be the limiting angle of the negative-to-positive dipole vector in \eqref{eq:dipole-asymptotics}. Pair the nearby opposite signs, let $X_1,X_2$ be their pair centers, and set
\[ r_{\mathrm{out}}=|X_1(K)-X_2(K)|,\qquad \beta=\theta_\infty+K. \]
Choose $\vartheta_N$ arbitrarily and put the four initial disk centers at scale $N$ at
\[ d_N+\ell_Ne^{i(\vartheta_N-\theta_\infty)}\{\pm z_1(K),\pm z_2(K)\}. \]
Using the same pairing for the patch centroids, define the actual separation time and associated dipole angle, whenever the prescribed distance is reached, by
\[ \begin{aligned} \tau_n&=\inf\{t\ge0:|X_1(t)-X_2(t)|=r_{\mathrm{out}}\ell_n\},\\ \theta_{d,n}&=\arg z_{1,n}^{(2)}(\tau_n)+\theta_\infty-\theta(K)\pmod\pi. \end{aligned} \]
For a rotated copy of the fixed shape, this angle is its limiting dipole angle. In particular, $\tau_N=0$ and $\theta_{d,N}=\vartheta_N$. Later angle differences use their small representatives modulo $\pi$.

For $t\ge\tau_{n+1}$, the linear part of the source second moment has size $\Gamma_{n+1}^2(t-\tau_{n+1})$, inducing strain of size $\Gamma_{n+1}^2(t-\tau_{n+1})/|d_n-d_{n+1}|^4$. Formula~\eqref{eq:positive-coefficient} gives the positive coefficient $\kappa_n$ below when the phases maximize its cosine. After the forcing tail decays, the approximation $q_n^{(2)}(t)-q_n^{(1)}(t)\approx\kappa_ne^{2\sqrt2\Omega_n(t-\tau_{n+1})}$ predicts a delay $\Delta_n$ before scale $n$ reaches the fixed shape at normalized orbit time $K$.
\begin{equation} \label{second:seed-delay}
\kappa_n=\frac{\sqrt3\,\Gamma_{n+1}^2}{2\sqrt2\pi^2\Omega_n^2|d_n-d_{n+1}|^4},\qquad \Delta_n=\frac1{\Omega_n}\left(\frac{\log(1/\kappa_n)}{2\sqrt2}+K\right).
\end{equation}
Define the predicted separation times by
\[ \tau_{n,N}=\sum_{j=n}^{N-1}\Delta_j,\qquad \tau_{N,N}=0. \]

At $\tau_{n+1,N}$, the predicted receiving phase is $\varphi_n-\Omega_n\tau_{n+1,N}$ and the source angle is $\vartheta_{n+1}$. Since $d_n-d_{n+1}>0$, the following downward recursion maximizes the cosine in \eqref{eq:positive-coefficient} and assigns the next dipole angle.
\begin{equation} \label{second:phase-choice}
\begin{aligned}
\varphi_n&=\Omega_n\tau_{n+1,N}+\frac{\arctan(1/\sqrt2)-\pi}{2}-\vartheta_{n+1}\pmod\pi,\\
\vartheta_n&=\varphi_n-\Omega_n\tau_{n,N}+\beta\pmod\pi,\qquad n=N-1,\ldots,1.
\end{aligned}
\end{equation}
Choose $\varphi_N$ arbitrarily. With all phases now fixed, Proposition~\ref{prep:reference} chooses $(q_1(0),\ldots,q_N(0))$ and gives the reference solution $\omega_N^{(1)}$. For $\omega_N^{(2)}$, retain its initial disks at every scale $n<N$ and use the prescribed separating shape at scale $N$, with unchanged radii, signs, and vorticity values. Give every disk the labels $F_i(0,\theta)=e^{i\theta}$. Theorem~\ref{meta:reduction}(iv) gives local existence. The phases may depend on $N$, while all subsequent estimates are uniform in these choices.

Finally, the choice of scales \eqref{eq:scales} gives
\[ \log(1/\kappa_n)=4\Omega_{n+1}^\alpha+O(n+\log B+1),\qquad \sum_{n\ge1}\Delta_n\le C_{\alpha,B}\Lambda^{\alpha-1}. \]
For fixed $B$, choose $\Lambda$ large so that $0<\kappa_n<1$, $\Delta_n>0$, and $\sum_n\Delta_n<1/4$. The later estimates prove continuation through time 1 and control the actual errors $\tau_n-\tau_{n,N}$ and $\theta_{d,n}-\vartheta_n$.
\subsection{Growth and continued separation at one scale} \label{second:one-scale}

Suppose scale $n+1$ reaches its prescribed dipole separation at $t_0=\tau_{n+1}$, and set $s=\Omega_n(t-t_0)$. The estimates in this subsection assume initial differences of size $O(\kappa_n)$, positive weighted forcing, and the stated bounds for the other quadrupoles. The next subsection explains how the induction verifies these assumptions. All estimates are made up to the first possible failure of a shape, separation, or patch boundary bound. They then give continuation through time 1.

\paragraph{Differences from the reference.}

Before scale $j$ reaches its prescribed separation, write $Y_j$ for its dimensionless difference from the reference, including the integral $W_j$ in \eqref{transfer:core-memory} that controls boundary differences. Section~\ref{array:weights-section} gives the precise equivalent norms, which satisfy
\[ Y_j\asymp\frac{|\triangle z_{1,j}|+|\triangle z_{2,j}|}{\ell_j}+\frac{d_j|\triangle M_{1,j}|}{\Gamma_j\ell_j^2}+\frac{|\triangle d_j|}{d_j}+W_j. \]
The first term controls shape, impulse, and phase. The first moment $M_{1,j}=4\Gamma_j\delta_j$ controls the offset between the positive and negative pairs. Here $\triangle d_j$ is the difference between the moving quadrupole centers, whereas $d_j$ in the denominator is the fixed initial location. In particular,
\[ |\triangle M_{1,j}|\le C\frac{\Gamma_j\ell_j^2}{d_j}Y_j,\qquad |\triangle M_{2,j}|\le C\Gamma_j\ell_j^2Y_j. \]
These bounds convert differences of configurations into differences of the velocity acting on another scale. Small $Y_j$ keeps the second solution's quadrupole near its reference counterpart. Along the subsequent fixed trajectory, only a uniform bound on $Y_j$ is needed.

\paragraph{Why a relative estimate is needed.}

By the preceding estimate for $\log(1/\kappa_n)$, the coefficient $\kappa_n$ is smaller than every fixed negative power of $\Omega_n$. An absolute normalized centroid error $O(\Omega_n^{-5})$ therefore cannot determine the sign of this unstable coefficient. Theorem~\ref{meta:reduction}(ii) instead bounds the difference of the two centroid errors. With $F(s)=\Omega_n^{-1}\|\triangle A_n(t_0+s/\Omega_n)\|_R$, its relevant consequence is
\[ \frac{|\triangle E_n(t_0+s/\Omega_n)|}{\Omega_n\ell_n}\le C\frac{a_n^2}{\ell_n^2}\bigl[Y_n(t_0+s/\Omega_n)+F(s)\bigr],\qquad \frac{a_n^2}{\ell_n^2}=O(\Omega_n^{-3}). \]
There is no additive error independent of the differences. The integral controlling the boundary difference retains its actual value at $t_0$. From \eqref{meta:history-restart},
\[ W_n(t_0+s/\Omega_n)\le e^{\sigma_cs}W_n(t_0)+C\int_0^s e^{\sigma_c(s-r)}\bigl[Y_n(t_0+r/\Omega_n)+F(r)\bigr]\,dr,\qquad \sigma_c<2\sqrt2. \]
Thus $W_n(t_0)=O(\kappa_n)$ and inputs bounded by $C\kappa_ne^{2\sqrt2s}$ give the same growth bound for $W_n$. Over the required interval $S=O(\log(1/\kappa_n))$, the resulting relative error in the unstable coefficient is
\[ \frac1{\kappa_n}\int_0^S e^{-2\sqrt2s}\frac{|\triangle E_n(t_0+s/\Omega_n)|}{\Omega_n\ell_n}\,ds\le C\Omega_n^{-3}\log(1/\kappa_n)=O_{\alpha,B}(\Omega_n^{\alpha-3}). \]
The induction proves the growth bound and this error bound together. It does not assume an absolute patch error smaller than $\kappa_n$.

\paragraph{From positive forcing to the separating trajectory.}

Section~\ref{transfer:linearization} uses shape coordinates $(\widehat p,\widehat q)$ in which the separating point-vortex trajectory satisfies $\widehat p=0$ and $\widehat q_s=2\sqrt2\widehat q$, with $\widehat q/q\to1$ at the rhombus. The nonlinear remainder in the unstable equation vanishes when $\widehat p=0$.

Put $Q_n(t)=\triangle\widehat q_n(t)$. The common initial patches give $Q_n(0)=0$, regardless of the unknown values $q_n^{(1)}(0)$ chosen by shooting. The next subsection bounds the motion before $t_0$ by $|Q_n(t_0)|/\kappa_n\le CM_{\mathrm{hist}}B^{-2}+o_{\Lambda\to\infty}(1)$, with $M_{\mathrm{hist}}$ fixed before $B,\Lambda$. In the normalized equation for $Q_n$, let $\kappa_n f_q$ collect the forcing from completed scales $j>n$, their subsequent influence through scales $i<n$, and the slower-scale differences already present at $t_0$. The additional forcing produced when $n$ perturbs the slower scales after $t_0$ is estimated separately. The leading calculation is
\[ \begin{aligned} \frac d{ds}Q_n(t_0+s/\Omega_n)&=2\sqrt2Q_n(t_0+s/\Omega_n)+\kappa_nf_q(s)+\text{relative error},\\ \mathcal A_n&=\frac{Q_n(t_0)}{\kappa_n}+\int_0^\infty e^{-2\sqrt2s}f_q(s)\,ds. \end{aligned} \]
Here $\mathcal A_n$ is the normalized coefficient of unstable growth. The forcing may change sign. Under the completed-scale bounds below, the chosen phases give $\int_0^\infty e^{-2\sqrt2s}f_q(s)\,ds=1+O(\varepsilon_*)$. Together with the small value $Q_n(t_0)/\kappa_n$, this determines the sign independently of the common initial $q_n(0)$. On a stopped interval, use the actual forcing up to the stop and the explicit ideal source afterwards. Its tail is subtracted in the variation-of-constants formula, so this notation assumes no future physical motion.

If the accumulated relative errors are at most $\varepsilon_*$, the calculation of Section~\ref{transfer:material-block} gives, after the initial forcing tail has decayed,
\[ Q_n(t_0+s/\Omega_n)=\kappa_ne^{2\sqrt2s}\bigl[\mathcal A_n+O(\varepsilon_*)+O((1+s)^3e^{-\sqrt2s})\bigr]. \]
Fix a small $q_*>0$, independently of $n,N$, and let $T_n$ be the first physical time when $Q_n(T_n)=q_*$. If $\mathcal A_n\ge3/4$ and the other bounds do not fail first, then
\[ \Omega_n(T_n-t_0)=\frac1{2\sqrt2}\log\frac{q_*}{\kappa_n\mathcal A_n}+O(\varepsilon_*). \]
The stable shape difference decays under its linear equation, while impulse and phase have at most polynomial unforced growth. Their forced estimates give, at $T_n$,
\[ |\widehat p_n^{(2)}|+\left|\frac{I_n^{(2)}}{2\Gamma_n\ell_n^2}-1\right|+\frac{|\delta_n^{(2)}|}{\ell_n}\le C\varepsilon_*. \]
After the phase correction in Section~\ref{transfer:linearization}, the phase difference from the reference is also $O(\varepsilon_*)$. The reference shape is much smaller than $q_*$. Thus, after matching its phase, the second configuration is close to the positive separating trajectory at $\widehat q=q_*$. The remaining normalized point-vortex duration is the fixed number $K-\log q_*/(2\sqrt2)$. The vector field is Lipschitz on this collision-free segment, so the normalized centroid error stays $O(\varepsilon_*)$. The normalized pair-center distance crosses $r_{\mathrm{out}}$ with derivative at least one, so the crossing time also changes by $O(\varepsilon_*)$ in normalized time. Adding the two durations and comparing phases yields
\begin{equation} \label{second:local-passage}
\begin{aligned}
\tau_n-\tau_{n+1}&=\frac1{\Omega_n}\left[\frac{\log(1/(\kappa_n\mathcal A_n))}{2\sqrt2}+K+O(\varepsilon_*)\right],\\
\theta_{d,n}&=\theta_n^{(1)}(\tau_n)+\beta+O(\varepsilon_*).
\end{aligned}
\end{equation}

\paragraph{Why the dipoles keep separating.}

Each opposite-sign pair has length close to $\ell_n$ and translates with speed close to $\Omega_n\ell_n$. The two leading velocities are opposite. At pair-center distance $R$, the other pair changes its relative vector at rate $O(\Gamma_n\ell_n^2/R^3)$. The fractional change accumulated along the expected separation is bounded by
\[ \int_0^{1-\tau_n}\frac{\Gamma_n\ell_n}{[\ell_n(r_{\mathrm{out}}+\Omega_nt)]^3}\,dt\le Cr_{\mathrm{out}}^{-2}. \]
The external strain contributes at most $C\mathsf S$ to the fractional rate of change of the dipole vector. The patch correction contributes at most $C\Omega_n(a_n^2/\ell_n^2)\Omega_n^{-2}$, since the normalized boundary deviation is $O(\Omega_n^{-2})$. Over the whole remaining physical interval,
\[ \int_{\tau_n}^1\left(\mathsf S+\Omega_n\frac{a_n^2}{\ell_n^2}\Omega_n^{-2}\right)dt\le\mathsf S+C\Omega_n^{-4}\ll\varepsilon_*. \]
Here $\mathsf S$ bounds the strain of the other quadrupoles and tends to zero with $\Lambda^{-1}$. Thus the pair lengths and directions remain close to their initial values. Section~\ref{transfer:outgoing-material} makes this a bootstrap estimate: the relative pair-center velocity has projection at least $\Omega_n\ell_n$ in the separating direction, and
\[ |X_1(t)-X_2(t)|\asymp\ell_n\bigl(r_{\mathrm{out}}+\Omega_n(t-\tau_n)\bigr),\qquad \tau_n\le t\le1. \]
The same control gives
\[ \dot M_{2,n}^{(2)}(t)=-\frac{2i}{\pi}\Gamma_n^2e^{2i\theta_{d,n}}+O(\varepsilon_*\Gamma_n^2). \]
This is the force that scale $n$ supplies to scale $n-1$. These conclusions hold up to the common stopping time until the geometric and boundary estimates give continuation.
\subsection{The cascade by induction} \label{second:cascade-estimates}

At $t=\tau_{n+1}$, assume the stages $j>n$ are complete. Each completed quadrupole supplies the separation and moment estimates above, a time and angle close to their prescribed values, and the bound
\[ Y_j(t)\le M_{\mathrm{hist}}e^{c\Omega_j(t-\tau_j)},\qquad 0\le t\le\tau_j,\qquad 0<c<2\sqrt2. \]
The constant $M_{\mathrm{hist}}$ is independent of $j,N$. During $[\tau_{n+1},\tau_n]$, we estimate $n$ and all $i<n$ together, while the completed scales $j>n$ keep separating and retain their bounds on $[0,\tau_j]$. At the first stage $n=N-1$, $t_0=0$ and all differences at scales $j<N$ vanish. The bounds for scale $N$ follow from its prescribed configuration.

\paragraph{Force from faster scales.}

The second moment of $n+1$ is its value at $\tau_{n+1}$, of size $O(\Gamma_{n+1}\ell_{n+1}^2)$, plus an approximately linear term in $t-\tau_{n+1}$ with coefficient of size $\Gamma_{n+1}^2$. Writing $r=t-\tau_{n+1}$, the constant and linear terms have weights $\int_0^\infty e^{-2\sqrt2\Omega_nr}\,dr=(2\sqrt2\Omega_n)^{-1}$ and $\int_0^\infty re^{-2\sqrt2\Omega_nr}\,dr=(8\Omega_n^2)^{-1}$. Their relative size is of order $B^{-1}$, since
\[ \frac{\Gamma_{n+1}\ell_{n+1}^2/\Omega_n}{\Gamma_{n+1}^2/\Omega_n^2}=\frac1{2\pi B}. \]
Both the initial second moment $M_{2,n+1}^{(2)}(\tau_{n+1})$ and the reference second moment $M_{2,n+1}^{(1)}(t)$ are bounded by $C\Gamma_{n+1}\ell_{n+1}^2$. Their contributions to the weighted forcing coefficient are therefore at most $C\kappa_n/B$, giving an $O(B^{-1})$ relative error.

The earlier motion has an $O(B^{-2})$ relative error. Its effective duration in the weighted integral is $O(\Omega_{n+1}^{-1})$. For $B>4\sqrt2/c$,
\[ \frac{\Gamma_{n+1}\ell_{n+1}^2}{|d_n-d_{n+1}|^4}\int_{-\infty}^0e^{(c\Omega_{n+1}-2\sqrt2\Omega_n)t}\,dt\le\frac{C\kappa_n}{B(cB-2\sqrt2)}\le C\kappa_nB^{-2}. \]
The negative-time integral is an upper bound after shifting $\tau_{n+1}$ to zero. Applying the weighted comparison of Section~\ref{array:induction-section}, Step 3, only up to $\tau_{n+1}$ gives
\[ \sup_{0\le t\le \tau_{n+1}}e^{2\sqrt2\Omega_n(\tau_{n+1}-t)}Y_n(t)\le CM_{\mathrm{hist}}\kappa_nB^{-2}+C(1+M_{\mathrm{hist}})\kappa_ne^{-c\Omega_{n+2}^\alpha}. \]
This controls the whole earlier motion, not just $Y_n(\tau_{n+1})$, and includes $W_n(\tau_{n+1})$. Since the exponential weight is at least one, it also keeps scale $n$ near its rhombus throughout $[0,\tau_{n+1}]$.

For clarity, the smallness of the older sources follows from a comparison of two exponents. The completed delays give
\[ \Omega_n\sum_{k=n+1}^{j-1}\frac{\log(1/\kappa_k)}{\Omega_k}\le\frac{5\Omega_{n+1}^\alpha}{B^{1-\alpha}-1},\qquad \log\frac{\Gamma_j^2}{\Gamma_{n+1}^2}=-4(\Omega_j^\alpha-\Omega_{n+1}^\alpha)+2(j-n-1)\log B. \]
For $j\ge n+2$, the negative term is of size $\Omega_j^\alpha$, while the extra amplification is smaller by a factor of order $B^{-1}$. Taking $B$ large and then $\Lambda$ large absorbs the spatial factors, which grow only geometrically in $j$. Section~\ref{array:sources-section} bounds their total contribution, divided by $\kappa_n$, by $C(1+M_{\mathrm{hist}})e^{-c\Omega_{n+2}^\alpha}$.

\paragraph{Errors $\tau_n - \tau_{n,N}, \theta_{d,n} - \vartheta_n$ in time and angle.}

Put $h_j=\Omega_j(\tau_j-\tau_{j,N})$. The already completed scale $n+1$ supplies $|h_{n+1}|\le C_h\varepsilon_*$ and $\theta_{d,n+1}-\vartheta_{n+1}=-h_{n+1}+O(\varepsilon_*)$. At $\tau_{n+1}$ the reference phase is
\[ \theta_n^{(1)}(\tau_{n+1})=\varphi_n-\Omega_n\tau_{n+1,N}-B^{-1}h_{n+1}+O(\varepsilon_*). \]
The initial phases maximize the cosine in \eqref{eq:positive-coefficient}. Substitution of these actual phases gives
\[ \mathcal A_n=\cos\bigl(2(1+B^{-1})h_{n+1}\bigr)+O(\varepsilon_*)=1+O(h_{n+1}^2+\varepsilon_*). \]
The bounded moment and retained past contribute $O(B^{-1}+M_{\mathrm{hist}}B^{-2})$, and older sources give $o_{\Lambda\to\infty}(1)$. For the nearest source, the slower response contributes $O((\Omega_nd_{n+1})^{-1})$. The convolution below gives $\Omega_n^{-1}$, and the first-moment normalization contributes $d_{n+1}^{-1}$. This is uniformly small since $\Omega_nd_{n+1}\ge\Lambda B/4$ for $B>2$. For older sources, $d_j^{-1}$ remains inside their exponentially small sum. The parameter choices make these errors $O(\varepsilon_*)$, so $\mathcal A_n\ge3/4$. This positivity uses only the completed stages. Once $n$ separates, subtracting the prescribed delay from \eqref{second:local-passage} gives
\[ h_n=B^{-1}h_{n+1}-\frac{\log\mathcal A_n}{2\sqrt2}+O(\varepsilon_*),\qquad |h_n|\le B^{-1}|h_{n+1}|+C|h_{n+1}|^2+C\varepsilon_*. \]
Fix $C_h$ and then $\varepsilon_*$ using the universal constants in this recurrence, as specified in Section~\ref{array:choices}. For large $B$, the recurrence improves $|h_n|\le C_h\varepsilon_*$. Starting from $h_N=0$, the same timing and angle bounds are available for the next coefficient.

\paragraph{The response of slower scales.}

The motion of $n$ also forces every $i<n$, which then acts back on $n$. Let $\Xi$ bound these interaction maps in the weighted comparison norm of Section~\ref{array:weights-section}. Then $\Xi\to0$ as $\Lambda\to\infty$. The slower subsystem has growth rate at most $C\Omega_{n-1}+C\Xi<\sqrt2\Omega_n$ for large $B$. Hence
\[ \int_{\tau_{n+1}}^t e^{\sqrt2\Omega_n(t-r)}e^{2\sqrt2\Omega_n(r-\tau_{n+1})}\,dr\le\frac1{\sqrt2\Omega_n}e^{2\sqrt2\Omega_n(t-\tau_{n+1})}. \]
On the common stopped interval, assume $Y_n(t)\le C\kappa_ne^{2\sqrt2\Omega_n(t-\tau_{n+1})}$ with a sufficiently large fixed constant. The part generated by scale $n$ then satisfies, schematically,
\[ \max_{i<n}\left(\frac{d_i}{d_n}\right)^4Y_i(t)\le\text{inherited and source-driven terms}+\frac{C\Xi}{\Omega_n}\kappa_ne^{2\sqrt2\Omega_n(t-\tau_{n+1})}. \]
The product $\kappa_ne^{2\sqrt2\Omega_n(T_n-\tau_{n+1})}$ is $O(q_*)$, and the remaining transition takes bounded normalized time. Since $d_i>d_n$, this estimate also controls each $Y_i$. Acting back on $n$ costs another $\Xi$, and converting to normalized time gives another $\Omega_n^{-1}$. Lemma~\ref{array:follower-convolution} bounds the resulting feedback norm by $C\Xi^2/\Omega_n^2$, including all mutual interactions among slower scales. Their actual states at $\tau_{n+1}$ and responses to completed sources remain in $\kappa_nf_q$. The feedback contains only their response to the subsequent motion of $n$.

The estimates for the stable shape and the remaining coordinates give the following schematic bound on the common stopped interval, for $S\le C\log(1/\kappa_n)$:
\[ \begin{aligned} \sup_{0\le s\le S}\frac{e^{-2\sqrt2s}Y_n(\tau_{n+1}+s/\Omega_n)}{\kappa_n}\le{}&C+C\left(\frac{\mathsf S}{\Omega_n}+\Omega_n^{-3}+\frac{\Xi^2}{\Omega_n^2}\right)\log(1/\kappa_n)\\ &\quad\times\sup_{0\le s\le S}\frac{e^{-2\sqrt2s}Y_n(\tau_{n+1}+s/\Omega_n)}{\kappa_n}. \end{aligned} \]
The three small terms come from the reference and external fields, the relative patch error, and the feedback through slower scales. The other local terms are absorbed by the fixed small shape threshold and the scale choices. For large $\Lambda$, the multiplier of the last supremum is less than $1/2$. Absorption then improves this bootstrap bound. Including the slower scales' initial values and forcing from $j>n$, Section~\ref{array:induction-section}, Step 4, obtains $\sup_{i<n,\,\tau_{n+1}\le t\le\tau_n}Y_i(t)=o_{\Lambda\to\infty}(1)$, uniformly in $n,N$. They therefore cannot leave their rhombus neighborhoods first. These strict improvements prove the current and slower bounds together. For $n=1$, there are no slower scales.

\paragraph{The estimates passed to the next stage.}

The bound on the differences, including the weighted past before $\tau_{n+1}$, gives
\[ Y_n(t)\le C\kappa_ne^{2\sqrt2\Omega_n(t-\tau_{n+1})}\quad(0\le t\le T_n),\qquad \kappa_ne^{2\sqrt2\Omega_n(T_n-\tau_{n+1})}=\frac{q_*}{\mathcal A_n}(1+O(\varepsilon_*)). \]
Combining these bounds controls every earlier time by its distance from $T_n$. The interval from $T_n$ to $\tau_n$ has bounded length in normalized time and bounded linear evolution. Hence
\[ Y_n(t)\le M_{\mathrm{hist}}e^{c\Omega_n(t-\tau_n)},\qquad 0\le t\le\tau_n. \]
To keep the same constant at every stage, the proof separates the leading source from the terms depending on the previously assumed $M_{\mathrm{hist}}$. The coefficient in the new bound is at most
\[ C+C\bigl(B^{-1}+B^{-2}+o_{\Lambda\to\infty}(1)\bigr)M_{\mathrm{hist}}\le\tfrac34M_{\mathrm{hist}}. \]
The constants on the left are fixed by the one-quadrupole estimates before $B,\Lambda$ are chosen. Section~\ref{array:choices} first fixes $M_{\mathrm{hist}}$ larger than the leading constant, then chooses $B$ and $\Lambda$ to obtain this strict improvement.

The slower-scale estimate supplies the actual state of the next scale. The main factor is
\[ \frac{\Gamma_n\ell_n^2}{\Omega_n|d_{n-1}-d_n|^4}\asymp\kappa_{n-1}\left(\frac{\Omega_{n-1}}{\Omega_n}\right)^2=\kappa_{n-1}B^{-2}. \]
Consequently, for $n\ge2$,
\[ Y_{n-1}(\tau_n)\le CM_{\mathrm{hist}}\kappa_{n-1}B^{-2}+C(1+M_{\mathrm{hist}})\kappa_{n-1}e^{-c\Omega_{n+1}^\alpha}. \]
This includes $W_{n-1}(\tau_n)$, so no state or boundary label is restarted. Scale $n$ now supplies its earlier-motion bound, its time and angle estimates, and its continuing dipole and moment estimates. Every $i<n$ retains its improved rhombus bound. These are the data needed for the next step.

Finally, the geometric estimates give
\[ \operatorname{diam}(\text{quadrupole }j)\le C\ell_j(1+\Omega_j),\qquad |d_i^{(k)}(t)-d_j^{(k)}(t)|\ge\tfrac12|d_i-d_j|. \]
Since $\ell_j(1+\Omega_j)\ll d_j$, quadrupoles remain separated. Their minimum internal distances stay bounded below by $c\ell_j$, while the lengths within each dipole remain comparable to $\ell_j$, and Theorem~\ref{meta:reduction}(iv) keeps each normalized patch boundary within $C\Omega_j^{-2}$ of a circle. These bounds exclude the remaining failures on the stopped interval. Descending induction therefore reaches scale $1$ and continues every dipole through time 1, with constants independent of $N$. Moreover,
\[ \tau_1\le\sum_{n\ge1}\Delta_n+\frac{C_h\varepsilon_*}{\Omega_1}<\frac12. \]

\subsection{Conclusion} \label{second:observation-section}

\begin{proposition} \label{second:finite-solutions}
For the fixed choices above and every $N\ge2$, the prescribed initial configurations give Euler solutions $\omega_N^{(1)},\omega_N^{(2)}$ on $[0,1]$, with $u_N^{(k)}=K*\omega_N^{(k)}$. Their initial patches agree at every scale $n<N$, and the first solution satisfies Proposition~\ref{prep:reference}. In the second solution, scale $N$ starts in the prescribed separating configuration and the remaining scales reach that separation in order, with
\[ 0=\tau_N<\tau_{N-1}<\cdots<\tau_1<\tfrac12,\qquad |\tau_n-\tau_{n,N}|\le C_h\varepsilon_*/\Omega_n. \]
All patches remain disjoint. Each normalized boundary has $C^{1,1/2}$ distance at most $C\Omega_n^{-2}$ from a parameterized circle. Distances between quadrupoles remain comparable to their initial distances, and the supports lie in one bounded set independent of $N$.
\end{proposition}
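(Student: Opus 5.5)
The proposition is a packaging of the inductive bootstrap sketched in Section~\ref{second:cascade-estimates}. I would prove it by a single continuity argument in time for the coupled finite system, run on a common stopped interval. Fix $N\ge2$ and the phases \eqref{second:phase-choice}, and let $\omega_N^{(1)}$ be given by Proposition~\ref{prep:reference}. The first solution needs no further work, since Proposition~\ref{prep:reference}(i) already gives existence on $[0,2]$, disjointness, boundary bounds $C\Omega_n^{-2}$, and comparable distances between quadrupoles.

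For $\omega_N^{(2)}$, local existence follows from Theorem~\ref{meta:reduction}(iv). Let $T^*$ be the supremum of times $T\le1$ on which a list of bootstrap hypotheses holds with non-strict constants. The list consists of the following:
\begin{enumerate}[label=(\alph*)]
\item the separation $|d_i(t)-d_j(t)|\ge\frac12|d_i-d_j|$;
\item boundary bounds \eqref{meta:boundary-chart} for every patch;
\item for each scale $n$ not yet separated, the rhombus bounds and $Y_n(t)\le C\kappa_ne^{2\sqrt2\Omega_n(t-\tau_{n+1})}$, with $Y_n\le M_{\mathrm{hist}}e^{c\Omega_n(t-\tau_n)}$ once $\tau_n$ is reached;
\item for each separated scale, the dipole lower bound $|X_1-X_2|\ge c\ell_n(r_{\mathrm{out}}+\Omega_n(t-\tau_n))$, pair lengths comparable to $\ell_n$, and the moment derivative $\dot M_{2,n}^{(2)}=-\frac{2i}{\pi}\Gamma_n^2e^{2i\theta_{d,n}}+O(\varepsilon_*\Gamma_n^2)$;
\item the timing bounds $|h_j|\le C_h\varepsilon_*$.
\end{enumerate}
The goal is to show that every item improves strictly on $[0,T^*]$, so $T^*=1$.

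The improvements follow the descending induction on $n$. At each stage $[\tau_{n+1},\tau_n]$ the steps are these.
\begin{enumerate}[label=(\roman*)]
\item Lemma~\ref{prep:patch-bounds} and Theorem~\ref{meta:reduction}(i),(iv), applied with the weighted absolute-circulation criterion, improve the boundary bounds to $C\Omega_n^{-2}$. This uses separation and the fact that the diameter of quadrupole $j$ is at most $C\ell_j(1+\Omega_j)\ll|d_j-d_{j+1}|$.
\item The relative estimate \eqref{meta:relative-error}, combined with Corollary~\ref{meta:memory}, controls the patch contribution to the unstable coefficient by $O(\Omega_n^{\alpha-3})$.
\item The source estimates show that the weighted forcing coefficient is $\mathcal A_n=1+O(\varepsilon_*)$. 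This comes from the phase choice, the $O(B^{-1})$ constant-moment error, the $O(M_{\mathrm{hist}}B^{-2})$ retained past, the exponentially small older sources, and the $\Xi^2/\Omega_n^2$ feedback from slower scales. This gives \eqref{second:local-passage}, hence the recurrence for $h_n$ and the improvement of (e).
\item Continuous dependence along the collision-free segment of Proposition~\ref{prop:separating} carries the state to the crossing of $r_{\mathrm{out}}$. After that, the outgoing-dipole estimate propagates (d) through time~1.
\item Slower scales $i<n$ stay $o(1)$ in $Y_i$, so they cannot exit their rhombus neighborhoods first.
\end{enumerate}
With the parameter order $M_{\mathrm{hist}}$, then $B$, then $\Lambda$, each constant regenerates with a factor $\le 3/4$. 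No constant depends on $N$.

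The ordering $0=\tau_N<\tau_{N-1}<\cdots<\tau_1$ follows from the stage structure. Each $\tau_n$ is defined only after $\tau_{n+1}$, since the growth at scale $n$ starts from $\kappa_n\ll q_*$, and the increment $\tau_n-\tau_{n+1}$ is positive because $\Delta_n>0$ dominates $C\varepsilon_*/\Omega_n$ for large $\Lambda$. The bound $|\tau_n-\tau_{n,N}|\le C_h\varepsilon_*/\Omega_n$ is the definition of $h_n$. The bound $\tau_1<\frac12$ follows from $\sum\Delta_n<\frac14$ and $C_h\varepsilon_*/\Omega_1<\frac14$. Boundedness of supports independent of $N$ holds because every quadrupole stays within $C\ell_j(1+\Omega_j)+o(d_j)$ of $d_j\le\frac12$; this uses the center residual \eqref{prep:center-residual} integrated over $[0,1]$, together with the dipole speed bound $\Omega_j\ell_j$. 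Agreement of the initial patches at scales $n<N$ holds by construction.

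The main obstacle is closing the bootstrap for the unstable difference $Q_n$ uniformly over many exponentially long intervals. There the absolute errors exceed $\kappa_n$, so every error must enter relatively, through $W$ via \eqref{meta:history-restart} and through weighted convolutions, including the slower-scale feedback (Lemma~\ref{array:follower-convolution}). I would first set up the weighted norm of Section~\ref{array:weights-section} and prove the single absorption inequality displayed in Section~\ref{second:cascade-estimates}. All remaining items then reduce to the finite-dimensional Lipschitz continuation and the outgoing estimate.
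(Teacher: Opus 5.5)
Your proposal follows essentially the paper's own route: the descending, stage-by-stage stopped bootstrap of Section~\ref{array:induction-section}, with the relative patch error carried through $W$, the slower-scale return controlled by Lemma~\ref{array:follower-convolution}, positivity of $\mathcal A_n$ from the phase choice, the timing recurrence \eqref{array:timing-recursion}, and the outgoing-dipole estimates; your arguments for the ordering of the $\tau_n$, for $\tau_1<\tfrac12$, and for the first solution are also fine. The one thing to fix is item (c): it cannot serve as a forward-in-time bootstrap hypothesis for a scale $n$ whose predecessor $n+1$ has not yet separated, because $\tau_{n+1}$ is then undefined, so, as in the paper, such scales should carry only the chart bound \eqref{array:follower-current}, and the weighted bound on $[0,\tau_{n+1}]$ should be proved retroactively at the start of stage $n$ (Step~3 applied with $T=t_0=\tau_{n+1}$).
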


Proposition~\ref{array:finite-induction}, proved in Section~\ref{array:induction-section}, establishes this statement from the reference construction and the initial data above. By Section~\ref{array:observation-section}, at time 1 the first solution's outer patches lie within $C\ell_1$ of $d_1$, and the second solution's outer patches lie at distances comparable to $\Omega_1\ell_1$. In each solution, the outer center moves by $o(\ell_1)$, uniformly in $N$. A fixed annulus centered at $d_1$ contains all four outer patches of the second solution and no other patches of either solution at time $1$. Choose a smooth $0\le\chi\le1$ supported in this annulus and equal to one on those four patches, uniformly in $N$. Then
\begin{equation} \label{second:fixed-observation}
\int\chi|\omega_N^{(1)}(1)|\,dx=0,\qquad \int\chi|\omega_N^{(2)}(1)|\,dx=4\Gamma_1.
\end{equation}
Since $A,B,\Lambda$ are fixed, $4\Gamma_1>0$ is independent of $N$. The initial difference, confined to scale $N$, tends to zero in every finite $L^p$ by \eqref{eq:initial-difference}. Section~\ref{sec:limit} extracts a common subsequence whose initial data converge to the same $\omega_0$ and whose vorticities converge strongly in $C([0,1];L^1)$. Thus $\omega^{(1)}(0)=\omega^{(2)}(0)$ and $\|\omega^{(2)}(1)-\omega^{(1)}(1)\|_{L^1}\ge4\Gamma_1>0$.

\begin{figure}[p]
\centering
\normalsize
\setlength{\abovedisplayskip}{5pt}
\setlength{\belowdisplayskip}{5pt}
\setlength{\abovedisplayshortskip}{4pt}
\setlength{\belowdisplayshortskip}{4pt}
\begin{tikzpicture}[x=1cm,y=1cm,>=Stealth,every node/.style={align=flush center,inner sep=2pt},textblock/.style={anchor=north,text width=14.7cm},half/.style={anchor=north,text width=6.9cm},flow/.style={->,semithick}]
\node[font=\bfseries] at (7.4,0.45) {One induction step at scale $n<N$};
\draw[flow] (0,-0.3) -- (15,-0.3);
\foreach \x in {0,3.8,8.2,10.4,14.8} {\draw (\x,-0.2) -- (\x,-0.4);}
\node[above] at (0,-0.2) {$0$};
\node[above] at (3.8,-0.2) {$\tau_{n+1}$};
\node[above] at (8.2,-0.2) {$T_n$};
\node[above] at (10.4,-0.2) {$\tau_n$};
\node[above] at (14.8,-0.2) {$1$};
\node[anchor=north,text width=3.4cm] at (1.9,-0.6) {Near the rhombus};
\node[anchor=north,text width=4cm] at (6,-0.6) {Unstable growth};
\node[anchor=north,text width=2cm] at (9.3,-0.6) {Transition};
\node[anchor=north,text width=4cm] at (12.6,-0.6) {Separating dipoles};
\node[anchor=north] at (7.4,-1.15) {Choice of $T_n, \tau_n$:~~$Q_n(T_n)=q_*,~~ |X_1(\tau_n)-X_2(\tau_n)|=r_{\mathrm{out}}\ell_n$};
\draw[black!30] (0,-1.75) -- (14.8,-1.75);

\node[textblock] (initial) at (7.4,-2) {\textbf{At $\tau_{n+1}$}\quad{\S\ref{array:induction-section}, Steps 1, 3}
\[ \sup_{0\le t\le\tau_{n+1}}e^{2\sqrt2\Omega_n(\tau_{n+1}-t)}Y_n(t)\le CM_{\mathrm{hist}}\kappa_nB^{-2}. \]
Scales $j>n$ are already separating.};
\draw[flow] ([yshift=-0.05cm]initial.south) -- ++(0,-0.5);

\node[textblock,yshift=-0.75cm] (forcing) at (initial.south) {\textbf{Forcing from scale $n+1$}\quad{\S\ref{transfer:outgoing-material}, \S\ref{array:sources-section}--\ref{array:phases-section}}
\[ \dot M_{2,n+1}^{(2)}=-\frac{2i}{\pi}\Gamma_{n+1}^2e^{2i\theta_{d,n+1}}+O(\varepsilon_*\Gamma_{n+1}^2). \]};
\draw[flow] ([yshift=-0.05cm]forcing.south) -- ++(0,-0.5);

\node[textblock,yshift=-0.75cm] (coupled) at (forcing.south) {\textbf{Simultaneous estimates}\quad{\S\ref{array:induction-section}, Steps 3--4}};
\node[half,xshift=-3.95cm,yshift=-0.2cm,minimum height=2.25cm] (growth) at (coupled.south) {\textbf{Scale $n$}\quad{\S\ref{transfer:linearization}--\ref{transfer:material-block}}
\[ Y_n(t)\le C\kappa_ne^{2\sqrt2\Omega_n(t-\tau_{n+1})}. \]};
\node[half,xshift=3.95cm,yshift=-0.2cm,minimum height=2.25cm] (slower) at (coupled.south) {\textbf{Scales $i<n$}\quad{\S\ref{array:weights-section}}
\\[5pt]{ Lemma~\ref{array:follower-convolution} and \eqref{array:follower-current}}};
\draw[<->,semithick] ([xshift=0.08cm]growth.east) -- ([xshift=-0.08cm]slower.west);
\draw[flow] ([xshift=-3.95cm,yshift=-0.05cm]slower.south) -- ++(0,-0.5);

\node[textblock,xshift=-3.95cm,yshift=-0.75cm] (transition) at (slower.south) {\textbf{From $T_n$ to $\tau_n$}\quad{\S\ref{transfer:material-block}, \S\ref{array:induction-section}, Step 5}\\[5pt]Follow the point-vortex trajectory for $O(\Omega_n^{-1})$.};
\draw[flow] ([yshift=-0.05cm]transition.south) -- ++(0,-0.5);

\node[textblock,yshift=-0.75cm] (conclusions) at (transition.south) {\textbf{At $\tau_n$}\quad{\S\ref{array:induction-section}, Step 6}
\[ Y_n(t)\le\tfrac34M_{\mathrm{hist}}e^{c\Omega_n(t-\tau_n)},\qquad 0\le t\le\tau_n. \]
\[ \Omega_n|\tau_n-\tau_{n,N}|\le\tfrac34C_h\varepsilon_*,\qquad \theta_{d,n}-\vartheta_n=-\Omega_n(\tau_n-\tau_{n,N})+O(\varepsilon_*). \]
For the next scale $n-1$,
\[ Y_{n-1}(\tau_n)\le CM_{\mathrm{hist}} \kappa_{n-1} B^{-2}+C(1+M_{\mathrm{hist}}) \kappa_{n-1} e^{-c\Omega_{n+1}^\alpha}\qquad(n\ge2). \]
};
\draw[flow] ([yshift=-0.05cm]conclusions.south) -- ++(0,-0.5);

\node[textblock,yshift=-0.75cm] (continue) at (conclusions.south) {\textbf{Separation continues}\quad{\S\ref{transfer:outgoing-material}}
\[ |X_1(t)-X_2(t)|\asymp\ell_n\bigl(r_{\mathrm{out}}+\Omega_n(t-\tau_n)\bigr),\qquad \tau_n\le t\le1. \]
Repeat from $\tau_N=0$ down to scale $1$, with $\tau_1<1/2$.};
\end{tikzpicture}
\caption{The bounds at $\tau_n$ supply the next induction step. Patch boundaries and geometry are controlled by Theorem~\ref{meta:reduction}(iv) and \S\ref{array:choices}. The time axis is schematic.}
\label{fig:second-dependencies}
\end{figure}
\clearpage

\section{Transition in quadrupole configurations} \label{transfer:material-section}

We compare one quadrupole with the prepared reference, starting at a physical time $t_0\ge0$. In Section~\ref{array:section}, $t_0=\tau_{n+1}$ is the time when quadrupole $n+1$ reaches the prescribed dipole separation. Both solutions have already evolved on $[0,t_0]$, and we retain their centroid, moment, and boundary differences at $t_0$. We first estimate the growth of the unstable shape variable, then the separation and second moment of the resulting dipoles. Section~\ref{array:section} verifies the external-field bounds used here.

Suppress the quadrupole index and write $\Gamma=2\pi\Omega\ell^2$, $a^2/\ell^2=2/(A\Omega^3)$, and $\rho=C_A\Omega^{-2}$. The external velocity $U$ is generated by all other quadrupoles. We use the centroid and moment equations \eqref{transfer:centroid-equation} and \eqref{transfer:neutral-identities}. The quantity $W$ is the integral in \eqref{transfer:core-memory}, retaining its value at $t_0$ as in \eqref{meta:history-restart}. To estimate $D=|z_1|^2-|z_2|^2$, we evolve the patch angular impulse $I$. Equation~\eqref{transfer:action-correction} gives
\[ \left|\frac D{\ell^2}-\frac I{2\Gamma\ell^2}\right|\le C\frac{a^2}{\ell^2}\rho^2,\qquad \left|\triangle\left(\frac D{\ell^2}-\frac I{2\Gamma\ell^2}\right)\right|\le C\frac{a^2}{\ell^2}\rho W. \]
Thus $I/(2\Gamma\ell^2)$ determines $D/\ell^2$ up to the displayed patch error.

\subsection{The exact unstable rate} \label{transfer:linearization}

Consider first the four point vortices of Section~\ref{sec:four-point}, at $\pm z_1,\pm z_2$ with circulations $\Gamma,\Gamma,-\Gamma,-\Gamma$. Their angular impulse is $2\Gamma D$, where $D=|z_1|^2-|z_2|^2$ is conserved. The orbit in Proposition~\ref{prop:separating} has $D=\ell^2$. In normalized time $s=\Omega t$, the shape variables $(p,q)$ have linear rates $-2\sqrt2,2\sqrt2$ at the rhombus.

We first express the separating orbit as $p=h(q)$. Put $r=\sqrt R$ and $r_*=\sqrt2-1$. The sine formula in the proof of Proposition~\ref{prop:separating} is analytic near $r_*$. The branch $\phi(r)=\pi-\arcsin(\sin\phi(r))$ gives
\[ \left.\frac{d\sin\phi}{dr}\right|_{r=r_*}=-\frac{2\sqrt2}{r_*},\qquad \left.\frac{dq}{dr}\right|_{r=r_*}=\frac2{r_*},\qquad \left.\frac{dp}{dr}\right|_{r=r_*}=0. \]
Consequently $p=h(q)$ near zero, with $h$ analytic and $h(0)=h'(0)=0$. Along the separating trajectory, $q>0$ and
\[ p_s-h'(q)q_s=0\qquad\text{when }p=h(q). \]
After substituting $p=h(q)$, the left side is an analytic function of $q$. Since it vanishes for all small positive $q$, it also vanishes for all small negative $q$. Thus a solution starting on this graph stays on it, for either sign of $q$.

On the graph write $q_s=f(q)=2\sqrt2q+O(q^2)$ and define
\[ \widehat q=q\exp\left(\int_0^q\left[\frac{2\sqrt2}{f(r)}-\frac1r\right]dr\right),\qquad \widehat p=p-h(q). \]
The integrand is analytic at zero, and the derivative of the coordinate change is the identity there. On $\widehat p=0$,
\[ \frac d{ds}\log|\widehat q|=\left(\frac1q+\frac{2\sqrt2}{f(q)}-\frac1q\right)f(q)=2\sqrt2. \]
Hence $\widehat q_s=2\sqrt2\widehat q$ exactly on the separating orbit. Away from this graph, analyticity and the linearization give
\[ \widehat q_s=2\sqrt2\widehat q+\widehat pB_q,\qquad \widehat p_s=-2\sqrt2\widehat p+\widehat pB_p,\qquad |B_q|+|B_p|\le C(|\widehat q|+|\widehat p|). \]
Here $B_q,B_p$ are analytic functions of $(\widehat q,\widehat p)$. For example, subtracting the value of $\widehat q_s$ at $\widehat p=0$ gives $\widehat p\int_0^1\partial_{\widehat p}(\widehat q_s)(\widehat q,r\widehat p)\,dr$, whose coefficient vanishes at the origin.

The adjusted phase \eqref{eq:adjusted-phase} satisfies $\Theta_s=-1+\psi(\widehat q,\widehat p)$, where $\psi=O((|\widehat q|+|\widehat p|)^2)$. Set
\[ \widehat\Theta=\Theta-\int_0^{\widehat q}\frac{\psi(r,0)}{2\sqrt2r}\,dr. \]
Differentiation removes the phase error on $\widehat p=0$ and gives $\widehat\Theta_s=-1+\widehat pB_\Theta$, with
\[ B_\Theta=\int_0^1\partial_{\widehat p}\psi(\widehat q,r\widehat p)\,dr-\frac{\psi(\widehat q,0)}{2\sqrt2\widehat q}B_q,\qquad |B_\Theta|\le C(|\widehat q|+|\widehat p|). \]
The quotient is analytic at zero.

For general $D>0$ near $\ell^2$, the common frequency in \eqref{eq:shape-ode} is $\Gamma/(2\pi D)=\Omega\ell^2/D$, in place of $\Omega$. Keeping $s=\Omega t$ gives
\[ \widehat q_s=\frac{\ell^2}{D}\bigl(2\sqrt2\widehat q+\widehat pB_q\bigr),\qquad \widehat p_s=\frac{\ell^2}{D}\bigl(-2\sqrt2\widehat p+\widehat pB_p\bigr). \]
The phase satisfies $\widehat\Theta_s=(\ell^2/D)(-1+\widehat pB_\Theta)$, and $D_s=0$. Thus the nonlinear shape and phase terms vanish when $\widehat p=0$, with no term depending quadratically on $\widehat q$ alone. The next subsection estimates both the stable difference and $\ell^2/D$ during the long growth interval.

For the orbit normalized in Section~\ref{second:configuration}, $\widehat q/q\to1$ and $\widehat\Theta-\theta\to0$ at the rhombus. Its equations therefore give $\widehat q(s)=e^{2\sqrt2s}$ and $\widehat\Theta(s)=-s$ as long as these local coordinates are defined. We retain $K,\theta_\infty,\beta=\theta_\infty+K$ from that section.

\subsection{Departure and dipole separation} \label{transfer:memory-saddle} \label{transfer:material-block}

We now compare two four-patch motions, the first being the reference in Proposition~\ref{prep:reference}. At the original time $t=0$, their radii and signed circulations agree and $F_i^{(1)}(0,\theta)=F_i^{(2)}(0,\theta)=e^{i\theta}$. Their centroids may differ. Starting at $t_0$, set $s=\Omega(t-t_0)$, $\iota^{(j)}=I^{(j)}/(2\Gamma\ell^2)$, $Q=\triangle\widehat q$, and $P=\triangle\widehat p$. Write
\[ \nu=\left(\triangle\iota,\triangle\widehat\Theta,\operatorname{Re}\frac{\triangle\delta}{\ell},\operatorname{Im}\frac{\triangle\delta}{\ell}\right),\qquad |Z|=|Q|+|P|+|\nu|. \]
We use the sum of the absolute values of the components for $|\nu|$. We write $W(s)$ for the restarted integral \eqref{meta:history-restart}, so $W(0)$ retains the history up to physical time $t_0$. The normalized external input is $F(s)=\Omega^{-1}\|\triangle A(t_0+s/\Omega)\|_R$.

At the rhombus with $D=\ell^2$, the point-vortex linearization is
\[ Q_s=2\sqrt2Q,\qquad P_s=-2\sqrt2P,\qquad (\triangle\iota)_s=0,\qquad (\triangle\widehat\Theta)_s=\triangle\iota,\qquad (\triangle\delta)_s=0. \]
The phase equation follows from $\partial_\iota(-1/\iota)=1$ at $\iota=1$. Thus the linear operator on $\nu$ is simply
\[ \mathcal N\nu=(0,\triangle\iota,0,0),\qquad e^{s\mathcal N}=\mathrm{Id}+s\mathcal N,\qquad \|e^{s\mathcal N}\|\le C_\sigma e^{\sigma s/2}\quad(s\ge0), \]
for any fixed $0<\sigma<\sqrt2/2$. The unstable shape difference grows exponentially. The other unforced differences decay, stay constant, or grow only linearly.

\paragraph{External input and initial differences.}

We specify the bounds that Section~\ref{array:section} verifies. The direct external input includes completed sources and the slower quadrupoles' responses to those sources and to their own values at $t_0$. Evaluate this input in the reference centroid and moment equations, including the receiving-patch averages, and denote its normalized contributions by $\kappa f_q,\kappa f_p,\kappa f_\nu$. Here $0<\kappa\ll1$ measures the initial perturbation and forcing. Assume
\[ |Q(0)|+|P(0)|+|\nu(0)|+W(0)\le C\kappa, \]
\[ |f_q(s)|+|f_p(s)|+|f_\nu(s)|\le C(1+s)^m e^{\sigma s/2},\qquad \mathcal A=\frac{Q(0)}\kappa+\int_0^\infty e^{-2\sqrt2s}f_q(s)\,ds\ge A_*>0. \]
The integer $m\ge0$, $A_*$, and the input constants are fixed independently of $\kappa$. The displayed bound also gives $\mathcal A\le C$. For a stopped motion, $\mathcal A\ge A_*$ refers to an extension of $f_q$ satisfying the same bound. Its tail after $s$ has absolute value at most $C(1+s)^m e^{-(2\sqrt2-\sigma/2)s}$, which becomes a small power of $\kappa$ at the departure time proved below.

The slower quadrupoles also respond to this quadrupole's motion after $t_0$. Bound this returned input by a positive operator
\[ \begin{gathered} (\mathcal RY)(s)=r_0(s)Y(s)+\int_0^s r(s,u)Y(u)\,du,\qquad r_0,r\ge0,\\ \sup_s\left[r_0(s)+\int_0^s r(s,u)e^{-2\sqrt2(s-u)}\,du\right]\le\eta_{\mathrm{return}}. \end{gathered} \]
Its bound includes the fixed projections onto the centroid and moment equations. With $F_0(s)=C\kappa(1+s)^m e^{\sigma s/2}$, assume
\[ F\le F_0+\mathcal R(|Z|+W),\qquad \sigma_c\le\sigma/2. \]
The inherited values at $t_0$ belong to the initial state and direct input. The operator $\mathcal R$ contains only the response to subsequent motion.

Choose $T\le1$ so that the available interval contains
\[ t_0+\frac1\Omega\left[\frac1{2\sqrt2}\log\frac4{A_*\kappa}+K+1\right]\le T\le1. \]
Up to the displayed time or the first bootstrap exit, assume the geometric, external, and comparison hypotheses of Theorem~\ref{meta:reduction} on the original interval. The comparison neighborhoods are a fixed small neighborhood of the rhombus and a fixed collision-free neighborhood of the subsequent point-vortex trajectory to pair-center distance $r_{\mathrm{out}}\ell$. Proposition~\ref{prep:reference}(ii) gives
\[ |q^{(1)}|+|p^{(1)}|+|\iota^{(1)}-1|+\left|\frac{\ell^2}{D^{(1)}}-1\right|\le C\left(\frac{\mathsf S+T\mathsf S^2}{\Omega}+\Omega^{-5}\right),\qquad \frac{|\delta^{(1)}|}{\ell}\le C\mathsf T\ell T. \]
We collect the small coefficients in
\begin{equation} \label{transfer:material-error}
\eta=C\left(\frac{\mathsf S+T\mathsf S^2}{\Omega}+\Omega^{-3}+\mathsf T\ell T+\eta_{\mathrm{return}}\right).
\end{equation}
In particular, $a^2/\ell^2\le\eta$ and the reference errors above are $O(\eta)$. We choose a fixed small $q_*>0$ from the input bounds, then require $\kappa$ and $\eta\log(1/\kappa)$ to be sufficiently small. We prove the separation-time formula \eqref{transfer:memory-exit} and the associated phase estimate before any geometric or external-field failure. Section~\ref{array:section} excludes those failures for the coupled construction.

\paragraph{Difference equations.}

Subtracting the two motions defines the remainders by
\[ Q_s=2\sqrt2Q+\kappa f_q+r_q,\qquad P_s=-2\sqrt2P+\kappa f_p+r_p,\qquad \nu_s=\mathcal N\nu+\kappa f_\nu+r_\nu. \]
The coordinate change removes the nonlinear terms on the separating orbit. After the terms with small reference coefficients are included in $C\eta|Z|$, every remaining quadratic difference contains $P$ or a component of $\nu$. We derive the bounds
\begin{equation} \label{transfer:ode-remainders}
\begin{aligned}
|r_q|&\le C\left[\eta|Z|+|P|(|Q|+|P|)+|\nu||Q|+|\nu|^2+\frac{a^2}{\ell^2}W+F_0|Z|\right]+\mathcal R(|Z|+W),\\
|r_p|+|r_\nu|&\le C\left[\eta|Z|+(|P|+|\nu|)|Z|+\frac{a^2}{\ell^2}W+F_0|Z|\right]+\mathcal R(|Z|+W).
\end{aligned}
\end{equation}

The impulses $\iota^{(j)}$ stay bounded away from zero in the chosen neighborhood. By the patch impulse bounds above and $x^{-1}-y^{-1}=(y-x)/(xy)$,
\[ \left|\triangle\left(\frac{\ell^2}{D}-\frac1\iota\right)\right|\le C\frac{a^2}{\ell^2}\rho W+C\frac{a^2}{\ell^2}\rho^2|\triangle\iota|. \]
Replacing $\ell^2/D$ by $1/\iota$ therefore costs at most $C(a^2/\ell^2)W+C\eta|Z|$. This includes the difference of the shape factors multiplying the absolute reference correction $O((a^2/\ell^2)\rho^2)$. The normalized centered centroid differences are bounded by $C|Z|+C(a^2/\ell^2)\rho W$, so the same replacement is allowed in the input to $W$.

For the point-vortex system with no external field, conservation of $M_1=4\Gamma\delta$ gives $\delta_s=0$. The equations for $z_1$ and $z_2$ are even in $\delta$, since the cross-pair denominators occur in pairs $2\overline\delta\pm\overline z_1\pm\overline z_2$. The internal arithmetic-center velocity is odd in $\delta$ and bounded by $C\Omega|\delta|$. Its product with $M_1$ in \eqref{transfer:neutral-identities} is therefore quadratic. Taylor's formula bounds the normalized differences of these corrections by
\[ C\left[\frac{|\delta^{(1)}|}{\ell}\frac{|\triangle\delta|}{\ell}+\left|\frac{\triangle\delta}{\ell}\right|^2+\left(\frac{|\delta^{(1)}|}{\ell}\right)^2|Z|\right]\le C\eta|Z|+C\left|\frac{\triangle\delta}{\ell}\right|^2. \]
The moving-center term in the material impulse equation is $-2\operatorname{Re}(\dot d\overline{M_1})$. Its additional centroid-error contribution has relative bound $C\eta|Z|+C(a^2/\ell^2)W$. Common translation cancels from the centered internal equations and is estimated separately in Section~\ref{array:section}.

The reference shape variables, $\iota^{(1)}-1$, and $\delta^{(1)}/\ell$ are $O(\eta)$. The coordinate derivatives are bounded uniformly in the reference phase. For the inverse-impulse terms,
\[ \begin{aligned} \frac{\widehat q^{(2)}}{\iota^{(2)}}-\frac{\widehat q^{(1)}}{\iota^{(1)}}-Q&=\left(\frac1{\iota^{(2)}}-1\right)Q-\frac{\triangle\iota\,\widehat q^{(1)}}{\iota^{(2)}\iota^{(1)}},\\ \frac{\widehat p^{(2)}}{\iota^{(2)}}-\frac{\widehat p^{(1)}}{\iota^{(1)}}-P&=\left(\frac1{\iota^{(2)}}-1\right)P-\frac{\triangle\iota\,\widehat p^{(1)}}{\iota^{(2)}\iota^{(1)}},\\ -\frac1{\iota^{(2)}}+\frac1{\iota^{(1)}}-\triangle\iota&=\triangle\iota\left(\frac1{\iota^{(2)}\iota^{(1)}}-1\right). \end{aligned} \]
These expressions are bounded, respectively, by $C\eta|Z|+C|\nu||Q|$, $C\eta|Z|+C|\nu||P|$, and $C\eta|Z|+C|\nu|^2$. Since each $B_q,B_p,B_\Theta$ vanishes linearly and has bounded first derivatives, the difference of $\widehat pB/\iota$ is bounded by $C\eta|Z|+C|P|(|Q|+|P|)$. Together with the offset estimate, these give the quadratic terms in \eqref{transfer:ode-remainders}.

We next estimate the external terms. Pull back each external field about its own center. For a centered field $U$, put
\[ E_{\mathrm{ext},i}^{(j)}[U]=\frac1{\Gamma_i}\int\bigl[U(z-d^{(j)})-U(\zeta_i^{(j)}-d^{(j)})\bigr]\omega_i^{(j)}(z)\,dA(z). \]
Here $\Gamma_i$ is the signed circulation, and $E_{\mathrm{ext}}^{(j)}[U]$ denotes the vector of the four averages. By linearity,
\[ \triangle E_{\mathrm{ext}}=E_{\mathrm{ext}}^{(1)}[\triangle U]+(E_{\mathrm{ext}}^{(2)}-E_{\mathrm{ext}}^{(1)})[U^{(2)}]. \]
The averaging calculation in Theorem~\ref{meta:reduction}(i),(ii), applied to the fixed field $U^{(2)}$, gives
\[ \frac{|E_{\mathrm{ext}}^{(1)}[\triangle U]|}{\Omega\ell}\le C\frac{a^2}{\ell^2}\rho F,\qquad \frac{|(E_{\mathrm{ext}}^{(2)}-E_{\mathrm{ext}}^{(1)})[U^{(2)}]|}{\Omega\ell}\le C\frac{a^2}{\ell^2}\frac{\mathsf S}{\Omega}(\rho|Z|+W). \]
The second estimate uses $\|A^{(2)}\|_R\le C\mathsf S$ without separate absolute bounds for the direct and returned inputs. The same decomposition applies to the external moment integrals. Indeed, the first integral in the equation for $\dot I$, divided by $2\Gamma\ell^2\Omega$, is
\[ \operatorname{Re}\sum_{k\ge1}\frac{A_k}{\Omega}\frac{\overline{M_{k+1}}}{\Gamma\ell^{k+1}}, \]
and the equation for $\dot M_1/(\Gamma\ell\Omega)$ has the corresponding series with $M_k/(\Gamma\ell^k)$, where its $k=1$ term is $M_1/(\Gamma\ell)$. The relative moment bounds and the convergent coefficient series bound the changes at fixed $U^{(2)}$ by $C(\mathsf S/\Omega)[|Z|+(a^2/\ell^2)W]$. The external contribution to $(U(d)-\dot d)\overline{M_1}$ obeys the same bound, by the four centroid averages and $M_1=4\Gamma\delta$.

Relative to evaluation at the centroids, the receiving-patch averages enlarge the direct-input bounds by at most $1+C(a^2/\ell^2)\rho$ and contribute at most $C(a^2/\ell^2)\rho\kappa$ to $\kappa\mathcal A$. They enlarge the positive return bound by the same factor. Evaluating the direct input at the second state instead of the reference costs $CF_0|Z|$, and the analogous returned terms are absorbed into $\mathcal R(|Z|+W)$ on the fixed neighborhood. When converting the centroid errors to coordinate errors, the difference of the coordinate derivatives also multiplies the absolute reference error. Theorem~\ref{meta:reduction}(i) and the centered-coordinate bound above estimate this term by $C(a^2/\ell^2)\rho[|Z|+(a^2/\ell^2)\rho W]$, which is included in $C\eta|Z|+C(a^2/\ell^2)W$. Together with \eqref{meta:relative-error}, these estimates prove \eqref{transfer:ode-remainders}, with no term $CQ^2$ independent of $P,\nu$.

\paragraph{Growth of the unstable shape difference.}

Duhamel's formula gives
\[ \begin{aligned} Q(s)&=e^{2\sqrt2 s}\left[Q(0)+\int_0^s e^{-2\sqrt2 r}(\kappa f_q+r_q)(r)\,dr\right],\\ P(s)&=e^{-2\sqrt2 s}P(0)+\int_0^s e^{-2\sqrt2(s-r)}(\kappa f_p+r_p)(r)\,dr,\\ \nu(s)&=e^{s\mathcal N}\nu(0)+\int_0^s e^{(s-r)\mathcal N}(\kappa f_\nu+r_\nu)(r)\,dr. \end{aligned} \]
Put $E(s)=\kappa e^{2\sqrt2 s}$. Stop at $Q=\pm q_*$, another local boundary, or
\[ T_\kappa=\frac1{2\sqrt2}\log\frac{4q_*}{A_*\kappa}. \]
Then $E\le4q_*/A_*$. We prove
\[ |Q|\le CE,\qquad |P|+|\nu|\le C\eta E+C\kappa e^{\sigma s},\qquad W\le CE. \]
Choose the constants for $|Q|$, the $\kappa e^{\sigma s}$ term, $W$, and the $\eta E$ term in this order, then reduce $\eta$. The convolution estimates are
\[ \begin{aligned} \int_0^s e^{-2\sqrt2(s-r)}E(r)\,dr&\le\frac{E(s)}{4\sqrt2},& \int_0^s e^{\sigma_c(s-r)}E(r)\,dr&\le\frac{E(s)}{2\sqrt2-\sigma_c},\\ \int_0^s C_\sigma e^{\sigma(s-r)/2}E(r)\,dr&\le\frac{C_\sigma E(s)}{2\sqrt2-\sigma/2}. \end{aligned} \]
The initial stable and neutral terms and their direct-forcing integrals are bounded by $C\kappa e^{\sigma s}$, since $(1+s)^{m+1}e^{\sigma s/2}\le Ce^{\sigma s}$. Also $\kappa e^{\sigma s}\le E$ and $\mathcal RE\le\eta E$. Under the displayed bounds,
\[ \mathcal R(|Z|+W)\le C\eta E,\qquad \frac{a^2}{\ell^2}W\le C\eta E. \]

Since $|Z|\le Cq_*$, variation of constants gives
\[ |P(s)|+|\nu(s)|\le C\kappa(1+s)^{m+1}e^{\sigma s/2}+C\int_0^s e^{\sigma(s-r)/2}\left[\eta E(r)+q_*(|P(r)|+|\nu(r)|)\right]dr. \]
Here $F_0|Z|\le Cq_*F_0$ is included in the first term. Choose $q_*$ so that the rate added by Gronwall is less than $\sigma/2$. The preceding convolution estimates then give $|P|+|\nu|\le C\eta E+C\kappa e^{\sigma s}$. Equation~\eqref{meta:history-restart} gives
\[ \sup_{0\le r\le s}\frac{W(r)}{E(r)}\le C+C\eta\sup_{0\le r\le s}\frac{W(r)}{E(r)}. \]
The coefficient of $\eta E$ was fixed after the bound for $W$, so its contribution is made small by reducing $\eta$. The remaining constant depends only on the chosen bounds for $Q$, the $\kappa e^{\sigma s}$ term, and $W(0)/\kappa$. Absorbing the last term proves $W\le CE$.

Every quadratic term in $r_q$ contains $P$ or $\nu$. Hence
\[ |r_q|\le C\eta E+CE(|P|+|\nu|+F_0),\qquad \int_0^s(|P|+|\nu|+F_0)\,dr\le C\eta+C\int_0^s\kappa e^{\sigma r}\,dr. \]
Dividing the formula for $Q$ by $E$ gives
\[ \frac{Q(s)}{\kappa e^{2\sqrt2 s}}=\mathcal A+O\left(\eta(1+s)+\int_0^s\kappa e^{\sigma r}\,dr\right)+O\left((1+s)^m e^{-(2\sqrt2-\sigma/2)s}\right). \]
Since $\sigma<\sqrt2/2$, for $s\le T_\kappa$ we have
\[ \kappa e^{\sigma s}+\int_0^s\kappa e^{\sigma r}\,dr\le C\kappa^{1-\sigma/(2\sqrt2)}\le C\kappa^{3/4}. \]
The partial direct-forcing integral is uniformly bounded. For $\eta\log(1/\kappa)$ and $\kappa$ small, the last two estimates improve the bound for $Q$. Taking $q_*$ smaller keeps all other coordinates inside their neighborhood. This closes the continuity argument up to $Q=\pm q_*$ or $T_\kappa$.

Choose a fixed $s_0$ so that the forcing tail is at most $A_*/4$ for $s\ge s_0$. For small $\kappa$, $|Q|\le C\kappa e^{2\sqrt2 s_0}<q_*$ on $[0,s_0]$. For $s\ge s_0$, $Q/E\ge A_*/2$, so the value $Q=-q_*$ cannot be reached. Reaching $T_\kappa$ first would give $Q(T_\kappa)\ge2q_*$. Thus, unless a geometric or external-field bound fails first, $Q=q_*$ is reached. Its time satisfies $s_*=\frac1{2\sqrt2}\log(1/\kappa)+O(1)$. At this time the forcing tail is $O(\kappa^{3/4})$. Taking a logarithm in the formula for $Q/E$ and using the stable and neutral estimates gives
\[ s_*=\frac1{2\sqrt2}\log\frac{q_*}{\kappa\mathcal A}+O\bigl(\eta\log(1/\kappa)+\kappa^{3/4}\bigr),\qquad |P(s_*)|+|\nu(s_*)|\le C(\eta+\kappa^{3/4}). \]

On this interval $|Z|+F+W\le C\kappa e^{2\sqrt2s}$. Since the relative centroid error contains $(a^2/\ell^2)W$, its accumulated contribution to the unstable coefficient, divided by $\kappa$, is at most
\[ \frac C\kappa\int_0^{s_*}e^{-2\sqrt2s}\frac{a^2}{\ell^2}W(s)\,ds\le C\Omega^{-3}\log(1/\kappa). \]

\paragraph{Continuation to separating dipoles.}

At $s=s_*$, the second motion has $\widehat q^{(2)}=q_*+O(\eta)$, while its errors in $\widehat p$, $\iota$, and $\delta/\ell$ are $O(\eta+\kappa^{3/4})$. Moreover, $\triangle\widehat\Theta=O(\eta+\kappa^{3/4})$ and $\widehat\Theta^{(1)}=\theta^{(1)}+O(\eta)$. The normalized point-vortex orbit reaches $\widehat q=q_*$ at time $\log q_*/(2\sqrt2)$, with $\widehat\Theta=-\log q_*/(2\sqrt2)$. Rotate it by
\[ \theta^{(1)}(s_*)+\frac{\log q_*}{2\sqrt2}. \]
Its centered configuration then agrees with the second motion up to $O(\eta+\kappa^{3/4})$. The remaining point-vortex duration is $K-\log q_*/(2\sqrt2)$, a fixed number independent of $\kappa$.

Let $e(r)$ be the sum of the four centered centroid errors divided by $\ell$, where $r$ is the elapsed normalized time after $s_*$. On the fixed collision-free neighborhood, the point-vortex field is Lipschitz. The centroid error estimate of Theorem~\ref{meta:reduction}(i) therefore gives, up to the first neighborhood exit,
\[ e(r)\le C(\eta+\kappa^{3/4})+C_{\mathrm{sec}}\int_0^r\left[e(u)+\frac{\mathsf S}{\Omega}+\frac{a^2}{\ell^2}\rho\right]du. \]
Gronwall gives $e\le C_{\mathrm{sec}}(\eta+\kappa^{3/4})$ on this fixed interval. For small parameters this excludes the neighborhood exit, and Theorem~\ref{meta:reduction}(iv) continues the patch boundary bounds. At the prescribed separation, Proposition~\ref{prop:separating} gives
\[ \frac d{ds}\frac{|X_1-X_2|}{\ell}=2+O(r_{\mathrm{out}}^{-2})\ge1. \]
This positive lower bound persists for the nearby patch motion. Thus its crossing time and angle differ by $O(\eta+\kappa^{3/4})$ from those of the rotated point-vortex orbit. Adding the two durations gives
\begin{equation} \label{transfer:memory-exit}
s_{\mathrm{out}}=\frac1{2\sqrt2}\log\frac1{\kappa\mathcal A}+K+O\bigl(\eta\log(1/\kappa)+\kappa^{3/4}\bigr).
\end{equation}
The physical separation time is $t_0+s_{\mathrm{out}}/\Omega$. It lies before the assumed upper time, since the local duration is less than $T_\kappa$ and
\[ T_\kappa+K-\frac{\log q_*}{2\sqrt2}+1=\frac1{2\sqrt2}\log\frac4{A_*\kappa}+K+1. \]

The reference satisfies $\theta_s^{(1)}=-1+O(\eta)$, so
\[ \theta^{(1)}(s_{\mathrm{out}})=\theta^{(1)}(s_*)-K+\frac{\log q_*}{2\sqrt2}+O(\eta+\kappa^{3/4}). \]
The associated limiting angle $\theta_d$ is defined in Section~\ref{second:configuration} by adding $\theta_\infty-\theta(K)$ to the angle of $z_1^{(2)}$ at crossing. For the rotated orbit it equals $\theta^{(1)}(s_*)+\log q_*/(2\sqrt2)+\theta_\infty$. Since $\beta=\theta_\infty+K$, the patch motion satisfies
\[ \theta_d=\theta^{(1)}(s_{\mathrm{out}})+\beta+O\bigl(\eta\log(1/\kappa)+\kappa^{3/4}\bigr). \]
These estimates retain the actual state at $t_0$ and the actual reference phase at the separation time. All patch errors in the growth calculation vanish with the difference of the two motions.

\subsection{Separating dipoles and their second moment} \label{transfer:outgoing-material}

Shift physical time so that $t=0$ is the separation time obtained in Section~\ref{transfer:material-block}. The interval $[0,T]$ below is contained in the original interval $[0,2]$. Continue the patch boundary estimates from the original disks, retaining the actual contours at the new time zero.

Pair each positive patch with the nearby negative patch. Let $X_j$ be the center of pair $j$ and $v_j$ its positive centroid minus its negative centroid. Write
\[ d=\frac{X_1+X_2}{2},\qquad R=|X_1-X_2|,\qquad M_1=\Gamma(v_1+v_2). \]
Assume that at $t=0$ the four centroids are within $\xi\ell$ of a translated and rotated copy of the prescribed point-vortex configuration at distance $r_{\mathrm{out}}\ell$. Let $\vartheta$ be the limiting dipole angle of that copy. On a neighborhood of the quadrupole's convex hull, assume
\[ \|DU\|_{L^\infty}\le C\mathsf S,\qquad \|D^2U\|_{L^\infty}\le C\mathsf T. \]
Assume also the geometric criterion of Theorem~\ref{meta:reduction}(iv) from the original initial time whenever the distances between centroids within the quadrupole are at least $c\ell$. Section~\ref{array:section} verifies these hypotheses. We require
\[ \xi+r_{\mathrm{out}}^{-2}+\mathsf S(T+r_{\mathrm{out}}/\Omega)+\Omega\frac{a^2}{\ell^2}\rho T\ll1. \]
As before, $a/\ell$ and $(a^2/\ell^2)\rho$ are small by the choice of $\Omega$.

\paragraph{Continued separation.}

A point dipole with separation vector $v$ has velocity $-i\Gamma/(2\pi\overline v)$, and $v$ is constant. Proposition~\ref{prop:separating} and the initial error give
\[ \frac{|v_1(0)-\ell e^{i\vartheta}|+|v_2(0)+\ell e^{i\vartheta}|}{\ell}\le C(\xi+r_{\mathrm{out}}^{-2}). \]
Thus the two self-induced velocities are close to the opposite vectors $\mp i\Omega\ell e^{i\vartheta}$. We prove that this remains so and that the pair centers keep separating. Until a first possible failure, impose
\[ \frac\ell2\le|v_j(t)|\le2\ell,\qquad \frac\ell4(r_{\mathrm{out}}+\Omega t)\le R(t)\le4\ell(r_{\mathrm{out}}+\Omega t). \]

Between the two pairs, $|DK|\le C/R^2$ and $|D^2K|\le C/R^3$. Subtracting the kernels of the two opposite circulations therefore gives a pair-center velocity $O(\Gamma\ell/R^2)$ and a within-pair relative velocity $O(\Gamma\ell^2/R^3)$. These bounds are uniform for $R\ge4\ell$. Coupling the two normalized area measures in a pair bounds the difference of their external velocity averages by $C\mathsf S(|v_j|+a)\le C\mathsf S\ell$. Theorem~\ref{meta:reduction}(i) bounds the internal patch correction by $C\Omega\ell(a^2/\ell^2)\rho$. Consequently,
\[ \frac{|\dot v_j(t)|}{\ell}\le C\frac{\Gamma\ell}{R(t)^3}+C\mathsf S+C\Omega\frac{a^2}{\ell^2}\rho. \]
The first term is integrable uniformly in time, since
\[ \int_0^\infty\frac{\Gamma\ell}{[\ell(r_{\mathrm{out}}+\Omega t)/4]^3}\,dt=\frac{64\pi}{r_{\mathrm{out}}^2}. \]
It follows that
\[ \frac{|v_1(t)-\ell e^{i\vartheta}|+|v_2(t)+\ell e^{i\vartheta}|}{\ell}\le C\left[\xi+r_{\mathrm{out}}^{-2}+\mathsf S t+\Omega\frac{a^2}{\ell^2}\rho t\right]. \]
This improves the bounds for $|v_j|$ and controls both dipole directions.

For sufficiently large $r_{\mathrm{out}}$ and small $\xi$, the point-vortex asymptotics also give
\[ \operatorname{Re}\bigl(ie^{-i\vartheta}(X_1(0)-X_2(0))\bigr)\ge\frac34\ell r_{\mathrm{out}},\qquad R(0)\le\frac54\ell r_{\mathrm{out}}. \]
The self-induced velocity difference has projection at least $3\Omega\ell/2$ in the direction $-ie^{i\vartheta}$. Using the preceding bound for $v_j$ and the bootstrap upper bound for $R$,
\[ \begin{aligned} \frac d{dt}\operatorname{Re}\bigl(ie^{-i\vartheta}(X_1-X_2)\bigr)&\ge\frac32\Omega\ell-C\mathsf S R-C\frac{\Gamma\ell}{R^2}-C\Omega\ell\frac{a^2}{\ell^2}\rho\ge\Omega\ell,\\ |\dot X_1-\dot X_2|&\le3\Omega\ell. \end{aligned} \]
Indeed, after division by $\Omega\ell$, the three errors are bounded by $C\mathsf S(T+r_{\mathrm{out}}/\Omega)$, $Cr_{\mathrm{out}}^{-2}$, and $C(a^2/\ell^2)\rho$. Integrating yields
\[ \frac34\ell(r_{\mathrm{out}}+\Omega t)\le R(t)\le3\ell(r_{\mathrm{out}}+\Omega t). \]
This improves the remaining bootstrap bounds. All four patches stay separated by at least $c\ell$, so Theorem~\ref{meta:reduction}(iv) continues the boundary estimates. Hence the bounds for $v_j$ and $R$ hold on $[0,T]$.

\paragraph{The complex second moment.}

In the exact identity \eqref{meta:second-moment}, the same-patch terms are $O(\Gamma^2\rho)$. Replacing different-patch integrals by their centroid expressions costs $O(\Gamma^2a^2/\ell^2)$, since the linear Taylor terms vanish at the centroids and the second derivatives of $z/\bar z$ are bounded by $C|z|^{-2}$. Only the minimum pair distance $c\ell$ is needed for this bound.

The centroid second moment is $2\Gamma[(X_1-d)v_1+(X_2-d)v_2]$. Differentiating it using only the two self-induced dipole velocities, including their contribution to $\dot d$, gives
\[ -\frac{i\Gamma^2}{2\pi}\left(\frac1{\overline v_1}-\frac1{\overline v_2}\right)(v_1-v_2). \]
For $v_1=\ell e^{i\vartheta}$ and $v_2=-\ell e^{i\vartheta}$ this is $-(2i/\pi)\Gamma^2e^{2i\vartheta}$. The bound for $v_j$ changes it by at most
\[ C\Gamma^2\left[\xi+r_{\mathrm{out}}^{-2}+\mathsf S t+\Omega\frac{a^2}{\ell^2}\rho t\right]. \]
For the cross-pair terms, taking one difference in each opposite-sign pair gives $C\Gamma^2\ell^2/R^2\le C\Gamma^2r_{\mathrm{out}}^{-2}$. The corresponding moving-center term has the same bound, since its velocity is $O(\Gamma\ell/R^2)$ and $|M_1|\le C\Gamma\ell$.

For the external contribution, the derivative of $(z-d)(U(z)-U(d))$ is at most $C\mathsf S R$. Pairing opposite-sign material points at distance $O(\ell)$ therefore gives $C\Gamma\mathsf S R\ell$. The external part of $\dot d-U(d)$ is at most $C\mathsf S R$, so its product with $M_1$ has the same bound. Finally, the internal centroid error in the moving-center term $-2\dot dM_1$ is at most $C\Gamma\ell\cdot\Omega\ell(a^2/\ell^2)\rho$. Using $\Gamma=2\pi\Omega\ell^2$ and the upper bound for $R$, these errors satisfy
\[ \frac{C\Gamma\mathsf S R\ell}{\Gamma^2}\le C\mathsf S(t+r_{\mathrm{out}}/\Omega),\qquad \frac{C\Gamma\ell\cdot\Omega\ell(a^2/\ell^2)\rho}{\Gamma^2}\le C\frac{a^2}{\ell^2}\rho. \]
Since $\rho\le1$, summing gives
\begin{equation} \label{transfer:material-ramp}
\dot M_2(t)=-\frac{2i}{\pi}\Gamma^2e^{2i\vartheta}+O\left(\Gamma^2\left[\xi+r_{\mathrm{out}}^{-2}+\mathsf S(t+r_{\mathrm{out}}/\Omega)+\Omega\frac{a^2}{\ell^2}\rho t+\rho+\frac{a^2}{\ell^2}\right]\right).
\end{equation}
The constants in the separation and second-moment bounds are uniform for all sufficiently large $r_{\mathrm{out}}$ and independent of $\Gamma,\ell,\Omega$ and the number of other quadrupoles. In particular, the coefficient of $r_{\mathrm{out}}^{-2}$ is uniform. For a quadrupole produced by Section~\ref{transfer:material-block}, insert $\xi\le C_{\mathrm{sec}}[\eta\log(1/\kappa)+\kappa^{3/4}]$, whose constant contains the dependence on the preceding fixed trajectory segment.

\paragraph{The first and higher moments.}

For the inherited first moment, assume the same $\mathsf S,\mathsf T$ bounds also hold before separation, from the original initial time. In the construction, $M_1$ is initially zero and the quadrupole stays within $C_{\mathrm{sec}}\ell$ of its center for a physical time at most 1 before separation. Equation~\eqref{transfer:neutral-identities} gives $|\dot M_1|\le C\mathsf S|M_1|+C_{\mathrm{sec}}\Gamma\mathsf T\ell^2$, hence $|M_1(0)|\le C_{\mathrm{sec}}\Gamma\mathsf T\ell^2$ at the shifted time zero. For the initially separated top quadrupole, $M_1(0)=0$ directly.

During separation, subtract the affine external field at $d$. The derivative of the remainder is at most $C\mathsf T R$. Coupling opposite-sign patches again pairs points at distance $O(\ell)$, so
\[ \begin{aligned} |\dot M_1|&\le C\mathsf S|M_1|+C\Gamma\mathsf T R\ell,\\ |M_1(t)|&\le e^{C\mathsf S t}\left[|M_1(0)|+C\Gamma\mathsf T\ell^2(r_{\mathrm{out}}t+\Omega t^2)\right]. \end{aligned} \]
The second inequality follows by integration and the bound for $R$. It also controls the pair offset, since $M_1=4\Gamma\delta$.

Fix $d_{\mathrm{obs}}>0$ so that the observation distance is at least $d_{\mathrm{obs}}\ge2R(t)$ throughout $0\le t\le T$. The first-moment contribution to the external strain is then at most $C|M_1|/d_{\mathrm{obs}}^3$. Let $0<\Omega_r\le\Omega$ be the frequency of the receiving quadrupole. Integrating against $e^{-2\sqrt2\Omega_rt}$ and dividing by $\Gamma^2/(\Omega_r^2d_{\mathrm{obs}}^4)$ gives
\[ \frac{\Omega_r^2d_{\mathrm{obs}}}{\Gamma^2}\int_0^T e^{-2\sqrt2\Omega_rt}|M_1(t)|\,dt\le C\left[\frac{\Omega_r}{\Omega}\mathsf T d_{\mathrm{obs}}+\frac{\mathsf T d_{\mathrm{obs}}}{\Omega_r}\right]. \]
Here $C$ may depend on the fixed $r_{\mathrm{out}}$ and the preceding segment. Indeed, $e^{C\mathsf S t}$ is bounded on this physical interval, and the three integrals are bounded by $k!/(2\sqrt2\Omega_r)^{k+1}$ for $k=0,1,2$.

For $k\ge2$, all material points satisfy $|z-d|\le R$ when $r_{\mathrm{out}}$ is large. Pairing opposite signs and using $|(z-d)^k-(w-d)^k|\le kR^{k-1}|z-w|$ gives
\[ |M_k|\le Ck\Gamma\ell R^{k-1},\qquad \sum_{k\ge3}\frac{(k+1)|M_k|}{d_{\mathrm{obs}}^{k+2}}\le\frac{C\Gamma\ell R^2}{d_{\mathrm{obs}}^5}\quad\text{if }R/d_{\mathrm{obs}}\le1/2. \]
Thus \eqref{transfer:material-ramp} supplies the leading second moment, and the last two estimates bound the first and higher moments in the far field. The reference second moment contributes a bounded remainder of size $C\Gamma\ell^2$, whose absolute integral against $e^{-2\sqrt2\Omega_rt}$ is at most $C\Gamma\ell^2/\Omega_r$. All time integrals are restricted to the actual interval with the stated separation from the receiving quadrupole. If the ideal linear moment is integrated to infinity, its explicit exponentially small tail is subtracted as in Section~\ref{sec:four-point}.

\section{Proof of Proposition \ref{second:finite-solutions}} \label{array:section}

We apply the estimates of Section~\ref{transfer:material-block} successively to $n=N-1,\ldots,1$. At stage $n$, the quadrupoles $j>n$ have separated and those with $i<n$ remain near their reference rhombi. All estimates are made before the first geometric, patch boundary, or coordinate exit.

\subsection{Parameter choices and continuation} \label{array:choices}

Use the scales \eqref{eq:scales}. All constants are chosen independently of the number $N$ of quadrupoles, in the order shown below.
\begin{center}
\begin{tikzpicture}[>=Stealth,every node/.style={align=center},explain/.style={text width=1.85cm,anchor=north}]
\node (patch) at (0,0) {$A$};
\node (input) at (2.2,0) {$\varepsilon_*,M_0$};
\node (local) at (4.4,0) {$q_*,\kappa_0$};
\node (dipole) at (6.6,0) {$r_{\mathrm{out}},K$};
\node (history) at (8.8,0) {$M_{\mathrm{hist}}$};
\node (ratio) at (11,0) {$B$};
\node (size) at (13.2,0) {$\Lambda$};
\draw[->] (patch) -- (input);
\draw[->] (input) -- (local);
\draw[->] (local) -- (dipole);
\draw[->] (dipole) -- (history);
\draw[->] (history) -- (ratio);
\draw[->] (ratio) -- (size);
\node[explain] at (0,-0.4) {patch\\boundaries};
\node[explain] at (2.2,-0.4) {local\\input bounds};
\node[explain] at (4.4,-0.4) {departure\\from the rhombus};
\node[explain] at (6.6,-0.4) {dipole\\separation};
\node[explain] at (8.8,-0.4) {bound on\\earlier motion};
\node[explain] at (11,-0.4) {frequency\\ratio};
\node[explain] at (13.2,-0.4) {small\\error terms};
\end{tikzpicture}
\end{center}
An increase of $B$ or $\Lambda$ does not change any preceding choice.

\paragraph{Patch boundaries.}

Proposition~\ref{prop:separating} gives a minimum point-vortex distance $c_0\ell$ along the entire separating trajectory. Fix a perturbation margin $c_0\ell/4$ and the resulting individual-patch external bound $C_0\Omega$. Choose $A$ so that Theorem~\ref{meta:reduction} keeps the normalized boundaries within $C_A\Omega_n^{-2}$ of circles on $[0,2]$. This choice uses the minimum distance, so it is independent of how far the dipoles are followed. Write
\[ \varepsilon_n=\frac{a_n^2}{\ell_n^2}=\frac{2}{A\Omega_n^3}. \]

\paragraph{Departure from the rhombus.}

Fix the rhombus comparison neighborhood and its coefficient radius $R$ in Theorem~\ref{meta:reduction}. On $|x|\le1/2$, we have $\cos x\ge7/8$ and $|\log\cos x|\le Cx^2$. Fix a universal $C\ge1$ for these elementary estimates and the timing recurrence \eqref{array:timing-recursion}, for $B\ge4$ and total normalized errors at most $\varepsilon_*$. Fix $C_h\ge4C$, then choose $\varepsilon_*>0$ so that $CC_h\varepsilon_*\le1/4$, $(5/2)C_h\varepsilon_*\le1/2$, and $C\varepsilon_*\le1/8$. These constants are independent of the later choices $q_*,r_{\mathrm{out}},K$. All later constants multiplying forcing, phase, or delay errors are included before those errors are required to be at most $\varepsilon_*$. Fix also $A_*=1/4$ and $0<\sigma<\sqrt2/2$. In time $s=\Omega_n(t-t_0)$, the leading direct input is bounded by $C_{\mathrm{dir}}\kappa_ns$, because
\[ \frac{\Gamma_{n+1}^2s}{\Omega_n^2|d_n-d_{n+1}|^4}\asymp\kappa_ns. \]
The constant $C_{\mathrm{dir}}\ge1$ includes the projections onto the centroid and moment equations. It also covers velocity divided by $d_n$ and Hessian multiplied by $d_n$, since $|d_n-d_{n+1}|=d_n/2$. It is independent of $q_*$ and $r_{\mathrm{out}}$. Set
\[ M_0=4C_{\mathrm{dir}}\max\left\{1,\int_0^\infty e^{-(2\sqrt2-\sigma/2)s}(1+s)^3\,ds\right\}. \]
Fix the input bounds of Section~\ref{transfer:material-block} by
\[ \begin{gathered} |Z(0)|+W(0)\le2M_0\kappa,\qquad F_0(s)\le2M_0\kappa(1+s)^3e^{\sigma s/2},\\ \int_0^\infty e^{-2\sqrt2s}F_0(s)\,ds\le2M_0\kappa,\qquad \mathcal A\ge A_*. \end{gathered} \]
Here $F_0$ bounds the direct input and the sum of its projected terms $\kappa|f_q|,\kappa|f_p|,\kappa|f_\nu|$. Thus $\mathcal A\le4M_0$. Include the fixed coordinate constants for the first-moment and center estimates of Section~\ref{array:weights-section} in the same bounds. Section~\ref{transfer:material-block} now fixes $q_*>0$ and then $\kappa_0>0$ for departure at $Q=q_*$.

\paragraph{Dipole separation and earlier motion.}

Choose a point sufficiently far along the separating orbit that $Cr_{\mathrm{out}}^{-2}\le\varepsilon_*/4$. Here $C$ is the uniform constant in \eqref{transfer:material-ramp}, enlarged only by the fixed projections and elementary timing estimates. In particular, it does not depend on the segment chosen by $r_{\mathrm{out}}$. This fixes $r_{\mathrm{out}}$, $K$, and the trajectory segment from $Q=q_*$ to that distance. Fix a collision-free comparison neighborhood of this segment and its coefficient radius. The symbol $R$ denotes the radius for the comparison currently used. Enlarging the later neighborhood does not change the local input bounds above. Continue $W$ from its incoming value by \eqref{meta:history-restart}.

Fix also $0<c<2\sqrt2$ in the earlier-motion bound \eqref{array:prehistory}. Let $C_{\mathrm{sec}}$ bound the estimates on this segment and in Section~\ref{transfer:outgoing-material}. Let $C_{\mathrm{out}}\ge1$ include the bounds for growth near the rhombus and along this segment, and the bound for the prescribed scale-$N$ initial configuration. These constants depend only on the choices already made. Fix
\begin{equation} \label{array:history-constant}
M_{\mathrm{hist}}=2C_{\mathrm{out}}A_*^{-1}M_0.
\end{equation}
Section~\ref{array:phases-section} verifies that this bounds the earlier motion at each completed scale. The factor two leaves room for the interactions between quadrupoles. In particular, $M_{\mathrm{hist}}$ is fixed before $B$ and $\Lambda$.

The error $Cr_{\mathrm{out}}^{-2}$ changes the forcing coefficient $\mathcal A$, which is retained in the logarithmic delay. The error from the preceding trajectory segment is $C_{\mathrm{sec}}[\eta\log(1/\kappa)+\kappa^{3/4}]$. It tends to zero with $\Lambda^{-1}$ after the segment has been fixed. Thus the later dependence on $r_{\mathrm{out}}$ does not change its already chosen tolerance.

\paragraph{The frequency ratio and overall frequency.}

Choose $B>4$ sufficiently large, depending on $\alpha$, for the frequency and source comparisons in Sections~\ref{array:weights-section}--\ref{array:sources-section} and~\ref{array:induction-section}, and require
\[ \begin{gathered} B^\alpha>2,\qquad B^{1-\alpha}\ge2,\\ C_{\mathrm{sec}}(1+M_{\mathrm{hist}})(B^{-1}+B^{-2})\le\tfrac14\min\{M_0,\varepsilon_*\}. \end{gathered} \]
The ratio $\Omega_{n-1}/\Omega_n=B^{-1}$ controls the slower quadrupoles. The factors $B^{-1}$ and $B^{-2}$ also control the constant second moment and the earlier motion of the nearest source, respectively. All constants in these requirements have been fixed independently of $B$.

With $B$ fixed, choose $\Lambda$ large. Formula~\eqref{second:seed-delay} gives
\[ \log(1/\kappa_n)=4\Omega_{n+1}^\alpha+O_{\alpha,B}(n+1),\qquad \sup_n\kappa_n\longrightarrow0,\qquad \sum_n\Delta_n\longrightarrow0. \]
The spatial estimates below make $\mathsf S,\mathsf T$ and the interaction bounds tend to zero. The coefficient estimates in Sections~\ref{array:weights-section}--\ref{array:phases-section} give $\sup_n[\eta_n\log(1/\kappa_n)+\kappa_n^{3/4}]\to0$, where $\eta_n$ is \eqref{transfer:material-error}. Choose $\Lambda$ so that these errors, including their fixed factors $C_{\mathrm{sec}},M_{\mathrm{hist}}$, meet the stated smallness bounds. Require also
\[ \sup_n\kappa_n\le\kappa_0,\qquad C(\Omega_1^{-1}+\Omega_1^{-3})\le\sigma/2,\qquad \sum_n\Delta_n<1/4. \]
The middle inequality ensures $\sigma_c\le\sigma/2$ in the local integral estimate. Increase $\Lambda$ once more so that the total upper bound for the separation times in Section~\ref{transfer:material-block} satisfies
\[ \sum_{n\ge1}\frac1{\Omega_n}\left[\frac1{2\sqrt2}\log\frac4{A_*\kappa_n}+K+1\right]=\sum_{n\ge1}\Delta_n+\frac{(2\sqrt2)^{-1}\log(4/A_*)+1}{\Lambda(B-1)}<\frac12. \]
The choices are now fixed as $N\to\infty$. For each $N$, construct the reference on $[0,2]$ and the second initial configuration as in Sections~\ref{sec:first-solution}--\ref{sec:second-solution}. Compare them on $[0,1]$.

\paragraph{Spatial bounds and continuation.}

For fixed $k,m\ge0$ and $r>0$,
\[ \sum_{n\ge1}\Omega_n^k d_n^{-m}\ell_n^r\longrightarrow0\qquad(\Lambda\to\infty,\ B\text{ fixed}). \]
Indeed, the ratio of consecutive terms is $B^k2^m e^{-r(B^\alpha-1)\Omega_n^\alpha}\le1/2$ for large $\Lambda$. The sum is then at most $2\Omega_1^kd_1^{-m}\ell_1^r$, which tends to zero. The corresponding supremum has the same limit.

Stop at the first patch boundary exit in \eqref{meta:boundary-chart} or failure of
\[ |d_i^{(j)}(t)-d_k^{(j)}(t)|\ge\tfrac12|d_i-d_k|\qquad(i\ne k,\ j=1,2). \]
The geometric bootstrap bounds near the rhombus, along the fixed trajectory segment, and during dipole separation give diameter at most $C_{\mathrm{sec}}\ell_n(1+\Omega_nt)$ and minimum centroid distance at least $c_0\ell_n/2$. Sections~\ref{transfer:material-block}--\ref{transfer:outgoing-material} improve these bounds once their inputs are verified. For the latter section, the initial error is zero at scale $N$ and small by Section~\ref{transfer:material-block} at subsequent scales. Its other smallness requirements are bounded by
\[ r_{\mathrm{out}}^{-2}+\mathsf S(1+r_{\mathrm{out}}/\Omega_n)+C_A\Omega_n^{-4}. \]
These are small by the choices of $r_{\mathrm{out}}$ and $\Lambda$.

The series estimate gives
\[ \sup_i\frac{C_{\mathrm{sec}}\ell_i(1+\Omega_i)+2R\ell_i+2a_i}{d_i}\longrightarrow0. \]
Thus the comparison disks $B(d_i^{(j)}(t),2R\ell_i)$ and the separating quadrupoles' convex hulls stay away from every other quadrupole. The external fields are analytic there, with
\[ \sup|DU_{\mathrm{ext},i}^{(j)}|\le C\sum_{k\ne i}\frac{\Gamma_k}{|d_i-d_k|^2}\le C\mathsf S,\qquad \sup|D^2U_{\mathrm{ext},i}^{(j)}|\le C\sum_{k\ne i}\frac{\Gamma_k}{|d_i-d_k|^3}\le C\mathsf T. \]

We next control the quadrupole centers. For separating dipoles use $X_1,X_2,v_1,v_2$ from Section~\ref{transfer:outgoing-material}, so $M_1=\Gamma(v_1+v_2)=4\Gamma\delta$. Summing the four point-vortex velocities gives
\[ \dot d_{\mathrm{point,int}}=-\frac\Gamma2\bigl[K(v_1)+K(v_2)+K(X_1-X_2+2\delta)+K(X_2-X_1+2\delta)\bigr]. \]
Since $v_1+v_2=4\delta$, oddness of $K$ and $|DK(x)|\le C|x|^{-2}$ imply
\[ |\dot d_{\mathrm{point,int}}|\le C\Gamma|\delta|\bigl(\ell^{-2}+|X_1-X_2|^{-2}\bigr)\le C\Omega|M_1|/\Gamma. \]
The internal patch correction is at most $C\Omega\ell\varepsilon\Omega^{-2}$. The linear external Taylor term cancels when averaged over the four patches about $d$, so
\[ |\dot d-U(d)|\le C\Omega|M_1|/\Gamma+C\mathsf T(|X_1-X_2|^2+a^2)+C\Omega\ell\varepsilon\Omega^{-2}. \]
On the original interval $[0,1]$, Section~\ref{transfer:outgoing-material} gives $|M_1|\le C_{\mathrm{sec}}\Gamma\mathsf T\Omega\ell^2$ and $|X_1-X_2|\le C_{\mathrm{sec}}\Omega\ell$. Near the rhombus and along the fixed segment, the same calculation uses diameter $C_{\mathrm{sec}}\ell$ and gives a smaller bound. Hence, in all three parts of the motion,
\[ |\dot d_n-U_{\mathrm{ext},n}(d_n(t))|\le r_n,\qquad r_n=C_{\mathrm{sec}}\bigl(\mathsf T\Omega_n^2\ell_n^2+\ell_n\Omega_n^{-4}\bigr),\qquad \sup_n r_n/d_n\longrightarrow0. \]

The cutoff field $V$ from Lemma~\ref{prep:moment-bounds} satisfies $V(d_n(t))=U_{\mathrm{ext},n}(d_n(t))$ and $\|DV\|_{L^\infty}\le C\mathsf S$. Thus
\[ |\dot d_i(t)-\dot d_j(t)|\le C\mathsf S|d_i(t)-d_j(t)|+r_i+r_j. \]
Using $d_i+d_j\le3|d_i-d_j|$ for the fixed initial locations and integrating gives
\[ \frac{|d_i(t)-d_j(t)-(d_i-d_j)|}{|d_i-d_j|}\le e^{C\mathsf S t}-1+3t e^{C\mathsf S t}\sup_k\frac{r_k}{d_k}\longrightarrow0\qquad(0\le t\le1). \]
For large $\Lambda$ this is less than $1/4$, improving the separation assumption and controlling the change in the arguments of all interquadrupole vectors.

For each individual patch, the other three patches in its quadrupole contribute at most $C\Gamma_n/\ell_n^2=C\Omega_n$ to the weighted circulation integral in Theorem~\ref{meta:reduction}(iv). The other quadrupoles contribute at most $C\mathsf S$. The normalized external coefficient and material-derivative bounds are therefore $C\Omega_n$ and $C\Omega_n^2$. The theorem improves the patch boundary bounds from the original time zero. Consequently, while the centroid bounds hold, neither a patch boundary exit nor loss of separation between quadrupoles can occur first.

\subsection{Relative estimates and slower quadrupoles} \label{array:weights-section}

The linear shape rates are $2\sqrt2\Omega_i$ and $-2\sqrt2\Omega_i$, while the exponential rate in the physical-time integral $W_i$ is bounded independently of $i$. We use the centroid decomposition of Theorem~\ref{meta:reduction} and take second minus first as in Lemma~\ref{core:relative}. Thus $\triangle d_i(t)=d_i^{(2)}(t)-d_i^{(1)}(t)$, while $d_i=2^{-i}$ is the fixed initial location. In addition to the shape, angular-impulse, and corrected-phase differences and the integral $W_i$, use
\[ \widehat M_{1,i}=\frac{d_i\triangle M_{1,i}}{\Gamma_i\ell_i^2},\qquad \widehat d_i=\frac{\triangle d_i}{d_i}. \]
In the coordinates of Section~\ref{transfer:material-section}, set $Q_i=\triangle\widehat q_i$, $P_i=\triangle\widehat p_i$, and
\[ Y_i=|Q_i|+|P_i|+\frac{|\triangle I_i|}{2\Gamma_i\ell_i^2}+|\triangle\widehat\Theta_i|+W_i+|\widehat M_{1,i}|+|\widehat d_i|. \]
Write $Z_i=Y_i-W_i$ for the sum of the finite-coordinate differences. Both motions began at the original time zero from disks with the same radii, signed circulations, and standard normalized material labels $F(0,\theta)=e^{i\theta}$. Equation~\eqref{transfer:action-correction} bounds the difference between $D_i/\ell_i^2$ and $I_i/(2\Gamma_i\ell_i^2)$, with relative error $C\varepsilon_i\Omega_i^{-2}W_i$. Enlarge the centroid input in \eqref{transfer:core-memory} to include all these coordinates, as justified after Theorem~\ref{meta:reduction}. Each $W_i$ starts from $W_i(0)=0$ at the original time zero and retains its actual value at every later starting time and chart change. At $i<N$ the initial centroid configurations also agree, so $Y_i(0)=0$. Since $\triangle\delta_i/\ell_i=(\ell_i/(4d_i))\widehat M_{1,i}$, the stronger first-moment normalization decreases its coefficient in the even centroid equations and the integral input.

At distance comparable to $d_i$, this normalization gives the same scale for the first- and second-moment strains.
\[ \frac{|\triangle M_{1,i}|}{d_i^3}=\frac{\Gamma_i\ell_i^2}{d_i^4}|\widehat M_{1,i}|,\qquad \frac{|\triangle M_{2,i}|}{d_i^4}\le C\frac{\Gamma_i\ell_i^2}{d_i^4}Y_i. \]
The translation variable $\widehat d_i$ is normalized by the interquadrupole distance.

Choose the relative rhombus neighborhood to be a fixed small ball for $Y_i$. Lift the relative phase continuously from the identical initial data. The absolute reference phase may rotate freely. Stop at the first exit of any component of $Y_i$, or at loss of patch confinement or separation. A small bound on $Y_i$ therefore excludes every rhombus-neighborhood exit.

On the fixed compact segment after $Q_i=q_*$, retain the four material labels and their representation $d+\delta\pm z_1,d-\delta\pm z_2$, and use instead
\[ Y_i=\frac{|\triangle z_{1,i}|+|\triangle z_{2,i}|}{\ell_i}+W_i+|\widehat M_{1,i}|+|\widehat d_i|. \]
The two norms are uniformly equivalent near $Q_i=q_*$. Indeed, the four even centroid coordinates give local analytic coordinates for shape, angular impulse, and phase, with $|\triangle(I_i/(2\Gamma_i\ell_i^2)-D_i/\ell_i^2)|\le C\varepsilon_i\Omega_i^{-2}W_i$. The reference stays near a rotating rhombus, and the second motion stays near the fixed collision-free compact segment. The latter norm is defined throughout this passage, satisfies the same multipole bounds with constants depending on the segment, and continues the integral comparison of Theorem~\ref{meta:reduction}. Use the corresponding norm on each interval when estimating earlier motion.

Near $Q_i=q_*$, use the local relative estimates. On the rest of the outgoing segment, its shape has a fixed positive distance from the compact set of rotating rhombi, so $(|\triangle z_{1,i}|+|\triangle z_{2,i}|)/\ell_i\ge c_{\mathrm{sec}}>0$. Each separated normalized point field and individual external coefficient list is bounded by $C_{\mathrm{sec}}$. Their difference is therefore bounded by $2C_{\mathrm{sec}}c_{\mathrm{sec}}^{-1}(|\triangle z_{1,i}|+|\triangle z_{2,i}|)/\ell_i$. This proves the relative coefficient estimate without requiring a collision-free straight segment between the two configurations.

Write the external expansion about each quadrupole's own center as
\[ U_{\mathrm{ext},i}^{(j)}(d_i^{(j)}+z)-U_{\mathrm{ext},i}^{(j)}(d_i^{(j)})=S_i^{(j)}\bar z+\sum_{k\ge2}a_{k,i}^{(j)}\bar z^k. \]
Subtracting the exact first-moment equations gives
\[ \triangle \dot M_{1,i}=S_i^{(1)}\overline{\triangle M_{1,i}}+\triangle S_i\overline{M_{1,i}^{(2)}}+\sum_{k\ge2}\left(a_{k,i}^{(1)}\overline{\triangle M_{k,i}}+\triangle a_{k,i}\overline{M_{k,i}^{(2)}}\right). \]
Fix the time-independent bound
\[ \mathsf T_i=C\sum_{j\ne i}\frac{\Gamma_j}{|d_i-d_j|^3}, \]
where the geometric constant $C$ is large enough to bound both external Hessians on the comparison disks throughout the stopped interval. It also bounds the reference Hessian on its original interval. By the first-moment equation, $\delta_i^{(1)}(0)=0$ gives $|\delta_i^{(1)}(t)|\le C\mathsf T_i\ell_i^2t e^{C\mathsf S t}\le C\mathsf T_i\ell_i^2$ on $[0,1]$. Use the radius $R$ already fixed for the comparison neighborhood. In the rhombus neighborhood, $|\triangle M_{k,i}|\le CR^k\Gamma_i\ell_i^kY_i$. Also $d_i|M_{1,i}^{(2)}|/(\Gamma_i\ell_i^2)\le C$, by the reference bound and the bound on $\widehat M_{1,i}$. Hence
\[ |\dot{\widehat M}_{1,i}|\le C\mathsf{S}|\widehat M_{1,i}|+Cd_i\mathsf{T}_iY_i+C\left(|\triangle S_i|+d_i\sum_{k\ge2}R^k\ell_i^{k-2}|\triangle a_{k,i}|\right). \]
The last term is bounded by the external strain difference plus $d_i$ times the analytic external Hessian difference. Since $|d_i-d_j|\ge\frac12\max\{d_i,d_j\}$,
\[ d_i\mathsf T_i\le C\sum_{j<i}\frac{\Gamma_jd_i}{d_j^3}+C\sum_{j>i}\frac{\Gamma_j}{d_i^2}\le C\sum_{j\ne i}\frac{\Gamma_j}{d_j^2}\le C\mathsf S. \]
Thus the coefficient of a local shape or patch difference is at most $C\mathsf S$, and every forcing term is an actual external-field difference.

For the internal point field, direct summation of the four centroid equations gives
\[ \dot d_{\mathrm{point,int}}=-\frac \Gamma2\sum_{\epsilon_1,\epsilon_2\in\{-1,1\}}K(2\delta+\epsilon_1z_1+\epsilon_2z_2). \]
The expression is odd in $\delta$. On the fixed separated normalized neighborhood, its $\delta$ derivative is bounded by $C\Omega$ and its shape derivative by $C\Omega|\delta|/\ell$. Since $|\delta_i^{(1)}|\le C\mathsf T_i\ell_i^2$ and $|\triangle\delta_i|=\ell_i^2|\widehat M_{1,i}|/(4d_i)$, the mean-value formula gives
\[ \frac{|\triangle \dot d_{\mathrm{point,int}}|}{d_i}\le C\frac{\Gamma_i}{d_i^2}|\widehat M_{1,i}|+C\frac{\Omega_i\mathsf{T}_i\ell_i^2}{d_i}Y_i. \]
The offset coefficient is $C\Gamma_i/d_i^2$. The shape coefficient contains the small reference offset and is small because $\ell_i/d_i$ decays superexponentially. The relative patch contribution to the arithmetic-center equation is bounded by
\[ C\Omega_i\frac{\ell_i}{d_i}\varepsilon_i\bigl(W_i+\Omega_i^{-2}|Z_i|\bigr)+C\frac{\ell_i}{d_i}\varepsilon_i\Omega_i^{-2}\|\triangle A_i\|_R. \]
Here $Z_i=Y_i-W_i$, and $\triangle A_i$ is the actual external coefficient difference from Theorem~\ref{meta:reduction}(ii). The last term retains its instantaneous external input, with uniformly small factor $(\ell_i/d_i)\varepsilon_i\Omega_i^{-2}$. External averaging adds $C\mathsf T_i\ell_i^2/d_i$ times a local relative variable and the corresponding external-input difference. Since the even point equations are even in $\delta_i$, their offset contribution is a small reference factor times $\triangle\delta_i/\ell_i$, plus a quadratic difference.

Indeed, Taylor's formula gives
\[ C\frac{|\delta_i^{(1)}|}{\ell_i}\frac{|\triangle\delta_i|}{\ell_i}+C\left|\frac{\triangle\delta_i}{\ell_i}\right|^2\le C\mathsf{T}_i\ell_i\frac{|\triangle\delta_i|}{\ell_i}+C\left|\frac{\triangle\delta_i}{\ell_i}\right|^2. \]
The first term has a small coefficient, and the second is quadratic. Theorem~\ref{meta:reduction}(ii),(iii) bounds the remaining moment and centroid errors by the same finite-coordinate differences and $W_i$.

The external input norm at quadrupole $i$ is the sum of velocity divided by $d_i$, strain, and $d_i$ times Hessian. A shape, angular-impulse, phase, or patch difference at quadrupole $j$ changes its centered second moment by at most $C\Gamma_j\ell_j^2Y_j$. Its first-moment difference is at most $C\Gamma_j\ell_j^2|\widehat M_{1,j}|/d_j$. Translating its center differentiates the existing first and higher moment fields, since the total circulation remains zero. Since the reference satisfies $|M_{1,j}|\le C\Gamma_j\mathsf T\ell_j^2$, these translation terms satisfy the bounds below as well.

Use the weighted norm $\max_i d_i^4Y_i$, so the coefficient from $j$ to $i$ acquires the factor $(d_i/d_j)^4$.

For the sum of velocity/$d_i$, strain, and $d_i$ times Hessian, the second- and first-moment coefficients are bounded respectively by
\[ \begin{gathered} C\Gamma_j\ell_j^2\left(\frac1{d_i|d_i-d_j|^3}+\frac1{|d_i-d_j|^4}+\frac{d_i}{|d_i-d_j|^5}\right),\\ C\frac{\Gamma_j\ell_j^2}{d_j}\left(\frac1{d_i|d_i-d_j|^2}+\frac1{|d_i-d_j|^3}+\frac{d_i}{|d_i-d_j|^4}\right). \end{gathered} \]
Let $A_{ij}(t)\ge0$ be the sum of these coefficient bounds, including the translation terms below. For $j>i$, multiplication by $(d_i/d_j)^4$ bounds the two expressions by $C\Gamma_j\ell_j^2/d_j^4$ and $C\Gamma_j\ell_j^2d_i/d_j^5$, respectively. For $j<i$, both are bounded by $C\Gamma_j\ell_j^2/d_j^4$, since $d_i/d_j\le1$. Thus
\begin{equation} \label{array:row-bound}
\sup_{t,i}\sum_{j\ne i}A_{ij}(t)\left(\frac{d_i}{d_j}\right)^4\le \Xi,\qquad \Xi=C\sum_{j\ge1}\frac{\Omega_j\ell_j^4}{d_j^5}+C\mathsf{S}.
\end{equation}
For the same three components, translating the second-moment field gives a coefficient at most
\[ C\Gamma_j\ell_j^2d_j\left(\frac1{d_i|d_i-d_j|^4}+\frac1{|d_i-d_j|^5}+\frac{d_i}{|d_i-d_j|^6}\right). \]
Translating the reference first-moment field has coefficient at most
\[ C\mathsf{T}\Gamma_j\ell_j^2d_j\left(\frac1{d_i|d_i-d_j|^3}+\frac1{|d_i-d_j|^4}+\frac{d_i}{|d_i-d_j|^5}\right). \]
Multiplication by $(d_i/d_j)^4$ and the same split into $j>i$ and $j<i$ give the majorant in \eqref{array:row-bound}, using $\mathsf T\le1$ and $d_j\le1$. The constants are independent of the truncation. Include the local impulse, phase, first-moment, and translation terms in the local evolution estimate, whose growth is bounded by $C\exp(C\Omega_i(t-s))$ in the rhombus neighborhood.

Equation~\eqref{meta:history-restart} uses the external coefficient difference without its time derivative. In physical time, the centroid input to $W_i$ has coefficient $C\Omega_i$, the $W_i$ term in the centroid equation has coefficient $C\Omega_i\varepsilon_i$, and the exponential rate in the $W_i$ integral is uniformly bounded. Include these terms in the same local growth and weighted interaction estimates.

For the target, adjoin $\widehat M_{1,i},\widehat d_i$ to the angular-impulse and corrected-phase variables in Section~\ref{transfer:memory-saddle}. In recipient time their coefficients satisfy
\[ \frac{d_i\mathsf{T}_i}{\Omega_i}\le\frac{C\mathsf{S}}{\Omega_i},\qquad \frac{\Gamma_i}{\Omega_id_i^2}=\frac{2\pi \ell_i^2}{d_i^2}\le C\Omega_i^{-3},\qquad \frac{\ell_i}{d_i}\varepsilon_i\le C\varepsilon_i. \]
The remaining local center-error coefficients are smaller. After the displayed small coefficients are included in the remainders, the additional coordinates have zero unperturbed derivatives. Translation changes the shape equation by at most $d_i\mathsf T_i/\Omega_i$, since internal velocities depend only on relative positions. The reference offset also satisfies $|\delta_i^{(1)}|/\ell_i\le C\mathsf T_i\ell_i\le C\Omega_i^{-3}$. Thus the enlarged system satisfies the remainder inequalities and reference-offset bound of Section~\ref{transfer:material-block}, with its stated $\eta$.

\paragraph*{Slower quadrupoles.}

Fix a target $n$. The slower quadrupoles are $i<n$, which have smaller frequencies and remain in their local rhombus neighborhoods. Stop when the target reaches its prescribed dipole separation or a slower quadrupole leaves its neighborhood. All estimates below hold on this stopped interval.

For $i<n$, use the bound $e^{C_f\Omega_it}$ and $\Omega_i\le\Omega_n/B$. Choose $B$ so that $C_f\Omega_i<2\sqrt2\Omega_n$. The target estimate will retain the exact rate $2\sqrt2\Omega_n$, since a fixed change in that rate over the logarithmic departure time would change the effective coefficient by a power of $\kappa_n$.

Recall $Z_i=Y_i-W_i$. The change of receiving center in the velocity, strain, and Hessian components has coefficients bounded by $C\mathsf S$, $Cd_i\mathsf T_i$, and $Cd_i^2\sup|D^3U_{\mathrm{ext},i}|$, respectively. Kernel differentiation gives
\[ d_i^2\sup|D^3U_{\mathrm{ext},i}|\le C\sum_{j\ne i}\frac{\Gamma_jd_i^2}{|d_i-d_j|^4}\le C\mathsf S. \]
Thus the terms caused by changing the receiving state are bounded by $C(\mathsf S+d_i\mathsf T_i)Y_i$. Include them in the local centroid and integral coefficients. Let $F_i^{\mathrm{ext}}$ be the remaining physical-time external input norm, due to changes in the other quadrupoles. Then
\[ \|\triangle A_i\|_R\le C F_i^{\mathrm{ext}}+C\mathsf S Y_i. \]
Since $(\ell_i/d_i)\varepsilon_i\Omega_i^{-2}\le1$, the instantaneous patch term above is included in $CF_i^{\mathrm{ext}}$ and the small local coefficients. Theorem~\ref{meta:reduction} and \eqref{meta:history-restart} give, on every subinterval $[r,t]$ before the stop,
\[ \begin{aligned} Z_i(t)&\le Ce^{C\Omega_i(t-r)}Z_i(r)+C\int_r^t e^{C\Omega_i(t-s)}\bigl(\Omega_i\varepsilon_iW_i(s)+F_i^{\mathrm{ext}}(s)\bigr)\,ds,\\ W_i(t)&\le Ce^{\mu(t-r)}W_i(r)+C\int_r^t e^{\mu(t-s)}\bigl(\Omega_iZ_i(s)+F_i^{\mathrm{ext}}(s)\bigr)\,ds,\qquad \mu\le C\Omega_i. \end{aligned} \]
The constants here may depend on $A$ and the fixed coordinate neighborhoods, but not on $i,N$. Equation~\eqref{meta:history-restart} retains $W_i(r)$. Since $\varepsilon_i\le1$, addition and Gronwall give fixed constants $C_G,C_f$, chosen before $B$, such that
\begin{equation} \label{array:forced-propagator}
Y_i(t)\le C_Ge^{C_f\Omega_i(t-r)}Y_i(r)+C_G\int_r^t e^{C_f\Omega_i(t-s)}F_i^{\mathrm{ext}}(s)\,ds.
\end{equation}
This estimate includes the coefficient $C\Omega_i$ of the centroid input to $W_i$.

\begin{lemma} \label{array:follower-convolution}
Let $n\ge2$ and $0\le t_0\le T\le1$, where $T$ is a stopped endpoint as above. For every $i<n$, suppose the input in \eqref{array:forced-propagator} is bounded on $[t_0,T]$ by
\[ F_i^{\mathrm{ext}}(t)\le f_i(t)+A_{in}(t)Y_n(t)+\sum_{j<n,\ j\ne i}A_{ij}(t)Y_j(t),\qquad f_i(t)\ge0. \]
Here $f_i$ contains the forcing from completed sources, and $A_{in}Y_n$ bounds the current target's contribution. Set
\[ V_0=\max_{i<n}\left(\frac{d_i}{d_n}\right)^4Y_i(t_0),\qquad f(t)=\max_{i<n}\left(\frac{d_i}{d_n}\right)^4f_i(t),\qquad \beta_n=C_f\Omega_{n-1}+C_G\Xi. \]
The forcing returned by the slower quadrupoles to the target is bounded by
\[ \begin{aligned} \sum_{i<n}A_{ni}(t)Y_i(t)\le{}&C_G\Xi e^{\beta_n(t-t_0)}V_0+C_G\Xi\int_{t_0}^t e^{\beta_n(t-r)}f(r)\,dr\\ &+C_G\Xi^2\int_{t_0}^t e^{\beta_n(t-r)}Y_n(r)\,dr. \end{aligned} \]
The terms on the right come respectively from the slower-quadrupole states at $t_0$, new source inputs, and the target's contribution after $t_0$. For every $\gamma>\beta_n$, the last term defines a positive integral operator $\mathcal K_n$ on $[t_0,T]$ satisfying
\[ \sup_{t_0\le t\le T}\int_{t_0}^t C_G\Xi^2e^{-(\gamma-\beta_n)(t-r)}\,dr\le\frac{C_G\Xi^2}{\gamma-\beta_n}. \]
The exponentially weighted integrals of the first and second terms are bounded by
\[ \frac{C_G\Xi}{\gamma-\beta_n}V_0,\qquad \frac{C_G\Xi}{\gamma-\beta_n}\int_{t_0}^T e^{-\gamma(r-t_0)}f(r)\,dr, \]
respectively. These bounds use no continuation beyond the stopping time $T$.
\end{lemma}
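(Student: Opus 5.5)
The plan is to reduce the slower subsystem to a single scalar Gronwall inequality for the weighted quantity
\[ V(t)=\max_{i<n}\left(\frac{d_i}{d_n}\right)^4Y_i(t), \]
and then to insert the result into the returned forcing $\sum_{i<n}A_{ni}Y_i$.

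\textbf{Scalar inequality for $V$.} Fix $i<n$ and apply \eqref{array:forced-propagator} from $r=t_0$, with $\Omega_i\le\Omega_{n-1}$. Multiplying by $(d_i/d_n)^4$ and using the assumed bound on $F_i^{\mathrm{ext}}$ gives three contributions:
\begin{itemize}
\item the initial state, bounded by $C_Ge^{C_f\Omega_{n-1}(t-t_0)}V_0$;
\item the source input $(d_i/d_n)^4f_i\le f$;
\item the coupling terms.
\end{itemize}
For the coupling terms I rewrite
\[ (d_i/d_n)^4A_{ij}Y_j=A_{ij}(d_i/d_j)^4\,(d_j/d_n)^4Y_j, \]
so that the row bound \eqref{array:row-bound} gives
\[ \sum_{j<n,\,j\ne i}(d_i/d_n)^4A_{ij}Y_j\le\Xi\,V. \]
The target term is handled by writing $(d_i/d_n)^4A_{in}Y_n$. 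By \eqref{array:row-bound} with $j=n$, it is at most $\Xi Y_n$.

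\textbf{Closing the Gronwall step.} Taking the maximum over $i<n$ yields
\[ V(t)\le C_Ge^{C_f\Omega_{n-1}(t-t_0)}V_0+C_G\int_{t_0}^te^{C_f\Omega_{n-1}(t-r)}\bigl(f+\Xi V+\Xi Y_n\bigr)(r)\,dr. \]
A Gronwall argument for the exponentially weighted quantity then gives
\[ V(t)\le C_Ge^{\beta_n(t-t_0)}V_0+C_G\int_{t_0}^te^{\beta_n(t-r)}(f+\Xi Y_n)\,dr,\qquad \beta_n=C_f\Omega_{n-1}+C_G\Xi. \]
Here one possibly enlarges $C_G$ by a fixed factor, which is harmless since $C_G$ is a fixed constant chosen before $B$.

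\textbf{Returned forcing.} The forcing returned to the target satisfies
\[ \sum_{i<n}A_{ni}Y_i=\sum_i A_{ni}(d_n/d_i)^4\,(d_i/d_n)^4Y_i\le\Xi V, \]
again by the row bound \eqref{array:row-bound} applied with index $n$. Substituting the bound for $V$ gives exactly the three displayed terms, with the factors $C_G\Xi$ and $C_G\Xi^2$.

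\textbf{Kernel and weighted integrals.} The kernel bound follows by direct integration of $e^{-(\gamma-\beta_n)(t-r)}$. The two weighted-integral bounds follow from Fubini: multiply by $e^{-\gamma(t-t_0)}$, integrate over $t\in[t_0,T]$, and integrate the inner exponential first. All quantities are evaluated only on $[t_0,T]$, so no continuation beyond $T$ enters.

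\textbf{Main obstacle.} I expect the hardest step to be verifying that \eqref{array:row-bound} controls the weighted coefficients in both directions, namely $A_{ij}(d_i/d_j)^4$ and $A_{ni}(d_n/d_i)^4$, including the target column and row. This needs the split into $j>i$ and $j<i$ established earlier, with the translation terms included. I would check the two displayed coefficient bounds case by case, using $|d_i-d_j|\ge\tfrac12\max\{d_i,d_j\}$.
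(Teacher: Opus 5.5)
Your proposal is correct and matches the paper's proof step for step: the weighted maximum $V$, then \eqref{array:forced-propagator} with $\Omega_i\le\Omega_{n-1}$ together with \eqref{array:row-bound} for the coupling and target terms, then Gronwall after removing the factor $e^{C_f\Omega_{n-1}(t-t_0)}$, then the row bound at index $n$ for the returned forcing, and finally Fubini for the weighted integrals. The obstacle you anticipate is already handled, since \eqref{array:row-bound} is a supremum over every row index (including $n$) with the target column among the $j\ne i$; moreover, the sharp integral form of Gronwall gives exactly the rate $C_f\Omega_{n-1}+C_G\Xi$, so $C_G$ need not be enlarged.
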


\begin{proof}
Set
\[ V(t)=\max_{i<n}(d_i/d_n)^4Y_i(t). \]
By $\Omega_i\le\Omega_{n-1}$, \eqref{array:forced-propagator}, and \eqref{array:row-bound},
\[ V(t)\le C_Ge^{C_f\Omega_{n-1}(t-t_0)}V_0+C_G\int_{t_0}^t e^{C_f\Omega_{n-1}(t-r)}\bigl(f(r)+\Xi Y_n(r)+\Xi V(r)\bigr)\,dr. \]
Multiply by $e^{-C_f\Omega_{n-1}(t-t_0)}$ and apply Gronwall to obtain
\[ V(t)\le C_Ge^{\beta_n(t-t_0)}V_0+C_G\int_{t_0}^t e^{\beta_n(t-r)}\bigl(f(r)+\Xi Y_n(r)\bigr)\,dr. \]
Equation~\eqref{array:row-bound} gives $\sum_{i<n}A_{ni}Y_i\le\Xi V$, proving the three terms.

For an input $e^{\gamma(t-t_0)}$, $\gamma>\beta_n$, division of the returned term by the input gives
\[ C_G\Xi^2\int_{t_0}^t e^{-(\gamma-\beta_n)(t-r)}\,dr\le\frac{C_G\Xi^2}{\gamma-\beta_n}. \]
This proves the operator estimate. Integration of the initial-state term against $e^{-\gamma(t-t_0)}$ gives at most $C_G\Xi V_0/(\gamma-\beta_n)$. For the source term, interchanging the nonnegative integrals gives
\[ \int_{t_0}^T e^{-\gamma(t-t_0)}\int_{t_0}^t e^{\beta_n(t-r)}f(r)\,dr\,dt=\int_{t_0}^T e^{-\gamma(r-t_0)}f(r)\int_r^T e^{-(\gamma-\beta_n)(t-r)}dt\,dr. \]
Bound the inner integral by $(\gamma-\beta_n)^{-1}$. This proves the remaining estimate on the stopped interval.
\end{proof}

We also need an exact signed decomposition. If a vector $g$ satisfies $|g|\le D+H$, where $D,H\ge0$, then
\[ g=\frac{Dg}{D+H}+\frac{Hg}{D+H},\qquad \left|\frac{Dg}{D+H}\right|\le D,\qquad \left|\frac{Hg}{D+H}\right|\le H. \]
Set both terms to zero where $D+H=0$, since then $g=0$. Apply this observation to the actual slower-quadrupole input, including its analytic coefficient list and all reference centroid and moment projections, with the sum of their norms. Include the fixed projection and receiving-patch averaging constants in the preceding majorants. Take $D$ to be the inherited-state and completed-source bound in Lemma~\ref{array:follower-convolution}, and $H$ to be its target-return bound. The first term defines the signed direct input and the second is included in the remainders of \eqref{transfer:ode-remainders}. A common scalar multiplies the coefficient list and all its projections, so the split respects those linear maps and bounds its contribution to the input $F$ in the estimate for $W$. The receiving-state terms already separated above contribute $C\mathsf S Y_n/\Omega_n$ to the local part of $\mathcal R$ in Section~\ref{transfer:memory-saddle}.

These pieces are chosen along the actual stopped solution and may depend on it. The local estimates use their bounds and require neither a separate evolution nor derivatives of the pieces. The same construction applies when the starting time is zero.

Require $B$ and then $\Lambda$ to be large enough that $\beta_n\le\sigma\Omega_n/4$. Since $\sigma<2\sqrt2/4$, the lemma applies at $\gamma=2\sqrt2\Omega_n$, with physical forcing operator norm at most $C\Xi^2/\Omega_n$. Under $s=\Omega_n(t-t_0)$, a physical kernel $k_n(t,r)$ becomes $\Omega_n^{-2}k_n(t_0+s/\Omega_n,t_0+u/\Omega_n)$.

\[ \frac1{\Omega_n}\int_{t_0}^{t_0+s/\Omega_n}C_G\Xi^2e^{\beta_n(t_0+s/\Omega_n-r)}Y_n(r)\,dr=\frac{C_G\Xi^2}{\Omega_n^2}\int_0^s e^{(\beta_n/\Omega_n)(s-u)}Y_n(t_0+u/\Omega_n)\,du. \]

The normalized norm of the slower-quadrupole integral operator $\mathcal K_n$ is therefore $C\Xi^2/\Omega_n^2$, giving an accumulated relative contribution at most $C\Xi^2T/\Omega_n$ over physical time $T$. Together with the local receiving-state term, $\eta_{\mathrm{return}}\le C(\mathsf S/\Omega_n+\Xi^2/\Omega_n^2)$ in Section~\ref{transfer:memory-saddle}. The source-driven weighted integral gains $C\Xi/\Omega_n$. For $n=1$, the slower-quadrupole subsystem is empty and $\mathcal K_1=0$.

Retain also the source-index factor in the forcing norm. If $j>n$ satisfies $Y_j(t)\le M_{\mathrm{hist}}e^{c\Omega_j(t-\tau_j)}$ for $t\le\tau_j$, then its first-moment difference is bounded by $M_{\mathrm{hist}}\Gamma_j\ell_j^2d_j^{-1}e^{c\Omega_j(t-\tau_j)}$. Thus, in the weighted norm for all $i\le n$, the second-moment forcing bound is multiplied by $C(1+d_j^{-1})$. At a fixed recipient $i$, the factor is $C(1+d_i/d_j)$. For the nearest source,
\begin{equation} \label{array:spacing-frequency}
\sup_{n\ge1}\frac1{\Omega_nd_{n+1}}=\frac2\Lambda\sup_{n\ge1}(2/B)^n\longrightarrow0.
\end{equation}
Hence the factor $d_{n+1}^{-1}$ is offset by the gain $\Omega_n^{-1}$, and the nearest source's contribution through slower quadrupoles is relatively bounded by $C\Xi/(\Omega_nd_{n+1})$. For more distant source indices, retain $d_j^{-1}$ in the strength summation.

Since $\beta_n/\Omega_n\le\sigma/4$, the source estimates in Section~\ref{array:sources-section}, with the spatial factors retained, will bound their source-driven contribution by $C\kappa_n(1+s)^3e^{\sigma s/2}$. Keep the target's returned contribution as the integral operator above.

At $t=\tau_{n+1}$, retain the actual slower-quadrupole states. Lemma~\ref{array:follower-convolution} bounds their later response by contributions from these states, new source forcing, and target forcing after $\tau_{n+1}$. For $n\ge2$, put $j=n+1$ and
\[ V_0=\max_{i<n}(d_i/d_n)^4Y_i(\tau_j). \]
Suppose the preceding completed stage gives
\[ Y_i(\tau_j)\le C M_{\mathrm{hist}}\frac{\Gamma_j\ell_j^2}{\Omega_j|d_i-d_j|^4}\left(1+\frac{d_i}{d_j}+\frac{\Xi}{\Omega_jd_j}\right)+\text{older-source remainder}. \]
Since $|d_i-d_j|$ is comparable to $d_i$, $|d_n-d_{n+1}|=d_j=d_n/2$, and $\Gamma_j\ell_j^2/\Omega_j=\Gamma_j^2/(2\pi\Omega_j^2)$, this implies
\[ V_0\le C M_{\mathrm{hist}}\kappa_nB^{-2}(1+d_j^{-1})+\text{older-source remainder}. \]
By \eqref{array:row-bound} and the slower-quadrupole growth estimate, the resulting forcing in recipient time is at most $(C\Xi/\Omega_n)V_0e^{\sigma s/2}$. Its absolute weighted integral and coefficient in this bound are therefore at most $CM_{\mathrm{hist}}\kappa_nB^{-2}\Xi/(\Omega_nd_{n+1})$, plus the older-source contribution. Include these terms in the source input. The state at $\tau_j$ contains every earlier slower-quadrupole response, including that driven by the current target. This calculation requires no bound on the uncompleted target's entire previous motion. For the deepest source, $\tau_j=0$ and every slower quadrupole agrees with the reference, so $V_0=0$. For $n=1$, also set $V_0=0$ and omit the empty subsystem.

\subsection{Forcing from separated quadrupoles} \label{array:sources-section}

All estimates below are restricted to the current common existence interval, stopped at a geometric or patch-confinement exit. At stage $n$, suppose every quadrupole $j>n$ has first reached pair-center distance $r_{\mathrm{out}}\ell_j$ at time $\tau_j$ and satisfies
\begin{equation} \label{array:prehistory}
Y_j(t)\le M_{\mathrm{hist}}e^{c\Omega_j(t-\tau_j)},\qquad 0\le t\le\tau_j.
\end{equation}
Here $Y_j$ is the norm defined in Section~\ref{array:weights-section}, including shape, angular impulse, corrected phase, first moment, translation, and $W_j$, and $c$ was fixed in Section~\ref{array:choices}. In particular,
\[ |\triangle M_{2,j}(t)|\le CM_{\mathrm{hist}}\Gamma_j\ell_j^2e^{c\Omega_j(t-\tau_j)},\qquad |\triangle M_{1,j}(t)|\le M_{\mathrm{hist}}\frac{\Gamma_j\ell_j^2}{d_j}e^{c\Omega_j(t-\tau_j)}. \]
The initial quadrupole $N$ already has pair-center distance $r_{\mathrm{out}}\ell_N$ at $\tau_N=0$, so its bound is required only at time zero.

By Section~\ref{transfer:outgoing-material}, for $t\ge\tau_j$,
\[ M_{2,j}^{(2)}(t)=M_{2,j}^{(2)}(\tau_j)-\frac{2i}{\pi}\Gamma_j^2e^{2i\theta_{d,j}}(t-\tau_j)+\mathcal E_j(t),\qquad |\dot{\mathcal E}_j(t)|\le \varepsilon_{\mathrm{sep}}\Gamma_j^2, \]
where $\mathcal E_j(\tau_j)=0$ and $\varepsilon_{\mathrm{sep}}$ is smaller than the prescribed multiple of $\varepsilon_*$. The reference second moment and the value at $\tau_j$ are bounded by $C_{\mathrm{sec}}\Gamma_j\ell_j^2$. The first-moment identity and cancellation within each dipole give
\[ |M_{1,j}^{(2)}(t)|+|M_{1,j}^{(1)}(t)|\le C_{\mathrm{sec}} \mathsf{T} \Gamma_j\ell_j^2\bigl(1+\Omega_j(t-\tau_j)^2\bigr),\qquad \tau_j\le t\le1. \]
The linear term in $t-\tau_j$ is included in this bound. Before $\tau_j$, we use the relative estimate \eqref{array:prehistory}.

For a quadrupole $i<j$ near its rhombus, the resulting strain, velocity divided by $d_i$, and Hessian multiplied by $d_i$ are bounded, for $t\le\tau_j$, by
\[ CM_{\mathrm{hist}}\frac{\Gamma_j\ell_j^2}{|d_i-d_j|^4}\left(1+\frac{d_i}{d_j}\right)e^{c\Omega_j(t-\tau_j)}. \]
For $t\ge\tau_j$, they are bounded by
\[ C_{\mathrm{sec}}\frac{\Gamma_j^2}{|d_i-d_j|^4}\left[t-\tau_j+\Omega_j^{-1}+\mathsf{T}d_i\bigl((t-\tau_j)^2+\Omega_j^{-1}\bigr)\right]. \]
Since $\ell_j(1+\Omega_j)\ll|d_i-d_j|$ uniformly, these bounds include the finite spatial extent of the source. For $j\ge2$, the higher-moment error has relative size at most $C\Omega_j\ell_j/d_j$. If $B^\alpha>2$ and $\Lambda$ is sufficiently large,
\[ \sup_{j\ge2}\frac{\Omega_j\ell_j}{d_j}\le C\frac{\Omega_2\ell_2}{d_2}\le C_{\alpha,B}e^{-(B^\alpha-2)\Omega_1^\alpha}\mathsf{S}\le\mathsf{S}. \]
The geometric estimates also give a uniformly small change in the argument of the direct strain. Terms caused by the recipient's own displacement are included in its small diagonal coefficients.

For $n=j-1$, we have $|d_n-d_j|=d_j$, $d_n/d_j=2$, and $\Omega_n/\Omega_j=B^{-1}$. Hence
\begin{equation} \label{array:nearest-errors}
\begin{aligned} \frac{CM_{\mathrm{hist}}\Gamma_j\ell_j^2}{|d_n-d_j|^4}\int_0^{\tau_j}e^{(c\Omega_j-2\sqrt2\Omega_n)(t-\tau_j)}\,dt&\le CM_{\mathrm{hist}}\kappa_nB^{-2},\\ \frac{C_{\mathrm{sec}}\Gamma_j\ell_j^2}{|d_n-d_j|^4}\int_{\tau_j}^{\infty}e^{-2\sqrt2\Omega_n(t-\tau_j)}\,dt&\le C_{\mathrm{sec}}\kappa_nB^{-1}. \end{aligned}
\end{equation}
Indeed, $c\Omega_j-2\sqrt2\Omega_n\ge c\Omega_j/2$ for sufficiently large $B$, and $\Gamma_j\ell_j^2=\Gamma_j^2/(2\pi\Omega_j)$. The relevant integrals are
\[ \begin{aligned} \int_0^{\tau_j}e^{-2\sqrt2\Omega_n(t-\tau_j)}e^{c\Omega_j(t-\tau_j)}\,dt&\le\frac1{c\Omega_j-2\sqrt2\Omega_n},\\ \int_{\tau_j}^{\infty}e^{-2\sqrt2\Omega_n(t-\tau_j)}\,dt&=\frac1{2\sqrt2\Omega_n},\qquad \int_{\tau_j}^{\infty}e^{-2\sqrt2\Omega_n(t-\tau_j)}(t-\tau_j)\,dt=\frac1{8\Omega_n^2}. \end{aligned} \]
The term linear in $t-\tau_j$ has coefficient $O(\Gamma_j^2)$ and weighted integral comparable to $\kappa_n$. Thus the motion before $\tau_j$ and the bounded second moment after $\tau_j$ gain $B^{-2}$ and $B^{-1}$, respectively. The infinite integrals are nonnegative upper bounds. Actual signed forcing is integrated only to the current stopping time.

Integrating the constant and quadratic first-moment terms gives a further contribution, divided by $\kappa_n$, of at most
\[ C_{\mathrm{sec}}\left(\frac{\mathsf{T}d_n}{\Omega_n}+\frac{\mathsf{T}d_n}{B}\right). \]
For forcing through slower quadrupoles, the spatially weighted input has the additional factor $1+d_j^{-1}$ from Section~\ref{array:weights-section}. Its return is controlled by \eqref{array:spacing-frequency}.

\paragraph{Earlier sources.}

Suppose the completed delays are at most $C\log(1/\kappa_k)/\Omega_k$. For $j\ge n+2$, the contribution of quadrupole $j$, divided by $\kappa_n$ and weighted from $\tau_{n+1}$, is bounded by
\begin{equation} \label{array:older-source}
C_{\mathrm{sec}}(1+M_{\mathrm{hist}})\left(1+\frac{d_n}{d_j}+\frac{\Xi}{\Omega_nd_j}\right)\frac{\Gamma_j^2}{\Gamma_{n+1}^2}\exp\left(C_1\Omega_n\sum_{k=n+1}^{j-1}\frac{\log(1/\kappa_k)}{\Omega_k}\right).
\end{equation}
The factor $d_n/d_j$ bounds the first-moment contribution before $\tau_j$, and $\Xi/(\Omega_nd_j)$ bounds forcing through slower quadrupoles. The constants include the bounded first-moment terms after $\tau_j$ and the small return to the target. The same bound applies to the stable-shape and remaining finite-coordinate values and to $W_n(\tau_{n+1})$, since a slightly larger target exponential absorbs the polynomial factors and the bounded physical-time rate in $W_n$.

The delay and growth constants depend only on the fixed thresholds $q_*,r_{\mathrm{out}}$ once $B$ exceeds a fixed threshold. For each such $B$, take $\Lambda$ large enough that $\log(1/\kappa_k)\le5\Omega_{k+1}^\alpha$ for all $k$. Then
\[ \Omega_n\sum_{k=n+1}^{j-1}\frac{\log(1/\kappa_k)}{\Omega_k}\le\frac{5\Omega_{n+1}^\alpha}{B^{1-\alpha}-1}. \]
Also,
\[ \log\frac{\Gamma_j^2}{\Gamma_{n+1}^2}=-4\bigl(\Omega_j^\alpha-\Omega_{n+1}^\alpha\bigr)+2(j-n-1)\log B. \]
The logarithm of the spatial factor in \eqref{array:older-source} is at most $C+j\log2+\log(1+\Xi/\Omega_n)$. Since $j\ge n+2$, the logarithm of the circulation ratio plus the exponent in \eqref{array:older-source} is at most
\[ -\left(4-\frac4{B^\alpha}-\frac{5C_1}{B^\alpha(B^{1-\alpha}-1)}\right)\Omega_j^\alpha+2(j-n-1)\log B. \]
Choose $B$ so that the parenthesis is at least three, then increase $\Lambda$ to absorb the spatial and index terms, uniformly in $n,j$. Summing \eqref{array:older-source} over $j\ge n+2$ gives
\begin{equation} \label{array:older-sum}
C_{\mathrm{sec}}(1+M_{\mathrm{hist}})e^{-c\Omega_{n+2}^\alpha}.
\end{equation}
This is uniform in $n$ and $N$, and retains the first-moment factor $2^{j-n}$. All these contributions are estimated in absolute value.

\paragraph{Pointwise bounds.}

Under the completed-stage hypotheses, write $\mathcal F_{n\leftarrow j}$ for the nonnegative physical input norm from quadrupole $j$, including all projected equations, and $\mathcal F_n^{\mathrm{source}}$ for an upper bound on their combined input, including the response through slower quadrupoles. Set $t_0=\tau_{n+1}$ and $s=\Omega_n(t-t_0)$. For $j=n+1$, the preceding profiles give
\[ \frac{\mathcal F_{n\leftarrow j}(t_0+s/\Omega_n)}{\Omega_n}\le C_{\rm dir}\kappa_ns+C_{\rm sec}\kappa_n\left[B^{-1}+\mathsf{T}d_n\left(\frac{s^2}{\Omega_n}+B^{-1}\right)\right]. \]
Here $C_{\rm dir}$ includes the small material and geometric errors in the linear term. The remaining terms are the constant second-moment term and the constant and quadratic first-moment terms. Since $\Gamma_j^2/(\Omega_n^2|d_n-d_j|^4)\asymp\kappa_n$, these remaining terms are at most $C_{\rm sec}(B^{-1}+\mathsf{T})\kappa_n(1+s)^2$.

For $j\ge n+2$, the completed delays give
\[ 0\le\Omega_n(\tau_{n+1}-\tau_j)\le\frac{C\Omega_{n+1}^\alpha}{B^{1-\alpha}-1}. \]
Thus its input, divided by $\Omega_n$, is at most
\[ C_{\rm sec}\kappa_n(1+s)^2(1+d_j^{-1})\frac{\Gamma_j^2}{\Gamma_{n+1}^2}\left[1+\Omega_n(\tau_{n+1}-\tau_j)\right]^2. \]
The logarithms of the circulation ratio, spatial factor, and age polynomial are bounded by $-4(\Omega_j^\alpha-\Omega_{n+1}^\alpha)+2(j-n-1)\log B$, $C+j\log2$, and $C+2\alpha\log\Omega_{n+1}$, respectively. The calculation proving \eqref{array:older-sum} therefore gives
\[ \sum_{j\ge n+2}\frac{\mathcal F_{n\leftarrow j}(t_0+s/\Omega_n)}{\Omega_n}\le C_{\rm sec}(1+M_{\rm hist})e^{-c\Omega_{n+2}^\alpha}\kappa_n(1+s)^2. \]
The same estimate applies to the weighted input to the slower quadrupoles. For $j=n+1$, retain the factor $1+d_{n+1}^{-1}$. For $j\ge n+2$, this factor is already included in the sum.

Choose $\beta_n/\Omega_n\le\sigma/4$. Lemma~\ref{array:follower-convolution} bounds the return of these inputs after $t_0$, divided by $\Omega_n$, by
\[ C\kappa_n\left[\frac{\Xi}{\Omega_nd_{n+1}}+\frac{\Xi}{\Omega_n}C_{\rm sec}(1+M_{\rm hist})e^{-c\Omega_{n+2}^\alpha}\right](1+s)^3e^{\sigma s/4}. \]
This follows by integrating $(1+s)^2$ against $e^{\beta_n(s-r)/\Omega_n}$. All interactions among slower quadrupoles are included in $\beta_n$.

The preceding stage bounds the actual norm $V_0$ from Lemma~\ref{array:follower-convolution} by
\[ V_0\le CM_{\rm hist}\kappa_nB^{-2}(1+d_{n+1}^{-1})+C_{\rm sec}(1+M_{\rm hist})\kappa_ne^{-c\Omega_{n+2}^\alpha}. \]
Its homogeneous return, divided by $\Omega_n$, is at most $(C\Xi/\Omega_n)V_0e^{\sigma s/4}$. The target norm at $t_0$ is at most $CM_{\rm hist}\kappa_nB^{-2}$ plus the same older-source remainder. The value $V_0$ includes every earlier response, including the response to quadrupole $n$.

Combining these bounds gives
\[ \frac{\mathcal F_n^{\mathrm{source}}(t_0+s/\Omega_n)}{\Omega_n\kappa_n(1+s)^3e^{\sigma s/2}}\le{} C_{\rm dir}+C_{\rm sec}(1+M_{\rm hist})\Biggl(B^{-1}+\mathsf{T} +\sup_k\frac{\Xi}{\Omega_kd_{k+1}}+e^{-c\Omega_3^\alpha}\Biggr). \]
The target norm at $t_0$ is at most $C_{\rm sec}(1+M_{\rm hist})(B^{-2}+e^{-c\Omega_3^\alpha})\kappa_n$. Choose $B$, then $\Lambda$, so that the error coefficient in the preceding display is at most $C_{\rm dir}$ and this initial-value coefficient is at most $M_0$. The source input is then bounded by $2C_{\rm dir}\kappa_n(1+s)^3e^{\sigma s/2}$, and its absolute weighted integral is at most $2C_{\rm dir}\kappa_n\int_0^\infty e^{-(2\sqrt2-\sigma/2)s}(1+s)^3\,ds\le M_0\kappa_n/2$. These bounds lie strictly inside the input class fixed in Section~\ref{array:choices}.

\subsection{The history constant and the phases} \label{array:phases-section}

We verify the choice \eqref{array:history-constant}. Recall that $M_0$ and $A_*=1/4$ were fixed before $q_*$. At $Q=q_*$, the signed delay formula gives $e^{2\sqrt2\Omega_n(t-\tau_{n+1})}\le Cq_*/(\kappa_nA_*)$. An absolute weighted input $\kappa_n(M_0+\text{error})$ therefore gives a positive comparison bound $Cq_*A_*^{-1}(M_0+\text{error})$ at this crossing. The products involving stable shape, impulse, phase, first-moment, and translation differences contribute $\exp(Cq_*)$, the small accumulated coefficients contribute at most two, and the fixed outgoing segment contributes a fixed bound for the linear evolution. These are the fixed factors included in $C_{\mathrm{out}}$ in Section~\ref{array:choices}, together with the initial scale-$N$ bound. The factor $A_*^{-1}$ is kept explicit.

The linear nearest-source term has absolute weighted integral at most $M_0\kappa_n$, independently of $M_{\mathrm{hist}}$. Its constant term, the motion before $\tau_{n+1}$, earlier outgoing quadrupoles, and forcing through slower quadrupoles add at most $C_{\mathrm{sec}}M_{\mathrm{hist}}\zeta_{\Lambda,B}\kappa_n$, where
\[ \zeta_{\Lambda,B}=B^{-1}+B^{-2}+\mathsf{T}+\sup_n\frac{\Xi}{\Omega_nd_{n+1}}+e^{-c\Omega_3^\alpha}. \]
The same bound controls the pointwise coefficient, by \eqref{array:older-sum}.

Choose $B$, then $\Lambda$, so that
\[ C_{\mathrm{sec}}M_{\mathrm{hist}}\zeta_{\Lambda,B}\le\tfrac12M_0 \]
and the corresponding signed weighted errors are below the prescribed margin. The input norm is then below $2M_0\kappa_n$, and the new constant in \eqref{array:prehistory} is at most
\[ C_{\mathrm{out}}A_*^{-1}\bigl(M_0+C_{\mathrm{sec}}M_{\mathrm{hist}}\zeta_{\Lambda,B}\bigr)\le\tfrac34M_{\mathrm{hist}}. \]
Steps 3--6 below prove this estimate on the stopped solution.

\paragraph*{Phase and timing.}
Use the phases, predicted times, and initial configurations fixed in Section~\ref{second:configuration}. Also use $h_n$ from Section~\ref{second:cascade-estimates}. By \eqref{eq:positive-coefficient}, the normalized direct coefficient is
\[ \cos\bigl(4\arg(d_n-d_{n+1})-2\theta_n^{(1)}(\tau_{n+1})-2\theta_{d,n+1}+\arctan(1/\sqrt2)-\pi\bigr). \]
By \eqref{second:phase-choice}, its argument vanishes at the ideal phases, since $d_n-d_{n+1}>0$.

Make the choices in Section~\ref{array:choices} so that
\[ C\varepsilon_{\mathrm{sep}}+C_{\mathrm{sec}}\left(M_{\mathrm{hist}}\zeta_{\Lambda,B}+\sup_n\left[\eta_n\log(1/\kappa_n)+\kappa_n^{3/4}\right]+\mathsf{S}+\sup_n r_n/d_n\right)\le\varepsilon_*, \]
where $\eta_n$ is \eqref{transfer:material-error}, including the return, first-moment, and translation coefficients estimated above. The displayed $C$ is universal, and $C_{\mathrm{sec}}$ includes every fixed segment factor. By \eqref{transfer:material-ramp}, $C\varepsilon_{\mathrm{sep}}$ is bounded by the already chosen uniform multiple of $r_{\mathrm{out}}^{-2}$ plus terms tending to zero with $\Lambda^{-1}$, including $C_{\mathrm{sec}}[\eta_n\log(1/\kappa_n)+\kappa_n^{3/4}]$. All remaining terms are made small by choosing $B$, then $\Lambda$. Thus this budget bounds the normalized forcing, phase, and delay errors by the tolerance fixed in Section~\ref{array:choices}. In particular, Section~\ref{transfer:material-block} gives, at every completed stage,
\[ \theta_{d,n}-\vartheta_n=-h_n+O(\varepsilon_*). \]
Both timing and angle errors vanish at scale $N$. At time $\tau_{n+1}$, when quadrupole $n+1$ first reaches its prescribed dipole separation, the phases in the next weighted coefficient are
\[ \begin{aligned} \theta_n^{(1)}(\tau_{n+1})&=\vartheta_n+\Omega_n\Delta_n-\beta-B^{-1}h_{n+1}+O(\varepsilon_*),\\ \theta_{d,n+1}&=\vartheta_{n+1}-h_{n+1}+O(\varepsilon_*). \end{aligned} \]
Since $\Omega_n/\Omega_{n+1}=B^{-1}$, substitution into the cosine gives
\[ \mathcal A_n=\cos\bigl(2(1+B^{-1})h_{n+1}\bigr)+O(\varepsilon_*). \]
For $|2(1+B^{-1})h_{n+1}|\le1/2$, the choices above make the absolute $O(\varepsilon_*)$ error at most $1/8$. Hence $\mathcal A_n\ge3/4$, and the delay formula yields
\[ h_n=B^{-1}h_{n+1}-\frac1{2\sqrt2}\log\cos\bigl(2(1+B^{-1})h_{n+1}\bigr)+O(\varepsilon_*). \]
Since $|\log\cos x|\le Cx^2$ on this interval, we obtain
\begin{equation} \label{array:timing-recursion}
|h_n|\le B^{-1}|h_{n+1}|+C|h_{n+1}|^2+C\varepsilon_*.
\end{equation}
Here $C$ is the elementary constant fixed before $\varepsilon_*$ in Section~\ref{array:choices}. If $|h_{n+1}|\le C_h\varepsilon_*$, that choice keeps the cosine argument in $[-1/2,1/2]$ and bounds the right side by $3C_h\varepsilon_*/4$. Starting from $h_N=0$, this proves $|h_n|\le C_h\varepsilon_*$ at every completed stage. The preceding phase formula then gives the claimed angle error.

\subsection{The stopped inductive step} \label{array:induction-section}

We prove by descending induction that every quadrupole $n$ first reaches pair-center distance $r_{\mathrm{out}}\ell_n$ within the deterministic duration of Section~\ref{transfer:material-block}, satisfies \eqref{array:prehistory} and \eqref{array:timing-recursion}, and supplies the actual values for the next stage. All estimates before completion are restricted to the stopped interval.

\paragraph*{Step 1. Initial values and earlier times.}
At time zero, quadrupole $N$ has pair-center distance $r_{\mathrm{out}}\ell_N$ and its prescribed angle, and satisfies the bound $M_{\mathrm{hist}}$. Suppose the conclusions hold for $j>n$, while every $i\le n$ remains near its rhombus. Starting at $t_0=\tau_{n+1}$, stop at $Q_n=\pm q_*$, another target or slower-quadrupole chart boundary, a geometric or patch-confinement exit, or the upper time of Section~\ref{transfer:memory-saddle}. Eliminate only $i<n$ by Lemma~\ref{array:follower-convolution}. Their response to completed sources is an input, and their response to $n$ is bounded by the positive integral operator. The preceding stage supplies the actual shape, impulse, phase, first-moment, and translation differences, together with $W_n(t_0)$, bounded by $CM_{\mathrm{hist}}\kappa_nB^{-2}$ plus the older-source remainder. For $n=N-1$, $t_0=0$ and all recipient and slower-quadrupole differences vanish. At every index below $N$, the comparison begins with identical centroids and normalized disk boundaries at the original time zero.

For each completed stage retain its deterministic duration bound from Section~\ref{transfer:material-block}. Since $\tau_N=0$, the choice in Section~\ref{array:choices} gives
\[ \tau_{n+1}+\frac1{\Omega_n}\left[\frac1{2\sqrt2}\log\frac4{A_*\kappa_n}+K+1\right] \le\sum_{k=n}^{N-1}\frac1{\Omega_k}\left[\frac1{2\sqrt2}\log\frac4{A_*\kappa_k}+K+1\right]<\frac{1}{2}. \]
Thus the interval required by Section~\ref{transfer:material-block}, including continuation up to $\tau_n$, is contained in $[0,1/2]$. The original disks have identical normalized labels, radii, and signed circulations, as required there. The value $W_n(t_0)$ is retained.

On $[0,t_0]$, the completed stages keep every $i\le n$ near its rhombus. Their local rates are at most $C\Omega_n$, so the spatial estimate \eqref{array:row-bound} bounds the linear evolution by $Ce^{\gamma_nt}$, where $\gamma_n=C_f\Omega_n+C\Xi$ and $C_f$ is fixed. Choose $B$ so that $c\Omega_{n+1}>2\gamma_n$. The external inputs $k>n$ have the already proved profiles.

Convolution of the nearest source bound before $\tau_{n+1}$ costs $C/\Omega_{n+1}$. Its direct spatial factor is $1+d_n/d_{n+1}$. The indirect term has an additional $\Xi/\Omega_{n+1}$ applied to $1+d_{n+1}^{-1}$. Thus its contribution, for every $t\le t_0$, is at most
\[ CM_{\mathrm{hist}}\frac{\Gamma_{n+1}\ell_{n+1}^2}{\Omega_{n+1}|d_n-d_{n+1}|^4}\left(1+\frac{d_n}{d_{n+1}}+\frac{\Xi}{\Omega_{n+1}d_{n+1}}\right)\le CM_{\mathrm{hist}}\frac{\kappa_n}{B^2}. \]
For $k\ge n+2$, the bound before $\tau_k$ has the factor $e^{\gamma_n(t_0-\tau_k)}$. After $\tau_k$, set $v=t-\tau_k$. The profile is bounded by $C\Gamma_k^2[v+\Omega_k^{-1}+\mathsf{T}(v^2+\Omega_k^{-1})]$ times its spatial factor. Its integral against $e^{-\gamma_nv}$ costs $C/\Omega_n^2$, since $\gamma_n\asymp\Omega_n$, $\Omega_k\ge\Omega_n$, and $\mathsf{T}\le1$. Hence the older sources contribute at most
\[ \frac{C_{\mathrm{sec}}(1+M_{\mathrm{hist}})}{\Omega_n^2d_n^4}\sum_{k\ge n+2}(1+d_k^{-1})\Gamma_k^2e^{\gamma_n(\tau_{n+1}-\tau_k)}. \]
After division by $\kappa_n/B^2$, \eqref{array:older-source} applies with an additional fixed factor $B^2$ and spatial factor $1+d_k^{-1}$. Its logarithm is at most $C+k\log2$, and the completed delays bound $t_0-\tau_k$ by the same sum as before. Increasing $\Lambda$ therefore bounds this contribution by $C_{\mathrm{sec}}(1+M_{\mathrm{hist}})(\kappa_n/B^2)e^{-c\Omega_{n+2}^\alpha}$. Consequently,
\begin{equation} \label{array:pre-reset}
\sup_{0\le t\le\tau_{n+1}}Y_n(t)\le CM_{\mathrm{hist}}\frac{\kappa_n}{B^2},\qquad \Omega_n\int_0^{\tau_{n+1}}Y_n(t)\,dt\le C_{\alpha,B}M_{\mathrm{hist}}\frac{\kappa_n\Omega_n^\alpha}{B^2}=o(\kappa_n^{3/4}).
\end{equation}
The integral estimate uses the completed delay bound $\tau_{n+1}\le C_{\alpha,B}\Omega_n^{\alpha-1}$. The small-o bound is uniform in $n$, since $2^n\le\Omega_n$ and
\[ \kappa_n^{1/4}\Omega_n^\alpha\le C_{\alpha,B}\Omega_n^{\alpha+1}e^{-\Omega_{n+1}^\alpha}\longrightarrow0. \]
When $n=N-1$, $t_0=0$ and both left sides vanish. Since the quadratic point remainders are bounded by $C\Omega_nY_n^2$, \eqref{array:pre-reset} also bounds the integral of their coefficients. The reference and patch coefficients have the bounds already proved.

\paragraph*{Step 2. The signed weighted forcing.}
Let $S$ be the current stopping time in normalized units. Using the exact split after Lemma~\ref{array:follower-convolution}, let $f_q^{\mathrm{act}}$ be the direct forcing in the normalized $Q$ equation, divided by $\kappa_n$. It includes the inherited and completed-source contributions and leaves the target return in the remainder. Let $f_q^{\mathrm{id}}$ be the ideal nearest linear profile, with the source axis fixed and the recipient rotating uniformly from its actual phase at $t_0$. Define
\[ f_q^{[S]}(s)=\begin{cases}f_q^{\mathrm{act}}(s),&0\le s\le S,\\ f_q^{\mathrm{id}}(s),&s>S.\end{cases} \]
\[ A_S=\frac{Q_n(t_0)}{\kappa_n}+\int_0^\infty e^{-2\sqrt2 s}f_q^{[S]}(s)\,ds,\qquad A_{\mathrm{id}}=\int_0^\infty e^{-2\sqrt2 s}f_q^{\mathrm{id}}(s)\,ds. \]
The direct forcing includes the reference receiving-patch average in Section~\ref{transfer:material-block}. Its additional normalized weighted error is $C(a_n^2/\ell_n^2)\Omega_n^{-2}=O(\Omega_n^{-5})\le C\eta_n\log(1/\kappa_n)$. The identity
\[ A_S=A_{\mathrm{id}}+\frac{Q_n(t_0)}{\kappa_n}+\int_0^S e^{-2\sqrt2 s}\bigl(f_q^{\mathrm{act}}(s)-f_q^{\mathrm{id}}(s)\bigr)\,ds \]
and the initial-value and weighted source bounds give $|A_S-A_{\mathrm{id}}|\le C\varepsilon_*$, uniformly in $S$. The phase estimate, with cosine at least $7/8$ and total coefficient error at most $1/8$, gives $A_S\ge3/4>A_*$. Both pieces of $f_q^{[S]}$ satisfy the same polynomial and slow-exponential bound. For $0\le r\le S$,
\[ \frac{Q_n(t_0)}{\kappa_n}+\int_0^r e^{-2\sqrt2 s}f_q^{\mathrm{act}}(s)\,ds=A_S-\int_r^\infty e^{-2\sqrt2 s}f_q^{[S]}(s)\,ds. \]
\[ \left|\int_r^\infty e^{-2\sqrt2 s}f_q^{[S]}(s)\,ds\right|\le C(1+r)^3e^{-(2\sqrt2-\sigma/2)r}. \]
By Section~\ref{transfer:memory-saddle}, these identities exclude $Q_n=-q_*$ and reaching the deterministic upper time before $Q_n=q_*$, provided the other exits are excluded below. All constants are independent of $S$. The extension after $S$ is used only in the displayed integrals, while the return operator remains restricted to $[0,S]$. At $Q_n=q_*$, the tail is $O(\kappa_n^{3/4})$ and $A_S$ is the coefficient in the delay formula.

At $Q_n=q_*$ set $\mathcal A_n=A_S$. To bound earlier motion, eliminate the slower quadrupoles from the original time zero, when their relative states vanish. Let $F_n(t)\ge0$ bound all projected centroid, moment, and patch inputs from indices greater than $n$, including their responses through slower quadrupoles, and let $\mathcal K_n$ be the positive return operator starting at zero. It includes the earlier response to $n$, which is therefore omitted from $F_n$. In the local formulation starting at $t_0$, that response is instead included in the homogeneous slower-quadrupole input. Lemma~\ref{array:follower-convolution} bounds the forcing-level return norm by $C\Xi^2/\Omega_n$ and the weighted integral of a source-driven response by $C\Xi/\Omega_n$ times its weighted source norm. Thus, for every stopped time $T$,
\[ \int_0^T e^{-2\sqrt2\Omega_n(t-\tau_{n+1})}F_n(t)\,dt\le\bigl(M_0+C_{\mathrm{sec}}M_{\mathrm{hist}}\zeta_{\Lambda,B}\bigr)\kappa_n<2M_0\kappa_n. \]
The bound includes the inputs to every component of $Y_n$. Together with the pointwise bounds and the ideal extension after $S$, it verifies the fixed input class without continuing the physical motion beyond the stop.

\paragraph*{Step 3. A comparison bound for the recipient.}
We use the signed equation to control departure and a nonnegative comparison function to bound $Y_n$ at every earlier time. Here $Q=Q_n=\triangle\widehat q_n$, $P=P_n=\triangle\widehat p_n$, and $\nu=(\triangle I_n/(2\Gamma_n\ell_n^2),\triangle\widehat\Theta_n,\widehat M_{1,n},\widehat d_n)$ collects the other finite differences defined in Section~\ref{array:weights-section}. Use the sum of their absolute values for $|\nu|$.

The changes of shape and phase in Section~\ref{transfer:linearization} remove the nonlinear terms depending only on the unstable shape variable. The terms $QP$ and $Q\nu$ cost at most $\exp(Cq_*)$, since the normalized-time integral of $|Q|$ before $Q_n=q_*$ is $O(q_*)$. Their return to the unstable equation uses the estimate $|P|+|\nu|\le C\eta\kappa e^{2\sqrt2s}+C\kappa e^{\sigma s}$ in Section~\ref{transfer:material-block}. The remaining accumulated coefficient and integral-error terms are bounded by
\begin{equation} \label{array:accumulated-error}
C_{\mathrm{sec}}\left[\mathsf{S} T+\Omega_n^{-3}\Omega_nT+\frac{\Xi^2T}{\Omega_n}+\kappa_n^{3/4}\right]
\end{equation}
while $\kappa_ne^{2\sqrt2\Omega_n(t-\tau_{n+1})}\le Cq_*$. The completed delays and current upper time give
\[ T\le C\frac{\log(1/\kappa_n)}{\Omega_n}\le C_{\alpha,B}\Omega_n^{\alpha-1}. \]
Since $\alpha<1$, the accumulated errors tend to zero uniformly as $\Lambda\to\infty$. More explicitly, \eqref{transfer:material-error} and the normalized return bound $C\Xi^2/\Omega_n^2$ give
\[ \eta_n\log(1/\kappa_n)\le C_{\alpha,B}\left[(\mathsf S+\mathsf S^2)\Omega_n^{\alpha-1}+\Omega_n^{\alpha-3}+\mathsf T\ell_n\Omega_n^\alpha+\Xi^2\Omega_n^{\alpha-2}\right]=o_{\Lambda\to\infty}(1). \]
The term containing $\ell_n$ is small by the series estimate in Section~\ref{array:choices}. The four terms in \eqref{array:accumulated-error} come from the external and reference coefficients, patch feedback, slower-quadrupole return, and forcing of $P$ and $\nu$, respectively. In particular,
\[ \varepsilon_n\Omega_nT\le C\Omega_n^{-3}\log(1/\kappa_n)=O_{\alpha,B}(\Omega_n^{\alpha-3}),\qquad \frac{\Xi^2T}{\Omega_n}\le C_{\alpha,B}\Xi^2\Omega_n^{\alpha-2}. \]
Before $t_0$, the nonlinear coefficient integral is bounded by \eqref{array:pre-reset}. The fixed factor $\exp(Cq_*)$ is included in $C_{\mathrm{out}}$.

In normalized time the unperturbed linear operator is $\operatorname{diag}(2\sqrt2,-2\sqrt2,\mathcal N)$, where $\mathcal N^2=0$ describes the impulse-to-phase coupling in Section~\ref{transfer:material-section}. The first-moment and translation coordinates have zero unperturbed equations. Their small coupling is included in $\alpha_n$ below. Since impulse-to-shape terms vanish at the rhombus, the linear evolution is bounded by $C[e^{2\sqrt2s}+1+s]\le C_Ze^{2\sqrt2s}$ for $s\ge0$. Recall $Z_n=Y_n-W_n=|Q|+|P|+|\nu|$, and write $(E_a*f)(t)=\int_0^te^{a(t-r)}f(r)\,dr$. Then
\[ \begin{aligned} Z_n&\le C_ZE_{2\sqrt2\Omega_n}*[F_n+\alpha_n Z_n+C\varepsilon_n\Omega_nW_n+\mathcal K_n(Y_n)],\\ W_n&\le C_WE_\mu*[\Omega_nZ_n+F_n+\mathcal K_n(Y_n)],\qquad \mu=O(1). \end{aligned} \]
The nonnegative coefficient $\alpha_n(t)$ contains the reference, source-evaluation, and nonlinear centroid terms. Its integral before $t_0$ is controlled by \eqref{array:pre-reset}. Afterwards the simultaneous estimates of Section~\ref{transfer:memory-saddle} bound it by $Cq_*$ plus the small terms in \eqref{array:accumulated-error}. The integral-kernel rate $\mu$ is bounded independently of $n$.

Substitute the inequality for $Z_n$ into that for $W_n$. The identity
\[ \Omega_n E_\mu*E_{2\sqrt2\Omega_n}=\frac{\Omega_n}{2\sqrt2\Omega_n-\mu}(E_{2\sqrt2\Omega_n}-E_\mu)\le C E_{2\sqrt2\Omega_n} \]
holds with a uniform constant. Since $E_\mu\le E_{2\sqrt2\Omega_n}$, we obtain
\[ Y_n(t)\le C\int_0^t e^{2\sqrt2\Omega_n(t-r)}\left[F_n(r)+(\alpha_n(r)+\varepsilon_n\Omega_n)Y_n(r)+\mathcal K_n(Y_n)(r)\right]dr. \]
Define the nonnegative comparison function
\[ R_n(t)=e^{2\sqrt2\Omega_nt}\max_{0\le r\le t}e^{-2\sqrt2\Omega_nr}Y_n(r). \]
Then $Y_n\le R_n$ and $R_n(r)\le e^{-2\sqrt2\Omega_n(t-r)}R_n(t)$ for $r\le t$. In particular,
\[ \mathcal K_n(Y_n)(r)\le C_G\Xi^2\int_0^r e^{\beta_n(r-u)}R_n(u)\,du\le\frac{C\Xi^2}{\Omega_n}R_n(r). \]
Multiply the preceding inequality for $Y_n$ by $e^{-2\sqrt2\Omega_nt}$. Its right side is nondecreasing, so taking the maximum over earlier times gives
\[ e^{-2\sqrt2\Omega_nt}R_n(t)\le C\int_0^t e^{-2\sqrt2\Omega_nr}F_n(r)\,dr+C\int_0^t\left[\alpha_n(r)+\varepsilon_n\Omega_n+\frac{\Xi^2}{\Omega_n}\right]e^{-2\sqrt2\Omega_nr}R_n(r)\,dr. \]
Gronwall costs only $C\int_0^T\alpha_n+C\varepsilon_n\Omega_nT+C\Xi^2T/\Omega_n$, already bounded above. Step 2 therefore gives
\begin{equation} \label{array:majorant}
\begin{aligned} Y_n(t)&\le R_n(t),\qquad R_n(r)\le e^{-2\sqrt2\Omega_n(t-r)}R_n(t)\quad(0\le r\le t),\\ e^{-2\sqrt2\Omega_n(t-\tau_{n+1})}R_n(t)&\le C\bigl(M_0+C_{\mathrm{sec}}M_{\mathrm{hist}}\zeta_{\Lambda,B}\bigr)\kappa_n. \end{aligned}
\end{equation}
This estimate uses absolute values. The signed coefficient $\mathcal A_n$ is used only to determine the departure time.

Apply the same argument with $T=t_0=\tau_{n+1}$. The nearest source contributes only its motion before $\tau_{n+1}$, whose weighted input is bounded by the first inequality in \eqref{array:nearest-errors}. Its response through slower quadrupoles has the additional small factor $C\Xi/(\Omega_nd_{n+1})$. Equations~\eqref{array:older-source}--\eqref{array:older-sum} bound the other sources. The coefficient integral in Gronwall uses only \eqref{array:pre-reset} and the reference, patch, and return bounds on $[0,t_0]$. Hence
\[ \sup_{0\le t\le t_0}e^{2\sqrt2\Omega_n(t_0-t)}Y_n(t)\le CM_{\mathrm{hist}}\kappa_nB^{-2}+C_{\mathrm{sec}}(1+M_{\mathrm{hist}})\kappa_ne^{-c\Omega_{n+2}^\alpha}. \]
Increasing $\Lambda$ absorbs the last term into the first. This proves the bound used in Section~\ref{second:cascade-estimates} and Figure~\ref{fig:second-dependencies}, without using the target's later motion.

\paragraph*{Step 4. Slower quadrupoles and the crossing $Q_n=q_*$.}
By \eqref{array:majorant} and the deterministic upper time, $R_n(t)\le C_{\mathrm{sec}}q_*$. We prove, for every current stopped time,
\begin{equation} \label{array:follower-current}
Y_i(t)\le C_{\mathrm{sec}}\frac{\Gamma_n\ell_n^2}{\Omega_n|d_i-d_n|^4}\left(1+\frac{d_i}{d_n}+\frac{\Xi}{\Omega_nd_n}\right)R_n(t)+\mathcal R_{i,n}(t),\qquad i<n.
\end{equation}
Here $\mathcal R_{i,n}$ is the nonnegative comparison contribution from completed indices greater than $n$.

Fix the actual coefficients and iterate the slower-quadrupole comparison inequalities from time zero. Write the resulting nonnegative bounds as $Y_i\le Y_i^{\mathrm{target}}+\mathcal R_{i,n}$, where $Y_i^{\mathrm{target}}$ contains the inputs from $n$ and $\mathcal R_{i,n}$ contains those from completed indices. Both comparison terms have zero initial value. The multipole estimates and $Y_n\le C_{\mathrm{sec}}R_n$ give coefficients
\[ D_{in}=C\frac{\Gamma_n\ell_n^2}{|d_i-d_n|^4}\left(1+\frac{d_i}{d_n}\right),\qquad \max_{i<n}\left(\frac{d_i}{d_n}\right)^4D_{in}\le C\frac{\Gamma_n\ell_n^2}{d_n^4}(1+d_n^{-1}). \]
By $R_n(r)\le R_n(t)e^{-2\sqrt2\Omega_n(t-r)}$ and Lemma~\ref{array:follower-convolution},
\[ \max_{i<n}\left(\frac{d_i}{d_n}\right)^4Y_i^{\mathrm{target}}(t)\le\frac{C_G}{2\sqrt2\Omega_n-\beta_n}\max_{j<n}\left(\frac{d_j}{d_n}\right)^4D_{jn}\,R_n(t). \]
Apply \eqref{array:forced-propagator} again to index $i$. Its direct input is $D_{in}R_n$, and \eqref{array:row-bound} bounds the remaining input by $\Xi$ times the preceding weighted norm. Integration gives
\[ \begin{aligned} Y_i^{\mathrm{target}}(t)&\le\frac{C_GD_{in}}{2\sqrt2\Omega_n-C_f\Omega_i}R_n(t)\\ &\quad+\left(\frac{d_n}{d_i}\right)^4\frac{C_G^2\Xi}{(2\sqrt2\Omega_n-C_f\Omega_i)(2\sqrt2\Omega_n-\beta_n)}\max_{j<n}\left(\frac{d_j}{d_n}\right)^4D_{jn}\,R_n(t). \end{aligned} \]
Both denominators are comparable to $\Omega_n$, since $i<n$, and $|d_i-d_n|\asymp d_i$. In particular,
\[ \int_0^t e^{C_f\Omega_i(t-r)}R_n(r)\,dr\le\frac{R_n(t)}{2\sqrt2\Omega_n-C_f\Omega_i}. \]

Substitution of the bounds for $D_{in}$ gives
\[ Y_i^{\mathrm{target}}(t)\le{} C_{\mathrm{sec}}\frac{\Gamma_n\ell_n^2}{\Omega_n|d_i-d_n|^4} \left(1+\frac{d_i}{d_n}+\frac{\Xi}{\Omega_nd_n}\right)R_n(t). \]
Adding $\mathcal R_{i,n}$ proves \eqref{array:follower-current}. The same calculation holds on the compact outgoing segment with a fixed larger multipole constant, provided its continuation satisfies $R_n(r)\le e^{-2\sqrt2\Omega_n(t-r)}R_n(t)$.

For $n\ge2$, the subsystem $i<n$ has linear evolution bounded by $Ce^{\beta_n(t-r)}$, where $\beta_n=C_f\Omega_{n-1}+C_G\Xi\asymp\Omega_{n-1}$. The contribution of $k>n$ before $\tau_k$ costs $C\Gamma_k^2/\Omega_k^2$. After factoring out $\Gamma_k^2e^{\beta_n(t-\tau_k)_+}$, integrating the linear, constant, and quadratic terms in $t-\tau_k$ gives $C\beta_n^{-2}$, $C(\beta_n\Omega_k)^{-1}$, and $C\mathsf{T}\beta_n^{-3}$, respectively. Since $\beta_n\ge1$, $\mathsf{T}\le1$, and $\Omega_k\ge\beta_n$,
\[ \mathcal R_{i,n}(t)\le\frac{C_{\mathrm{sec}}(1+M_{\mathrm{hist}})}{\beta_n^2d_i^4}\sum_{k>n}(1+d_k^{-1})\Gamma_k^2e^{\beta_n(t-\tau_k)_+},\qquad i<n. \]
The local time bound, including continuation up to $\tau_n$, gives
\[ t-\tau_{n+1}\le C_T\log(1/\kappa_n)/\Omega_n. \]
Together with the completed delays, $\beta_n\le C\Omega_n/B$, and $B^{1-\alpha}\ge2$, this yields
\[ \beta_n(t-\tau_k)_+\le C_2B^{-1}\Omega_{n+1}^\alpha,\qquad k>n, \]
with $C_2$ fixed before $B$. Hence
\[ \log\frac{\Gamma_k^2}{\Gamma_n^2}+\beta_n(t-\tau_k)_+\le-\left(4-\frac4{B^\alpha}-\frac{C_2}{B}\right)\Omega_k^\alpha+2(k-n)\log B. \]
Choose $B$ so that the parenthesis is at least three, then increase $\Lambda$ to absorb $\log(1+d_k^{-1})$ and the index terms. The sum is at most $C\Gamma_n^2e^{-c\Omega_{n+1}^\alpha}$. Since $\beta_n\asymp\Omega_{n-1}$ and $|d_{n-1}-d_n|=d_n$,
\[ \mathcal R_{n-1,n}(t)\le C_{\mathrm{sec}}(1+M_{\mathrm{hist}})\kappa_{n-1}e^{-c\Omega_{n+1}^\alpha}. \]
For $n=1$, there are no slower quadrupoles.

For $n\ge2$, use $\Gamma_n\ell_n^2/\Omega_n=2\pi\ell_n^4$ and $d_i\ge2d_n$ in \eqref{array:follower-current} to obtain
\[ \sup_{i<n}Y_i(t)\le C_{\mathrm{sec}}(1+M_{\mathrm{hist}})\left[\frac{\ell_n^4}{d_n^5}\left(1+\frac{\Xi}{\Omega_n}\right)+\frac{\Gamma_n^2}{\beta_n^2d_n^4}e^{-c\Omega_{n+1}^\alpha}\right]=o_{\Lambda\to\infty}(1). \]
The convergence is uniform in $n$ for fixed $\alpha,B$. Choose $\Lambda$ so that this is less than half the radius of the slower-quadrupole chart. The norm $Y_i$ includes every shape, impulse, phase, first-moment, translation, and integral component, so no such exit occurs. The geometric estimates improve patch confinement and separation. The estimates of Section~\ref{transfer:memory-saddle}, with Step 2, exclude $Q_n=-q_*$, the remaining target chart boundaries, and the deterministic upper time. Thus $Q_n=q_*$ is reached at
\[ T_n=\tau_{n+1}+\frac1{2\sqrt2\Omega_n}\log\frac{q_*}{\kappa_n \mathcal A_n}+O_{\mathrm{sec}}(\Omega_n^{-1}). \]

\paragraph*{Step 5. Reach the prescribed dipole separation.}
Continue along the fixed compact outgoing segment, stopping at a slower-quadrupole exit or the boundary of its noncollision neighborhood. Use the physical centroid norm of Section~\ref{array:weights-section} and retain $W_n(T_n)$ as prescribed after Theorem~\ref{meta:reduction}. In normalized time, the centroid derivatives and the coefficient of centroid input to $W_n$ are bounded. The return from $W_n$ has coefficient $O(\varepsilon_n)$, and the first-moment and translation coefficients have the bounds above. The linear evolution on this interval is therefore bounded independently of $n$ and $\kappa_n$. Multiply $R_n$ on $[0,T_n]$ by a fixed norm-equivalence factor so that $Y_n(T_n)\le R_n(T_n)$ in this norm, and extend it continuously by
\[ \dot R_n=K_{\mathrm{sec}}\Omega_nR_n+C_{\mathrm{sec}}F_n+C_{\mathrm{sec}}\mathcal K_n(R_n),\qquad K_{\mathrm{sec}}\Omega_n\ge2\sqrt2\Omega_n. \]
In these coordinates,
\[ \limsup_{h\downarrow0}\frac{Y_n(t+h)-Y_n(t)}h\le C_{\mathrm{sec}}\Omega_nY_n(t)+C_{\mathrm{sec}}F_n(t)+C_{\mathrm{sec}}\mathcal K_n(Y_n)(t). \]
Choose $K_{\mathrm{sec}}\ge\max\{C_{\mathrm{sec}},2\sqrt2\}$ and enlarge the return coefficients by the same comparison constants. Positive comparison gives $Y_n\le R_n$ and $\dot R_n\ge2\sqrt2\Omega_nR_n$. Combining this with \eqref{array:majorant} gives, also across $T_n$,
\[ R_n(r)\le e^{-2\sqrt2\Omega_n(T_n-r)}R_n(T_n)\le e^{-2\sqrt2\Omega_n(t-r)}R_n(t),\qquad r\le T_n\le t. \]
In normalized time $s=\Omega_n(t-\tau_{n+1})$,
\[ s=\frac{\log(1/\kappa_n)}{2\sqrt2}+O_{\mathrm{sec}}(1),\qquad \frac{F_n(t)}{\Omega_n}\le C_{\mathrm{sec}}\kappa_n(1+s)^3e^{\sigma s/2}\le C_{\mathrm{sec}}\kappa_n^{3/4}. \]
The bound for the linear evolution on the outgoing segment and the small return coefficient imply
\[ R_n(t)\le C_{\mathrm{sec}}\bigl(R_n(T_n)+\kappa_n^{3/4}\bigr)\le C_{\mathrm{sec}},\qquad 0\le\Omega_n(t-T_n)\le C_{\mathrm{sec}}. \]
The finite-time comparison in Section~\ref{transfer:material-block} improves the compact noncollision bounds. At every current time, \eqref{array:follower-current} follows again from the physical multipole bounds and $R_n(r)\le R_n(t)e^{-2\sqrt2\Omega_n(t-r)}$. The displayed bound on $R_n$ excludes a slower-quadrupole exit. Hence the pair-center distance reaches $r_{\mathrm{out}}\ell_n$ at $\tau_n$, within the deterministic duration used in Step 1. All fixed compact-passage factors are included in $C_{\mathrm{out}}$.

\paragraph*{Step 6. Bounds for the next stage.}
The signed delay formula and \eqref{array:majorant} give
\[ \begin{aligned} \kappa_ne^{2\sqrt2\Omega_n(T_n-\tau_{n+1})}&\le Cq_*/A_*,\\ R_n(T_n)&\le C_{\mathrm{sec}}q_*A_*^{-1}(M_0+C_{\mathrm{sec}}M_{\mathrm{hist}}\zeta_{\Lambda,B}). \end{aligned} \]
For $t\le T_n$, the comparison gives $R_n(t)\le R_n(T_n)e^{-2\sqrt2\Omega_n(T_n-t)}$. Since $\Omega_n(\tau_n-T_n)\le C_{\mathrm{sec}}$,
\[ e^{-2\sqrt2\Omega_n(T_n-t)}\le e^{C_{\mathrm{sec}}}e^{-c\Omega_n(\tau_n-t)},\qquad c<2\sqrt2. \]
On $[T_n,\tau_n]$, the same exponential weight is bounded below by a fixed positive constant. Including these and the compact-passage factors in $C_{\mathrm{out}}$, the constant in \eqref{array:prehistory} for $n$ is at most
\[ C_{\mathrm{out}}A_*^{-1}\bigl(M_0+C_{\mathrm{sec}}M_{\mathrm{hist}}\zeta_{\Lambda,B}\bigr)\le\tfrac34M_{\mathrm{hist}}. \]
This proves \eqref{array:prehistory}, including the first moment, translation, and $W_n$. The time and angle estimates give \eqref{array:timing-recursion} with the stated strict margin.

The estimate \eqref{array:follower-current} also includes $W_i$. Indeed, the direct centroid response to $D_{in}R_n$ costs $\Omega_n^{-1}$. The external input to the equation for $W_i$ costs the same factor, while its centroid input contributes $\Omega_i/\Omega_n\le1$. This is also contained in \eqref{array:forced-propagator}. Retain the resulting value of $W_i$ at the next starting time.

For $n\ge2$, set $i=n-1$ in \eqref{array:follower-current}. The spatial ratio is bounded, and \eqref{array:spacing-frequency} controls the last term. At $\tau_n$,
\[ Y_{n-1}(\tau_n)\le CM_{\mathrm{hist}}\kappa_{n-1}B^{-2}+C_{\mathrm{sec}}(1+M_{\mathrm{hist}})\kappa_{n-1}e^{-c\Omega_{n+1}^\alpha}. \]
Here
\[ \frac{\Gamma_n\ell_n^2}{\Omega_n|d_{n-1}-d_n|^4}=\frac{\Gamma_n^2}{2\pi\Omega_n^2|d_{n-1}-d_n|^4}\asymp\kappa_{n-1}\left(\frac{\Omega_{n-1}}{\Omega_n}\right)^2=\kappa_{n-1}B^{-2}. \]
These are the actual values of every component of $Y_{n-1}$ for the next stage. For $i<n-1$, retain the factor $1+d_i/d_n$ in \eqref{array:follower-current}.

Descending from $N-1$ to $1$, the new constant in \eqref{array:prehistory} is at most $3M_{\mathrm{hist}}/4$, the timing error is at most $3C_h\varepsilon_*/4$, and every slower quadrupole stays within half its chart radius. The next recipient has the bound just proved. The estimates of Section~\ref{transfer:outgoing-material} keep the completed dipoles separating, while the geometric and patch boundary estimates continue. Thus the induction closes with constants independent of $N$.

The deterministic-duration sum gives $\tau_1<1/2$, and \eqref{array:timing-recursion} gives $|\tau_n-\tau_{n,N}|\le C_h\varepsilon_*/\Omega_n$. The separation and geometric estimates keep the dipoles in every quadrupole separating through time 1.

\begin{proposition} \label{array:finite-induction}
With the choices above, for every $N\ge2$ there are two finite patch solutions on $[0,1]$, a prepared reference and a second solution, whose initial data agree in every quadrupole $n<N$. The reference remains in its rhombus charts. The second solution reaches pair-center distance $r_{\mathrm{out}}\ell_n$ at each scale $n$, in descending order, with $\tau_1<1/2$, and satisfies the geometric and patch bounds uniformly in $N$. Its times $\tau_n$ obey \eqref{array:timing-recursion}, with $|h_n|\le C_h\varepsilon_*$, and its dipole angles satisfy $\theta_{d,n}-\vartheta_n=-h_n+O(\varepsilon_*)$. Every completed quadrupole satisfies \eqref{array:prehistory}, and the slower quadrupoles satisfy \eqref{array:follower-current} before their intended stages.
\end{proposition}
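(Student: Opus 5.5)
The proof will be assembled from the pieces of Sections~\ref{array:choices}--\ref{array:induction-section}, organized as a descending induction on the scale index $n=N-1,\dots,1$. First, for every $N\ge2$ the parameters are fixed in the order $A,\varepsilon_*,M_0,q_*,\kappa_0,r_{\mathrm{out}},K,M_{\mathrm{hist}},B,\Lambda$ of Section~\ref{array:choices}, all independently of $N$. The phases come from \eqref{second:phase-choice}, and Proposition~\ref{prep:reference} supplies the reference $\omega_N^{(1)}$ on $[0,2]$ together with the shooting vector. The second datum differs only in the four disks at scale $N$, so at every $n<N$ the two solutions share identical centroids, radii, circulations and normalized labels. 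This gives $Y_n(0)=0$ and $W_n(0)=0$, which is exactly what the relative estimates of Theorem~\ref{meta:reduction}(ii) require. Local existence and continuation until a geometric, patch-boundary, or chart exit follow from Theorem~\ref{meta:reduction}(iv). The spatial bounds of Section~\ref{array:choices} show that, as long as the centroid bootstrap holds, neither a patch-boundary exit nor a loss of interquadrupole separation can occur first.

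The inductive hypothesis at stage $n$ will be the following. Every $j>n$ has reached pair-center distance $r_{\mathrm{out}}\ell_j$ at $\tau_j$, satisfies \eqref{array:prehistory} and $|h_j|\le C_h\varepsilon_*$, and keeps separating by Section~\ref{transfer:outgoing-material}. Every $i\le n$ remains in its rhombus chart, and the actual value $Y_n(\tau_{n+1})$ is bounded by $CM_{\mathrm{hist}}\kappa_nB^{-2}$ plus the older-source remainder. The base case is scale $N$ at $\tau_N=0$, where all lower differences vanish.

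The step will follow Steps~1--6 of Section~\ref{array:induction-section}. Step~1 bounds the whole motion on $[0,\tau_{n+1}]$ via \eqref{array:pre-reset}, using \eqref{array:nearest-errors} for the nearest source and \eqref{array:older-sum} for the older ones. Step~2 writes the signed coefficient $A_S$, with the ideal extension after the stopping time. The phase computation of Section~\ref{array:phases-section} then gives $A_S\ge3/4$. Step~3 uses the comparison function $R_n$ and Lemma~\ref{array:follower-convolution} to get \eqref{array:majorant}. Step~4 bounds the slower scales by \eqref{array:follower-current}, so they stay in their charts, and obtains the crossing $Q_n=q_*$ from Section~\ref{transfer:material-block}. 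Step~5 follows the compact collision-free segment to $\tau_n$. Step~6 recovers \eqref{array:prehistory} with constant $\tfrac34M_{\mathrm{hist}}$, the recursion \eqref{array:timing-recursion} with $|h_n|\le\tfrac34C_h\varepsilon_*$, the angle estimate $\theta_{d,n}-\vartheta_n=-h_n+O(\varepsilon_*)$, and the bound on $Y_{n-1}(\tau_n)$ that feeds the next stage. These strict improvements close the continuity argument on the stopped interval.

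The step I expect to be the main obstacle is ensuring that the constants genuinely close uniformly in $N$. The history constant must reproduce itself, which requires $C_{\mathrm{out}}A_*^{-1}(M_0+C_{\mathrm{sec}}M_{\mathrm{hist}}\zeta_{\Lambda,B})\le\tfrac34M_{\mathrm{hist}}$. The accumulated relative errors $\eta_n\log(1/\kappa_n)$ must stay below $\varepsilon_*$ even though the growth interval has logarithmic length. I would first check that every error term is either $O(B^{-1})$ with $M_{\mathrm{hist}}$ fixed before $B$, or tends to zero with $\Lambda^{-1}$ uniformly in $n$, as in \eqref{array:accumulated-error}. Once this is verified, the deterministic duration sum in Section~\ref{array:choices} gives $\tau_1<1/2$. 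Section~\ref{transfer:outgoing-material} keeps every dipole separating through time $1$, and the geometric bounds hold uniformly, which proves the proposition.
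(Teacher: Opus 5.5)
Your proposal is correct and follows the paper's own proof essentially step for step. It uses the parameter ordering of Section~\ref{array:choices}, the descending induction from the base case $\tau_N=0$ with inherited values $Y_n(\tau_{n+1})\le CM_{\mathrm{hist}}\kappa_nB^{-2}$ plus the older-source remainder, and Steps~1--6 of Section~\ref{array:induction-section}, closing with the strict improvements $\tfrac34M_{\mathrm{hist}}$ and $\tfrac34C_h\varepsilon_*$. The uniformity in $N$ that you flag as the main obstacle is secured in the paper as you anticipate: it fixes $M_{\mathrm{hist}}$ before $B$, then makes $\zeta_{\Lambda,B}$ and $\sup_n[\eta_n\log(1/\kappa_n)+\kappa_n^{3/4}]$ small by choosing $B$ and then $\Lambda$ (Section~\ref{array:phases-section} and Section~\ref{array:induction-section}, Step~3).
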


Together with the reference construction and prescribed initial replacement, Proposition~\ref{array:finite-induction} proves Proposition~\ref{second:finite-solutions}.

\subsection{The fixed observation window} \label{array:observation-section}

Since $\tau_1<1/2$, the estimates of Section~\ref{transfer:outgoing-material} apply for at least half a unit of time. The separation bound gives $|U_{\mathrm{ext},1}(d_1(t))|\le Cd_1^{-1}\sum_{j\ge2}\Gamma_j$. Integrating the center residual equation, for each solution separately, gives
\[ \sup_{0\le t\le1}|d_1(t)-d_1|\le Cd_1^{-1}\sum_{j\ge2}\Gamma_j+C_{\mathrm{sec},A}\bigl(\mathsf{T}\Omega_1^2\ell_1^2+\ell_1\Omega_1^{-4}\bigr)=o(\ell_1). \]
At time 1, the pair centers are separated by a distance comparable to $\Omega_1\ell_1$, the dipole lengths and patch radii are $O(\ell_1)$, and their arithmetic center is $d_1+o(\ell_1)$.

Each pair center is at distance $R(1)/2$ from the arithmetic center, and each patch is within $C\ell_1$ of its pair center. Hence
\[ c\Omega_1\ell_1-C\ell_1-o(\ell_1)\le |x-d_1|\le C\Omega_1\ell_1+C\ell_1+o(\ell_1) \]
for every point of the four outer patches of the second solution. The four outer patches of the reference lie within $C_{\mathrm{sec}}\ell_1+o(\ell_1)$ of $d_1$.

For large $\Lambda$, these patches lie between radii $c\Omega_1\ell_1/2$ and $2C\Omega_1\ell_1$ for the second solution and within radius $(C_{\mathrm{sec}}+1)\ell_1$ for the reference. Choose radii, independent of $N$, such that
\[ 2(C_{\mathrm{sec}}+1)\ell_1<r_-<\frac c4\Omega_1\ell_1,\qquad 3C\Omega_1\ell_1<r_+<\frac{d_1}{16}. \]
All four outer patches of the second solution at time 1 lie in $\{r_-<|x-d_1|<r_+\}$, and those of the reference lie in $\{|x-d_1|<r_-/2\}$. The annulus contains no other quadrupole, and its boundary has positive distance from both indicated supports, uniformly in $N$. It contains total absolute vorticity $4\Gamma_1$ for the second solution and zero for the reference, as required in Section~\ref{second:observation-section}.

\section{Limit of solution sequences and nonuniqueness} \label{sec:limit}

We keep $\alpha,A,B,\Lambda$ fixed and extract the two solution limits. The limits have the same initial datum. Renormalization and conservation of norms give the strong convergence needed to pass \eqref{second:fixed-observation} to the limit.

\subsection{Common initial data and vorticity norms} \label{sec:norm-growth}

By Proposition~\ref{prep:reference}, the initial disk centers of the first family lie in compact sets at each fixed scale. Choose a diagonal subsequence along which all these centers converge, again indexed by $N$. The radii and signed amplitudes are fixed, and the limiting disks remain separated. Thus each finite collection of patches converges in every finite $L^p$. Let $\omega_0$ be the signed sum of the limiting disks. Equation~\eqref{eq:lp-sum} bounds both tails, so
\[ \limsup_{N\to\infty}\|\omega_N^{(1)}(0)-\omega_0\|_{L^p}\le2\left(8\pi A^{p-1}\sum_{n>m}\Omega_n^{4p-3}e^{-2\Omega_n^\alpha}\right)^{1/p}\longrightarrow0\quad(m\to\infty). \]
Equation~\eqref{eq:initial-difference} then gives $\omega_N^{(k)}(0)\to\omega_0$ in every finite $L^p$ for $k=1,2$. The choices for different $N$ need not be compatible. The common datum is compactly supported, has integral zero, and satisfies
\[ \|\omega_0\|_{L^p}^p=8\pi A^{p-1}\sum_{n\ge1}\Omega_n^{4p-3}e^{-2\Omega_n^\alpha}<\infty. \]
The estimate in Section~\ref{sec:quartic-growth} gives $cp^{4/\alpha}\le\|\omega_0\|_{L^p}\le Cp^{4/\alpha}$ for $p\ge2$.

\subsection{Compactness and strong convergence}

\begin{lemma} \label{limit:compactness}
Let $0<T<\infty$ and suppose
\[ \partial_t\omega_j+\operatorname{div}(u_j\omega_j)=0,\qquad u_j=K*\omega_j\quad\text{on }[0,T]. \]
Assume that $\omega_j:[0,T]\to L^2$ is weakly continuous, has zero integral and support in a ball independent of $j,t$, and satisfies
\[ \|\omega_j(t)\|_{L^p}=\|\omega_j(0)\|_{L^p},\qquad 0\le t\le T,\quad 1\le p<\infty. \]
Suppose that
\[ \omega_j(0)\longrightarrow\omega_0\quad\text{in }L^p,\qquad 1\le p<\infty. \]
Then a subsequence satisfies $u_j\to u$ in $C([0,T];L^2)$ and $\omega_j\to\omega$ in $C([0,T];L^p)$ for every finite $p$. The limit satisfies $u=K*\omega$, has initial datum $\omega_0$, and obeys renormalized vorticity transport, conservation of every finite $L^p$ norm, and the weak momentum equation.
\end{lemma}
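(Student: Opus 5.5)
We follow the standard DiPerna--Lions compactness route: uniform bounds give weak convergence, the velocities converge strongly because the Biot--Savart operator gains a derivative on uniformly supported data, and renormalization plus norm conservation upgrades weak to strong convergence of the vorticities.

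\emph{Uniform bounds and velocity compactness.} Zero integral and support in a fixed ball $B_{R_0}$ give $u_j(t)=K*\omega_j(t)$ bounded in $L^2$ uniformly in $j,t$. Indeed, the kernel minus its far-field average is square integrable after the mean-zero cancellation, so $|u_j(x)|\le C\|\omega_j\|_{L^1}R_0|x|^{-2}$ for large $|x|$. Calder\'on--Zygmund gives $\nabla u_j$ bounded in $L^p_{\mathrm{loc}}$ for every finite $p$, and $u_j$ is bounded in $L^\infty$. The transport equation shows that $\partial_t\omega_j=-\operatorname{div}(u_j\omega_j)$ is bounded in $L^\infty(0,T;W^{-1,2})$, so $\omega_j$ is equicontinuous into $H^{-1}$. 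Arzel\`a--Ascoli, with the uniform support and $L^2$ bound, then gives $\omega_j\to\omega$ in $C([0,T];L^2_w)$ and in $C([0,T];H^{-1})$ along a subsequence. On the bounded support, $K*$ maps $H^{-1}$ compactly supported mean-zero data continuously into $L^2$: the $L^2$ tail is controlled uniformly by the $|x|^{-2}$ decay, and the local part is controlled by the $H^{-1}\to L^2$ estimate for $\nabla^{\perp}\Delta^{-1}$. Hence $u_j\to u=K*\omega$ in $C([0,T];L^2)$, and $\omega(0)=\omega_0$.

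\emph{Passage to the limit.} The products $u_j\omega_j\to u\omega$ converge in the sense of distributions, since this is strong times weak convergence, so the limit solves the transport equation weakly. Since $\nabla u\in L^\infty_tL^2$ and $\omega\in L^\infty_tL^2$, the DiPerna--Lions commutator lemma makes $\omega$ a renormalized solution. For each fixed $j$ the same holds, since the hypotheses include $\nabla u_j\in L^p$. Renormalizing with $\beta(s)=|s|^p$, cut off at large $|s|$ as needed, yields $\|\omega(t)\|_{L^p}=\|\omega_0\|_{L^p}$. By hypothesis and the $L^p$ convergence of the initial data, this equals $\lim_j\|\omega_j(t)\|_{L^p}$.

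\emph{Strong convergence, the main step.} Weak convergence $\omega_j(t)\rightharpoonup\omega(t)$ in $L^2$ together with convergence of the norms gives strong $L^2$ convergence for each $t$. The harder point is uniformity in $t$. For this we use the $C([0,T];L^2_w)$ convergence and the fact that $t\mapsto\|\omega_j(t)\|_{L^2}$ is constant and converges to the constant $\|\omega(t)\|_{L^2}$. Then
\[
\|\omega_j(t)-\omega(t)\|_2^2=\|\omega_j(0)\|_2^2-\|\omega_0\|_2^2+2\bigl(\|\omega(t)\|_2^2-\langle\omega_j(t),\omega(t)\rangle\bigr).
\]
The last term tends to zero uniformly in $t$, because $\omega\in C([0,T];L^2)$ has compact range and the weak convergence is uniform in time. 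We obtain $\omega\in C([0,T];L^2)$ by applying the same renormalization argument to the limit equation, whose norm is constant. This gives $C([0,T];L^2)$ convergence. Interpolating with uniform $L^1$ bounds on the fixed ball and $L^q$ bounds for every finite $q$ gives $C([0,T];L^p)$ convergence for all finite $p$.

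\emph{Weak momentum equation.} The velocity form follows in the limit from the identity $\operatorname{div}(u\otimes u)$ expressed through $\omega u^\perp$ plus a gradient, valid for each $u_j$. The strong $C_tL^2$ convergence of $u_j$ and the $L^2$ convergence of $\omega_j$ allow us to pass to the limit in $\omega_ju_j^\perp$ and $u_j\otimes u_j$ against divergence-free test fields. The main obstacle is the time-uniform strong convergence step, which relies on the renormalization of the limit; this is exactly where the $L^2$ integrability of $\nabla u$ is used.
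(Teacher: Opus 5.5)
Your proposal is correct and follows essentially the paper's route. Compactness gives $u_j\to u$ in $C([0,T];L^2)$ and time-uniform weak convergence of $\omega_j$. DiPerna--Lions renormalization then gives conservation of every $L^p$ norm for the limit, and the identity $\|\omega_j(t)-\omega(t)\|_{L^2}^2=\|\omega_j(0)\|_{L^2}^2-\|\omega_0\|_{L^2}^2-2\langle\omega_j(t)-\omega(t),\omega(t)\rangle$, together with the compact range of $\omega$ in $L^2$, upgrades this to uniform strong convergence. The only variation is in the velocity compactness: you use $H^{-1}$ equicontinuity of $\omega_j$ and $\|K*f\|_{L^2}=\|f\|_{\dot H^{-1}}\le C_R\|f\|_{H^{-1}}$ for mean-zero $f$ supported in $B_R$, whereas the paper bounds $u_j$ in $H^1\cap L^\infty$ and $\partial_tu_j$ in $L^2$; this is a dual form of the same argument.
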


\begin{proof}
Let $B(0,R)$ contain all vorticity supports. The Biot--Savart law and $\partial_tu_j=-K*\operatorname{div}(u_j\omega_j)$ give
\[ \sup_j\left(\|u_j\|_{L^\infty_t(H^1_x\cap L^\infty_x)}+\|\partial_tu_j\|_{L^\infty_tL^2_x}\right)\le C. \]
Zero circulation also gives $|u_j(t,x)|\le C|x|^{-2}$ for $|x|>2R$. Local compactness, time equicontinuity, and this tail bound yield $u_j\to u$ in $C([0,T];L^2)$ along a subsequence. Since $\omega_j=\operatorname{curl}u_j$ is uniformly bounded in $L^\infty_tL^2_x$, its limit satisfies
\[ \sup_{0\le t\le T}|\langle\omega_j(t)-\omega(t),\phi\rangle|\longrightarrow0\qquad(\phi\in L^2). \]
The support, circulation, and finite $L^p$ bounds pass to $\omega$, with $\omega(0)=\omega_0$ and $u=K*\omega$. Strong velocity convergence permits passage to the vorticity equation.

Since $u\in L^\infty_t(H^1_x\cap L^\infty_x)$ and $\operatorname{div}u=0$, the DiPerna--Lions renormalization theorem~\cite{DiPernaLions} applies and gives $\|\omega(t)\|_{L^p}=\|\omega_0\|_{L^p}$ for every $t\in[0,T]$ and $1\le p<\infty$. Weak continuity and conservation of the $L^2$ norm give strong $L^2$ continuity of $\omega$ and each $\omega_j$.

The set $\{\omega(t):0\le t\le T\}$ is compact in $L^2$, so the uniform weak convergence above also holds with the test $\phi=\omega(t)$. Consequently,
\[ \|\omega_j(t)-\omega(t)\|_{L^2}^2=\|\omega_j(0)\|_{L^2}^2-\|\omega_0\|_{L^2}^2-2\langle\omega_j(t)-\omega(t),\omega(t)\rangle\longrightarrow0 \]
uniformly in $t$. The common support gives convergence in $C([0,T];L^p)$ for $1\le p<2$, and interpolation with the uniform $L^{2p}$ bound gives it for $p>2$.

Finally, $u\in H^1\cap L^\infty$ implies $\operatorname{curl}\operatorname{div}(u\otimes u)=\operatorname{div}(u\omega)$ in distributions. Testing the vorticity equation with compactly supported stream functions gives the weak momentum equation with initial velocity $K*\omega_0$.
\end{proof}

Extend each $\omega_N^{(k)}$ globally by Yudovich theory~\cite{Yudovich1963}. These solutions agree with the constructed solutions on $[0,1]$, are $L^2$-continuous, and conserve circulation and every finite $L^p$ norm. By \eqref{eq:lp-sum}, $\|u_N^{(k)}(t)\|_{L^\infty}\le C$ and $\operatorname{supp}\omega_N^{(k)}(t)\subset B(0,R+Ct)$, with $C,R$ independent of $N,k,t$.

Section~\ref{sec:norm-growth} gives the common initial limit $\omega_N^{(k)}(0)\to\omega_0$. Applying Lemma~\ref{limit:compactness} to both families and taking a common diagonal subsequence gives, for $k=1,2$, every finite $T$, and $1\le p<\infty$,
\[ \omega_N^{(k)}\longrightarrow\omega^{(k)}\quad\text{in }C([0,T];L^p),\qquad \omega^{(1)}(0)=\omega^{(2)}(0)=\omega_0. \]
The lemma gives renormalization, norm conservation, and the weak momentum equation, with $u^{(k)}=K*\omega^{(k)}$. Strong $L^1$ convergence at time 1 preserves the two values $0$ and $4\Gamma_1$ in \eqref{second:fixed-observation}. Since $0\le\chi\le1$,
\[ \|\omega^{(2)}(1)-\omega^{(1)}(1)\|_{L^1}\ge\int\chi\bigl(|\omega^{(2)}(1)|-|\omega^{(1)}(1)|\bigr)\,dx=4\Gamma_1>0. \]
This proves Theorem~\ref{thm:main}.

\noindent
\textbf{Hyungjun Choi}\\
Department of Mathematics, Princeton University\\
\href{mailto:hyungjun.choi@princeton.edu}{hyungjun.choi@princeton.edu}, \href{https://web.math.princeton.edu/~hc9325/}{web.math.princeton.edu/\textasciitilde hc9325/}
\end{document}